\numberwithin{equation}{section}
\theoremstyle{plain}
\newtheorem{theorem}{Theorem}[section]
\newtheorem{lemma}[theorem]{Lemma}
\newtheorem{proposition}[theorem]{Proposition}
\newtheorem{corollary}[theorem]{Corollary}
\theoremstyle{definition}
\newtheorem{definition}[theorem]{Definition}
\newtheorem{noname}[theorem]{}
\theoremstyle{remark}
\newtheorem{remark}[theorem]{Remark}
\newtheorem{example}[theorem]{Example}
\newtheorem{notation}[theorem]{Notation}
\newcommand{\C}{\mathbf{C}}
\newcommand{\Q}{\mathbf{Q}}
\newcommand{\N}{\mathbf{N}}
\renewcommand{\P}{\mathbf{P}}
\newcommand{\PH}{\mathbf{P}\kern -.05em \mathrm{H}}
\newcommand{\A}{\mathbf{A}}
\newcommand{\F}{\mathbf{F}}
\renewcommand{\D}{\mathbf{D}}
\newcommand{\T}{\mathbf{T}}
\renewcommand{\O}{\mathcal{O}}
\newcommand{\I}{\mathcal{I}}
\renewcommand{\H}{\mathcal{H}}
\renewcommand{\L}{\mathcal{L}}
\newcommand{\M}{\mathcal{M}}
\newcommand{\Cliff}{\mathrm{Cliff}}
\newcommand{\Ext}{\mathrm{Ext}}
\DeclareMathOperator{\Sym}{\mathrm{Sym}}
\newcommand{\red}{\mathrm{red}} 
\newcommand{\cHom}{{\mathcal H}\kern -.08em om} 
\newcommand{\cExt}{{\mathcal E}\kern -.1em xt} 
\newcommand{\vect}[1]{\langle #1 \rangle} 
\newcommand{\lineq}{\sim} 
\DeclareMathOperator{\cork}{cork}
\newcommand{\Iff}{\Longleftrightarrow}
\renewcommand{\L}{\mathcal{L}}
\renewcommand{\l}{\ell}
\newcommand{\dlbrack}{[ \kern -.4ex [}
\newcommand{\drbrack}{] \kern -.4ex ]}
\newcommand{\trsp}[1]{\vphantom{#1}^{\mathsf T\!} #1}
\newcommand{\restr}[2]{\left. #1 \right| _{#2}}
\def\@restrpar[#1]#2{\left. (#1) \right| _{#2}}
\def\@restrst#1#2{\left. #1 \right| _{#2}}
\def\restr{\@ifnextchar[{\@restrpar}{\@restrst}}
\def\@orthpar[#1]{(#1)^\perp}
\def\@orthst#1{#1^\perp}
\def\orth{\@ifnextchar[{\@orthpar}{\@orthst}}
\def\@dualpar[#1]{(#1)^\vee}
\def\@dualst#1{#1^\vee}
\def\dual{\@ifnextchar[{\@dualpar}{\@dualst}}
\newcommand{\cC}{\mathcal{C}}
\newcommand{\cS}{\mathcal{S}}
\newcommand{\cSC}{\mathcal{SC}}
\newcommand{\cH}{\mathcal{H}}
\newcommand{\cL}{\mathcal{L}}
\newcommand{\cJ}{\mathcal{J}}
\newcommand{\bx}{\mathbf x}
\newcommand{\by}{\mathbf y}
\newcommand{\bef}{\mathbf f}
\newcommand{\bh}{\mathbf h}
\newcommand{\bt}{\mathbf t}
\newcommand{\fe}{\mathfrak{e}}
\newcommand{\ff}{\mathfrak{f}}
\newcommand{\fg}{\mathfrak{g}}
\newcommand{\fh}{\mathfrak{h}}
\renewcommand{\epsilon}{\varepsilon}
\renewcommand{\geq}{\geqslant}
\renewcommand{\leq}{\leqslant}
\def\subset{\subseteq}
\renewcommand{\emptyset}{\varnothing}
\newcommand{\et}{\quad \text{and} \quad}
\newcommand{\ie}{\textit{i.e.} } 
\newcommand{\cf}{\textit{cf.} } 
\newcommand{\eg}{\textit{e.g.} }
\newcommand{\noeud}{n{\oe}ud}
\newcommand{\noeuds}{n{\oe}uds}
\def\noeud{\@ifnextchar.{n{\oe}ud}{\@ifnextchar,{n{\oe}ud}{n{\oe}ud\ }}}
\def\noeuds{\@ifnextchar.{n{\oe}uds}{\@ifnextchar,{n{\oe}uds}{n{\oe}uds\ }}}
\def\?{?\kern -.08em ?}
\def\wtf{?\kern -.08em !}
\newcommand{\p}{\mathrm{pr}}
\setlist[enumerate,1]{label={\rm(\roman*)}, ref={\rm\roman*}}
\newlist{a-enumerate}{enumerate}{2}
\setlist[a-enumerate,1]{label={\rm(\alph*)}, ref={\rm\alph*}}
\newcommand{\lra}{\longrightarrow}
\newcommand{\supth}[1]{\ensuremath{#1^{\mathrm{th}}}}
\title{Extensions of curves with high degree \\ with respect to the genus}
\author{Ciro Ciliberto}
\address{Dipartimento di Matematica,
Universit{\`a} degli Studi di Roma Tor Vergata, 
Via della Ricerca Scientifica, 
00133 Roma, Italy}
\email{cilibert@mat.uniroma2.it}
\author{Thomas Dedieu}
\address{Institut de Math{\'e}matiques de Toulouse,  UMR5219, 
Universit{\'e} de Toulouse,  CNRS, 
UPS IMT, F-31062 Toulouse Cedex 9, France}
\email{thomas.dedieu@math.univ-toulouse.fr}
\begin{document}


\maketitle

\begin{prelims}

\DisplayAbstractInEnglish

\bigskip

\DisplayKeyWords

\medskip

\DisplayMSCclass

\end{prelims}


\newpage

\setcounter{tocdepth}{1}

\tableofcontents


\section{Introduction}

In this article we study the extensions of certain non-special curves
of genus $g\geq 2$.
We shall consider smooth, irreducible, linearly normal projective
curves $C$ of genus $g\geq 2$ and degree $d$ in $\P^r$; it will
always be the case that $d>2g-2$, hence
$r=d-g$. The \emph{extensions} we want to study are surfaces $S
\subset \P^{r+1}$
having $C$ as a hyperplane section, and more
generally $(c+1)$-dimensional varieties $X \subset \P^{r+c}$ having the curve
$C$ as a section by a linear space.
An extension of $C$ is \emph{non-trivial} if it is not a cone.
In this article we will be interested in non-trivial extensions.

A classical theorem by C.~Segre says that if a surface extension of $C$
is a scroll, then it is actually a cone over~$C$.  We shall give this
theorem a modern proof in the present text; see \cite[Section
  2]{CSegre-torino} for the original proof.  On the other hand, a
theorem by Hartshorne \cite[Theorem (4.1)]{hartshorne-bound} says that
if $d > 4g+4$, then $S$ is a scroll; together with the previous result
by C.~Segre, this implies that it is a cone.  The upshot is that for our
study, we can assume without loss of generality that $d \leq 4g+4$.
Our first result is the classification of surface extensions of a
curve as above in the range $4g-4 \leq d \leq 4g+4$.

The following notation will be used throughout the text.

\begin{notation}
\label{nz:scrolls}
For all $e\in \N$, we let $\F_e$ be the rational ruled surface
$\P(\O_{\P^1}\oplus \O_{\P^1}(-e))$, and we denote by
$E$ the section with self-intersection $-e$
(in the case $e=0$, this has to be taken with a grain of salt) and
by $F$ the class of the fibres, and we set $H=E+eF$.
\end{notation}

\begin{theorem}
\label{t:hideg}
Let $S\subset \P^{r+1}$ be a non-degenerate irreducible, projective surface
of degree $d \geq 4g-4$ whose general hyperplane section $C$ is smooth, of
genus $g\geq 2$, and linearly normal.
If\, $S$ is not a cone, then one of the following holds:
\begin{a-enumerate}
\item
\label{ghd:biell}
  $S$ is the image by the Veronese map $v_2$ of a cone over an
  elliptic normal curve of degree $g-1$,
  and the hyperplane sections of\, $S$ are bielliptic bicanonical
  curves, as in Example~\ref{ex:biell}. 
\item
\label{ghd:plane}
  $S$ is a rational surface represented by a linear system of
smooth
  plane $\delta$-ics, $4 \leq \delta \leq 6$, as in
  Example~\ref{ex:plane}. 
\item
\label{ghd:DP}
   $S$ is the image by the Veronese map $v_2$ of a Del Pezzo
  surface, as in Example~\ref{ex:delp}. 
\item
\label{ghd:hyperell}
  $S$ is a rational surface with hyperelliptic sections,
  represented by a linear subsystem of\, 
  $|2H+(g+1-e)F|$ on $\F_e$, as
  in Example~\ref{ex:hyperell};
\item
\label{ghd:trigonal}
   $S$ is a rational surface with trigonal sections,
  represented by a linear subsystem of 
  \linebreak
  $|3H+\frac 1 2 (g-3e+2)F|$ on $\F_e$,
  and $g\leq 10$, as in
  Example~\ref{ex:trig}. 
\end{a-enumerate}
\end{theorem}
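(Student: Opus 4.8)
The plan is the following.

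\textbf{Reduction to ruled surfaces.} By Hartshorne's theorem and C.~Segre's, recalled above, we may assume $4g-4\leq d\leq 4g+4$, since larger $d$ forces a cone. A general hyperplane section of $S$ being smooth, $\Sing S$ is finite; fix a resolution $\sigma\colon\tilde S\to S$, put $L:=\sigma^{*}\O_{S}(1)$ — a nef and big line bundle with $L^{2}=d$ — and let $\tilde C\cong C$ be the strict transform of a general hyperplane section, a member of $|L|$. Adjunction on $\tilde S$ gives
\[
2g-2=\tilde C\cdot(\tilde C+K_{\tilde S})=d+L\cdot K_{\tilde S},
\qquad\text{so}\qquad
L\cdot K_{\tilde S}=2g-2-d\leq-(2g-2)<0 .
\]
As a nef divisor is non-negative on pseudo-effective classes, $K_{\tilde S}$ is not pseudo-effective, so $\kappa(\tilde S)=-\infty$ and $\tilde S$ is ruled over a smooth curve $B$ of genus $q=q(\tilde S)$; a general fibre cannot contain the genus $g\geq 2$ curve $\tilde C$, hence $\tilde C\to B$ is finite and $q\leq g$. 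Observe that the hypothesis $d\geq 4g-4$ reads $L\cdot(-K_{\tilde S})\geq\tfrac d2$, which together with $d\leq 4g+4$ keeps $L$ tightly proportional to $-K_{\tilde S}$; this numerical window drives the whole classification.

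\textbf{The irregular case, giving \ref{ghd:biell}.} Suppose $q\geq 1$, let $F$ be a general fibre, $n:=L\cdot F=\deg(\tilde C\to B)$. Contracting the $(-1)$-curves in fibres to a geometrically ruled surface $\pi\colon\tilde S_{0}\to B$ and writing $L$ as the pullback of a divisor of fibre degree $n$ minus exceptional curves, the constraints $d=L^{2}$, $L\cdot(-K_{\tilde S})\geq\tfrac d2$ and the Hurwitz formula for $\tilde C\to B$ force $q=1$, $n=2$, no blow-ups and $d=4g-4$. Then $\tilde C$ is bielliptic over $B$, $L$ restricts to $\O_{\P^{1}}(2)$ on the fibres, and the pencil of conics cut on $S$ by $|L|$ identifies it, through the second Veronese map, with $v_{2}$ of the cone over an elliptic normal curve of degree $g-1$: Example~\ref{ex:biell}. (One may instead quote the classical classification of surfaces whose general curve section is hyperelliptic.)

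\textbf{The rational case, giving \ref{ghd:plane}--\ref{ghd:trigonal}.} Assume $q=0$. If $\tilde S$ carries a $\P^{1}$-fibration, choose one minimising $n:=L\cdot F$; contracting the $(-1)$-curves in its fibres identifies it with some $\F_{e}\to\P^{1}$, on which $L$ becomes a member of a linear subsystem of $|nH+kF|$, with $n\geq 2$ since $\tilde C$ is irrational, adjunction on $\F_{e}$ fixing $k$ from $g,e,n$. One then finds $d\leq\frac{2n(g+n-1)}{n-1}$, with equality iff $\tilde S=\F_{e}$ and $|L|$ is complete. For $n=2$ this is Example~\ref{ex:hyperell} (case \ref{ghd:hyperell}, $C$ hyperelliptic); for $n=3$ it is Example~\ref{ex:trig} (case \ref{ghd:trigonal}, $C$ trigonal), and since then $d\leq 3g+6$ and $d\geq 4g-4$, necessarily $g\leq 10$. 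If instead $\tilde S$ has no $\P^{1}$-fibration with $L\cdot F\leq 3$ (in particular if $\tilde S=\P^{2}$), contract the $(-1)$-curves on which $L$ is trivial to make $L$ ample; then $L\cdot(-K_{\tilde S})\geq\tfrac d2$, $d\leq 4g+4$ and the absence of a low-degree ruling force, via the adjunction (first-reduction) process — which now terminates immediately — that the resulting minimal model of $(\tilde S,L)$ is the second Veronese of a Del Pezzo surface (case \ref{ghd:DP}, whence $d=4g-4$) or $(\P^{2},\O_{\P^{2}}(\delta))$ with $4\leq\delta\leq 6$ (the bound $\delta^{2}=d\geq 4g-4=2(\delta-1)(\delta-2)-4$ gives $\delta\leq 6$, while $g\geq 2$ gives $\delta\geq 4$); in the latter case $S$ itself, being a blow-up of $\P^{2}$, is represented by a linear subsystem of $|\delta\ell|$: case \ref{ghd:plane}. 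The overlaps among these families (e.g. $v_{2}(\P^{2})$, simultaneously a plane-sextic surface and a Del Pezzo Veronese) are accounted for by the Examples, and the uniform bound $g\leq 10$ in \ref{ghd:plane}, \ref{ghd:DP}, \ref{ghd:trigonal} ultimately reflects that the adjoint surface has degree $K_{\tilde S}^{2}+4g-4-d\leq 9$.

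\textbf{Main obstacle.} The reduction and the irregular case are soft, as is the trichotomy in the rational case. The real content is the last alternative: proving that, in the absence of a $\P^{1}$-fibration of small relative degree, the narrow window $4g-4\leq d\leq 4g+4$ forces $(\tilde S,L)$, after its minimal modification, to be the second Veronese of a Del Pezzo surface or of $\P^{2}$, or a blow-up of $\P^{2}$ re-embedded by a linear subsystem of plane $\delta$-ics with $\delta\leq 6$. Concretely one must show that the first adjoint system $|K_{\tilde S}+L|$ already maps $\tilde S$ birationally onto a surface of the expected small degree and that the reduction stops there, then identify that surface from its numerical invariants; simultaneously one must keep track of which curves $\sigma$ and the reductions contract, in order to read off the ``linear subsystem'' descriptions of the Examples and to verify that the five listed families exhaust all the fibration and plane representations a rational surface of this kind may carry.
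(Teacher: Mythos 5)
Your skeleton matches the paper's architecture (Segre/Hartshorne reduction, the dichotomy $q>0$ versus $q=0$, conic and trisecant fibrations, and a residual planar/Del Pezzo branch), but as written it is a plan with the two hardest steps left unexecuted. In the irregular case, "the constraints \dots force $q=1$, $n=2$, no blow-ups and $d=4g-4$" is an assertion, not a proof: Hartshorne's bound $C^2\leq\frac{2m}{m-1}(g-1)$ does give $m=2$ and $d=4g-4$, but $q=1$ requires the computation $(K_{\Sigma'}+C')^2=0=K_{\Sigma'}^2=8(1-q)$, and the identification of $S$ with $v_2$ of an elliptic cone requires showing that the rank $2$ bundle $\mathcal E$ with $\Sigma'=\P(\mathcal E)$ is decomposable, that $e=g-1$, and that $\O(C')$ is the square of $\O(E)\otimes\varphi'^*\L^\vee$. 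All of this hinges on the linear normality of $C$ (via $h^1(\Sigma',\O_{\Sigma'}(C'))>0$), which your sketch of the irregular case never invokes; without it the conclusion fails. Your parenthetical appeal to the classification of surfaces with hyperelliptic sections is also off the mark here: the sections in this case are bielliptic, not hyperelliptic.

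In the rational case you explicitly defer the "main obstacle" — the branch with no $\P^1$-fibration of relative degree $\leq 3$ — to an unspecified adjunction process, and this is exactly where the paper's real work lies: the decomposition $|C+K_{S'}|=F+|M|$, the proof that $C$ is non-hyperelliptic when $|M|$ is not composed with a pencil (a genuinely nontrivial monodromy argument), the dichotomy on $\dim|C-M|$, the proof that the fixed part $F$ splits off $|C-M|$ so that $|-K_{S'}|$ has dimension $\geq g-1$, the resulting forcing of $\varepsilon=0$ and $g\leq 10$, and the analysis of $|-K_{S'}|=A+|B|$ with $A\cdot B=2$ identifying the minimal-degree adjoint surface. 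Even your organizing device — choosing a fibration minimizing $L\cdot F$ — presupposes that hyperelliptic sections yield a conic fibration of the surface, which is one of the things to be proved. So the proposal correctly predicts the shape of the answer but does not constitute a proof: the decisive steps you label as the "real content" are precisely the ones missing.
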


In the above statement, linear (sub)systems
(in cases \eqref{ghd:plane},
\eqref{ghd:hyperell},
and \eqref{ghd:trigonal})
are defined by simple base points, possibly infinitely near but along
a curvilinear scheme.
Our proof of Theorem~\ref{t:hideg} is based on a
careful analysis of the adjoint system of $C$ on $S$.

\bigskip
On the other hand, if $C \subset \P^r$ has property $N_2$ (see
Theorem~\ref{t:green}), then its extension theory is governed by
ribbons.  The latter are non-reduced schemes supported on $C$ that are
potential first infinitesimal neighbourhoods of $C$ in an extension $S
\subset \P^{r+1}$.  The salient points of the theory, which we recall
from \cite{cds} and slightly expand in Section~\ref{S:ribb'n'ext}, are
the following: \begin{enumerate*}
\item\label{item1} isomorphism classes of non-trivial ribbons are
  parametrized by the projective space $\P(\ker (\trsp
  \gamma_{C,L}))$, where $L=\restr {\O_{\P^r}(1)} C$ and
  $\gamma_{C,L}$ is a Gaussian map, the definition of which is
  recalled in Section~\ref{def:gaussian};
\item\label{item2} each ribbon may be the first infinitesimal
  neighbourhood of $C$ in at most one extension; in particular, $C$ may
  have a non-trivial extension only if $\gamma_{C,L}$ is not
  surjective;
\item\label{item3} if all ribbons in $\P(\ker (\trsp \gamma_{C,L}))$
  can be realized as first infinitesimal neighbourhoods of $C$ in an
  extension, then there exists a \emph{universal extension} of $C$,
  \ie
  a $(c+1)$-dimensional variety $X \subset \P^{r+c}$, $c = \cork
  (\gamma_{C,L})$, having $C$ as a curve section and such that all
  surface extensions of $C$ are realized in a unique way as a section
  of $X$ by some $(r+1)$-dimensional linear space containing $C$.
\end{enumerate*}

We shall apply our classification theorem, Theorem~\ref{t:hideg}, to
study the existence of non-trivial extensions of polarized curves and,
in favourable cases, 
prove the existence of universal extensions, as follows.
First, we have found some situations in which we can compute the
dimension of $\P(\ker (\trsp \gamma_{C,L}))$, which in general is a
very difficult task.
Then our idea is to consider the family of all possible extensions of
the curve $C$, using our classification theorem, which gives the
dimension of the locus in $\P(\ker (\trsp \gamma_{C,L}))$ of those
ribbons corresponding to an actual extension of $(C,L)$ by the unicity
property~\eqref{item2} above. When the two dimensions match, we can conclude
that there exists a universal extension by the general
Theorem~\ref{t:gnl-ribb}. \looseness=1

The dimension of the space $\P(\ker (\trsp \gamma_{C,L}))$ is $\cork
(\gamma_{C,L})-1$, where $\cork (\gamma_{C,L})$ denotes the corank of
the map $\gamma_{C,L}$, \ie the codimension of its image in
$H^0(C,2K_C+L)$; see Section~\ref{def:gaussian}.  We compute it in the
following situations.

\begin{theorem}
\label{t:cork-iperell+g3}
Let $C$ be a smooth projective curve of genus $g\geq 2$ and
$L$ a line bundle on $C$ of degree $d$.
\begin{a-enumerate}
\item Assume $C$ is hyperelliptic; if either $d \geq 2g+3$,
  or $d\geq g+4$ and $L$ is general, then
  $\cork (\gamma_{C,L}) = 2g+2$.
\item Assume $g=3$ and $C$ is non-hyperelliptic;
  if either $d \geq 2g=6$,
  or $d\geq g+1=4$ and $L$ is general,
  then
  $\cork(\gamma_{C,L}) = h^0(C,4K_C-L)$.
\end{a-enumerate}
\end{theorem}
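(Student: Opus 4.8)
The plan is to reinterpret $\cork(\gamma_{C,L})$ as a cohomological invariant of $C\times C$ and then to compute it by exploiting the special geometry of $C$. Recall (Section~\ref{def:gaussian}) that $\gamma_{C,L}$ is the Gaussian map $\mathcal{R}(K_C,L)\to H^0(2K_C+L)$, where $\mathcal{R}(K_C,L):=\ker\bigl(\mu_{K_C,L}\colon H^0(K_C)\otimes H^0(L)\to H^0(K_C+L)\bigr)$. Put $\mathcal{F}:=\pr_1^*K_C\otimes\pr_2^*L$ on $C\times C$ and write $\mathcal{F}(-k\Delta):=\mathcal{F}\otimes\O_{C\times C}(-k\Delta)$. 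Using $\O_{C\times C}(-\Delta)|_\Delta\cong K_C$, the sequences $0\to\mathcal{F}(-\Delta)\to\mathcal{F}\to K_C+L\to 0$ and $0\to\mathcal{F}(-2\Delta)\to\mathcal{F}(-\Delta)\to 2K_C+L\to 0$ identify $H^0(\mathcal{F}(-\Delta))=\mathcal{R}(K_C,L)$ and the connecting map of the second with $\gamma_{C,L}$; since $H^1(2K_C+L)=0$ one gets $\cork(\gamma_{C,L})=h^1(\mathcal{F}(-2\Delta))-h^1(\mathcal{F}(-\Delta))$, and since $L$ is non-special ($d>2g-2$ throughout) Künneth gives $h^1(\mathcal{F}(-\Delta))=h^0(L)+\dim\coker(\mu_{K_C,L})$. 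So the first thing to check is that $\mu_{K_C,L}$ is onto. For hyperelliptic $C$ this is easy: once $d\geq 2g+3$ (resp.\ $d\geq g+1$ and $L$ general) $L$ is very ample (resp.\ $\pi_*L$ is balanced), hence $\pi_*L=\O_{\P^1}(a)\oplus\O_{\P^1}(b)$ with $a,b\geq 0$, $a+b=d-1-g$, and surjectivity follows by multiplying polynomials since $K_C=\pi^*\O_{\P^1}(g-1)$; for a plane quartic it follows from the same statement on $\P^2$ via the Koszul resolution of the quartic. There remains the computation of $h^1(C\times C,\mathcal{F}(-2\Delta))$.

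I would compute this by pushing forward along $\pr_2$. Then $\pr_{2*}\mathcal{F}(-2\Delta)=L\otimes\mathcal{E}$, where $\mathcal{E}:=\pr_{2*}\bigl(\pr_1^*K_C\otimes\O(-2\Delta)\bigr)$ is the kernel sheaf of the second-order evaluation $H^0(K_C)\otimes\O_C\to\mathcal{P}^1(K_C)$, while $R^1\pr_{2*}\mathcal{F}(-2\Delta)$ has fibre $H^0(\O_C(2y))^\vee\otimes L_y$ at $y$. For a plane quartic, $K_C=\O_C(1)$ is $2$-jet spanned and $h^0(\O_C(2y))=1$ for every $y$: hence $\mathcal{E}$ is the line bundle $-3K_C$ (one has $\det\mathcal{P}^1(K_C)=3K_C$, as $\det\mathcal{P}^1(M)=K_C+2M$), and $R^1\pr_{2*}\mathcal{F}(-2\Delta)\cong L$ (its untwisted version is relative-Serre-dual to $\pr_{2*}\O(2\Delta)=\O_C$). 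The Leray sequence then gives $h^1(\mathcal{F}(-2\Delta))=h^1(L-3K_C)+h^0(L)=h^0(4K_C-L)+h^0(L)$, and subtracting $h^1(\mathcal{F}(-\Delta))=h^0(L)$ yields (b). For a hyperelliptic curve the situation degenerates precisely at the $2g+2$ Weierstrass points $y$, where $2y\sim g^1_2$ and $h^0(\O_C(2y))$ jumps from $1$ to $2$; this deposits a torsion subsheaf of length $2g+2$ (supported there) in $R^1\pr_{2*}\mathcal{F}(-2\Delta)$, whose torsion-free part is again $L$. Since $\mathcal{E}$ has rank $g-2$ (so $\mathcal{E}=0$ when $g=2$) and a Riemann--Roch computation on $C\times C$ gives $\deg\mathcal{E}=-4(g-2)$, the twist $h^1(L\otimes\mathcal{E})=h^0(\mathcal{E}^\vee\otimes K_C\otimes L^{-1})$ vanishes exactly when $\deg L$ beats the top sub-line-bundle slope of $\mathcal{E}^\vee\otimes K_C$ — this is the role of the bounds $d\geq 2g+3$ for arbitrary $L$ and $d\geq g+4$ for general $L$ (where $h^0$ of a general twist equals $\max(0,\chi)$ and $\chi=(g-2)(g+3-d)<0$). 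Then $h^1(\mathcal{F}(-2\Delta))=0+\bigl(h^0(L)+(2g+2)\bigr)$, and subtracting $h^0(L)$ leaves $2g+2$, proving (a). The ranges in (a), (b) where $L$ is only assumed general follow from these computations at a general $L$ together with the upper semicontinuity of $L\mapsto\cork(\gamma_{C,L})$ on $\Pic^d(C)$.

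The step I expect to be the main obstacle is the hyperelliptic computation of $R^1\pr_{2*}\mathcal{F}(-2\Delta)$: one must show the torsion has length exactly $2g+2$ — uniformly in $d$ and, for $d\geq 2g+3$, in $L$ — which calls for careful cohomology-and-base-change and relative-duality bookkeeping at the Weierstrass points, together with the verification that the locally free contributions cancel against $h^1(\mathcal{F}(-\Delta))$ rather than surviving (as $h^0(4K_C-L)$ does in the genus-$3$ case). A secondary difficulty is the vanishing $h^1(L\otimes\mathcal{E})=0$ down to the sharp bound $d\geq g+4$ for general $L$, and the surjectivity of $\mu_{K_C,L}$ in the remaining low-degree plane-quartic cases.
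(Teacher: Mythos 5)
Your reduction to $C\times C$ coincides with the paper's starting point (both identify $R_{C,L}=H^0(\p_1^*K_C+\p_2^*L-\Delta)$ and $\gamma_{C,L}$ with restriction to $\Delta$), and your formula $\cork(\gamma_{C,L})=h^1(\mathcal F(-2\Delta))-h^1(\mathcal F(-\Delta))$ together with the K\"unneth evaluation $h^1(\mathcal F(-\Delta))=h^0(L)+\dim\coker(\mu_{C,L})$ is correct. From there the routes genuinely diverge: the paper stays with $H^0(\mathcal F(-2\Delta))=\ker\gamma_{C,L}$ and computes it by exhibiting an explicit fixed component of $|\p_1^*K_C+\p_2^*L-2\Delta|$ --- twice the graph $I$ of the hyperelliptic involution in case (a), the tangential correspondence $T$ of the plane quartic in case (b) --- after which the residual system is a box product and the kernel is $H^0((g-3)\fg)\otimes H^0(L-2\fg)$, resp.\ $H^0(L-3K_C)$ (Propositions~\ref{p:he-ker}, \ref{p:g3-ker} and~\ref{pr:cork-general}); you instead push forward along $\p_2$ and use Leray and relative duality. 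Your genus~$3$ computation is complete along these lines (there $\mathcal E=\O_C(-3K_C)$ and $R^1\p_{2*}$ is locally free, since $h^0(2y)=1$ for every $y$ on a non-hyperelliptic curve), and the surjectivity of $\mu_{C,L}$ in the low-degree cases, which you leave somewhat vague, is handled in the paper by Castelnuovo's theorem, Theorem~\ref{t:castelnuovo}.

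In the hyperelliptic case, however, the two steps you flag are genuine gaps, and neither follows from what you have written. First, the length of the torsion of $R^1\p_{2*}\mathcal F(-2\Delta)$ cannot be read off from the fibre-dimension jump at the Weierstrass points: a torsion summand of any length $\ell\geq 1$ concentrated at one point raises the fibre dimension there by exactly $1$, so the jump from $1$ to $2$ at each of the $2g+2$ Weierstrass points is compatible with any total length $\geq 2g+2$; to pin it down you must compute $\chi(R^1\p_{2*})$ independently, which requires both $\deg\mathcal E$ and the identification of the locally free part of $R^1\p_{2*}$ with $L$ (your relative-duality argument for the latter is only carried out in the unramified situation). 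Second, $h^1(L\otimes\mathcal E)=h^0(\mathcal E^\vee\otimes K_C-L)=0$ for \emph{every} $L$ of degree $2g+3$ does not follow from the rank and degree of $\mathcal E$: one needs the maximal degree of a sub-line-bundle of $\mathcal E^\vee\otimes K_C$ (whose slope is $2g+2$), i.e.\ essentially the splitting type of $\mathcal E$, which you do not determine. Both gaps close at once if you prove $\mathcal E\cong H^0((g-3)\fg)\otimes\O_C(-2\fg)$; this is precisely the content of the paper's observation that $2I$ is a fixed part of $|\p_1^*K_C+\p_2^*L-2\Delta|$ combined with $I\in|\p_1^*\fg+\p_2^*\fg-\Delta|$. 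With that input your bookkeeping does return torsion of length $2g+2$ and $h^1(L\otimes\mathcal E)=(g-2)\,h^1(L-2\fg)=0$ in the stated ranges, so I recommend importing that lemma rather than attempting a direct cohomology-and-base-change analysis at the Weierstrass points.
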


In fact the above result is an application of
Proposition~\ref{pr:cork-general}, which enables one to compute the
corank of $\gamma_{C,L}$ in virtually any situation, provided $C$
either is hyperelliptic or has genus $3$.
For the genus $3$ case, we essentially give another proof to
an earlier result by Knutsen and Lopez
\cite[Proposition~2.9(a)]{kl}.
The following, on the other hand, is essentially a compilation of
previously known results.

\begin{theorem}
\label{t:cork-plurican}
Let $C$ be a smooth projective curve of genus $g\geq 5$,
non-hyperelliptic, and
$L = mK_C$ for some integer $m>1$.
Then $\cork (\gamma_{C,mK_C}) = 0$ if $m>2$ or $\Cliff(C)>2$.

\noindent
If\, $\Cliff(C)=1$, then either
\begin{a-enumerate}
\item $C$ is trigonal, and then
  $\cork (\gamma_{C,2K_C}) = h^0(K_C-(g-4)\fg)$, with $\fg$ the class of the
  $g^1_3$; or
\item $C$ is a plane quintic, and then
   $\cork (\gamma_{C,2K_C}) = h^0(\P^2,-2K_{\P^2}-C) = 3$.
\end{a-enumerate}

\noindent
If\, $\Cliff(C)=2$, then $\cork (\gamma_{C,mK_C}) = 0$ except in the
following cases: 
\begin{a-enumerate}
\item\label{cliff2-a} $g=5$, and then $\cork (\gamma_{C,2K_C}) = 3$;
\item\label{cliff2-b} $C$ is a bi-anticanonical divisor in a Del Pezzo surface $X$, and
  then we have 
  $\cork (\gamma_{C,2K_C}) =
  \linebreak
  h^0(X,-2K_X-C) = 1$;
\item\label{cliff2-c} $C$ is bielliptic, and then
  $\cork (\gamma_{C,2K_C}) = 1$.
\end{a-enumerate}
\end{theorem}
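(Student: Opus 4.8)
The plan is to reduce everything to the Gaussian map $\gamma_{C,L}$ via the standard dictionary between corank and extensions, then dispatch the three regimes $\Cliff(C)>2$, $\Cliff(C)=2$, $\Cliff(C)=1$ by invoking known vanishing results for Wahl/Gaussian maps of canonical and pluricanonical curves, organising the exceptional cases according to the classification of curves of small Clifford index. First, recall that $\cork(\gamma_{C,L})$ is the codimension of the image of $\gamma_{C,L}$ in $H^0(C,2K_C+L)$; for $L=mK_C$ this is the map $\gamma\colon H^0(K_C)\otimes H^0(mK_C)\to H^0((m+2)K_C)$ cut out on the symmetric/antisymmetric part as appropriate. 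The surjectivity of such maps for pluricanonical bundles is classical: for $m>2$ one can appeal directly to the results of Wahl and of Ein--Lazarsfeld type positivity (or to Butler-type arguments on the twisted ideal sheaf of $C$ in $\P^{g-1}$), so $\cork=0$ unconditionally when $m>2$. For $m=2$ and $\Cliff(C)>2$, surjectivity of the Gaussian--Wahl map $\gamma_{K_C}$ (hence of $\gamma_{C,2K_C}$, which dominates a suitable twist of it) is exactly the content of Wahl's theorem together with its refinements by Ciliberto--Miranda and by Beauville--Mérindol / Voisin; this gives $\cork(\gamma_{C,2K_C})=0$ in that range.

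The heart of the argument is therefore the case $m=2$ with $\Cliff(C)\in\{1,2\}$, where one must both identify the exceptional curves and compute the corank in each. Here I would use the cohomological interpretation: when $C$ is a hyperplane section of a surface $S$, the obstruction to surjectivity of $\gamma_{C,2K_C}$ is governed by $H^0$ of the conormal-type sheaf of $C$ in $S$, and more precisely, for $C$ embedded by $|mK_C|$, one has an identification $\cork(\gamma_{C,2K_C})=h^0(C,4K_C-L)=h^0(C,(4-m)K_C)$ in the relevant cases — this is the genus-$3$-flavoured computation feeding into Proposition \ref{pr:cork-general}, extended here by the extension-theoretic input (items \eqref{item1}--\eqref{item3} of the Introduction). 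For $\Cliff(C)=1$, the curve is trigonal or a plane quintic (Petri's analysis). In the trigonal case $C$ lies on a rational normal scroll; writing $\fg$ for the $g^1_3$, the kernel of $\trsp\gamma_{C,2K_C}$ corresponds to the space of extensions, which by the scroll geometry is computed by $h^0(K_C-(g-4)\fg)$; for the plane quintic ($g=6$) one instead gets $h^0(\P^2,-2K_{\P^2}-C)=h^0(\P^2,\O(1))=3$ directly. For $\Cliff(C)=2$, the Mumford--Martens classification gives: $g=5$ general (complete intersection $(2,2,2)$), $C$ bi-anticanonical on a Del Pezzo, $C$ bielliptic, or $C$ with a $g^2_6$ (plane sextic model) / a $g^1_4$ — and one checks case by case that only the first three produce nonzero corank, with the stated values $3$, $h^0(X,-2K_X-C)=1$, $1$, the Del Pezzo and bielliptic values coming from the surface $S$ on which $C$ sits via Theorem \ref{t:hideg} and the unicity of extensions \eqref{item2}.

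The main obstacle I anticipate is not the vanishing statements — those are quotable — but rather the \emph{bookkeeping}: matching the abstract corank of $\gamma_{C,2K_C}$ with the concrete geometric count ($h^0$ of a line bundle on $C$, or $h^0$ of an adjoint-type line bundle on the extending surface $S$) in each exceptional case, and making sure the list of exceptional curves is exhaustive and non-redundant. In particular one must be careful that, in the $\Cliff(C)=2$ analysis, the possible extension surfaces $S$ are exactly those appearing in Theorem \ref{t:hideg} (bielliptic bicanonical, Veronese of a Del Pezzo, etc.) and that each contributes the claimed corank and no more — which is where the classification theorem does its real work. A secondary subtlety is the borderline low-genus behaviour (e.g. $g=5$ trigonal vs. $\Cliff=2$, or the overlap of ``trigonal'' with ``plane quintic'' only at $g=6$), which must be handled by direct inspection rather than by the general machinery; I would treat those small cases by hand at the end.
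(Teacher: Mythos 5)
There are genuine gaps. The paper's proof rests on a single identification that your proposal never states: by Ciliberto--Miranda \cite[Proposition~1.2]{cm90}, for a non-hyperelliptic curve one has $\ker(\trsp\gamma_{C,mK_C}) \cong H^0(N_{C/\P^{g-1}}(-m))$, where $N_{C/\P^{g-1}}$ is the normal bundle of the \emph{canonical} embedding. This reduces the whole theorem to quoting the computations of $h^0(N_{C/\P^{g-1}}(-m))$ in \cite{knutsen18}, supplemented by a short self-contained analysis of trigonal curves via triple-cover theory (to convert Knutsen's values into $h^0(K_C-(g-4)\fg)$) and of complete-intersection canonical curves for $g\leq 5$. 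Without this identification, none of the case-by-case values in your proposal is actually computed: the formula $\cork(\gamma_{C,L})=h^0(4K_C-L)-\cdots$ from Proposition~\ref{pr:cork-general} that you invoke is proved only for non-hyperelliptic curves of genus $3$ and is not available for $g\geq 5$; and counting moduli of extensions via Theorem~\ref{t:hideg} and the unicity statement only bounds $\cork(\gamma_{C,2K_C})$ from \emph{below} (distinct extensions give distinct ribbons), so it cannot by itself establish the stated equalities --- indeed the paper runs that comparison in the opposite direction, using the corank values of this theorem as input to prove integrability in Theorem~\ref{t:plurican}.

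The second gap is the case $m=2$, $\Cliff(C)>2$. You claim that the Wahl map $\phi_{K_C,K_C}$ is surjective for such curves and that this forces surjectivity of $\gamma_{C,2K_C}=\phi_{2K_C,K_C}$. Both steps fail: curves of arbitrarily large Clifford index lie on K3 surfaces and hence have non-surjective Wahl map by Wahl's theorem, so the premise is false; and there is no general implication from surjectivity of $\phi_{K_C,K_C}$ to that of $\phi_{2K_C,K_C}$. The correct input, used in the paper, is that $\Cliff(C)>2$ implies property $N_2$ for the canonical curve (Voisin \cite{voisin88}, Schreyer \cite{schreyer91}), which gives $H^0(N_{C/\P^{g-1}}(-m))=0$ for $m\geq 2$ by \cite[Lemma~3.6]{cds}, hence $\cork(\gamma_{C,mK_C})=0$ via the identification above. (Your treatment of $m>2$ by ``Ein--Lazarsfeld/Butler-type positivity'' is vague but at least points at a quotable vanishing; the paper again simply cites \cite{knutsen18} and \cite{kl}.)
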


\noindent
The case of curves of genus $g\leq 4$ is elementary; see
\eqref{eq:cork-g3,4}.

\bigskip
The next stage of our programme
is to examine in all three cases above
(genus $3$ curves, hyperelliptic curves, and pluricanonical curves)
the families of all surface extensions of a given polarized curve
$(C,L)$.
For pluricanonical curves, Theorem~\ref{t:hideg} above tells us all
the surfaces we need to consider.
In general we will assume that $\deg(L)\geq 2g+3$,
in order for property $N_2$ for $(C,L)$
(which is needed to apply the theory of ribbons and extensions)
to hold by Green's theorem, Theorem~\ref{t:green}.
For pluricanonical curves, this condition is automatic except for a
few sporadic cases in genus $g\leq 3$.
For an arbitrary polarized curve of genus $3$, this condition is
stronger than $\deg(L) \geq 4g-4$, so that again Theorem~\ref{t:hideg}
tells us all the surfaces we need to consider.

For a polarized hyperelliptic curve $(C,L)$ of arbitrary genus, 
however, we need a stronger classification result. We prove the
following, which extends classical results by
Castelnuovo, \cf \cite{castelnuovo-iperell}, and more
recent ones by Serrano, \cf \cite{serrano}, 
and Sommese--Van de Ven, \cf \cite{sommese-vdVen}
(see Section~\ref{p:castelnuovo-storia} for more comments on these results).

\begin{theorem}
\label{t:hyperell-notrat}
Let $C \subset \P^{d-g}$ be a linearly normal hyperelliptic curve of
genus $g\geq 2$ and degree $d \geq 10$, unless $g=2$ or $3$, in which
case we only make the looser assumption that $d\geq 2g+3$.
For all surfaces $S \subset \P^{d-g+1}$ having $C$ as a hyperplane
section, if\, $S$ is not a cone, then it is rational and ruled by conics.
In particular, its general hyperplane section is hyperelliptic.
\end{theorem}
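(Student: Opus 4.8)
The plan is to run the adjunction process on a resolution of $S$, exploiting that the canonical system of a hyperelliptic curve is composed with its hyperelliptic pencil.

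\emph{Setup and first consequences.} Let $f\colon X\to S$ be a resolution of the singularities and put $H:=f^{*}\O_{S}(1)$, a nef and big line bundle with $H^{2}=d$; since $C$ is smooth it misses $\Sing(S)$, so its total transform is again a copy of $C$, which lies in $|H|$. Write $|A|$ for the $g^{1}_{2}$ of $C$: then $K_{C}\sim(g-1)A$ and $H^{0}(C,K_{C})=\Sym^{g-1}H^{0}(C,A)$, so the canonical map of $C$, and the map attached to \emph{any} of its sublinear systems, factors through the double cover $C\to\P^{1}$ and has a rational normal curve as image. By adjunction $\O_{C}(1)=(K_{X}+C)|_{C}\otimes(K_{X}|_{C})^{-1}=K_{C}\otimes(K_{X}|_{C})^{-1}$, whence $K_{X}\cdot C=2g-2-d$. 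If $d>2g-2$ this is negative, forcing $\kappa(X)=-\infty$; if $d=2g-2$ and $\kappa(X)\geq0$ then the minimal model of $X$ has numerically trivial canonical bundle, so $\O_{C}(1)$ differs from $K_{C}$ by a torsion class, which for hyperelliptic $C$ is incompatible with very ampleness of $\O_{C}(1)$ (for the trivial twist $C$ would be canonically embedded). Finally, since $C\sim H$ is nef and big, Kawamata--Viehweg vanishing gives $h^{i}(X,K_{X}+C)=0$ for $i>0$, hence $h^{0}(X,K_{X}+C)=\chi(\O_{X})+g-1$, and the image of the restriction $H^{0}(K_{X}+C)\to H^{0}(K_{C})$ is composed with $|A|$.

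\emph{Producing the conic bundle.} By the Setup the adjunction map $\phi:=\phi_{|K_{X}+C|}$ restricts on $C$, and (after one checks this) on a general member of $|H|$, which remains hyperelliptic, to a map factoring through the double cover onto a rational normal curve. Since the members of $|H|$ sweep out $X$, either $\phi(X)$ is a curve---then the fibres $\Phi$ of the associated fibration satisfy $\Phi\cdot H=2$, and comparing $|K_{X}+C|$ with the multiples of $\Phi$ forces $\Phi^{2}=0$ and $K_{X}\cdot\Phi=-2$, so $S$ is a conic bundle---or $\phi$ maps $X$ with degree $2$ onto a surface of minimal degree, and then the pull-back of a ruling of that scroll is again a base-point-free pencil $|\Phi|$ with $\Phi\cdot H=2$, $\Phi^{2}=0$. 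Either way $S$ carries a base-point-free pencil of conics; as $q(X)=0$ the base is $\P^{1}$, and as $X$ is uniruled the general $\Phi$ is rational, so $S$ is rational and ruled by conics. All this holds unless $S$ falls into a bounded list of exceptional polarized surfaces of small degree, and this is where the numerical hypotheses enter. For $d\geq4g-4$ those are exactly the cases \eqref{ghd:biell}--\eqref{ghd:trigonal} of Theorem~\ref{t:hideg}: cases \eqref{ghd:plane} and \eqref{ghd:trigonal} are excluded because hyperelliptic curves of genus $\geq2$ have Clifford index $0$ while plane $\delta$-ics ($\delta\geq4$) and trigonal curves have Clifford index $\geq1$; case \eqref{ghd:DP} is excluded because a $g^{1}_{2}$ on a $-2K$-curve on a Del Pezzo surface $\Sigma$ would be cut by a class $B$ with $B\cdot(-K_{\Sigma})=1$, hence by a $(-1)$-curve, which is rigid and (by a computation with Kawamata--Viehweg vanishing on $\Sigma$) imposes only a single effective degree-$2$ divisor; case \eqref{ghd:biell} is handled similarly, the only relevant pencil being the ruling of the ruled surface over the base elliptic curve, parametrised by an elliptic curve and not by $\P^{1}$; and the sporadic low-genus subcases are eliminated because there $d=4g-4<2g+3$. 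For $2g+3\leq d<4g-4$ (which only happens for $g\geq4$ and is an infinite family, not covered by Theorem~\ref{t:hideg}) one must exclude the exceptional surfaces by hand, and this is exactly the use of the absolute bound $d\geq10$: it forces $H^{2}$ to exceed the degrees of all the small exceptional configurations.

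\emph{Expected main obstacle.} The delicate points are: (i) proving that the general member of $|H|$ is hyperelliptic and that $q(X)=p_{g}(X)=0$ (so that $\chi(\O_{X})=1$, $h^{0}(K_{X}+C)=g$, and the conic-bundle base is $\P^{1}$)---the irregularity has to be killed using $d\geq10$, since a priori an extension of a linearly normal hyperelliptic curve could be ruled over a positive-genus curve; (ii) in the range $d<4g-4$, establishing $\kappa(X)=-\infty$ when $d\leq2g-2$ (where negativity of $K_{X}\cdot C$ is unavailable) and controlling the degree-$2$ branch of the adjunction map so as to exclude that the pencil $|\Phi|$ consists of elliptic rather than rational curves; (iii) the explicit linear-system computations on the auxiliary Del Pezzo and elliptic ruled surfaces needed to rule out cases \eqref{ghd:biell} and \eqref{ghd:DP} when $d\geq4g-4$.
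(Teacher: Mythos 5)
Your proposal follows the classical adjunction--mapping route (in the spirit of Serrano and Sommese--Van de Ven), whereas the paper argues quite differently: it applies the Reider--Beltrametti--Sommese theorem (Theorem~\ref{t:R/B-S}) to the failure of $|K_{S'}+C|$ to separate the two points of any divisor of the $g^1_2$, obtaining for each such divisor an effective $D$ with $D\cdot C-2\le D^2\le 1$; since $D\cdot C\le 3<C^2$ forces $D$ to move with the $g^1_2$, the moving part is an irreducible curve with $M\cdot C=2$ or $3$, the case $M\cdot C=3$ is killed by the Hodge index inequality $C^2\le 9$ together with Lemma~\ref{l:g12g13}, and the resulting family of conics makes $S$ unirational, hence rational. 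This works uniformly for $d\ge 9$ and for singular $S$, with the cases $g=2$, $d=7,8$ handled separately via weak defectivity.

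There are genuine gaps in your argument. The main one is the range $2g+3\le d<4g-4$ (nonempty exactly for $g\ge 4$), which is the real content of the theorem: there Theorem~\ref{t:hideg} is unavailable, and your claim that the bound $d\ge 10$ ``forces $H^2$ to exceed the degrees of all the small exceptional configurations'' is not an argument --- in that range $d$ grows with $g$ and you have produced no bounded list of exceptions to exceed. (The actual role of $d\ge 10$, resp.\ $d\ge 2g+3$ for $g=2,3$, is simply to guarantee $C^2\ge 9$ so that Reider applies with $k=1$.) A second gap is that you assume, ``after one checks this'', that the general member of $|H|$ is hyperelliptic; but only the one section $C$ is known to be hyperelliptic, this is essentially part of the conclusion to be proved, and Section~\ref{p:castelnuovo-storia} records that for $(g,d)=(3,8)$ or $(4,9)$ there exist extensions whose general hyperplane section is \emph{not} hyperelliptic, so the claim genuinely depends on the numerical hypotheses. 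Finally, the dichotomy for the adjunction map (fibration in conics versus a degree-$2$ map onto a minimal-degree surface) presupposes $q=p_g=0$ and the surjectivity of $H^0(K_X+C)\to H^0(K_C)$, which you defer precisely to the points where the difficulty lies; the paper avoids all of this by extracting the pencil of conics directly from Reider's theorem and only then deducing rationality.
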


\begin{corollary}
\label{c:he_class}
In the setting of Theorem~\ref{t:hyperell-notrat}, the surface $S$ is
represented by a linear subsystem of\, $|2H+(g+1-e)F|$ on $\F_e$, as
in case \eqref{ghd:hyperell} of Theorem~\ref{t:hideg}.
\end{corollary}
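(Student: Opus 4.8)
The corollary is, once Theorem~\ref{t:hyperell-notrat} is granted, a matter of intersection theory on a Hirzebruch surface. By that theorem $S$ is rational and ruled by conics. Let $\nu\colon\tilde S\to S$ be a minimal resolution of singularities. The ruling induces a morphism $\tilde\pi\colon\tilde S\to\P^1$ (the base is $\P^1$ because $\tilde S$ is rational) whose general fibre is a smooth conic of $S$ and whose special fibres are trees of rational curves; contracting the $(-1)$-curves contained in fibres we reach a relatively minimal model, which is a Hirzebruch surface $\F_e$ for some $e\geq 0$. This produces a birational morphism $\mu\colon\tilde S\to\F_e$ realising $\tilde S$ as an iterated blow-up of $\F_e$ with centres lying on the fibres of $\F_e\to\P^1$; in particular the total blown-up scheme $Z$ is a curvilinear zero-dimensional subscheme of $\F_e$ supported on fibres. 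Put $D=\nu^*\O_S(1)$ and let $\mathcal E$ be the exceptional divisor of $\mu$.

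The next step is to identify $D$ and $\mathcal E$ numerically. Write $D=\mu^*A-\mathcal E$ for a class $A\in\Pic(\F_e)$; since $\O_S(1)$ is very ample and $\nu$ is a minimal resolution one checks that $\mathcal E$ is reduced (all the blow-up multiplicities of $D$ are $1$), whence $D+K_{\tilde S}=\mu^*(A+K_{\F_e})$ and $\mathcal E^2=-\deg Z$. A general hyperplane section $C$ of $S$ avoids $\Sing S$, so its preimage in $\tilde S$ is isomorphic to $C$ and lies in $|D|$; thus $D^2=d$ and, by adjunction, $2g-2=D\cdot(D+K_{\tilde S})=A\cdot(A+K_{\F_e})$. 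Moreover $D$ has degree $2$ on a general fibre of $\tilde\pi$, which is the strict transform of a general fibre $F$ of $\F_e\to\P^1$, so $A\cdot F=2$ and therefore $A\equiv 2H+kF$ for some $k\in\Z$. Using $K_{\F_e}\equiv-2H+(e-2)F$, $H^2=e$ and $H\cdot F=1$, the adjunction identity becomes $2(k+e-2)=2g-2$, i.e. $A\equiv 2H+(g+1-e)F$; and then $d=D^2=A^2-\deg Z=(4g+4)-\deg Z$, so $\deg Z=4g+4-d$, which is non-negative because $d\leq 4g+4$ throughout the range we consider.

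Hence $S$ is the image of $\F_e$ under the rational map associated with the subsystem of $|2H+(g+1-e)F|$ of sections vanishing on $Z$, with $Z$ curvilinear of length $4g+4-d$ supported on fibres --- which is precisely case~\eqref{ghd:hyperell} of Theorem~\ref{t:hideg}, as described in Example~\ref{ex:hyperell} (linear normality of $C$ being reflected in the fact that $Z$ imposes independent conditions on $|2H+(g+1-e)F|$). The only step that is not purely formal is the first one: passing from the qualitative conclusion ``$S$ is ruled by conics'' to the explicit model $\mu\colon\tilde S\to\F_e$ and, above all, checking that $\mathcal E$ is reduced and supported on fibres --- this is where very-ampleness of $\O_S(1)$ and minimality of $\nu$ are used, and it is the main obstacle. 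Once it is in place the corollary follows from the short computation above.
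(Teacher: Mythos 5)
Your overall route is the same as the paper's: pass to a relatively minimal rational ruled model $\F_e$ on which the conic ruling becomes the ruling by fibres, and then pin down the class of the image of $C$ from $\bar C\cdot F=2$ together with adjunction. The numerical part of your argument is correct \emph{granted} your claim that the exceptional class $\mathcal{E}$ is reduced, i.e.\ that the image $\bar C$ of $C$ on $\F_e$ is smooth and the base scheme consists of simple (curvilinear) points.

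That claim, however, is exactly where there is a genuine gap, and the justification you offer (``very-ampleness of $\O_S(1)$ and minimality of $\nu$'') does not give it. Minimality of the resolution only excludes $(-1)$-curves $\Gamma$ with $C\cdot\Gamma=0$; a $(-1)$-curve contained in a reducible fibre of $\tilde\pi$ may perfectly well satisfy $C\cdot\Gamma=2$, and contracting it creates a double point of $\bar C$ on the relatively minimal model you reach. Very-ampleness of $\O_S(1)$ on $S$ says nothing about the birational map to $\F_e$. This is not a cosmetic issue: if some blow-up multiplicity $m_i$ equals $2$, then $D+K_{\tilde S}\neq\mu^*(A+K_{\F_e})$ and the adjunction computation gives $k=g+1-e+\tfrac12\sum m_i(m_i-1)$ rather than $k=g+1-e$, so your identification of the class $A$ fails on that model. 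The correct argument, and the one the paper uses, is: since $\bar C\cdot F=2$, the base points of the image of $|C|$ have multiplicity at most $2$; one then removes each double base point by an elementary transformation centred there (blow up the point, blow down the fibre through it), which changes $e$ by $\pm1$ and lands on a possibly different $\F_{e'}$ on which the general member of the system is smooth and all base points are simple. Once this step is in place your computation goes through verbatim (and you should also record $0\leq C\cdot E=g+1-e$, which bounds $e$ as in the paper's statement).
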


\noindent
Our main tool in proving Theorem~\ref{t:hyperell-notrat} is
the Reider and Beltrametti--Sommese theorem, Theorem~\ref{t:R/B-S}.

\bigskip
Finally, we can complete our programme, to the effect that we obtain
the following results.

\begin{theorem}\label{t:ext_g3}
Let $(C,L)$ be a non-hyperelliptic polarized curve of genus $g=3$ and
degree $d\geq 2g+3 =9$. Then the following hold: 
\begin{enumerate}
\item 
\label{t:ext_g3:cond}
There exists a non-trivial extension of the polarized curve $(C,L)$
if and only if
there exist points $p_1,\ldots,p_{16-d} \in C$ such that
\smash{$L = 4K_C - \sum_{i=1}^{16-d} p_i$} $($in particular, $d\leq 16)$.
\item 
\label{t:ext_g3:all}
Every ribbon in $\P( \ker (\trsp \gamma_{C,L}))$ is the first
infinitesimal neighbourhood of $C$ in some extension of\, $(C,L)$; hence
there exists a universal extension of\, $(C,L)$.
\end{enumerate}
\end{theorem}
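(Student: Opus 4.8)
The plan is to combine the corank formula of Theorem~\ref{t:cork-iperell+g3}(b) with the classification of surface extensions of Theorem~\ref{t:hideg}, and to feed the outcome into the ribbon machinery recalled in the introduction and in Section~\ref{S:ribb'n'ext}. First I would dispose of the range $d>16$: there is no non-trivial extension, by the theorems of C.~Segre and Hartshorne quoted in the introduction; $\deg(4K_C-L)=16-d<0$, so there are no points $p_i$ as in~\eqref{t:ext_g3:cond}; and $\cork(\gamma_{C,L})=h^0(C,4K_C-L)=0$ by Theorem~\ref{t:cork-iperell+g3}(b), so $\P(\ker(\trsp\gamma_{C,L}))=\emptyset$ and~\eqref{t:ext_g3:all} is vacuous. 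From now on $9\leq d\leq 16$; set $n:=16-d$, so $0\leq n\leq 7$, and $c:=\cork(\gamma_{C,L})$. As $d\geq 2g+3=9$, property $N_2$ holds for $(C,L)$ by Green's theorem, Theorem~\ref{t:green}, so the theory of ribbons applies; and as $d\geq 2g=6$, Theorem~\ref{t:cork-iperell+g3}(b) gives $c=h^0(C,4K_C-L)$, a line bundle of degree $n$. By property~\eqref{item2} of the introduction, $(C,L)$ can have a non-trivial extension only if $\gamma_{C,L}$ is not surjective, \ie only if $c>0$, \ie only if $4K_C-L$ is linearly equivalent to an effective divisor $p_1+\dots+p_n$; this says exactly that $L=4K_C-\sum_{i=1}^n p_i$ with $p_i\in C$, and proves the ``only if'' part of~\eqref{t:ext_g3:cond} (the bound $d\leq 16$ being automatic, since otherwise $c=0$).

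Next I would describe all non-trivial surface extensions of $(C,L)$ by means of Theorem~\ref{t:hideg}, which applies since $d\geq 4g-4$. Because $C$ is non-hyperelliptic of genus $3$, its canonical model is a smooth plane quartic, unique up to $\PGL_3$, so $K_C=\O_C(1)$ for the corresponding embedding $C\subset\P^2$. Now: case \eqref{ghd:hyperell} is impossible, for $C$ would be hyperelliptic; cases \eqref{ghd:biell} and \eqref{ghd:DP} are excluded, their surfaces of sectional genus $3$ having degree $8$ (namely $4(g-1)$, resp.\ $4K_X^2$ with $K_X^2=g-1=2$; \cf Examples~\ref{ex:biell}, \ref{ex:delp}); in \eqref{ghd:plane} a smooth plane $\delta$-ic has genus $\binom{\delta-1}{2}$, which is $3$ only for $\delta=4$; and \eqref{ghd:trigonal} for $g=3$ reduces to \eqref{ghd:plane} with $\delta=4$ (under the contraction $\F_1\to\P^2$, $H$ is the pull-back of the line class and $|3H+F|$ is the system of plane quartics through one point). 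Hence every non-trivial surface extension $S$ of $(C,L)$ is, as in Example~\ref{ex:plane}, the blow-up of $\P^2$ at $n$ points $p_1,\dots,p_n$ of the plane quartic $C$ (possibly infinitely near, along a curvilinear scheme), embedded by $|4H-\sum_{i=1}^n E_i|$; these points lie on $C$ as base points of the system, and the restriction of $\O_S(1)$ to the proper transform of $C$ --- which is isomorphic to $C$ --- is $4K_C-\sum p_i$. So $\O_S(1)$ restricts to $L$ on $C$ exactly when $p_1+\dots+p_n\in|4K_C-L|$, and conversely a general divisor of $|4K_C-L|$ produces, by Example~\ref{ex:plane}, such an extension. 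Thus the assignment $S\mapsto p_1+\dots+p_n$ --- injective, since the divisor is recovered from $S$ through the adjoint map $|\O_S(1)\otimes K_S|$, which contracts $S$ onto $\P^2$ and carries $C$ to its canonical model --- identifies the non-trivial surface extensions of $(C,L)$ with a dense subset of $|4K_C-L|\cong\P^{c-1}$, an irreducible family of dimension $c-1$.

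Then I would conclude as follows. To each non-trivial surface extension $S$ of $(C,L)$ one attaches the ribbon cut on $C$ by its first infinitesimal neighbourhood in $S$; by property~\eqref{item2} this is a non-trivial ribbon (otherwise $S$ would be the cone over $C$), hence a point of $\P(\ker(\trsp\gamma_{C,L}))$ by property~\eqref{item1}, and the resulting map from the classes of extensions into $\P(\ker(\trsp\gamma_{C,L}))$ is injective by property~\eqref{item2}. Composing it with the parametrisation of the previous paragraph yields an injective morphism from an irreducible $(c-1)$-dimensional variety into $\P(\ker(\trsp\gamma_{C,L}))$, whose dimension is also $c-1$; the image is therefore dense, \ie the ribbons realised as first infinitesimal neighbourhoods of $C$ in an extension of $(C,L)$ are Zariski-dense in $\P(\ker(\trsp\gamma_{C,L}))$. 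By the general Theorem~\ref{t:gnl-ribb}, every such ribbon is then so realised, whence by property~\eqref{item3} there exists a universal extension of $(C,L)$: this is assertion~\eqref{t:ext_g3:all}. The ``if'' part of~\eqref{t:ext_g3:cond} follows at once: if $L=4K_C-\sum_{i=1}^n p_i$ then $c=h^0(4K_C-L)\geq 1$, so $\P(\ker(\trsp\gamma_{C,L}))$ is non-empty and any of its points, being a non-trivial ribbon, is realised by~\eqref{t:ext_g3:all} in an extension $S$, which is then non-trivial; alternatively, one may exhibit directly the blow-up of $\P^2$ at a general divisor of $|4K_C-L|$, embedded by $|4H-\sum E_i|$.

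The crux is the second step, and within it the assertion that a general divisor of $|4K_C-L|$ does give a \emph{linearly normal, very ample} extension: one has to control the very ampleness of $|4H-\sum_{i=1}^n E_i|$ on the blow-up of $\P^2$ at up to seven points of a smooth plane quartic, including configurations with infinitely near points, so that the family of extensions genuinely has dimension $c-1$ and matches $\dim\P(\ker(\trsp\gamma_{C,L}))$; this is the content of Example~\ref{ex:plane}. Sorting the cases of Theorem~\ref{t:hideg} under the constraints $g=3$, $d\geq 9$ needs in addition the genus-and-degree bookkeeping sketched above. Granting these, the injectivity of the ribbon map (property~\eqref{item2}) and the passage from a Zariski-dense family of integrable ribbons to all of them (Theorem~\ref{t:gnl-ribb}) are formal.
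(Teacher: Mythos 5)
Your proof is correct, and its skeleton coincides with the paper's: the ``only if'' direction via the corank formula and the unicity theorem, the ``if'' direction via the blow-up construction of Example~\ref{ex:plane}, and part~(\ref{t:ext_g3:all}) by producing a family of extensions of dimension $\cork(\gamma_{C,L})-1$ that injects into $\P(\ker(\trsp\gamma_{C,L}))$ and then invoking Theorem~\ref{t:gnl-ribb}. The genuine difference lies in how the dimension count is organized, and your version is the more economical one. The paper argues globally: it forms the moduli space $\cSC$ of pairs $(Z,C)$ modulo $\PGL_3$, maps it to the universal Jacobian, and bounds the fibre dimension below by $\dim\cSC-\dim\cJ^\circ=13-d$; since this matches $\cork(\gamma_{C,L})-1$ only for generic $L$, the strata where the corank jumps ($d=13$ with $p,q,r$ aligned, and $d=12$ with $L=3K_C$) each require a separate ad hoc count ($\cSC_1\to\cJ^1$, $\cSC_2\to\cJ^2$). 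You instead fix $(C,L)$ and parametrize its extensions directly by (a dense subset of) the linear system $|4K_C-L|$, whose dimension is $h^0(4K_C-L)-1=\cork(\gamma_{C,L})-1$ \emph{on the nose} by Theorem~\ref{t:cork-iperell+g3} and Lemma~\ref{l:cork_g3}, uniformly across all strata --- so no case distinction is needed. What this buys must be paid for elsewhere, and you do pay: you need (a) that every divisor of $|4K_C-L|$, including non-reduced ones, yields a bona fide linearly normal extension (this is exactly what the paper's proof of the ``if'' part of~(\ref{t:ext_g3:cond}) supplies, blowing up curvilinear schemes), and (b) injectivity of $S\mapsto p_1+\cdots+p_n$ on the relevant equivalence classes of extensions; your adjoint-map argument handles (b), since an isomorphism of extensions fixing $C$ pointwise preserves $|\O_S(1)\otimes K_S|=|H|$, hence descends to a projectivity of $\P^2$ fixing the quartic pointwise, hence is the identity. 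Your sorting of the cases of Theorem~\ref{t:hideg} for $g=3$, $d\geq 9$ reproduces Corollary~\ref{t:ext_g3:rat} and is not strictly necessary for~(\ref{t:ext_g3:all}) (a single family of the right dimension suffices), but it is harmless. Both routes rely on the same implicit fact, stated in Section~\ref{p:strategy}, that the map from the parameter space of extensions to $\P(\ker(\trsp\gamma_{C,L}))$ is algebraic, so that ``injective plus equal dimensions'' yields dominance.
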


We refer to Section~\ref{p:univ-ext-g3} for a discussion of the universal
extensions of polarized genus $3$ curves.
We also include an analysis of what happens for degrees below
$2g+3$, in which case property $N_2$ is no longer implied by Green's
theorem, and many results about ribbons and extensions are no longer
available; in particular, a given ribbon may \textit{a priori} be the first
infinitesimal neighbourhood of $C$ in several different extensions.
Notably we give examples of polarized curves having two
distinct families of extensions, one of the expected dimension
$\cork(\gamma_{C,L})-1$ and one superabundant,
which would be impossible if property $N_2$ held.

\begin{theorem}
\label{t:ext_he}
Let $(C,L)$ be a polarized hyperelliptic curve
of genus $g$ and degree $d\geq 2g+3$.
\begin{a-enumerate}
\item 
\label{t:ext_he:ext}
If\, $d \leq 4g+4$, then there exists a non-trivial extension of $(C,L)$.
\item 
\label{t:ext_he:univ}
If\, $d=2g+3$, then every ribbon in $\P( \ker (\trsp \gamma_{C,L}))$ is
the first infinitesimal neighbourhood of\, $C$ in some extension of\,
$(C,L)$, hence there exists a universal extension of\, $(C,L)$,
of degree $2g+3$ and dimension $2g+3$ in $\P^{3g+5}$.
\item 
\label{t:ext_he:obstr}
If\, $d>2g+3$, then for general $(C,L)$ there exist ribbons in $\P( \ker
(\trsp \gamma_{C,L}))$ which may not be realized as
first infinitesimal neighbourhoods of\, $C$ in some extension of\,
$(C,L)$.
\end{a-enumerate}
\end{theorem}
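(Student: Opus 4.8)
The strategy is to combine the classification of surface extensions (Corollary~\ref{c:he_class}, which tells us that every non-conical surface extension of a linearly normal hyperelliptic curve is represented by a linear subsystem of $|2H+(g+1-e)F|$ on some $\F_e$), the corank computation $\cork(\gamma_{C,L}) = 2g+2$ from Theorem~\ref{t:cork-iperell+g3}(a), and the general machinery of ribbons from Section~\ref{S:ribb'n'ext}, in particular the unicity property~\eqref{item2} and the universal extension criterion Theorem~\ref{t:gnl-ribb}. Since $d \geq 2g+3$, property $N_2$ holds for $(C,L)$ by Green's theorem, so all three pieces of the ribbon formalism are available. The backbone of the argument is a dimension count: the ribbons arising as first infinitesimal neighbourhoods of $C$ in actual extensions form a locus in $\P(\ker(\trsp\gamma_{C,L})) \cong \P^{2g+1}$, and by property~\eqref{item2} its dimension equals the dimension of the family of (non-trivial) surface extensions of $(C,L)$ modulo the obvious identifications; we must compute the latter from Corollary~\ref{c:he_class}.

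For part~\ref{t:ext_he:ext}, I would write down explicitly, for each relevant $e$, a surface $S \subset \F_e$ in $|2H + (g+1-e)F|$ (or the appropriate linear subsystem with assigned simple, possibly infinitely near, base points as in Example~\ref{ex:hyperell}) whose hyperplane section is the prescribed polarized hyperelliptic curve $(C,L)$ of degree $d$. The constraint $2g+3 \le d \le 4g+4$ is exactly what is needed for such an $S$ to exist and be non-degenerate in $\P^{d-g+1}$ without being forced to degenerate to a cone; one checks that the hyperplane series cuts out $|L|$ on the general section, using the genus and degree bookkeeping on $\F_e$ already done in Example~\ref{ex:hyperell}. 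This gives a non-trivial extension and proves~\ref{t:ext_he:ext}.

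For part~\ref{t:ext_he:univ}, the point is that when $d = 2g+3$ the family of surface extensions is \emph{large enough} to sweep out all of $\P(\ker(\trsp\gamma_{C,L}))$. Concretely, varying $e$ and the configuration of base points in the linear subsystem of $|2H+(g+1-e)F|$ yields a family of surfaces whose associated ribbons cover a locus of dimension $\geq 2g+1 = \dim \P(\ker(\trsp\gamma_{C,L}))$; since by~\eqref{item2} distinct extensions give distinct ribbons, this locus is everything, and Theorem~\ref{t:gnl-ribb} then produces the universal extension $X \subset \P^{3g+5}$ of dimension $2g+3$ and degree $2g+3$. The stated numerology ($c = \cork(\gamma_{C,L}) = 2g+2$, so $X$ sits in $\P^{r+c} = \P^{(d-g)+(2g+2)} = \P^{g+d+2} = \P^{3g+5}$, with $\dim X = c+1 = 2g+3$) is a direct substitution, and the degree follows since $X$ has the same degree as its curve section. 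For part~\ref{t:ext_he:obstr}, when $d > 2g+3$ one runs the same dimension count in reverse: Corollary~\ref{c:he_class} bounds the dimension of the family of surface extensions of a \emph{general} $(C,L)$ strictly below $2g+1$ — each extra unit of degree beyond $2g+3$ costs a modulus (a base point must be specialized to lie on a section through $C$) — so the ribbon locus coming from extensions is a proper subvariety of $\P(\ker(\trsp\gamma_{C,L}))$, and the complementary ribbons are not integrable.

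The main obstacle is the sharp dimension count in~\ref{t:ext_he:univ} and~\ref{t:ext_he:obstr}: one must show that the family of surfaces in $|2H+(g+1-e)F|$ (with base points) having a \emph{fixed} $(C,L)$ as hyperplane section has exactly the expected dimension — neither too small (which would break~\ref{t:ext_he:univ}) nor unexpectedly large for general $(C,L)$ (which would break~\ref{t:ext_he:obstr}). This requires a careful analysis of how the choice of $e$ and of the base-point scheme is constrained by the requirement that the hyperplane series restrict to $|L|$ on the section, together with a genericity argument on $(C,L)$ for~\ref{t:ext_he:obstr}; the hyperelliptic structure (the unique $g^1_2$ and its behaviour under the conic fibration on $S$) is what makes this tractable.
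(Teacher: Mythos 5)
Your overall strategy coincides with the paper's: classification of extensions as conic bundles over $\F_e$, the corank value $2g+2$, and a dimension count of the family of extensions of a fixed $(C,L)$ against $\dim\P(\ker(\trsp\gamma_{C,L}))=2g+1$, concluded via the unicity theorem and Theorem~\ref{t:gnl-ribb}. But there is a genuine gap at the foundation of all three parts: you never determine \emph{which} line bundles $L$ on a given hyperelliptic $C$ are actually realized as $\restr{(2H+\mu F)}{C}$ for an embedding of $C$ in $\F_{g+1-\mu}$. The degree--genus bookkeeping of Example~\ref{ex:hyperell} only shows that curves in $|2H+\mu F|$ have the right numerical invariants; for a fixed $C$ the locus of realized $L$ in the $g$-dimensional Jacobian is only $\min(g,\mu)$-dimensional (before projecting), so for most $\mu$ it is a proper sublocus and ``the constraint $2g+3\le d\le 4g+4$ is exactly what is needed'' is not a proof. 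The paper supplies the missing ingredient as Proposition~\ref{pr:charact_he}: $C$ embeds in $\F_{g+1-\mu}$ with $\restr{E}{C}=\fe$ if and only if no two points of the degree-$\mu$ divisor $\fe$ are conjugate under the hyperelliptic involution. Part~(\ref{t:ext_he:ext}) then requires showing that \emph{every} $L$ of degree $4g+4$ can be written as $2\fe+(2g+2-\mu)\ff$ with $\fe$ of this form (the paper does this by choosing a square root $M^{\pm}$ of $L$ or of $L-\ff$ and peeling off the maximal multiple of the $g^1_2$), after which lower degrees follow by internal projection. Your plan skips this entirely.

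The same omission undermines the sharp counts in parts~(\ref{t:ext_he:univ}) and~(\ref{t:ext_he:obstr}). The paper's count rests on the formula $\dim j_{\mu,b}(\mathcal{Z}\times C^{[b]})=\min(g,\mu+b)$ for the locus of realized normal bundles (Corollary~\ref{c:dimlocus_he}), which yields the fibre dimension $\delta_{\mu,b}=b+\min(g,\mu)-\min(g,\mu+b)\le b=4g+4-d$ for the map from the moduli of pairs $(S,C)$ to the universal Jacobian, with equality to $2g+1$ exactly when $b=2g+1$ and $\mu\in\{g,g+1\}$. Your heuristic that ``each extra unit of degree costs a modulus'' points in the right direction, but both the lower bound needed for~(\ref{t:ext_he:univ}) (the family is large enough) and the upper bound needed for~(\ref{t:ext_he:obstr}) (it is strictly too small for general $(C,L)$ when $d>2g+3$) depend on knowing exactly how the choice of $\mu$ and of the projection centre constrains the induced polarization, which is again Proposition~\ref{pr:charact_he}. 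Without that statement, or an equivalent, the proof cannot be completed along the lines you sketch.
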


When $d=4g+4$, $(C,L)$ in general has only finitely many extensions
but more than one; thus there cannot exist a universal extension of
$(C,L)$. Also, in this case we analyze briefly the situation for
degrees below $2g+3$, and we find that in this case the extensions
form a superabundant family; \ie this family has dimension greater
than $\cork(\gamma_{C,L})-1$.
This would be impossible if property $N_2$ held.

\begin{theorem}
\label{t:plurican}
Let $(C,mK_C)$ be a pluricanonical curve, and assume that either
$g\geq 4$ and $m\geq 2$, or $g=3$ and $m \geq 3$.
Then there exists a non-trivial extension of the polarized curve
$(C,mK_C)$ if and only if\, $(C,mK_C)$ falls into one of the cases of\,
Theorem~\ref{t:cork-plurican} or Section~\ref{p:ci-cork}
in which $\cork (\gamma_{C,mK_C}) \neq 0$
$($in particular, either $g\leq 10$, or $C$ is bielliptic$)$.

Every ribbon in $\P( \ker (\trsp \gamma_{C,mK_C}))$ is
the first infinitesimal neighbourhood of\, $C$ in some extension of\,
$(C,mK_C)$; hence there exists a universal extension of\, $(C,mK_C)$.
\end{theorem}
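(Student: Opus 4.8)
The plan is to combine the corank computation in Theorem~\ref{t:cork-plurican} (and Section~\ref{p:ci-cork}) with the classification of surface extensions in Theorem~\ref{t:hideg}, feeding the outcome into the general machinery of ribbons recalled in items \eqref{item1}--\eqref{item3} and formalized in Theorem~\ref{t:gnl-ribb}. First I would dispose of property $N_2$: under the hypothesis $g\geq 4$, $m\geq 2$ or $g=3$, $m\geq 3$ one checks $\deg(mK_C)=2m(g-1)\geq 2g+3$ except for the genuinely finitely many small cases, where property $N_2$ for $(C,mK_C)$ is known by ad hoc arguments (or direct reference), so that the theory of ribbons applies and, by item~\eqref{item2}, a non-trivial extension forces $\cork(\gamma_{C,mK_C})\neq 0$. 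This already gives one implication of the first assertion, and by Theorem~\ref{t:cork-plurican} the vanishing/non-vanishing of the corank is pinned down to the listed cases (trigonal with $g\leq 10$, plane quintic $g=6$, $\Cliff(C)=2$ with $g=5$, bi-anticanonical in a Del Pezzo, bielliptic), together with the complete intersection cases treated in Section~\ref{p:ci-cork}; so the only content left in the ``only if'' direction is bookkeeping.

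For the converse and for the universality statement, the strategy is to show that \emph{every} ribbon in $\P(\ker(\trsp\gamma_{C,mK_C}))$ is realized as a first infinitesimal neighbourhood, and then invoke Theorem~\ref{t:gnl-ribb} to produce the universal extension $X\subset\P^{r+c}$ with $c=\cork(\gamma_{C,mK_C})$. The key point is a dimension count: on the one hand $\dim\P(\ker(\trsp\gamma_{C,mK_C}))=c-1$; on the other hand, the set of ribbons that do come from an extension is, by the unicity in item~\eqref{item2}, in bijection with (an open subset of) the family of surface extensions of $(C,mK_C)$ modulo isomorphism, and Theorem~\ref{t:hideg} describes all such surfaces explicitly. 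So the heart of the argument is to go through each surviving case of Theorem~\ref{t:cork-plurican} (and Section~\ref{p:ci-cork}), identify via Theorem~\ref{t:hideg} precisely which surfaces $S\subset\P^{r+1}$ have a hyperplane section abstractly isomorphic to $C$ with $\O_S(1)|_C\cong mK_C$, compute the dimension of that family of extensions, and check it equals $c-1$. Concretely: in the trigonal case the relevant $S$ are the rational surfaces with trigonal sections of case~\eqref{ghd:trigonal} on some $\F_e$, and the move is to match $\dim$ of the corresponding linear system (and choice of $\F_e$) against $h^0(K_C-(g-4)\fg)-1$; in the $\Cliff(C)=2$ / Del Pezzo and bielliptic cases one uses instead cases \eqref{ghd:DP} and \eqref{ghd:biell} (the $v_2$ of a Del Pezzo, resp.\ the $v_2$ of a cone over an elliptic normal curve) and matches against the small coranks $3,1,1$; the plane quintic and $g=5$ cases use case~\eqref{ghd:plane}. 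In each case one must also verify that these geometric extensions sweep out \emph{all} of $\P(\ker(\trsp\gamma))$ and not merely a subvariety of the right dimension — for this I would exploit that $\P(\ker(\trsp\gamma))$ is irreducible, so an irreducible family of extensions of the full expected dimension whose associated ribbons are dense must actually hit every point.

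The main obstacle is exactly this last matching, case by case: Theorem~\ref{t:hideg} gives the surfaces up to projective equivalence, but one needs the dimension of the family of \emph{extensions of the fixed polarized curve} $(C,mK_C)$, which means counting the pairs (surface $S$, identification of a hyperplane section with $(C,mK_C)$) — i.e.\ one must keep track of the moduli of $S$, the automorphisms of $S$ acting on its hyperplane sections, and the way $C$ varies in the relevant linear system, and reconcile all of this with the cohomological formula for $\cork(\gamma_{C,mK_C})$ coming from Proposition~\ref{pr:cork-general} or the references underlying Theorem~\ref{t:cork-plurican}. A secondary subtlety is the boundary case $g=3$, $m\geq 3$: here one should double-check that Theorem~\ref{t:cork-iperell+g3}(b), rather than Theorem~\ref{t:cork-plurican}, governs the corank and that Theorem~\ref{t:ext_g3} is compatible, since $4K_C-L = 4K_C-mK_C=(4-m)K_C$ is effective precisely for $m\leq 4$, pinning down exactly when extensions exist. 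Once all the numerical matches are in place, the existence of the universal extension follows formally from Theorem~\ref{t:gnl-ribb}, with no further work.
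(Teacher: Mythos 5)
Your proposal is correct and follows essentially the same route as the paper: check property $N_2$ (with the genus~$3$ bicanonical case excluded), deduce the ``only if'' direction from the unicity statement, and then, case by case, exhibit a family of extensions of dimension $\cork(\gamma_{C,mK_C})-1$ so that the injective map to $\P(\ker(\trsp\gamma_{C,mK_C}))$ is dominant, after which Theorem~\ref{t:gnl-ribb} upgrades integrability of the general ribbon to all ribbons and yields the universal extension. The only cosmetic difference is that the paper produces these families by direct constructions (Veronese re-embeddings of complete intersections, projections of $v_5(\P^2)$ or of the trigonal scroll from suitable divisors) rather than by first filtering Theorem~\ref{t:hideg}, and your closing remark about a dense family ``hitting every point'' is unnecessary since Theorem~\ref{t:gnl-ribb} is exactly what converts density into surjectivity.
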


We refer to Section~\ref{s:univ-plurican} for a discussion of the
universal extensions of pluricanonical curves. Except in the trigonal
case, we provide an 
explicit construction.

\bigskip
The organization of the text is as follows.
In Section~\ref{S:CSegre} we recall general results on projective
curves, revisit the C.~Segre theorem, and recall the Hartshorne bound.
Section~\ref{S:class-hideg} is devoted to the proof of the
classification theorem, Theorem~\ref{t:hideg},
and Section~\ref{S:class-hyperell} to that of the classification
theorem for hyperelliptic curves, Theorem~\ref{t:hyperell-notrat}.
In Section~\ref{S:coker} we recall the definition of the Gaussian map
$\gamma_{C,L}$ and compute its corank in a number of cases, thus
proving Theorem~\ref{t:cork-iperell+g3}.
In Section~\ref{S:ribb'n'ext} we recall the theory of ribbons and
extensions, and provide all necessary material for its application
as described in the introduction. These applications are to polarized
genus $3$ curves (in Section~\ref{S:ext-g3}),
to polarized hyperelliptic curves (in Section~\ref{S:ext-hyperell}),
and to pluricanonical curves (in Section~\ref{S:bican}).

We work over the field $\C$ of complex numbers throughout. We use the
symbols '$\equiv$' and '$\lineq$' to denote numerical and linear
equivalence, respectively.

\subsection*{Acknowledgments}
We thank Andreas Knutsen and Angelo Lopez for useful comments.

\section{General preliminary results}
\label{S:CSegre}

\subsection{Results on projective curves}

We will need the following results.  The first one is an improvement
on a theorem by Castelnuovo; see \cite{castelnuovo-suimultipli} and
\cite[Theorem~(1.11)]{ciliberto83}.

\begin{theorem}[Castelnuovo]
\label{t:castelnuovo}
Let $C$ be a smooth curve and $L\to C$ be a globally generated line
bundle.  If the image of the map associated to the complete linear
series $|L|$ is not a rational curve, or if\, $|L|$ is a pencil, then
the multiplication map
\[
  H^0(L) \otimes H^0(K_C)
  \lra H^0(K_C+L)
\]
is surjective.
\end{theorem}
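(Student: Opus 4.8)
The plan is to deduce surjectivity of the multiplication map $\mu \colon H^0(L) \otimes H^0(K_C) \to H^0(K_C+L)$ from a base-point-free pencil trick combined with induction on the dimension of the series. First I would dispose of the case where $|L|$ is a pencil: here $\dim |L| = 1$, so choose a base-point-free sub-pencil (in fact $|L|$ itself, since $L$ is globally generated and we may assume $h^0(L) = 2$ after possibly observing the higher case reduces to it), and apply the base-point-free pencil trick. If $\{s_0, s_1\}$ is a basis of $H^0(L)$, the kernel of $H^0(L)\otimes H^0(K_C) \to H^0(K_C+L)$ is identified with $H^0(K_C + L - L) = H^0(K_C - L + \text{something})$; more precisely, tensoring the Koszul-type sequence $0 \to \O_C(-L) \to \O_C^{\oplus 2} \to \O_C(L) \to 0$ by $K_C + L$ and taking cohomology, surjectivity of $\mu$ follows from the vanishing of $H^1(C, K_C)$ against the relevant term, i.e.\ from $h^1(K_C - L + L) = h^1(K_C)$ controlling things correctly; the cokernel of $\mu$ injects into $H^1(C, K_C \otimes \mathcal O_C) = H^1(K_C)$-adjacent term which is zero by Serre duality since $H^0(\mathcal O_C(-L)) = 0$ as $\deg L > 0$. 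This handles the pencil case.

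Next I would treat the case $\dim |L| \geq 2$ with $\varphi_{|L|}(C)$ not a rational curve, by induction on $\dim |L|$. The idea is to reduce the dimension by subtracting a general point: write $L' = L(-p)$ for a general point $p \in C$. Then $h^0(L') = h^0(L) - 1$, and one has a commutative diagram relating $\mu_L$ and $\mu_{L'}$. The point is that $\mu_L$ is surjective provided $\mu_{L'}$ is surjective \emph{and} the map $H^0(L) \otimes H^0(K_C) \to H^0(K_C + L) / \operatorname{im}(H^0(L') \otimes H^0(K_C) \to H^0(K_C+L))$ is surjective, the latter quotient being essentially $H^0(K_C + L)|_p$-type data controlled by $H^0(L) \otimes H^0(K_C) \to (K_C+L)\otimes k(p)$ together with $H^0(K_C+L') \hookrightarrow H^0(K_C+L)$. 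The subtlety is that subtracting a point may make $\varphi_{|L'|}(C)$ rational (this happens precisely when $C$ maps to a rational normal curve after one such step), so the induction must carry along enough slack; one stops the descent as soon as either $|L'|$ becomes a pencil (use the base-case above) or the image stays non-rational (continue), and the hypothesis on $\varphi_{|L|}(C)$ being non-rational guarantees that in the first few steps the image cannot suddenly be forced to be a \emph{line}, which is the only genuinely bad configuration.

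The main obstacle, as usual with these Castelnuovo-type surjectivity statements, will be the bookkeeping in the inductive step: ensuring that the general point $p$ can be chosen so that simultaneously $h^0(L(-p)) = h^0(L) - 1$, the restriction map on $H^0(K_C)$ behaves generically, and the image curve does not degenerate in a way that breaks the hypothesis. Concretely, the hard part is showing that the cokernel of $\mu_L$ is \emph{generated} (as we vary $p$) by the cokernels of the $\mu_{L(-p)}$ together with the ``new'' section direction; this is where one invokes that a non-degenerate curve which is not a rational normal curve cannot have all its hyperplane sections through a point impose dependent conditions. I would finish by citing the precise form of this in \cite{castelnuovo-suimultipli} and \cite[Theorem~(1.11)]{ciliberto83}, since the statement here is explicitly advertised as an improvement packaged from those sources, and the argument above is the standard route; the only genuinely new input needed is to verify that the looser hypothesis (``not rational'' rather than a numerical bound on $\deg L$) suffices, which follows because the obstruction to the pencil-trick argument is exactly the image being a line, hence degree $1$, hence rational.
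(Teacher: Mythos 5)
The paper gives no proof of this statement: it is quoted as known, with the argument delegated to \cite{castelnuovo-suimultipli} and \cite[Theorem~(1.11)]{ciliberto83}. Since your own write-up ends by deferring the decisive step to those same two references, it is closer to a citation than to an independent proof; still, the two-case architecture (base-point-free pencil trick when $h^0(L)=2$, descent on $\dim|L|$ otherwise) is the standard route, so let me point out where the sketch actually breaks. In the pencil case the tool is right but the execution is wrong: tensoring $0\to L^{-1}\to\O_C^{\oplus2}\to L\to0$ by $K_C+L$ governs the map $H^0(L)\otimes H^0(K_C+L)\to H^0(K_C+2L)$, not $\mu$, and the vanishing you invoke --- that of $H^1(K_C)$ --- is false, since $h^1(K_C)=1$. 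The correct computation tensors by $K_C$, giving $0\to K_C-L\to K_C^{\oplus2}\to K_C+L\to0$, whence $\coker(\mu)=\ker\bigl(H^1(K_C-L)\to H^1(K_C)^{\oplus2}\bigr)$; this kernel is Serre-dual to $\coker\bigl(H^0(\O_C)^{\oplus2}\to H^0(L)\bigr)$, which vanishes exactly because $h^0(L)=2$. Note that $H^1(K_C-L)\cong H^0(L)^\vee$ is itself nonzero, so the mere injection of the cokernel into an $H^1$ proves nothing here; you need the exactness at the next spot.

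The inductive step contains two genuine gaps. First, $|L(-p)|$ need not be globally generated even for general $p$: if $\varphi_{|L|}$ has degree $k\geq2$ onto its image (which the hypotheses allow, e.g.\ bielliptic covers), the remaining $k-1$ points of the fibre through $p$ are base points of $|L(-p)|$, so the inductive hypothesis does not apply to $L(-p)$; passing to the moving part changes the line bundle and destroys the bookkeeping you describe. Second, and more seriously, your closing claim that ``the obstruction\ldots is exactly the image being a line'' is false, and with it the assertion that the non-rationality hypothesis comes for free. Take $C$ hyperelliptic of genus $g\geq3$ and $L=2\fg$ with $\fg$ the $g^1_2$: then $L$ is globally generated with $h^0(L)=3$ and $\varphi_{|L|}$ maps $C$ two-to-one onto a conic. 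Every product of a section of $L=2\fg$ with a section of $K_C=(g-1)\fg$ is pulled back from $\P^1$, so the image of $\mu$ has dimension at most $h^0(\P^1,\O_{\P^1}(g+1))=g+2$, while $h^0(K_C+L)=g+3$. Thus rational images of every degree genuinely obstruct surjectivity, and any descent must exclude them at each step, not merely avoid lines; this is exactly the delicate point that \cite[Theorem~(1.11)]{ciliberto83} handles and that your sketch leaves open.
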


\noindent
The other one is due to Green.

\begin{theorem}[\cf \protect{\cite[Theorem~4.a.1]{green84}}]\label{t:green}
Let $C$ be a smooth curve of any genus $g$ and $L\to C$ a line bundle of
degree $d$. For all $k\geq 0$, if
\[
d \geq 2g+1+k,
\]
then $L$ has property $N_k$, \ie
\begin{enumerate}
\item $L$ defines a projectively normal embedding of\, $C$, and
\item if\, $k\geq 1$, the ideal of\, $C$ in this embedding is generated by
quadrics and all syzygies are generated by linear syzygies up to the
$\supth{k}$ step.
\end{enumerate}
\end{theorem}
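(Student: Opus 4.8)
The plan is to follow Green's strategy: translate property $N_k$ into a cohomological vanishing for exterior powers of the kernel bundle, and prove that vanishing by induction on the degree. Write $d=\deg L\geq 2g+1+k$; then $h^1(C,L)=0$, so $h^0(C,L)=d-g+1=:r+1$, and $L$ is very ample. Put $V=H^0(C,L)$ and let $M_L=\ker(V\otimes\O_C\twoheadrightarrow L)$, a vector bundle of rank $r$ with $\det M_L\cong L^{-1}$. Taking exterior powers of the defining sequence produces, for every $q\geq 1$, the exact sequence
\[
0\to\wedge^{q}M_L\to\wedge^{q}V\otimes\O_C\to\wedge^{q-1}M_L\otimes L\to 0 .
\]
By Green's criterion — a standard consequence of these sequences, valid because $\deg L\geq 2g+1$ — the pair $(C,L)$ satisfies $N_k$ if and only if $H^1(C,\wedge^{q}M_L\otimes L^{\otimes m})=0$ for all $1\leq q\leq k+1$ and all $m\geq 1$. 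The twists with $m\geq 2$ cost nothing: applying the displayed sequence with $q+1$ in place of $q$, twisting by $L^{m-1}$, and using $H^1(C,L^{m-1})=0$ (since $\deg L^{m-1}\geq d>2g-2$) together with the vanishing of $H^2$ on a curve exhibits $H^1(C,\wedge^{q}M_L\otimes L^{m})$ as a quotient of $\wedge^{q+1}V\otimes H^1(C,L^{m-1})=0$. Everything therefore reduces to
\[
H^1\bigl(C,\wedge^{q}M_L\otimes L\bigr)=0\qquad(1\leq q\leq k+1),
\]
which I would establish in the sharper form: $H^1(C,\wedge^{q}M_B\otimes B)=0$ for every globally generated line bundle $B$ with $h^1(C,B)=0$ and every $q\leq\deg B-2g$.

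I would prove the latter by induction on $\deg B$, the case $\deg B=2g$ asserting only $H^1(C,B)=0$. For the inductive step, write $L=L'(x)$ with $x\in C$ a point; since $L$ is globally generated, $h^0(C,L')=h^0(C,L)-1$, and one gets the exact sequence
\[
0\to M_{L'}\to M_L\to\O_C(-x)\to 0 .
\]
Taking $q$-th exterior powers and twisting by $L$ (using $L(-x)=L'$) yields $0\to\wedge^{q}M_{L'}\otimes L\to\wedge^{q}M_L\otimes L\to\wedge^{q-1}M_{L'}\otimes L'\to 0$, so that $H^1(C,\wedge^{q}M_L\otimes L)=0$ once $H^1(C,\wedge^{q-1}M_{L'}\otimes L')=0$ and $H^1(C,\wedge^{q}M_{L'}\otimes L)=0$. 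The first is the inductive hypothesis for $L'$ (it needs $q-1\leq\deg L'-2g$, i.e.\ $q\leq d-2g$, which holds throughout the relevant range). The second is a quotient of $H^1(C,\wedge^{q}M_{L'}\otimes L')$ — tensor $0\to L'\to L\to L|_{x}\to 0$ with $\wedge^{q}M_{L'}$ — hence it too vanishes by induction, \emph{except} at the very top of the range, $q=d-2g$.

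This boundary case, $H^1(C,\wedge^{d-2g}M_L\otimes L)=0$, is where I expect the real difficulty. Since $\wedge^{d-2g}M_L\cong\wedge^{g}M_L^{\vee}\otimes L^{-1}$, Serre duality rewrites it as the injectivity of the Koszul differential
\[
\wedge^{g}H^0(C,L)\otimes H^0(C,K_C)\lra\wedge^{g-1}H^0(C,L)\otimes H^0(C,K_C+L) .
\]
This is exactly where Castelnuovo's theorem (Theorem~\ref{t:castelnuovo}) should be brought in: it provides the surjectivity of $H^0(C,L)\otimes H^0(C,K_C)\to H^0(C,K_C+L)$ — the image of $|L|$ is the non-rational curve $C$ — and from this, together with the numerics $h^0(C,K_C)=g$ and $\deg K_C=2g-2$, a base-point-free pencil trick along the lines of Green's original proof should deliver the required injectivity; this is the step in which the bound $2g+1+k$ turns out to be sharp. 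Once the boundary case is settled, the induction closes and property $N_k$ follows.
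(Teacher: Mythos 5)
The paper does not prove this statement: it is quoted from Green's article \cite[Theorem~4.a.1]{green84} as a known result, so there is no in-paper argument to compare yours against. Your outline is the standard kernel-bundle reformulation of Green's theorem, and most of its steps are sound: the equivalence of $N_k$ with the vanishings $H^1(\wedge^q M_L\otimes L^{\otimes m})=0$, the reduction to $m=1$, the point-removal induction via $0\to M_{L'}\to M_L\to\O_C(-x)\to 0$, and the identification of the boundary case $q=d-2g$ with the injectivity of the Koszul differential $\wedge^g H^0(L)\otimes H^0(K_C)\to\wedge^{g-1}H^0(L)\otimes H^0(K_C+L)$ are all correct.

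The genuine gap is exactly that boundary case, which you flag but do not prove, and which cannot be skipped: when $d=2g+1+k$ (the sharp case, which is the whole content of the theorem) the top exponent $q=k+1$ equals $d-2g$, so everything hangs on it. The tool you propose --- Castelnuovo's surjectivity of $H^0(L)\otimes H^0(K_C)\to H^0(K_C+L)$ --- is not the right one: that surjectivity is the vanishing of the Koszul cohomology in homological degree $0$, whereas what you need is the vanishing in degree $g$, namely $K_{g,0}(C;K_C,L)=0$, equivalently $H^0(\wedge^g M_L\otimes K_C)=0$, and there is no formal implication from the former to the latter; likewise the base-point-free pencil trick computes kernels of multiplication maps against pencils, not sections of top-range exterior powers of $M_L$. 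The ingredient actually required is Green's $H^0$-vanishing theorem, $K_{p,0}(C;M,L)=0$ for $p\geq h^0(M)$, applied with $M=K_C$ and $p=g=h^0(K_C)$. Equivalently, one can filter $M_L$ by removing $g$ general points $x_1,\dots,x_g$ (the hypothesis $d\geq 2g+1$ guarantees $h^0$ drops by exactly one at each step) and reduce $H^0(\wedge^g M_L\otimes K_C)=0$ to vanishings of the form $H^0(\wedge^j M_{L(-x_1-\cdots-x_g)}\otimes K_C(-x_{i_1}-\cdots-x_{i_{g-j}}))$; the case $j=0$ is $h^0(K_C-x_1-\cdots-x_g)=0$ for general points, but the cases $j\geq 1$ need a further, nontrivial induction. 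Until this vanishing is actually established, the proof is incomplete precisely where the bound $2g+1+k$ becomes sharp.
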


\subsection{Scrolls as extensions of linearly normal curves}

The main object of this subsection is to discuss
Theorem~\ref{l:CSegre} below.
We distinguish between \emph{ruled surfaces}, by which me mean
``abstract ruled surfaces'', \ie surfaces $S$ equipped with a locally
trivial morphism $S\to C$ onto a smooth curve whose fibres are $\P^1$,
and \emph{scrolls}, by which we mean a ruled surface embedded in some
projective space in such a way that the fibres are lines.

\begin{theorem}
\label{l:CSegre}
Let $C \subset \P^{n-1}$ be a smooth linearly normal and
non-degenerate curve of genus $g>0$.
Assume there exists a scroll $\Sigma \subset \P^n$ such that $C$ is a
hyperplane section of\, $\Sigma$.
Then the scroll $\Sigma$ is necessarily a cone.
\end{theorem}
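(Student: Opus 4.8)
The plan is to suppose that $\Sigma\subset\P^n$ is a scroll having $C$ as a hyperplane section, and to derive strong numerical constraints on the ruled surface structure of $\Sigma$ that force the directrix to be a point. Write $\pi\colon\Sigma\to B$ for the ruling, with $B$ a smooth curve; since a general hyperplane section of $\Sigma$ is isomorphic to $C$ and meets each line of the ruling in one point, the projection $\pi$ restricts to an isomorphism $C\xrightarrow{\ \sim\ }B$, so $B\cong C$ and in particular $B$ has genus $g>0$. Thus $\Sigma$ is (the image of) a geometrically ruled surface $S=\P(\mathcal E)\to C$ for some rank-$2$ bundle $\mathcal E$ on $C$, and the embedding $\Sigma\hookrightarrow\P^n$ is given by a linear system mapping fibres to lines, hence by (a subsystem of) $|\mathcal O_S(1)|$ for the tautological bundle associated to a suitable normalization of $\mathcal E$. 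Normalize so that $\mathcal E$ has a section with image a section $C_0\subset S$ of minimal self-intersection $C_0^2=-\mathfrak e$, $\mathfrak e=-\deg\mathcal E$, and let $f$ be the fibre class; then $H:=\mathcal O_S(1)$ can be written $H\equiv C_0+\mathfrak b f$ for some divisor $\mathfrak b$ on $C$ of degree $b$.

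Next I would run the standard Riemann--Roch / projective-normality bookkeeping. The curve $C\subset\P^{n-1}$ is linearly normal and non-degenerate of degree $d$, so $n-1=h^0(C,L)-1$ where $L=\mathcal O_C(1)$ has degree $d$; and $\Sigma\subset\P^n$ non-degenerate with $C$ as a hyperplane section forces $h^0(\Sigma,H)=n+1=h^0(C,L)+1$, i.e.\ the restriction sequence $0\to H^0(S,H-C)\to H^0(S,H)\to H^0(C,L)\to 0$ shows $h^0(S,H-C)\geq 1$, but $H-C$ is the class of the empty hyperplane... more precisely, since $C\in|H|$ on $S$, we get $H-C\sim 0$ and $h^0(\mathcal O_S(H-C))=h^0(\mathcal O_S)=1$, which is consistent; the real input is that the hyperplane section $C$ of $\Sigma$ is a \emph{general} one, so the linear system $|H|$ on $S$ is base-point free and maps $S$ birationally onto $\Sigma$ (a scroll, not a cone) exactly when the directrix $C_0$ is \emph{not} contracted, i.e.\ when $H\cdot C_0=b-\mathfrak e>0$. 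The strategy is to show this cannot happen. Restricting $H\equiv C_0+\mathfrak b f$ to a general hyperplane section $C\cong C_0$ (sitting in $|C_0+\mathfrak b f|$) one computes the degree of $L=\mathcal O_C(1)$ and, crucially, the degree of $K_C$ on $\Sigma$ via adjunction on $S$: $K_S\equiv -2C_0+(K_C-\mathfrak e\mathfrak f)f$ (with $\mathfrak f$ the fibre over a point, $K_C$ of degree $2g-2$), whence $(K_S+C)\cdot C = (2g-2)$ confirms $C$ has genus $g$, but more usefully the adjoint linear system $|K_S+H|$ restricted to $C$ is $|K_C+L|$, of dimension $g-1+d-g = d-1$... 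I would instead use the key contradiction: because $C\subset\P^{n-1}$ is linearly normal, $h^0(C,L)=d-g+1$; pulling back along $\pi|_C\colon C\cong B$, the bundle $L$ is recovered from $\mathcal E$ and $\mathfrak b$, and linear normality together with $h^0(S,H)=h^0(C,L)+1$ forces $h^1(S,H-f)=0$ type vanishing which, via the Leray spectral sequence for $\pi$, translates into $h^1(C,\det\mathcal E+\text{(something)})$; pushing this through pins down $\mathcal E$ and $\mathfrak b$ up to the point where the directrix section has $H\cdot C_0=0$.

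The cleanest route, which I would actually write, is the following. Consider the two projections of $\Sigma$: the ruling $\pi\colon\Sigma\to C$, and the fact that $\Sigma$ spans $\P^n$ while its hyperplane section $C$ spans $\P^{n-1}$. Take a point $p\in C$ and look at the osculating behaviour: the line $\ell_p=\pi^{-1}(\pi(p))$ of the ruling through $p$ meets the hyperplane $\P^{n-1}$ only at $p$ (as $C$ is a general section meeting each line once). Projecting $\Sigma$ from the point $p$ into $\P^{n-1}$, the image is a surface of degree $d-1$ whose general hyperplane section is the projection of $C$ from $p$; but $C$ linearly normal and non-degenerate means the projection of $C$ from $p$ spans $\P^{n-2}$ and, for $g>0$ and $d$ large enough relative to $g$ (which holds since $d>2g-2$ in our situation), is itself a curve of genus $g$ that is \emph{not} linearly normal unless $p$ is a singular point — forcing, if $\Sigma$ is not a cone, an inductive descent that terminates only at a cone. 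Concretely: iterate Segre's argument, or invoke the abstract classification of geometrically ruled surfaces $\P(\mathcal E)\to C$ embedded so that $|H|$ is very ample with $H\cdot f=1$: such $H\equiv C_0+\mathfrak b f$ is very ample iff $\deg\mathfrak b>\mathfrak e$ and $\deg(\mathfrak b-\mathfrak e\mathfrak f)$ is large; the hyperplane section is then a curve of genus $g=g(C)>0$ on $S$, and the condition that $C$ be \emph{linearly normal} is $h^1(S,H-C)=h^1(\mathcal O_S)=0$ — which always holds — \emph{together with} $h^0(S,H)=h^0(C,L)+1$ and $h^1(C,L)=0$; computing $h^0(S,C_0+\mathfrak b f)$ by the relative Euler sequence and comparing with $h^0(C,L)=d-g+1$ yields an equality that, after using $h^0(C,\det\mathcal E)$-terms, is satisfiable only when $\mathcal E$ is \emph{unstable of maximal type} and the tautological map contracts $C_0$, i.e.\ $\Sigma$ is a cone with vertex the image of $C_0$.

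\textbf{Main obstacle.} The delicate point is the numerical step: producing the precise inequality that distinguishes "scroll" from "cone". Showing $h^0(S,H)=h^0(C,L)+1$ combined with the linear normality of $C$ forces $H\cdot C_0=0$ requires carefully controlling $h^1$ of line bundles of the form $K_C - \mathfrak b$ and $K_C + \mathfrak e\mathfrak f - \mathfrak b$ on $C$ via Riemann--Roch and Clifford-type bounds, and handling the borderline cases (small $g$, special $\mathfrak b$) where these $h^1$'s jump; this is where the hypothesis $g>0$ is used essentially (for $g=0$ the statement is false, as there are genuine rational normal scrolls). I expect the bulk of the write-up to be this estimate, with the geometric set-up above being routine once the relative Euler sequence for $\P(\mathcal E)\to C$ is in hand.
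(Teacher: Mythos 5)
Your set-up is the right one --- $\Sigma$ is the image of a geometrically ruled surface $S=\P(\mathcal E)\to C$ with $C$ a section of the ruling, and linear normality of $C$ forces the restriction $H^0(S,\O_S(1))\to H^0(C,\O_C(1))$ to be surjective --- but the argument has a genuine gap exactly where you flag your ``main obstacle'', and the routes you sketch to close it cannot work. Being a cone is not a numerical condition on $\mathcal E$: it is equivalent to the splitting of the extension
\[
0\lra \O_C \lra \mathcal E \lra \O_C(1)\lra 0
\]
obtained by restricting the tautological sequence to the section $C$ (this is the sequence \eqref{ext-segre} in the paper). Non-split extensions with the same degree data as $\O_C\oplus\O_C(1)$ exist in abundance, since the extension class lives in $\Ext^1(\O_C(1),\O_C)\cong H^0(C,K_C+L)^\vee$, a space of dimension $d+g-1$; so no amount of Riemann--Roch bookkeeping on $h^0(S,H)$, $\deg\mathfrak b$ and $\mathfrak e$ can by itself force $H\cdot C_0=0$. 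What linear normality actually gives is the vanishing of the coboundary $H^0(\O_C(1))\to H^1(\O_C)$, i.e.\ that the extension class dies under the natural map $H^0(K_C+L)^\vee\to \Hom\bigl(H^0(L),H^1(\O_C)\bigr)$, which is the transpose of the multiplication map $H^0(L)\otimes H^0(K_C)\to H^0(K_C+L)$. The missing idea is that this multiplication map is \emph{surjective} when $g>0$ (Castelnuovo's theorem, Theorem~\ref{t:castelnuovo}), so its transpose is injective and the extension class itself vanishes; packaged as the Beauville--M\'erindol splitting criterion (Lemma~\ref{l:B-M}), this is how the paper concludes. The hypothesis $g>0$ enters through Castelnuovo, not through Clifford-type estimates on $h^1$ of twists of $K_C$.

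Your alternative ``projection and inductive descent'' route fails at its first step: if $p\in C$ and $L=\O_C(1)$ is very ample, then $h^0(L-p)=h^0(L)-1$, so the projection of $C$ from $p$ spans $\P^{n-2}$ and is again linearly normal there; the claimed failure of linear normality after projection does not occur, and there is no induction to run.
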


This first appeared in \cite[Section~2]{CSegre-torino}
by C.~Segre, and
also later in \cite[Section 14]{CSegre-mathann} by the same author,
under the additional
assumption that $\O_C(1)$ is non-special.
In the particular case when $C$ is a canonical curve, this is
\cite[Theorem~III.2.1]{epema-thesis}.

We shall need the following lemma for the proof.

\begin{lemma}[\cf \protect{\cite[Lemme~1]{beauville-merindol}}]
\label{l:B-M}
Let $C$ be a smooth curve, and let
\begin{equation}
\label{ext:BM}
0 \lra E' \lra E \lra E'' \lra 0
\end{equation}
be an exact sequence of vector bundles on $C$.
Assume that
\begin{enumerate}
\item\label{l:B-M-1} the boundary map
  $\partial \colon H^0(E'') \to H^1(E')$
  is zero, and
\item\label{l:B-M-2} the multiplication map 
$\alpha\colon H^0(E'') \otimes H^0((E')^\vee \otimes \omega_C)
\to  H^0(E'' \otimes (E')^\vee \otimes \omega_C)$
is surjective.
\end{enumerate}
Then the exact sequence \eqref{ext:BM} is split.
\end{lemma}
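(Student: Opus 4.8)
The plan is to show that the extension class of \eqref{ext:BM}, which lives in $\Ext^1_C(E'',E') \cong H^1(C, E''^\vee \otimes E')$, vanishes. The natural tool is Serre duality on the curve $C$, which identifies $H^1(C, E''^\vee \otimes E')$ with the dual of $H^0(C, E'' \otimes E'^\vee \otimes \omega_C)$. So the strategy is: pair the extension class against an arbitrary element of $H^0(E'' \otimes E'^\vee \otimes \omega_C)$, use hypothesis \eqref{l:B-M-2} to reduce to pairing against decomposable elements $s \otimes t$ with $s \in H^0(E'')$ and $t \in H^0(E'^\vee \otimes \omega_C)$, and then identify that pairing with something built out of the boundary map $\partial$, which vanishes by \eqref{l:B-M-1}.

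More precisely, first I would fix notation: let $\xi \in H^1(C, \cHom(E'', E'))$ be the class of \eqref{ext:BM}. The connecting map $\partial \colon H^0(E'') \to H^1(E')$ is, up to the standard sign, cup-product with $\xi$ composed with the evaluation $\cHom(E'',E') \otimes E'' \to E'$; that is, for $s \in H^0(E'')$ one has $\partial(s) = \ev_*(\xi \cup s) \in H^1(E')$. Next, given $t \in H^0(E'^\vee \otimes \omega_C)$, form the induced map $t_* \colon H^1(E') \to H^1(\omega_C) \cong \C$. The key computation is the commutativity of the diagram relating: (i) cup product $\xi \cup (-)$ followed by the evaluation $\cHom(E'',E') \otimes (E'' \otimes E'^\vee \otimes \omega_C) \to \omega_C$, and (ii) first applying $s \otimes t$ to produce $\partial(s)$ then composing with $t_*$. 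The upshot is the identity
\[
  \langle \xi, \, s \otimes t \rangle = t_*\bigl(\partial(s)\bigr) \quad\text{in } H^1(C,\omega_C) \cong \C,
\]
where $\langle \cdot,\cdot\rangle$ denotes the Serre-duality pairing $H^1(\cHom(E'',E')) \otimes H^0(E'' \otimes E'^\vee \otimes \omega_C) \to \C$. Since $\partial = 0$ by \eqref{l:B-M-1}, the right-hand side vanishes for all such $s, t$. By \eqref{l:B-M-2} the decomposable tensors $s \otimes t$ span $H^0(E'' \otimes E'^\vee \otimes \omega_C)$, so $\langle \xi, - \rangle$ is identically zero; by the non-degeneracy of Serre duality, $\xi = 0$, i.e.\ the sequence splits.

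The main obstacle is checking the compatibility identity $\langle \xi, s\otimes t\rangle = t_*(\partial(s))$ cleanly. This is a matter of tracing through the definitions of the connecting homomorphism, cup product on \v{C}ech (or Dolbeault) cochains, and the evaluation pairings, and verifying that the various maps are compatible with the Serre-duality identifications; the signs are a minor nuisance but do not affect the vanishing. One clean way to organize this is to work with Dolbeault representatives: pick a $\bar\partial$-closed $(0,1)$-form $\alpha$ valued in $\cHom(E'',E')$ representing $\xi$; then $\partial(s)$ is represented by $\alpha(s)$, $t_*\partial(s)$ by $t\bigl(\alpha(s)\bigr)$, and the Serre pairing of $\xi$ with $s\otimes t$ is computed by integrating exactly this $(0,1)$-form wedged against the appropriate data — so the two quantities agree essentially tautologically. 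Everything else (the use of hypotheses \eqref{l:B-M-1} and \eqref{l:B-M-2}, and the non-degeneracy of Serre duality) is then formal.
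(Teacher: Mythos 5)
Your argument is correct, and it is in fact essentially the proof of the cited reference (the paper itself states this lemma without proof, quoting \cite[Lemme~1]{beauville-merindol}): Serre duality identifies the pairing of the extension class $\xi$ against the image of $\alpha$ with the boundary map $\partial$ — equivalently, $\trsp{\alpha}$ sends $\xi$ to $\partial$ under the identification $\Hom(H^0(E''),H^0((E')^\vee\otimes\omega_C)^\vee)\cong\Hom(H^0(E''),H^1(E'))$ — so hypothesis~(\ref{l:B-M-1}) and the injectivity of $\trsp{\alpha}$ coming from~(\ref{l:B-M-2}) force $\xi=0$. The compatibility identity $\langle\xi,s\otimes t\rangle=t_*(\partial(s))$ that you flag as the main obstacle is indeed the standard cup-product description of the connecting homomorphism, and your Dolbeault verification of it is sound.
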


\begin{proof}[Proof of Theorem~\ref{l:CSegre}]
Let $f\colon S \to \Sigma$ be the minimal resolution of singularities. The
surface $S$ is ruled with $C$ as a section of the ruling,
and $\Sigma$ is a scroll.
Hence there exists a rank 
$2$ vector bundle $\mathcal{E}$ on $C$ such that $S\cong
\P(\mathcal{E})$ and the map $S\to \Sigma \subset \P^n$ is given by a
linear subsystem of $|H^0(\O_{\P({\mathcal E})}(1))|$.
Moreover, there is the following exact sequence of locally free
sheaves on $C$:
\begin{equation}
\label{ext-segre}
\xymatrix@C=0pt@R=6pt{
0 \lra & 
\!\!\!\O_C(1) \otimes N_{C/S}^{-1} \!\!\!
\ar@{=}[d]
&\lra 
{\mathcal E} \lra 
\O_C(1) \lra 0; \\
& \O_C
}
\end{equation}
see \cite[Proposition~V.2.6]{hartshorne}.
We shall use Lemma~\ref{l:B-M} to show that this exact sequence
is split. Then $\mathcal{E} = \O_C\oplus \O_C(1)$,  the linear
system $|H^0(\O_{\P({\mathcal E})}(1))|$ contracts the section
corresponding to the trivial quotient $\mathcal{E} \to \O_C$, and
$\Sigma$ is a cone, which is the result we wanted to prove.

It thus only remains to apply Lemma~\ref{l:B-M} to the exact sequence
\eqref{ext-segre}. Condition~\eqref{l:B-M-2} is satisfied by Castelnuovo's theorem, 
Theorem~\ref{t:castelnuovo}; \ie the multiplication map
$H^0(L) \otimes H^0(K_C) \to H^0(K_C+L)$, where $L=\O_C(1)$, is surjective.
To see that condition~\eqref{l:B-M-1}  of the lemma holds as well, we write the
long exact sequence associated to \eqref{ext-segre}:
\begin{equation*}
\xymatrix@C=0pt@R=6pt{
0 \lra H^0(\O_C) \lra 
& \!\!\!H^0({\mathcal E})\!\!\!
\ar@{=}[d] 
& \lra H^0(\O_C(1))
\lra &
\!\!\!H^1(\O_C)\!\!\!
\ar@{=}[d] 
& \lra
& \!\!\!H^1({\mathcal E})\!\!\!
\ar@{=}[d] 
& \lra
H^1(\O_C(1)) \lra 0;
\\
& H^0(\O_{S}(1))
&& H^1(\O_{S})
&& H^1(\O_{S}(1))
}
\end{equation*}
for the vertical identifications, see \eg
\cite[Lemma~V.2.4]{hartshorne};
$\O_S(1)$ stands for $\O_{\P({\mathcal E})}(1)$.
It follows from the fact that $C \subset \P^{n-1}$ is linearly normal
that 
$H^0(\O_{\Sigma}(1)) \to H^0(\O_C(1))$ is surjective,
hence
$H^0(\O_{S}(1)) \to H^0(\O_C(1))$ is surjective, and thus the boundary
map
$H^0(\O_C(1))\to H^1(\O_C)$ is zero; \ie condition~\eqref{l:B-M-1} holds.
We may thus apply Lemma~\ref{l:B-M}, and, as explained above, this
concludes the proof of the theorem.
\end{proof}

\subsection{The Hartshorne bound}

\begin{theorem}[\cf \cite{hartshorne-bound}]
\label{t:hartshorne}
Let $C$ be a smooth curve of genus $g$ sitting in a smooth surface $S$.
If\, $C^2 >4g+5$, then there exist a ruled surface $\Sigma$ having $C$
as a section and a birational map $S \dashrightarrow \Sigma$ which is
an isomorphism on an open subset containing $C$.

If\, $C^2=4g+5$, the only other possibility is that
there is a birational map $S \dashrightarrow \P^2$ which is an
isomorphism on an open subset containing $C$ and identifies $C$ with
a cubic curve.
\end{theorem}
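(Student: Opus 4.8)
The plan is to prove Theorem~\ref{t:hartshorne} by applying the Hartshorne bound (Theorem~\ref{hartshorne-bound}, quoted from \cite{hartshorne-bound}) to the surface $S$ together with the curve $C$ sitting inside it, and then reinterpreting the conclusion in terms of extensions. Concretely, starting from a smooth surface $S$ with a smooth curve $C\subset S$ of genus $g$ and self-intersection $C^2>4g+5$, one first runs the adjunction/minimal model considerations on $S$ \emph{relative to} $C$: since $C^2$ is very large compared to the genus, $C$ cannot lie on a surface of general type type contributing positively to $K_S\cdot C$, so one is forced into the scroll or $\mathbf{P}^2$ alternatives. The technical heart is the inequality, going back to Hartshorne, that controls $C^2$ in terms of $g$ and the numerical character of the linear system $|C|$ on (a minimal model of) $S$; I would invoke \cite[Theorem~(4.1)]{hartshorne-bound} essentially verbatim, since the statement here is precisely its geometric repackaging.

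First I would reduce to the case where $S$ is the minimal model of the pair, i.e.\ successively contract $(-1)$-curves of $S$ that are disjoint from $C$ (possible because $C$ moves in a large linear system and so meets every curve it is not equal to, once $C^2$ is large enough; more precisely $C\cdot E\geq 0$ for every curve $E$, and one can arrange the contractions so that $C$ is untouched on an open neighbourhood). This produces a birational $S\dashrightarrow S'$ that is an isomorphism near $C$, with $S'$ minimal or close to it. Then I would apply the classification: either $S'$ is ruled, in which case $C$ meeting each fibre positively and $C^2$ being large forces $C$ to be a section (after possibly an elementary transformation away from $C$), giving the ruled surface $\Sigma$ of the statement; or $S'$ is a minimal rational surface of Picard rank one, i.e.\ $\mathbf{P}^2$, and then $C$ is a plane curve of some degree $\delta$ with $\delta^2=C^2>4g+5$ and $g=\binom{\delta-1}{2}$, and the numerical constraint $\delta^2 > 4\binom{\delta-1}{2}+5 = 2\delta^2-6\delta+9$, i.e.\ $\delta^2-6\delta+9<0$, i.e.\ $(\delta-3)^2<0$, is \emph{impossible} unless we are in the boundary case $C^2=4g+5$, where it becomes $(\delta-3)^2\leq 0$, forcing $\delta=3$ — exactly the cubic curve in the second paragraph of the statement. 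This is what pins down the ``only other possibility''.

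The step I expect to be the main obstacle is the careful bookkeeping in the birational reduction: one must be sure that all the contractions and elementary transformations needed to reach a minimal model can be performed \emph{away from $C$}, so that the resulting birational map $S\dashrightarrow\Sigma$ (or $S\dashrightarrow\mathbf{P}^2$) is genuinely an isomorphism on an open set containing $C$, and that no blow-up is ever forced along $C$. For $C^2>4g+5$ this follows because $C$ is nef and big with $C^2$ strictly exceeding the bound, so the only curves $C$ could fail to meet are $(-1)$-curves disjoint from it, which are precisely the ones we are allowed to contract; ruling out the degenerate behaviour at the critical value $C^2=4g+5$ is what produces the exceptional $\mathbf{P}^2$-case. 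I would organize the argument so that the genus–degree computation above is the final, decisive numerical check, and cite \cite{hartshorne-bound} for the underlying inequality rather than reproving it.

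Alternatively — and this is likely the cleanest route given what is already available — one can simply observe that Theorem~\ref{t:hartshorne} as stated is \emph{exactly} \cite[Theorem~(4.1)]{hartshorne-bound} translated into the language of ruled surfaces and extensions, so the proof reduces to quoting that reference and spelling out the dictionary: ``$S$ is a scroll / is dominated birationally near $C$ by a ruled surface with $C$ a section'' is Hartshorne's conclusion that $|C|$ maps $S$ (birationally, near $C$) to a surface whose general hyperplane section through $C$ is $\mathbf{P}^1$-fibred, and the sole extremal case $C^2=4g+5$ is his $\mathbf{P}^2$-with-cubic exception. I would therefore present the proof as: (i) reduce to $S$ minimal near $C$; (ii) invoke \cite[Theorem~(4.1)]{hartshorne-bound}; (iii) carry out the genus–degree arithmetic $(\delta-3)^2\leq 0$ to isolate the cubic case. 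The only genuine content to write out is (i) and (iii); (ii) is a citation.
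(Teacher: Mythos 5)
The paper gives no proof of this statement at all: it is stated as a quotation of \cite[Theorem~(4.1)]{hartshorne-bound} (with a remark that it is also a special case of \cite[Theorem~A]{horowitz}), so your ``cleanest route'' of simply citing Hartshorne and spelling out the dictionary is exactly what the paper does. Your supplementary numerical check $(\delta-3)^2\leq 0$ for the $\P^2$ case is correct, and the genuinely hard ingredient you would need for a self-contained argument --- the bound on $C^2$ for a multisection of a ruled surface, which forces $C\cdot f=1$ --- is precisely what you (rightly) propose to import from \cite{hartshorne-bound} rather than reprove.
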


This result is also a particular case of \cite[Theorem~A]{horowitz}.

\begin{corollary}
\label{c:hartshorne}
Let $C \subset \P^n$ be a smooth curve of genus $g>1$ and degree $d$,
non-degenerate and linearly normal.  If $d > 4g+4$, then every
extension of\, $C$ is trivial.

If $g=1$ and $d=4g+5$, the only possibility for $C$ to have a
non-trivial extension
is that it is a hyperplane section of the Veronese surface $v_3(\P^2)
\subset \P^9$.
\end{corollary}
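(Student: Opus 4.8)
The plan is to reduce to a surface extension, resolve its singularities, and then play the Hartshorne bound, Theorem~\ref{t:hartshorne}, against C.~Segre's theorem, Theorem~\ref{l:CSegre}. First I would reduce to the case of a surface extension, which is standard: a non-trivial extension of dimension $\geq 3$ yields, upon cutting with a general $(n+1)$-plane containing $\langle C\rangle$, a surface having $C$ as a hyperplane section that is again non-trivial (were all such surface sections cones over $C$, the extension itself would be an iterated cone over $C$, its degree being equal to $\deg C$). So let $S\subset\P^{n+1}$ be a surface, not a cone, having $C$ as a hyperplane section; then $S$ is non-degenerate (because $C$ is) and has degree $d$. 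We may assume $C$ is a \emph{general} hyperplane section of $S$, so that $\Sing S$ is finite and disjoint from $C$, and let $\nu\colon\tilde S\to S$ be a resolution of singularities which is an isomorphism over a neighbourhood of $C$. Writing $C$ again for its preimage, we obtain a smooth curve $C\subset\tilde S$ with $H:=\nu^*\O_S(1)=\O_{\tilde S}(C)$ and $C^2=H^2=\deg S=d\geq 4g+5$.

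Now apply Theorem~\ref{t:hartshorne} to $C\subset\tilde S$. Either there are a ruled surface $\pi\colon\Sigma\to C$ having $C$ as a section and a birational map $\phi\colon\tilde S\dashrightarrow\Sigma$ which is an isomorphism near $C$; or $C^2=4g+5$ and there is a birational map $\tilde S\dashrightarrow\P^2$, an isomorphism near $C$, carrying $C$ onto a plane cubic. A smooth plane cubic has genus $1$, so the second alternative forces $g=1$, hence $d=9$; in particular, if $g>1$ we are always in the first (``ruled'') alternative. In that alternative, composing $\phi$ with $\pi$ gives a rational map $\tilde S\dashrightarrow C$ whose general fibre $F$, the $\phi$-transform of a general fibre of $\pi$, is a smooth rational curve with $F^2=0$ and $F\cdot C=1$. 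Since $H\equiv C$ and $H$ is nef, $H\cdot F=C\cdot F=1$; hence $F$ is not contracted by $\nu$ (as $H\cdot F\neq 0$), and its image in $\P^{n+1}$ is a line. These lines sweep out $S$ and each of them meets $C$ in a single point, so $S$ is ruled by lines, \ie a scroll; Theorem~\ref{l:CSegre} (valid since $g>0$) then forces $S$ to be a cone, a contradiction. This proves the first assertion and also disposes of the ruled alternative in the remaining case $g=1$, $d=9$.

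It remains to treat $g=1$, $d=4g+5=9$ in the second alternative: there is a birational map $\phi\colon\tilde S\dashrightarrow\P^2$, an isomorphism near $C$, with $\phi(C)=\Gamma$ a smooth plane cubic. Resolve $\phi$ by a surface $W$ with morphisms $p\colon W\to\tilde S$ and $q\colon W\to\P^2$; the indeterminacy loci of $\phi$ and $\phi^{-1}$ avoid $C$ and $\Gamma$, so $p$ and $q$ are isomorphisms near the relevant curves, and $p^*C=q^*\Gamma$ as divisors on $W$. Therefore $p^*H=p^*\O_{\tilde S}(C)=q^*\O_{\P^2}(3)$, whence $H^0(\tilde S,H)\cong H^0(\P^2,\O_{\P^2}(3))$ has dimension $10$. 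Since $S$ is non-degenerate of degree $9$ in $\P^9$, the linear system cutting out $S\subset\P^9$ is the complete system $|H|$, and the morphism it defines coincides, through $p$ and $q$, with $v_3\circ q$; its image is thus $v_3(\P^2)$. But that image is $S$, so $S=v_3(\P^2)\subset\P^9$ and $C$ is one of its hyperplane sections, as claimed.

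The main obstacle is the step in the second paragraph: extracting, from the \emph{a priori} merely birational conclusion of the Hartshorne bound --- which is moreover controlled only in a neighbourhood of $C$ --- the concrete statement that $S$ is a scroll, so that C.~Segre's theorem can be applied. What makes it go through is the identity $\O_{\tilde S}(C)=\nu^*\O_S(1)$, which holds precisely because $C$ avoids the exceptional locus of $\nu$, together with the nefness of this line bundle: the two combine to pin down $H\cdot F=1$, \ie that the fibres of the ruling are lines in the embedding. By contrast, the reduction to surfaces and the verification of $p^*C=q^*\Gamma$ in the genus~$1$ case are routine, once one records where the various birational modifications are supported.
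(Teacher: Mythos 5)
Your proof is correct and takes essentially the same route as the paper's: pass to a resolution of a surface extension, apply the Hartshorne bound (Theorem~\ref{t:hartshorne}), kill the ruled alternative with C.~Segre's theorem (Theorem~\ref{l:CSegre}), and identify the residual $g=1$, $d=9$ case with $v_3(\P^2)$. The only differences are cosmetic: you spell out the reduction from higher-dimensional extensions to surface extensions and you transport the fibres of the ruling to $S$ to exhibit it as a scroll, whereas the paper transports the complete linear system $|C|$ to $\P(E)$ and realizes $S$ as its image; note also that ``we may assume $C$ is a general hyperplane section'' is superfluous, since smoothness of the given $C$ already forces $C\cap\Sing S=\emptyset$ and $\Sing S$ finite.
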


\begin{proof}
Let $S \subset \P^{n+1}$ be a surface having $C$ as a hyperplane
section.  Then $\deg(S)=\deg(C)=d$; hence $C^2=d$ as a divisor in $S$.
We consider a minimal desingularization $\pi\colon S'\to S$, and,
abusing notation, we still denote by $C$ its proper transform on $S'$.
Note that $S$ may have at most isolated singularities, and by the
minimality of the resolution, there is no irreducible $(-1)$-curve
$\Gamma$ on $S'$ such that $C\cdot \Gamma=0$.

By Hartshorne's theorem, Theorem~\ref{t:hartshorne} above, there are
two cases to be considered. First assume that there exists a
birational map $S' \dashrightarrow \P(E)$ which is an isomorphism on
an open subset $U \subset S'$ containing $C$, where $E$ is a rank $2$
vector bundle over $C$.  The pull-back to $S'$ of the linear system
$|C|$ on $S$ is the complete linear system $|C|$ on $S'$ because $C
\subset \P^n$ is linearly normal, and it is base-point-free.  In turn,
the image on $\P(E)$ of this system is again the complete linear
system $|C|$ because $S' \dashrightarrow \P(E)$ is an isomorphism on
$U$.  The upshot is that $S$ is the image of $\P(E)$ defined by this
linear system, and by Theorem~\ref{l:CSegre} this is a cone.

In the remaining case of Hartshorne's theorem, which may occur only if
$g=1$ and $d=4g+5=9$, similar arguments show that $S$ is the image of
$\P^2$ by the complete linear system of plane cubics.
\end{proof}

\section{Classification of extensions with high degree}
\label{S:class-hideg}

This section is devoted to Theorem~\ref{t:hideg}.  We first expand on
the description of the items in the classification and then give the
proof.

\subsection{Detailed description of the items in the classification}

Let $S \subset \P^N$ be a degree $d$ surface, of sectional (geometric)
genus $g$.  We call \emph{simple internal projection} of~$S$ a surface
$S'\subset \P^{N'}$ obtained by projecting $S$ from a curvilinear
subscheme $Z$ of length $b$ supported on the smooth locus of $S$,
where $N'=N-\dim (\vect Z)$, such that the projection map is
birational.  We recall that a scheme $Z$ is \emph{curvilinear} if for
all points $p$ in the support of $Z$, the Zariski tangent space of $Z$
at $p$ has dimension at most $1$.

For a simple internal projection $S'$ of $S$ as above, one has
$\deg(S') = d-b$, and $S'$ has the same sectional genus $g$ as $S$.
Note that if $d-b \geq 2g+1$ and $S$ is regular and linearly normal,
then any projection from a curvilinear subscheme $Z$ of length $b$
supported on the smooth locus of $S$ is a simple internal projection,
and $N'=N-b$, for in this case the linear system of hyperplane
sections of $S$ passing through $Z$ restricts on its general member to
a complete, non-special, very ample linear system.

\begin{example}\label{ex:biell} Let $C$ be a bielliptic curve of genus
$g\geq 4$. Then the canonical model of $C$ in $\P^{g-1}$ sits on a
cone $X$ with vertex a point $p$ over a normal elliptic curve $E$ of
degree $g-1$ in a hyperplane $\Pi$ of $\P^{g-1}$ not containing $p$,
and $C$ is the complete intersection of $X$ with a quadric in
$\P^{g-1}$. The bielliptic involution is the restriction to $C$ of the
projection from $p$ to $\Pi$.

Note that the minimal resolution of $X$ is the projective bundle
$\P(\O_E\oplus \L)$, where $\L$ is the hyperplane bundle of $E$ in
$\Pi\cong \P^{g-2}$. The map $\P(\O_E\oplus \L)\to X$ is induced by
the $\O(1)$ bundle on $\P(\O_E\oplus \L)$.

Consider the $2$-Veronese image $S$ of $X$. Since
\[
h^0(E,\Sym^2(\O_E\oplus \L))=h^0(E,\O_E)+h^0(E,\L)+h^0(E,\L^{\otimes 2})=3g-2,
\]
the surface $S$ is linearly normally embedded in $\P^{3(g-1)}$. The
bicanonical image of $C$ is a hyperplane section of $S$, and it is
linearly normally embedded with degree $d=4(g-1)$.
In this case we will say that $S$ presents the
\emph{bicanonical bielliptic case}.
\end{example}

\begin{example}\label{ex:plane} Consider the linear system
$|\O_{\P^2}(\delta)|$ of plane curves of degree $\delta$ whose
self-intersection is $\delta^2$ and whose genus is
\[
g=\frac {(\delta-1)(\delta-2)}2.
\]
We assume $\delta\geq 4$ so that $g>1$. One has $\delta^2\geq 4g-4$ if and only
if $\delta\leq 6$. This means that for $4\leq \delta\leq 6$, the
degree of the $\delta$-Veronese image of $\P^2$
(and suitable simple internal projections of it)
is in the range $[4g-4,4g+4]$.

More precisely, in the case $\delta=4$. one has $\delta^2=4g+4$, which
is the maximum possible degree with respect to the sectional genus. We
still have degree in the above range if we make simple internal
projections of the $4$-Veronese of $\P^2$ from $b\leq 8$ points.

If $\delta=5$, we have $\delta^2=4g+1$, and the degree is in the above range
if we make simple internal projections of the $5$-Veronese of $\P^2$ from
$b\leq 5$ points.

Finally, if $\delta=6$,  we have $\delta^2=4g-4$.

In all these cases we will say that the surfaces present the
\emph{planar case}.
\end{example}

\begin{example}\label{ex:delp}%
\begin{a-enumerate}[wide]%
\item\label{e:d-1} Let $X$ be the plane blown up at $h\leq
7$ (proper or infinitely near) points such that there is an
irreducible cubic curve passing simply through these points.
Let $E_1,\ldots, E_h$ be the
exceptional $(-1)$-divisors over the blown-up points, set
$E=E_1+\cdots+E_h$, and let $H$ be the pull-back on $X$ of a general
line of~$\P^2$. Note that the anticanonical system on $X$ is
$|3H-E|$.
We will say that we are here in a \emph{Del Pezzo
  situation}
(even though $X$ is a genuine \emph{Del Pezzo surface} only if the
anticanonical system is ample).

Consider the linear system $|6H-2E|$. This linear system is
base-point-free, and its general curve is irreducible of genus
$g=10-h$ and self-intersection $4(9-h)=4g-4$. Moreover, it is not
difficult to see that $\phi_{|6H-2E|}$ is a birational morphism to the
image $S$, that is non-degenerate in $\P^{27-3h}=\P^{3g-3}$.  Note
that the hyperplane sections of these surfaces are bicanonically
embedded. For $h=0$, we again get the planar case for $\delta=6$.

\item\label{e:d-2} Similarly, let $X$ be an irreducible quadric in $\P^3$, and
consider the linear system $|-2K_X|$, which is the linear system of
quadric sections of $X$. Thus, either $X$ is the image of $\F_0$
by the linear system $|H+F|$ and $|-2K_X| =
|4H+4F|$ (curves of bidegree $(4,4)$ on $\P^1\times \P^1$), or
$X$ is the image of $\F_2$ by the linear system $|2H|$ and we may
identify $X$ with $\F_2$ and $|-2K_X|$ with $|4H|$.
As in case~\eqref{e:d-1}, the linear system $|-2K_X|$
is base-point-free, its general member is irreducible of genus $g=9$ and
self-intersection $4g-4=32$, and the associated map
$\phi_{|-2K_X|}$ is a birational morphism to the image $S$,
which is a non-degenerate surface in $\P^{3g-3}=\P^{24}$ with
bicanonical hyperplane sections.
\end{a-enumerate}
  We will say that the surfaces in cases~\eqref{e:d-1} and~\eqref{e:d-2} above present the \emph{bicanonical Del Pezzo case}.
\end{example}

\begin{example}\label{ex:hyperell} 
Consider, in the Notation~\ref{nz:scrolls}, the linear system 
\begin{equation*}
  |2H+kF|
  = |2E+(k+2e)F|
  = |H+E+(k+e)F|
\end{equation*}
on a rational ruled surface $\F_e$,
with $k\geq \max(0,3-e)$.
It
is base-point-free, and very ample unless $k=0$,
in which case the morphism $\phi_{|2H|}$ maps $\F_e$ birationally 
onto its image, which is the $2$-Veronese image of the cone in
$\P^{e+1}$ over a rational normal curve in $\P^{e}$. In any event the
general curve in $|2H+kF|$ is smooth and irreducible.

Since
\begin{equation}
\label{eq:kappa}
  K_{\F_e}
  \lineq -2E-(e+2)F
  \lineq -2H+(e-2)F,
\end{equation}
the adjoint system to $|2H+kF|$ is
$|(k+e-2)F|$, and therefore the curves in $|2H+kF|$ are
hyperelliptic of genus $g=k+e-1$.
The assumption that $k\geq 3-e$ implies that $g\geq 2$.
Moreover,
\[
(2H+kF)^2=4e+4k=4g+4.
\]
If $C$ is a smooth curve in $|2H+kF|$, then from the exact sequence
\[
0\longrightarrow \O_{\F_e}\longrightarrow \O_{\F_e}(2H+kF)\longrightarrow
\O_{C}(2H+kF)\longrightarrow 0
\]
and from the fact that
\[
h^1(\F_e, \O_{\F_e})=0, \quad h^0(C,\O_{C}(2H+kF))=3g+5, 
\]
we deduce that $h^0(\F_e,\O_{\F_e}(2H+kF))=3g+6$. If we set $S=
\phi_{|2H+kF|}(\F_e)$, then $S$ is non-degenerate of degree $4g+4$ in
$\P^{3g+5}$, and its general curve section is hyperelliptic of genus
$g\geq 2$.  Any surface which is a simple internal projection of $S$
from $b\leq 8$ points on $S$ still has degree in the range
$[4g-4,4g+4]$ and sectional genus $g$.  We will say that surfaces of
this type present the \emph{hyperelliptic case}.
\end{example}

\begin{example}\label{ex:trig} Let $C$ be a trigonal canonical curve
of genus $g\geq 4$ in $\P^{g-1}$. Then $C$ sits on a smooth rational
normal scroll $Y$ of degree $g-2$ in $\P^{g-1}$. We denote by $\cH$
the hyperplane section class of $Y \subset \P^{g-1}$ and by $F$ a line
of the ruling of $Y$ (be careful not to mistake $\cH$ for the class
$H$ in our Notation~\ref{nz:scrolls}; the minimal resolution of
singularities of $Y$ is isomorphic to some rational ruled surface
$\F_e$, and one has $\cH = H+lF$ for some $l\geq 0$).

It is easy to check that $C\in |3\cH-(g-4)F|$. Conversely, if $Y$
is a rational normal scroll of degree $g-2$ in $\P^{g-1}$, and
if a smooth curve $C$ sits in $|3\cH-(g-4)F|$, then $C$ is a trigonal
canonical curve. One has $(3\cH-(g-4)F)^2=3g+6$; hence the linear system
$|\O_C(3\cH-(g-4)F)|$ is very ample of dimension $2g+6$. This shows that
$\phi_{|3\cH-(g-4)F|}$ is a morphism that maps $Y$ birationally to a
non-degenerate surface $S \subset \P^{2g+6}$. If $g\leq 10$, one has
$3g+6\geq 4g-4$.
We will say that surfaces of this sort, as well as their simple
internal projections of degree at least $4g-4$,
present the \emph{trigonal case}.

To connect with the notation in Theorem~\ref{t:hideg}, note that if
$\cH = H+lF$ on $\F_e$, then
\[
  (H+lF)^2 = e+2l=g-2
\]
and
\[
  3\cH-(g-4)F
  = 3H+(3l-g+4)F
  = 3H + \tfrac{1}{2} (g-3e+2)F.
\]
\end{example}

\bigskip
Note that none of the surfaces in the above examples is a
cone. Indeed, they have irregularity $q\leq 1$ and sectional genus $g
\geq 2$.

\subsection{Previously known results}

We quote the following from \cite[Section~7]{angelo} but do not use it in
our proof.

\begin{theorem}[\cf \protect{\cite[Corollary~2.10]{kl}, \cite[Theorem~2]{bel}}]
Let $C \subset \P^r$ be a smooth irreducible non-degenerate linearly normal curve of genus $g \geq 4$ and degree $d$. Then $C$ is not extendable if
\begin{a-enumerate}
\item\label{t36-1}  $C$ is trigonal, $g \geq 5$, and $d \geq \max\{4g-6,3g+7\}$;
\item\label{t36-2}  $C$ is a plane quintic and $d \geq 26$;
\item\label{t36-3}  $\Cliff(C) = 2$ and $d \geq 4g-3$;
\item\label{it:gnl} $\Cliff(C) \geq 3$ and $d \geq 4g+1-3\Cliff(C)$.
\end{a-enumerate}
\end{theorem}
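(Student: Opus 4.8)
The plan is to reduce non-extendability to the vanishing of $\cork(\gamma_{C,L})$, and then to establish that vanishing according to the Clifford index of $C$. First one checks that the hypotheses force $d\geq 2g+3$ in all four cases: in~\eqref{t36-1} because $g\geq5$ gives $4g-6\geq2g+3$ while $3g+7>2g+3$ always; in~\eqref{t36-2} because $26>15$; in~\eqref{t36-3} because $\Cliff(C)=2$ already forces $g\geq5$, so $4g-3\geq2g+3$; and in~\eqref{it:gnl} because $\Cliff(C)\geq3$ forces $g\geq7$ and $\Cliff(C)\leq\lfloor(g-1)/2\rfloor\leq(2g-2)/3$, whence $4g+1-3\Cliff(C)\geq2g+3$. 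Thus $(C,L)$ has property $N_2$ by Green's theorem, Theorem~\ref{t:green}, and by the unicity statement~\eqref{item2} of the introduction, $C$ admits a non-trivial extension only if $\gamma_{C,L}$ is not surjective. Equivalently, the Euler-sequence computation of Section~\ref{S:coker} gives $h^0(C,N_{C/\P^r}(-1))=r+1+\cork(\gamma_{C,L})$, so that $\cork(\gamma_{C,L})=0$ precludes extensions of every dimension by Lvovski's theorem. It therefore suffices to show that $\cork(\gamma_{C,L})=0$ in each case.

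To prove this vanishing I would use the multiplicativity of Gaussian maps. Writing $L=A\otimes M$ with $A$ a line bundle adapted to the geometry of $C$ and $M=L\otimes A^{-1}$ of large degree, one combines: surjectivity of the multiplication maps $H^0(M)\otimes H^0(K_C+A)\to H^0(K_C+A+M)$ and $H^0(M)\otimes H^0(2K_C+A)\to H^0(2K_C+A+M)$, which hold by Castelnuovo's theorem, Theorem~\ref{t:castelnuovo}, as soon as $\deg M$ is large enough; with the standard filtration exact sequences relating $\gamma_{C,L}$ to $\gamma_{C,A}$, which bound $\cork(\gamma_{C,L})$ in terms of $\cork(\gamma_{C,A})$ modulo a term that is the cokernel of an explicit multiplication map and hence vanishes once $\deg M=d-\deg A$ exceeds a bound depending only on $\cork(\gamma_{C,A})$. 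Everything then comes down to the knowledge of $\gamma_{C,A}$ for a well-chosen $A$ plus numerical bookkeeping, and the four thresholds in the statement are exactly what this bookkeeping produces.

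The case analysis then runs as follows. In~\eqref{it:gnl} take $A$ computing $\Cliff(C)\geq3$: $\gamma_{C,A}$ is surjective by the known results on Gaussian maps of curves of Clifford index at least $3$, and $d\geq4g+1-3\Cliff(C)$ is precisely what absorbs the correction term. In~\eqref{t36-3} the bundle computing $\Cliff(C)=2$ is of restricted type, for which $\gamma_{C,A}$ has small and explicitly bounded corank, and $d\geq4g-3$ suffices. For the plane quintic~\eqref{t36-2} ($g=6$) one uses the plane model instead: from $0\to\mathcal{I}_{C/\P^2}\to\O_{\P^2}\to\O_C\to0$ the map $\gamma_{C,L}$ is expressed through Gaussian and multiplication maps of line bundles on $\P^2$, and $d\geq26$ is what makes the relevant $H^1$'s on $\P^2$ vanish. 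Finally, in the trigonal case~\eqref{t36-1} one takes $A$ attached to the trigonal structure: $C$ lies on the rational normal scroll $Y$ cut out by the $g^1_3$, one computes the cohomology of the auxiliary sheaves fibrewise along the ruling of $Y$, and the corank of the Wahl map of the trigonal \emph{canonical} model (classical; cf.\ Theorem~\ref{t:cork-plurican}) enters as the initial datum. Here the threshold $d\geq3g+7$ is geometrically transparent --- it is one more than the maximal degree $3g+6$ of a surface with trigonal hyperplane sections, cf.\ Example~\ref{ex:trig}, so that in the surface case Theorem~\ref{t:hideg} already leaves no non-trivial extension --- while $d\geq4g-6$ is what kills the scroll-direction contribution in the remaining low degrees not reached by Theorem~\ref{t:hideg}.

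The main obstacle is the trigonal case, and to a lesser degree the plane quintic: there $\Cliff(C)=1$ is as small as it can be for a non-hyperelliptic curve, the Wahl map of the canonical model is genuinely non-surjective, and its corank becomes negligible only once $\deg L$ reaches the stated threshold. Controlling the correction term in the multiplicativity argument then cannot be done by a soft estimate; it requires the explicit geometry of the scroll $Y\supset C$ and a careful cohomology computation along its ruling, which is what the cited work of Knutsen--Lopez and of Ballico carries out.
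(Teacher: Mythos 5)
First, a point of orientation: the paper does \emph{not} prove this statement. It is quoted verbatim from \cite[Corollary~2.10]{kl} and \cite[Theorem~2]{bel} (via \cite[Section~7]{angelo}), and the sentence immediately preceding it says explicitly that it is not used in the paper's arguments. There is therefore no proof in the paper to compare yours against, and I can only judge your sketch on its own terms.

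Your reduction step is sound and is indeed how the cited works proceed: your numerical verifications that the hypotheses force $d\geq 2g+3$ in all four cases are correct, so property $N_2$ holds by Theorem~\ref{t:green}, and non-surjectivity of $\gamma_{C,L}$ is then necessary for extendability by Theorem~\ref{t:unicity} (combined with the linearity observation of Section~\ref{p:linear} to pass from surface extensions to higher-dimensional ones, or with Lvovski's theorem as you invoke it; the identity $h^0(N_{C/\P^r}(-1))=r+1+\cork(\gamma_{C,L})$ does follow from the sequence \eqref{eq:euler}). The genuine gap is in the second half: the entire content of the theorem is the four numerical thresholds, and your argument for $\cork(\gamma_{C,L})=0$ never actually produces them. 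Phrases such as ``$d\geq 4g+1-3\Cliff(C)$ is precisely what absorbs the correction term'' and ``the four thresholds in the statement are exactly what this bookkeeping produces'' assert the outcome of the computation rather than perform it. The filtration/multiplicativity technique for Gaussian maps that you describe (controlling $\gamma_{C,A\otimes M}$ in terms of $\gamma_{C,A}$ via surjectivity of multiplication maps) is real, but making it yield the coefficient $3\Cliff(C)$ rather than, say, $2\Cliff(C)$, and handling the exceptional behaviour of the minimal pencils in the trigonal and plane-quintic cases, is exactly the technical work of \cite{bel} and \cite{kl} that you are deferring to rather than reproducing. Note also that your ``geometrically transparent'' reading of the bound $3g+7$ in case~(\ref{t36-1}) via Theorem~\ref{t:hideg} only covers $d\geq 4g-4$: for $g\geq 13$ the stated threshold is $4g-6<4g-4$, so the degrees $4g-6\le d\le 4g-5$ lie outside the reach of the classification theorem and genuinely require the Gaussian-map computation you have not carried out. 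In short: right strategy, correct and carefully checked reduction, but the quantitative core --- which \emph{is} the theorem --- is not supplied.
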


Part \eqref{it:gnl} tells us, in particular, that no curve $C$ with
$\Cliff(C)>2$ is extendable in the range of degree under consideration
in the present text, namely $d \geq 4g-4$. If $\Cliff(C) \leq 2$, our
Theorem~\ref{t:hideg} classifies those extensions that indeed exist in
the possibilities left open by the above statement.

If $\Cliff(C)=2$, the only possibility in our range left by the above
theorem is $d=4g-4$.  We find that there indeed exist extensions in
this degree, and they are all extensions of bicanonical curves, for
some special curves.

Items~\eqref{t36-1} and~\eqref{t36-2} deal with curves of Clifford
index~$1$. For plane quintics, the maximal degree is $25$, and it is
indeed realized in our classification, by rational surfaces
represented by a linear system of plane quintics. For smooth
quintics, $g=6$, hence $4g-4=20$ and $4g+4=28$.

For trigonal curves, the above theorem says that extensions may have
degree at most 
\[
  \max(4g-7, 3g+6) =
  \begin{cases}
    4g-7 & \text{if $g\geq 13$}, \\
    3g+6 & \text{if $g\leq 13$}.
  \end{cases}
\]
If $g \geq 13$, this implies that there is nothing in the range
$[4g-4,4g+4]$. For $g\leq 13$, the bound is sharp and, by our
classification, realized exclusively by trisecant scrolls (\cf
Example~\ref{ex:trig}) as soon as $3g+6 \geq 4g-4$, \ie $g\geq 10$.

For curves with Clifford index zero, \ie hyperelliptic curves, there
exist extensions in all degrees $4g-4,$ $\ldots,4g+4$, and they are all
bisecant scrolls (\cf Example~\ref{ex:hyperell}).

\subsection{Setup of the proof of Theorem~\ref{t:hideg}}
\label{s:setup}

We now start proving Theorem~\ref{t:hideg}.  Let $S \subset \P^{r+1}$
be a surface as in the theorem.  It may have at most isolated singular
points.  We consider its minimal desingularization $\pi\colon S'\to
S$, and, abusing notation, we still denote by $C$ its proper transform
on $S'$.  By the minimality of the resolution, there is no irreducible
$(-1)$-curve $E$ such that $C\cdot E=0$ on $S'$.

Since $K_{S'}\cdot C=2g-2-d<0$, the Kodaira dimension of $S'$ is
$\kappa(S')=-\infty$.  We let $q$ be the irregularity of~$S'$ and
consider a minimal model $f\colon S'\to \Sigma$ of $S'$. If $q>0$,
then $\Sigma$ is a $\P^1$-bundle over a smooth curve $\Gamma$ of genus
$q$, while if $S'$ is rational, then $\Sigma$ is either $\P^2$ or a
rational ruled surface $\F_e$, with $e\geq 0$ and $e\neq 1$.

\subsection{The irregular case}\label{sec:irreg}

In this section we will prove Theorem~\ref{t:hideg} in the case $q>0$,
which amounts to proving the following proposition.

\begin{proposition}\label{thm:irreg}
Let $S$ be as in Theorem~\ref{t:hideg} with $q>0$.  Then $S$ presents
the bicanonical bielliptic case as in Example~\ref{ex:biell}; hence we
are in case \eqref{ghd:biell} of Theorem~\ref{t:hideg}.
\end{proposition}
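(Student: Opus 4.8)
The plan is to exploit the fact that $S'\to\Sigma$ is a birational morphism to a $\P^1$-bundle $p\colon\Sigma\to\Gamma$ over a smooth curve $\Gamma$ of genus $q>0$, and to show that the numerical constraints coming from $d\geq 4g-4$ force $q=1$ and pin down the surface completely. First I would study the image $\bar C=f(C)$ and the induced covering $\pi_\Gamma=p\circ f|_C\colon C\to\Gamma$. Since $C$ dominates $\Gamma$ (as $C$ is a section of the ruling of $S'$, hence not contracted and not contained in a fibre), we get $g\geq q$; more importantly, the Hurwitz formula and the genus of a plane/ruled-surface curve of the relevant class will bound things. The key computation is to write $f_*C\lineq aC_0+bF$ in $\Sigma=\F_{e}(\Gamma)$ (a $\P^1$-bundle, with $C_0$ a section and $F$ a fibre), record that $a=C\cdot F=\deg(\pi_\Gamma)$, and compute $d=C^2\leq (f_*C)^2$ and $2g-2=(K_\Sigma+f_*C)\cdot f_*C$ minus the contributions of the exceptional curves, keeping careful track of the fact that $C$ may pass through the blown-up points. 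Feeding $d\geq 4g-4$ into these relations is the heart of the argument.

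The decisive step is to bound the degree $a=\deg(\pi_\Gamma)$. I expect that $a=2$ and $q=1$, i.e.\ $C$ is bielliptic: indeed if $a\geq 3$ or $q\geq 2$ the Castelnuovo-type bound on the genus of a curve with a $g^1_a$ mapping to a curve of genus $q$, combined with $d=4g-4+\varepsilon$ with $\varepsilon\in[0,8]$ and $r=d-g$, should become inconsistent. Concretely, once $\pi_\Gamma\colon C\to\Gamma$ has degree $a$ onto a genus-$q$ curve, the line bundle $L=\O_C(1)$ of degree $d$ and the adjoint/canonical computations show $K_C$ restricted appropriately has a very constrained decomposition; pushing forward to $\Gamma$ and using $h^0$-estimates (Clifford, or the projection formula together with $h^0(\Gamma,\cdot)$ on a positive-genus curve) should eliminate everything except $a=2$, $q=1$. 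Then $\Gamma=E$ is an elliptic curve, $\pi_\Gamma$ is the bielliptic involution, and the hyperplane class $L$ on $C$ has degree $d=4(g-1)$ exactly (the projection range collapses because a scroll case is excluded by Theorem~\ref{l:CSegre} and a Veronese-type rigidity leaves no room for internal projection here).

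Next I would identify the surface. With $q=1$, $\Sigma$ is a $\P^1$-bundle over the elliptic curve $E$, $C$ is a bisection, and $S'\to S$ together with the embedding $S\subset\P^{r+1}$ is governed by $\O_{S'}(C)$. I would show, via the exact sequence $0\to\O_{S'}\to\O_{S'}(C)\to\O_C(C)=\O_C(1)\to 0$ and the equalities $d=4(g-1)$, $h^0(C,\O_C(1))=d-g+1=3g-3$, $h^1(S',\O_{S'})=q=1$, that $h^0(S',\O_{S'}(C))=3g-2$ and hence $S\subset\P^{3(g-1)}$; comparing with the dimension count in Example~\ref{ex:biell} (where $h^0(E,\Sym^2(\O_E\oplus\L))=3g-2$ with $\deg\L=g-1$) forces $S$ to be precisely the $2$-Veronese image of the cone over the elliptic normal curve $E\subset\P^{g-2}$ of degree $g-1$. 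To nail the identification of the bundle I would argue that $C$, being bielliptic with $\O_C(1)$ suitably positive, forces the ruling of $\Sigma$ to be the one pulled back from $|{\L}|$ on $E$ (the bielliptic pencil), and that the only $\P^1$-bundle over $E$ whose $\O(1)$ contracts a section to a point and whose square-Veronese accommodates $C$ as a hyperplane section is $\P(\O_E\oplus\L)$; this is exactly the cone situation of Example~\ref{ex:biell}.

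The main obstacle, I expect, is the rigidity step: ruling out internal projections (so that the degree is forced to be exactly $4(g-1)$ and $S$ is linearly normal of the expected dimension) and ruling out $q\geq 2$ and $a\geq 3$ in a uniform way. The degree inequality $d\geq 4g-4$ is tight here, so the numerical bounds leave very little slack, and one must be careful about curves $C$ passing through the centres of the blow-up $f$ (which lowers $(f_*C)^2$ relative to $C^2$ and changes the genus formula) — a clumsy bookkeeping of these infinitely-near contributions could easily produce spurious extra cases. I would handle this by first showing $f$ is an isomorphism near $C$ is \emph{not} automatic, then bounding the number and multiplicities of exceptional components meeting $C$ using $K_{S'}\cdot C=2g-2-d<0$ and the absence of $(-1)$-curves orthogonal to $C$, reducing to the minimal-model computation on $\Sigma$ itself. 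Once the numerics collapse to $q=1$, $a=2$, $d=4g-4$, the rest is the cohomological identification sketched above.
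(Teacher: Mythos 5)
Your overall skeleton (pass to the $\P^1$-bundle over $\Gamma$, force the fibres to map to conics, conclude that $C$ is bielliptic, then identify the surface) matches the paper's, but two of your key steps have genuine gaps. First, the reduction to $m=2$ and $d=4(g-1)$: the paper gets this in one stroke from Hartshorne's inequality $C^2\leq\frac{2m}{m-1}(g-1)$ for a smooth curve on an irregular ruled surface with fibre degree $m>1$ (the case $m=1$ being excluded by the C.~Segre theorem), which against $d\geq 4g-4$ immediately pins down $m=2$ and $C^2=4(g-1)$ --- no bookkeeping of infinitely near points and no separate ``rigidity'' argument against internal projections is needed. Your ``Castelnuovo-type bound'' is too vague to substitute for this, and, more seriously, it cannot deliver $q=1$: a double cover of a genus-$q$ curve only satisfies $g=2q-1+b\geq 2q-1$ by Riemann--Hurwitz, and one checks (e.g.\ by pushing $L$ forward to $\Gamma$ and applying Riemann--Roch there) that the numerics $d=4g-4$, $h^0(L)=3g-3$ are perfectly consistent with $q\geq 2$. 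The paper's route to $q=1$ is a different idea entirely: $K_{\Sigma'}+C'$ is effective and has zero intersection with the fibre class, hence is numerically a multiple of a fibre, so $(K_{\Sigma'}+C')^2=0$; expanding this with $C'^2=4(g-1)$ and adjunction gives $K_{\Sigma'}^2=8(1-q)=0$. This step is absent from your proposal and I do not see how to replace it by the estimates you invoke.

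Second, the identification of $S$: your dimension count $h^0(S',\O_{S'}(C))=3g-2$ shows that $S$ is a linearly normal surface of degree $4g-4$ in $\P^{3g-3}$ ruled in conics over an elliptic curve, but that does not single out the $2$-Veronese of the cone $\P(\O_E\oplus\L)$ among all bundles $\P(\mathcal E)$ over $E$; your phrase ``the only $\P^1$-bundle over $E$ whose $\O(1)$ contracts a section\dots'' assumes the conclusion. The paper has to prove that $\mathcal E$ is decomposable (ruling out the indecomposable bundles with invariant $e=0,-1$, where Kawamata--Viehweg type vanishing would force $h^1(\O_{\Sigma'}(C'))=0$ and hence contradict the linear normality of $C$), and then to compute $\varphi'_*\O_{\Sigma'}(C')=\Sym^2(\mathcal E)\otimes \mathcal D$ in order to pin down $e=g-1$ and exhibit $\O_{\Sigma'}(C')$ as the square of the line bundle defining the elliptic cone. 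Without these two ingredients your argument does not close.
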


In this case the minimal desingularization $S'$ of $S$ has a
surjective morphism $\varphi\colon S'\to \Gamma$ to a smooth curve
$\Gamma$ of genus $q$ with connected, rational fibres.  We denote by
$\theta$ the class in the N{\'e}ron--Severi group of $S'$ of a general
fibre of $\varphi$ and set
\[
m=C\cdot \theta.
\]
This is the degree of the images of the fibres of $\varphi$ on $S$.
If $m=1$, then, by Theorem~\ref{l:CSegre}, $S$ is a cone, which is
excluded. Thus we have $m\geq 2$.  In the case $m=2$, the images of
the fibres of $\varphi$ on $S$ are conics, and we will say that we are
in the \emph{conic case}.

The proof of Proposition~\ref{thm:irreg} consists of a few steps.

\subsubsection{Reduction to the conic case}\label{ssec:red1}

The first step consists in the following. 

\begin{proposition}\label{prop:conic}
  Let $S$ be as in Proposition~\ref{thm:irreg}.
  Then $m=2$ and $d=4(g-1)$.
\end{proposition}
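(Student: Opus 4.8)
The plan is to bound $m = C \cdot \theta$ from above using the numerical constraints coming from $d \geq 4g-4$, the fact that the fibration $\varphi\colon S'\to\Gamma$ has rational fibres (so $\theta^2 = 0$ and $K_{S'}\cdot\theta = -2$), and the ruled-surface structure over a curve $\Gamma$ of genus $q \geq 1$. First I would restrict the hyperplane class $C$ to a general fibre $\Phi \cong \P^1$: the image of $\Phi$ in $S$ spans a $\P^{m-1}$ and, since $C$ is linearly normal, $h^0(S', \O(C)) = r+2 = d-g+2$, one gets an inequality of the form $d - g + 2 \leq h^0(C\restr{}{}) + (\text{contribution from moving in } \Gamma)$. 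More precisely I would use the exact sequence $0 \to \O_{S'}(C - \Phi) \to \O_{S'}(C) \to \O_\Phi(m) \to 0$ together with $h^1(S', \O(C-\Phi)) $ estimates to see that the number of fibre-moduli is controlled by $q$, and combine this with the adjunction computation $C^2 = d$, $C\cdot K_{S'} = 2g-2-d$ on $S'$.

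The heart of the argument is a Castelnuovo-type bound relating $d$, $g$, $m$, and $q$. Writing $C \equiv m\,\theta + \text{(horizontal part)}$ in $\NS(S')$ and intersecting with $C$, one has $d = C^2 = m(C\cdot\theta_{\mathrm{horiz}}) + \cdots$; pushing this through, the sectional genus $g$ of $S$ is bounded above by roughly $\tfrac{1}{2m}(d - \text{something}) + q\cdot(\text{something})$, analogously to the classical bound for curves on scrolls. Imposing $d \geq 4g-4$ forces $m \leq 2$ when $q \geq 1$: for $m \geq 3$ the genus is too small relative to $d$. Since $m = 1$ is excluded by Theorem~\ref{l:CSegre} (that would make $S$ a cone), we conclude $m = 2$, i.e.\ we are in the conic case.

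Once $m = 2$ is established, the equality $d = 4(g-1)$ should drop out by pushing the same estimate to the boundary: with the fibres mapping to conics, $S'$ is a conic bundle over $\Gamma$ and its hyperplane sections are bielliptic-type curves; the bielliptic double cover $C \to \Gamma$ has $\Gamma$ of genus $q$, and a Riemann--Hurwitz / Castelnuovo computation for the $2:1$ map pins down $d$ in terms of $g$. I expect the equality $d = 4(g-1)$ to follow because the range $4g-4 \leq d \leq 4g+4$ leaves only finitely many possibilities once $m=2$, and the projective-normality/linear-normality hypothesis together with the structure of $|C|$ on a conic bundle rules out all but $d = 4g-4$; here one likely invokes that the only surface realizing the extremal configuration is the $2$-Veronese of a cone over an elliptic normal curve (anticipating Example~\ref{ex:biell}), which forces $q = 1$ and $\deg = 4(g-1)$.

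The main obstacle I anticipate is making the genus bound sharp enough to exclude $m \geq 3$ cleanly in the presence of the positive irregularity $q$ — one must be careful that extra sections coming from $H^0(\Gamma,\cdot)$ do not inflate $h^0(S',\O(C))$ in a way that weakens the inequality, and that the horizontal part of $C$ is handled correctly when $\Sigma = \F_e$ has a negative section that may or may not be contracted by $\varphi$. The cleanest route is probably to argue on the minimal model $\Sigma$ directly, write $C$'s image class there, and use that any $(-1)$-curves blown down by $f\colon S' \to \Sigma$ meet $C$ positively (by minimality of the desingularization), so that the estimate on $\Sigma$ only improves when pulled back to $S'$.
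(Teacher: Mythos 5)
Your plan points in the right general direction (a genus bound for multisections of an irregular ruled surface), but it never actually produces the inequality on which everything hinges, and as written the argument does not close. The paper's proof is a two-line application of a precise quantitative result of Hartshorne \cite[Theorem~(2.3)]{hartshorne-bound}: if $C$ is a smooth irreducible curve of genus $g$ on an \emph{irregular} ruled surface with $m=C\cdot\theta>1$, then $C^2\leq \frac{2m}{m-1}(g-1)$. Combined with $4(g-1)\leq d=C^2$ this gives $4\leq\frac{2m}{m-1}$, hence $m\leq 2$, hence $m=2$ (as $m=1$ is excluded by the C.~Segre theorem), and then the \emph{same} inequality with $m=2$ reads $d\leq 4(g-1)$, forcing $d=4(g-1)$. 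Your sketch only asserts a bound of the shape ``$g$ is bounded above by roughly $\tfrac{1}{2m}(d-\text{something})+q\cdot(\text{something})$'' without identifying the constants; this is where the whole content lies. In particular, the sign and size of the $q$-contribution matter: on rational ruled surfaces the analogous bound fails (a curve of bidegree $(2,k)$ on $\P^1\times\P^1$ has $C^2=4g+4$), so the hypothesis $q\geq 1$ must enter the estimate in an essential way, and your proposed derivation via $h^0(S',\O(C))$ and restriction to a fibre does not exhibit how. Until the inequality $C^2\leq\frac{2m}{m-1}(g-1)$ (or an equally sharp substitute) is actually established or correctly cited, the exclusion of $m\geq 3$ is not proved.

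For the second assertion, your route is both more complicated than necessary and logically dangerous. Once the bound above is available, $d=4(g-1)$ drops out immediately from $4(g-1)\leq d\leq \frac{2\cdot 2}{2-1}(g-1)=4(g-1)$; there is no need for linear normality, the structure of $|C|$ on the conic bundle, or any appeal to the bielliptic picture. Worse, invoking ``the only surface realizing the extremal configuration is the $2$-Veronese of a cone over an elliptic normal curve'' to pin down $d$ would be circular: that identification is the \emph{conclusion} of Proposition~\ref{thm:irreg}, whose proof begins with the present proposition, and the statement $q=1$ is only established later (Lemma~\ref{lem:crx}), after the adjoint-system analysis that itself uses $m=2$ and $d=4(g-1)$.
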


\begin{proof} We apply \cite[Theorem (2.3)]{hartshorne-bound} to the effect that if $C$ is a smooth, irreducible curve of genus $g$ on an irregular ruled surface $S'$  with $m>1$, then
\[
C^2\leq \frac {2m}{m-1}(g-1).
\]
Hence we have 
\[
4(g-1)\leq d=C^2\leq \frac {2m}{m-1}(g-1),
\]
and the assertion follows immediately. \end{proof}

\subsubsection{Passing to a minimal model}\label{ssec:red2}

Consider the image $\bar C$ of $C$ in $\Sigma$ via the map $f\colon S'
\to \Sigma$.  This is an irreducible curve, and since $m=2$ by
Proposition~\ref{prop:conic}, $\bar C$ may have at most double points
that can be proper or infinitely near. Let $h$ be the number of double
points of $\bar C$. One has
\[
\bar C^2=C^2+4h+\nu, \quad p_a(\bar C)=g+h,
\]
with $\nu$ the number of $(-1)$-curves $E$ contracted by $f$
and such that $C\cdot E = 1$.

By performing a sequence of elementary transformations on $\Sigma$
based at the double points of $\bar C$, we
produce a birational map
\[
\alpha\colon \Sigma \longdashrightarrow  \Sigma', 
\]
where $\Sigma'$ is still a $\P^1$-bundle  over $\Gamma$, the image
$C'$ of $\bar C$ via $\alpha$ is smooth, and
$C'^2=C^2+\nu=4(g-1)+\nu$.
Applying Proposition~\ref{prop:conic} to $C' \subset \Sigma'$,
one finds that $\nu=0$.

Abusing notation, we will still denote by $\theta$ the class of a fibre of the structure morphism $\varphi'\colon \Sigma'\to \Gamma$.

The pair $(S',C)$ is birational to the pair $(\Sigma',C')$; \ie there is a birational map $S'\dasharrow \Sigma'$ that maps $C$ to $C'$. Hence the image of $\Sigma'$ via the map $\phi_{|C'|}$ determined by the linear system $|C'|$ is the original surface $S$. So, rather than studying the linear system $|C|$ on $S'$, we may study the linear system  $|C'|$ on $\Sigma'$.

\subsubsection{The adjoint system}\label {ssec:adj}

Next we consider the \emph{adjoint linear system} $|K_{\Sigma'}+C'|$. 

Let $\restr {\varphi'}{C'}\colon C'\to \Gamma$ be the double cover, with
branch divisor $B$. By the Riemann--Hurwitz formula, 
we have
\[
g=2q-1+b, \quad \text{with}\ \deg(B)=2b.
\]

From the cohomology sequence of the exact sequence
\[
0\longrightarrow  \O_{\Sigma'}(K_{\Sigma'})\longrightarrow  \O_{\Sigma'}(K_{\Sigma'}+C')\longrightarrow  \O_{C'}(K_{C'})\longrightarrow 0,
\]
since 
\begin{align*}
&h^0(\Sigma', \O_{\Sigma'}(K_{\Sigma'}))=h^1(\Sigma',  \O_{\Sigma'}(K_{\Sigma'}+C'))=0, \\
          &h^1(\Sigma', \O_{\Sigma'}(K_{\Sigma'}))=q,  \\
          &h^0(C',  \O_{C'}(K_{C'}))=g,
          \end{align*}
we get
\[
\dim(|K_{\Sigma'}+C'|)=g-1-q=q-2+b.
\]
Moreover, $q-2+b\geq 0$ because we are assuming $q>0$ and $g\geq 2$. 

Note that 
$K_{\Sigma'}\cdot \theta=-2$, hence $(K_{\Sigma'}+C')\cdot \theta=0$,
so, since $K_{\Sigma'}+C'$ is effective,
we have the numerical equivalence $K_{\Sigma'}+C'\equiv k\theta$
for some integer $k$.

\begin{lemma}\label{lem:crx} In the above setting one has $q=1$. 
\end{lemma}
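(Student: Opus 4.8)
The plan is to read off the value of $q$ from a one-line intersection computation on the minimal model $\Sigma'$, all of the substantive preparation having already been carried out. Recall that $\Sigma' \to \Gamma$ is a geometrically ruled surface over a smooth curve of genus $q$, so that (by Noether's formula, using $\chi(\O_{\Sigma'}) = 1-q$ and topological Euler number $e(\Sigma') = 2(2-2q)$) one has $K_{\Sigma'}^2 = 8(1-q)$. Recall also from Section~\ref{ssec:red2} that $\nu = 0$ and hence $C'^2 = 4(g-1)$, with $C'$ smooth of genus $g$. By the adjunction formula on $\Sigma'$,
\[
2g-2 = C'^2 + K_{\Sigma'}\cdot C' = 4(g-1) + K_{\Sigma'}\cdot C',
\]
so that $K_{\Sigma'}\cdot C' = -2(g-1)$.

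Next I would use the numerical equivalence $K_{\Sigma'}+C' \equiv k\theta$ noted just above (which holds because $(K_{\Sigma'}+C')\cdot\theta = 0$, because $K_{\Sigma'}+C'$ is effective, and because of the ruled structure of $\Sigma'$). Since $\theta^2 = 0$ this gives $(K_{\Sigma'}+C')^2 = 0$, whence, expanding,
\[
0 = (K_{\Sigma'}+C')^2 = K_{\Sigma'}^2 + 2\,K_{\Sigma'}\cdot C' + C'^2 = 8(1-q) - 4(g-1) + 4(g-1) = 8(1-q).
\]
Therefore $q = 1$, which is the assertion of the lemma (note $q > 0$ is already assumed, so this is the only remaining possibility).

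There is essentially no obstacle here beyond assembling the ingredients: the real work lay in passing to $\Sigma'$, in establishing $m=2$, $\nu=0$ and $C'^2 = 4(g-1)$ via the Hartshorne bound (Proposition~\ref{prop:conic}), and in producing the class $k\theta$ — after which the lemma drops out of the numerical identity $K_{\Sigma'}^2 = 8(1-q)$ for ruled surfaces. Should one wish to avoid quoting that identity, an equivalent route is to write $\mathrm{Num}(\Sigma') = \Z C_0 \oplus \Z\theta$ with $C_0^2 = -\mathfrak{e}$, $C_0\cdot\theta = 1$, $\theta^2 = 0$, $K_{\Sigma'} \equiv -2C_0 + (2q-2-\mathfrak{e})\theta$ and $C' \equiv 2C_0 + a\theta$ (legitimate since $C'\cdot\theta = 2$), and then to compare the two resulting expressions for $C'^2$ and for $K_{\Sigma'}\cdot C'$ against $4(g-1)$ and $-2(g-1)$; a short elimination forces $\mathfrak{e} + (2q-2-\mathfrak{e}) = 0$, again giving $q=1$.
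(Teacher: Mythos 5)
Your proof is correct and is essentially the paper's own argument: both deduce $(K_{\Sigma'}+C')^2=0$ from the numerical equivalence $K_{\Sigma'}+C'\equiv k\theta$, expand the square using $C'^2=4(g-1)$ and adjunction to get $K_{\Sigma'}^2=0$, and conclude from $K_{\Sigma'}^2=8(1-q)$. The only difference is cosmetic (you isolate $K_{\Sigma'}\cdot C'=-2(g-1)$ first, while the paper groups the terms as $2(K_{\Sigma'}+C')\cdot C'-(C')^2$).
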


\begin{proof} We have 
\[
  0=(K_{\Sigma'}+C')^2
= K_{\Sigma'}^2 +2(K_{\Sigma'}+C')\cdot C' -(C')^2
=K_{\Sigma'}^2+4(g-1)-C'^2
=K_{\Sigma'}^2.
\]
On the other hand, $K_{\Sigma'}^2=8(1-q)$, and the assertion follows.
\end{proof}

\subsubsection{The classification}\label{ssec:class}

We may now finish the proof of Proposition~\ref{thm:irreg}. To do so,
we first identify the surface $\Sigma'$. We have $\Sigma'=\P(\mathcal
E)$, where $\mathcal E$ is a rank $2$ vector bundle on a curve
$\Gamma$ of genus 1. We can suppose that $\mathcal E$ is
\emph{normalized} (see \cite[Notation~V.2.8.1]{hartshorne}),
with invariant
$e$. We denote by $E$ the section such that $E^2=-e$. We have
\[
K_{\Sigma'}\equiv
-2E-e\theta
\]
by \cite[Corollary~V.2.11]{hartshorne} 
and
\begin{equation}\label{eq:ccc}
C'\equiv 2E+a\theta
\end{equation}
for some integer $a$.

One has
\begin{align*}
  2g-2=(K_{\Sigma'}+C')\cdot C'
= (a-e)\theta\cdot C'
    = 2(a-e),
\end{align*}
hence 
\begin{equation}\label{eq:aaa}
  a=g-1+e.
\end{equation}

\begin{lemma}\label{lem:dec} The vector bundle $\mathcal E$ is decomposable.
\end{lemma}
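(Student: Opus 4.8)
We work on $\Sigma' = \P(\mathcal E)$, a $\P^1$-bundle over the elliptic curve $\Gamma$ (genus $q=1$ by Lemma~\ref{lem:crx}), with $\mathcal E$ normalized of invariant $e$. We have $C' \equiv 2E + a\theta$ with $a = g-1+e$ by \eqref{eq:aaa}, and the adjoint system satisfies $K_{\Sigma'}+C' \equiv (a-e)\theta = (g-1)\theta$.

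The plan is to use the adjoint linear system $|K_{\Sigma'}+C'|$ to detect the splitting of $\mathcal E$. First I would observe that $|K_{\Sigma'}+C'| = |(g-1)E + ((g-1)e')\theta|$ type computations reduce, by the structure of linear systems on ruled surfaces, to understanding $h^0(\Gamma, \Sym^{g-1}(\text{something}))$. More precisely, $K_{\Sigma'}+C' \equiv (g-1)\theta$ is a multiple of the fibre class, so it is the pullback of a degree $(g-1)$ divisor on $\Gamma$; thus $\dim|K_{\Sigma'}+C'| = g-2$, which matches the computation $\dim|K_{\Sigma'}+C'| = q-2+b = b-1$ obtained earlier together with $g = 2q-1+b = b+1$, so $b = g-1$. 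This is all consistent but does not yet force decomposability.

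The real point is to analyze the morphism $\phi_{|K_{\Sigma'}+C'|}$, or rather to use the restriction of the adjoint to $C'$. Restricting the exact sequence $0 \to \O_{\Sigma'}(K_{\Sigma'}) \to \O_{\Sigma'}(K_{\Sigma'}+C') \to \O_{C'}(K_{C'}) \to 0$ and using $h^1(\O_{\Sigma'}(K_{\Sigma'})) = q = 1$, one gets that the adjoint restricts to $C'$ as a subsystem of $|K_{C'}|$ of codimension $1$ (the cokernel of $H^0(K_{\Sigma'}+C') \to H^0(K_{C'})$ injects into $H^1(K_{\Sigma'}) = \C$). Since $K_{C'} = (K_{\Sigma'}+C')|_{C'}$ is cut out by the fibres, and $C' \to \Gamma$ is the hyperelliptic double cover $\restr{\varphi'}{C'}$ with $K_{C'} = \restr{\varphi'}{C'}^*(\text{something})$, the adjoint image of $\Sigma'$ is a cone over $\Gamma$ — the fibres of $\varphi'$ all pass through a common point, i.e. $\varphi'$ has a section $E$ that is contracted, which happens precisely when $\mathcal E = \O_\Gamma \oplus \O_\Gamma(-e)D$ splits with the negative summand giving the contracted section. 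Concretely: $|K_{\Sigma'}+C'|$ being composed with the pencil $|\theta|$ exactly means $K_{\Sigma'}+C' = (\varphi')^*\mathfrak{d}$ for a divisor $\mathfrak{d}$ on $\Gamma$; then $|K_{\Sigma'}+C' + E|$ or an analysis of sections of $\mathcal E \otimes \mathcal E^\vee$ via Lemma~\ref{l:B-M}-type splitting criteria (applied to the tautological sequence on $\Sigma'$) forces $\mathcal E$ to split.

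The cleanest route, and the one I would actually carry out, is to apply Lemma~\ref{l:B-M} (the Beauville–M\'erindol criterion) directly to the normalization exact sequence of $\mathcal E$: $0 \to \O_\Gamma \to \mathcal E \to \det\mathcal E \to 0$, where $\det\mathcal E$ has degree $-e \leq 0$. One needs (i) the boundary map $H^0(\det\mathcal E) \to H^1(\O_\Gamma)$ vanishes and (ii) the multiplication $H^0(\det\mathcal E)\otimes H^0(\omega_\Gamma) \to H^0(\det\mathcal E \otimes \omega_\Gamma)$ is surjective; on an elliptic curve $\omega_\Gamma = \O_\Gamma$ so (ii) is trivially surjective, and (i) holds because on an elliptic curve $H^1(\O_\Gamma) \to H^1(\mathcal E^\vee)$ or the relevant Serre-dual statement about $\mathcal E$ normalized forces the extension class to vanish — indeed a normalized $\mathcal E$ of invariant $e>0$ with $H^0(\mathcal E) \neq 0$ and $\det\mathcal E$ of degree $-e < 0$ has the trivial sub-line-bundle as a direct summand unless $\mathcal E$ is the unique indecomposable extension, which Atiyah's classification allows only for $e = 0$. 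Thus the main obstacle is bookkeeping: pinning down which normalized rank-$2$ bundles on an elliptic curve are indecomposable (Atiyah: only $e=0$, the Atiyah bundle, or $e=-1$ which is excluded here since $e \geq 0$) and checking that the indecomposable $e=0$ case is incompatible with the remaining numerical data for $C'$ — or, alternatively, noting it is handled separately. So the proof reduces to: invoke Atiyah's classification of bundles on elliptic curves to list the indecomposable candidates, then eliminate them via the geometry of $|C'|$ (the resulting surface would not have $C$ as a non-special linearly normal section of the right degree $4(g-1)$, or would be a cone, contradicting our standing hypotheses).
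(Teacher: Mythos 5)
There is a genuine gap. Your main tool --- applying Lemma~\ref{l:B-M} to the normalization sequence $0 \to \O_\Gamma \to \mathcal E \to \det\mathcal E \to 0$ --- only disposes of the cases where the splitting is automatic anyway: when $\deg(\det\mathcal E)=-e<0$, or when $e=0$ with $\det\mathcal E$ a non-trivial degree-$0$ bundle, one has $H^0(\det\mathcal E)=0$, both hypotheses of Lemma~\ref{l:B-M} hold vacuously, and indeed $\Ext^1(\det\mathcal E,\O_\Gamma)=H^1((\det\mathcal E)^\vee)=0$, so there was nothing to prove. For the genuinely indecomposable candidates the criterion fails exactly where it matters: the boundary map $H^0(\det\mathcal E)\to H^1(\O_\Gamma)$ is cup product with the extension class, and by Serre duality it is \emph{non-zero} precisely when the extension is non-split (the Atiyah bundle for $e=0$, and the indecomposable bundle with $e=-1$). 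So Lemma~\ref{l:B-M} cannot eliminate these two cases, and eliminating them is the entire content of Lemma~\ref{lem:dec}; your proposal defers this to ``the geometry of $|C'|$'' without supplying an argument.

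Two further points. First, you discard $e=-1$ on the grounds that $e\geq 0$, but that inequality is a consequence of decomposability (\cite[Theorem~V.2.12]{hartshorne}) --- i.e.\ of the statement being proved --- whereas for indecomposable normalized bundles on an elliptic curve \cite[Theorem~V.2.15]{hartshorne} allows both $e=0$ and $e=-1$; the $e=-1$ case must be, and in the paper is, treated. Second, the missing elimination step is cohomological, not numerical: in the paper one shows $h^1(\Sigma',\O_{\Sigma'}(C'))=0$ in both indecomposable cases (by Kawamata--Viehweg vanishing for $e=0$, since $C'-K_{\Sigma'}\equiv 4E+(g-1)\theta$ is big and nef, and by \cite[Theorem~1.17]{CaCi} for $e=-1$), whence the long exact sequence of $0\to\O_{\Sigma'}\to\O_{\Sigma'}(C')\to\O_{C'}(C')\to 0$ forces the restriction map $H^0(\O_{\Sigma'}(C'))\to H^0(\O_{C'}(C'))$ to have corank exactly $h^1(\O_{\Sigma'})=q=1$, contradicting the linear normality of $C$. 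Your write-up gestures at ``linearly normal section of the right degree'' but never identifies this vanishing, which is the crux.
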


\begin{proof} Suppose towards a contradiction that $\mathcal E$ is
  indecomposable. Then one has $-1\leq e\leq 0$ by \cite[Theorem~V.2.15]{hartshorne}. If $e=0$, then by  \eqref {eq:aaa}, we have
\begin{equation}\label{eq:bbb}
C'-K_{\Sigma'}\equiv 4E+(g-1)\theta,
\end{equation}
which is big and nef. Then, by Kawamata--Viehweg vanishing, one has $h^1(\Sigma', \O_{\Sigma'}(C'))=0$. But then from the cohomology sequence of the exact sequence
\[
0\longrightarrow \O_{\Sigma'}\longrightarrow  \O_{\Sigma'}(C')\longrightarrow  \O_{C'}(C')\longrightarrow 0,
\]
we see that the restriction map
\[
H^0(\Sigma',\O_{\Sigma'}(C') )\longrightarrow H^0(\Sigma',\O_{C'}(C'))
\]
has corank $h^1(\Sigma',\O_{\Sigma'})=1$, so it is not surjective, which in turn implies that $C$ is not linearly normal, so we have a contradiction.

In the case $e=-1$ one has
$h^1(\Sigma', \O_{\Sigma'}(C'))=0$
by \cite[Theorem~1.17]{CaCi}, and then one concludes as in the previous
case. 
\end{proof}

We can now finish the proof. 

\begin{proof}[Proof of Proposition~\ref{thm:irreg}]
From Lemma~\ref{lem:dec}, we have that
\[
\mathcal E=\O_{\Gamma}\oplus \L, 
\]
where $\L$ is a line bundle of degree $-e\leq 0$, so that $e\geq 0$ (see \cite[Theorem V.2.12]{hartshorne}). We want to compute $h^1(\Sigma', \O_{\Sigma'}(C'))$. To do so, consider again the structure morphism $\varphi'\colon \Sigma'=\P(\mathcal E)\to \Gamma$, and note that 
\begin{align*}
  h^1(\Sigma', \O_{\Sigma'}(C'))
  = h^1(\Gamma, \varphi'_* \O_{\Sigma'}(C')).
\end{align*}
From \eqref {eq:ccc} and \eqref {eq:aaa}, we have
\begin{equation}
\label{eq:forzaNapoli}
\varphi'_* \O_{\Sigma'}(C')=\Sym^2(\mathcal E)\otimes \mathcal D
\end{equation}
where $\mathcal D$ is a line bundle of degree $g-1+e$ on $\Gamma$. One has
\[
\Sym^2(\mathcal E)=\O_\Gamma\oplus \L\oplus \L^{\otimes 2},
\]
hence
\[
\Sym^2(\mathcal E)\otimes \mathcal D=\mathcal D \oplus (\mathcal D\otimes \L)\oplus (\mathcal D \otimes \L^{\otimes 2})
\]
with 
\[
\deg (\mathcal D\otimes \L)=g-1, \quad \deg (\mathcal D \otimes \L^{\otimes 2})=g-1-e.
\]
Therefore,
\[
h^1(\Sigma', \O_{\Sigma'}(C'))=h^1(\Gamma, \mathcal D)+h^1(\Gamma,\mathcal D\otimes \L)+h^1(\Gamma, \mathcal D \otimes \L^{\otimes 2})=h^1(\Gamma, \mathcal D \otimes \L^{\otimes 2}).
\]
By the same argument we made in the proof of Lemma~\ref{lem:dec}, we must have 
$h^1(\Sigma', \O_{\Sigma'}(C'))>0$. So we must have $h^1(\Gamma, \mathcal D \otimes \L^{\otimes 2})>0$, hence $ g-1-e =\deg (\mathcal D \otimes \L^{\otimes 2})\leq 0$ and $e\geq g-1$. On the other hand, by \eqref {eq:ccc} and \eqref {eq:aaa},  we have
\[
C'\cdot E=(2E+(g-1+e)\theta)\cdot E=g-1-e,
\]
hence $e\leq g-1$, thus $e=g-1$, and from $h^1(\Gamma, \mathcal D \otimes \L^{\otimes 2})>0$, we deduce that
\[
\mathcal D \otimes \L^{\otimes 2}\cong \O_\Gamma, \quad \ie\;\; \mathcal D\cong 
{(\L^\vee)}^{\otimes 2}.
\]

On the other hand, let us compute the linear equivalence class of
$C'$. By \eqref {eq:ccc},
\[
  \O_{\Sigma'}(C')=\O_{\Sigma'}(2E)\otimes \varphi'^* \mathcal{M}
\]
for some line bundle $\mathcal{M}$ on $\Gamma$,
hence
$\varphi'_* \O_{\Sigma'}(C')=\Sym^2 \mathcal{E} \otimes \mathcal{M}$.
Then \eqref{eq:forzaNapoli} implies that
$\mathcal{M} = \mathcal{D}$.
The upshot is that
\[
\O_{\Sigma'}(C')=\O_{\Sigma'}(2E)\otimes \varphi'^*({(\L^\vee)}^{\otimes 2})=\mathcal A^{\otimes 2}, 
\]
where we set
\[
\mathcal A=\O_{\Sigma'}(E)\otimes \varphi'^*{(\L^\vee)}.
\]
The map $\phi_\mathcal A$ determined by the line bundle $\mathcal
A$ maps $\Sigma'=\P(\O \oplus \mathcal L)$ to a cone $X\subset \P^{g-1}$ over
the elliptic normal  curve of degree $g-1$  in $\P^{g-2}$ which is the
image of $\Gamma$ via the map $\phi_{\L^\vee}$. In this map the
curve $C'$ is mapped to a quadratic section of $X$. This implies that
we are in the bicanonical bielliptic case.
\end{proof}

\subsection{The rational case}\label{sec:rat}

In this subsection we finish the proof of Theorem~\ref{t:hideg}
by considering the case in which the surface $S$ is rational.
We will thus prove the following. 

\begin{proposition}\label{thm:rat} 
  Let $S$ be as in Theorem~\ref{t:hideg} with $q=0$; \ie $S$ is
  rational.  Then $S$ presents either the planar case $($see
  Example~\ref{ex:plane}\,$)$ or the bicanonical Del Pezzo case $($see
  Example~\ref{ex:delp}\,$)$ or the hyperelliptic case $($see
  Example~\ref{ex:hyperell}\,$)$ or the trigonal case $($see
  Example~\ref{ex:trig}\,$)$.
\end{proposition}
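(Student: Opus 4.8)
The plan is to mirror the structure of the irregular case, replacing the Hartshorne-bound step by an analysis of the adjoint system of $C$ on $S'$ tuned to the rational setting. Recall that $S'$ is rational with minimal model $\Sigma$ either $\P^2$ or a Hirzebruch surface $\F_e$ ($e\ge 0$, $e\ne 1$), and that $C\subset S'$ has $C^2=d\ge 4g-4$ and arithmetic genus $g\ge 2$. Consider the adjoint system $\mathcal A:=|K_{S'}+C|$. From
\[
0\lra\O_{S'}(K_{S'})\lra\O_{S'}(K_{S'}+C)\lra\omega_C\lra 0
\]
and $h^0(\O_{S'}(K_{S'}))=h^1(\O_{S'}(K_{S'}))=0$ (rationality of $S'$), one gets $h^0(K_{S'}+C)=g$, so $\dim\mathcal A=g-1\ge 1$, and moreover the restriction $H^0(K_{S'}+C)\to H^0(\omega_C)$ is an isomorphism; hence the rational map $\psi\colon S'\dashrightarrow\P^{g-1}$ defined by $\mathcal A$ restricts on $C$ to the canonical map $\kappa_C$. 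Since $h^0(K_{S'})=0$, the curve $C$ is not a fixed component of $\mathcal A$, so writing $K_{S'}+C\lineq M+V$ with $M$ the moving part and $V\ge 0$ the fixed part, one still has $\dim|M|=g-1$ and $\psi=\phi_{|M|}$. The relevant numerics are $(K_{S'}+C)\cdot C=2g-2$, $K_{S'}\cdot C=2g-2-d\le-(2g-2)$, and $(K_{S'}+C)^2=K_{S'}^2+(4g-4-d)\le K_{S'}^2\le 9$.

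Next I would stratify according to the dimension of $T:=\overline{\psi(S')}\subset\P^{g-1}$. The case $\dim T=0$ is impossible, as $\psi|_C=\kappa_C$ is non-constant. If $\dim T=1$, then $T$ is an irreducible, non-degenerate, rational curve (since $S'$ is rational) of degree $g-1$ in $\P^{g-1}$, hence a rational normal curve; $\kappa_C$ is then $2{:}1$ onto $T$, so $C$ is hyperelliptic, and $\psi$ exhibits $S'$ as fibred in rational curves over $\P^1$, with general fibre $\Phi$ meeting $C$ in $\Phi\cdot C=\deg\kappa_C=2$ points. From this point I would repeat the reduction of Subsections~\ref{ssec:red2}--\ref{ssec:class} essentially verbatim: elementary transformations centred at the double points of the image of $C$ on a relatively minimal model turn it into a smooth bisection $C'$ of some $\F_{e'}$, and then adjunction on $\F_{e'}$ together with $C'^2=d\in[4g-4,4g+4]$ forces $C'\in|2H+kF|$, i.e. $S$ is as in Example~\ref{ex:hyperell} and we are in case~\eqref{ghd:hyperell}.

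The heart of the proof is the case $\dim T=2$, where $\psi$ is generically finite of some degree $\delta\ge 1$ onto a non-degenerate surface $T\subset\P^{g-1}$. After clearing the base locus of $\mathcal A$ on a blow-up $\tilde S\to S'$, the moving part $\tilde M$ becomes base-point-free, so $\deg T=\tilde M^2/\delta$, and the inequality $\deg T\ge g-2$ (minimal degree of a non-degenerate surface in $\P^{g-1}$) together with the relation $2g-2=\deg\psi(C)=\deg T+\deg\psi_*V$ confines $\deg T$ to the narrow range $[\,g-2,\,2g-2\,]$, so $g$ is bounded. I would then split once more. If $C$ is hyperelliptic, $\kappa_C$ factors through the $g^1_2$, $\psi|_C$ has degree $2$, and by examining which small-degree surfaces $T$ carry the resulting rational normal curve one is led again to Example~\ref{ex:hyperell}. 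If $C$ is non-hyperelliptic (so $g\ge 3$), then $\kappa_C$ is birational onto the canonical curve; from $\deg\psi(C)=2g-2=M\cdot C-V\cdot C\le(K_{S'}+C)\cdot C=2g-2$ one gets $V\cdot C=0$ and $M\cdot C=2g-2$, so $\Gamma:=\psi(C)$ is the linearly normal canonical curve of degree $2g-2$ sitting on $T$ as a section of degree at most $3$ times a hyperplane. Matching this with the classification of surfaces of small degree in $\P^{g-1}$ --- rational normal scrolls, the Veronese $v_2(\P^2)$, (weak) Del Pezzo surfaces and their $2$-Veronese images, and the plane models $v_\delta(\P^2)$ with their simple internal projections --- and reconstructing $(S,C)$ from $(T,\Gamma)$ by re-adding the fixed part $V$ and the $\psi$-contracted curves, one lands exactly in Examples~\ref{ex:plane}, \ref{ex:delp} and \ref{ex:trig}: the trigonal case corresponds to $T$ a scroll of minimal degree $g-2$ carrying $\Gamma\in|3\cH-(g-4)F|$, whence the bound $g\le 10$ via $d\ge 4g-4$; the Del Pezzo and the $\delta=6$ planar families appear precisely at the boundary value $d=4g-4$ (bicanonical sections), where $|2K_{S'}+C|$ restricts trivially to $C$; and the remaining plane models give the planar case.

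I expect the main obstacle to be this last step: pinning down the degree $\delta$ of the adjoint map (showing $\delta=1$, or that $\delta=2$ forces $S$ to be a $2$-Veronese of one of the listed surfaces), controlling the fixed part $V$ and the $\psi$-exceptional curves, and then carrying out the case-by-case reconstruction of $(S,C)$ from the low-degree surface $T$ and its canonical-curve section while keeping track of the simple, possibly infinitely near, base points along a curvilinear scheme --- these base points of $\mathcal A$ are exactly what produce the ``linear subsystem'' phrasing in Theorem~\ref{t:hideg} --- and verifying that the internal projections involved keep the degree within $[4g-4,4g+4]$.
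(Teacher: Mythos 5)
Your setup (the adjoint system $|K_{S'}+C|=F+|M|$ with $\dim|M|=g-1$, restricting to the canonical series on $C$ with $C\cdot F=0$) and your treatment of the case where the adjoint map has $1$-dimensional image coincide with the paper's: there the curves of $|C|$ are hyperelliptic, the fibres are conics, and elementary transformations land you in Example~\ref{ex:hyperell}. The problem is the case $\dim T=2$, which you yourself flag as "the main obstacle" and leave as a plan. This is precisely where the paper's proof does its real work, and it does so by a mechanism absent from your proposal: it considers $|C-M|\sim -K_{S'}+F$, shows $\dim|C-M|\geq g-2+\varepsilon$, treats the equality case directly (giving the planar $\delta=4,5$ and trigonal cases via a minimal-degree surface $Y\subseteq\P^{g-1}$), and in the remaining case proves that $F$ sits in the fixed part of $|C-M|$ (Lemma~\ref{lem:fixo}), so that $\dim|-K_{S'}|\geq g-1$. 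It is this last inequality, combined with $\dim|-K_{S'}|\leq 9$ for a rational surface, that yields the bound $g\leq 10$; and the decomposition $|-K_{S'}|=A+|B|$ with $A\cdot B=2$ (Lemma~\ref{l:antican-rat}) is what produces the clean trichotomy $A=0$ (bicanonical Del Pezzo) versus $A\neq 0$ with $\phi_{|B|}$ onto $\P^2$, $v_2(\P^2)$, or a scroll (planar with $b=8$ or $5$, and trigonal). Your substitute --- classifying the pairs $(T,\Gamma)$ with $\Gamma$ a canonical curve on a low-degree rational surface $T$ and "reconstructing" $(S,C)$ --- is not carried out, and without an a priori bound on $\deg T$ and on the degree $\delta$ of the adjoint map it is not clear it can be: a Hodge-index computation ($M^2 C^2\leq (M\cdot C)^2=(2g-2)^2$ with $C^2\geq 4g-4$, giving $\delta\cdot\deg T\leq M^2\leq g-1$) would be needed, and you do not perform it.

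Two intermediate claims are also wrong as stated. The relation $2g-2=\deg\psi(C)=\deg T+\deg\psi_*V$ is not a valid identity (the degree of a curve lying on a surface is not the degree of the surface plus a correction), and the conclusion you draw from it, "so $g$ is bounded", does not follow: $\deg T\in[g-2,2g-2]$ places no bound on $g$. Second, in the sub-case "$\dim T=2$ and $C$ hyperelliptic" you assert one is "led again to Example~\ref{ex:hyperell}"; in fact the paper proves (Lemma~\ref{lem:nonhyp}) that this sub-case cannot occur at all, by a genuine argument: the closure of the conjugate points of a general $p$ under the $g^1_2$ in a general pencil of $|C|$ through $p$ would have to be a member of $|M|$, yet it cannot cut a canonical divisor on $C$, the key input being a monodromy argument on the base points of the pencil together with $d\geq 4g-4$. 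You supply no argument for this step, and it is not a formality.
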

 
The proof will consist of various steps that we will carry out in the
next subsections. 
We consider the adjoint system 
$|C+K_{S'}|$, which we write
\[
|C+K_{S'}|=F+|M|,
\]
where $F$ is the fixed part and $|M|$ the movable part, with $\dim (|M|)=g-1$. Since $|C+K_{S'}|$  cuts the complete canonical linear series on $C$, we have $C\cdot F=0$. 

There are two cases to be considered:
\begin{a-enumerate}
\item  \label{case:he}  
 $|M|$ is composed with a pencil $|\Phi|$,
including the case $g=2$ in which $|M|$ itself is a pencil.
\item \label{case:nonhe}
The general curve in $|M|$ is irreducible, and
$\dim(|M|)\geq 2$, hence $g\geq 3$.
\end{a-enumerate}

In case \eqref{case:he} the curves in $|C|$ are hyperelliptic and
$C\cdot \Phi=2$.
So the curves in $|\Phi|$ are mapped to conics on $S \subset \P^{r+1}$
and therefore are rational.

\subsubsection {The hyperelliptic case}
\label{ghd:hell}

\begin{proof}%
[Proof of Proposition~\ref{thm:rat}  in case \eqref{case:he}]  
There is a birational morphism $\xi\colon S'\to \F_e$ such that the pencil
$|\Phi|$ of rational curves is mapped to the system $|F|$ of fibres of
the structure morphism $\F_e\to \P^1$. Then the linear system  $|C|$
is mapped to a linear system of curves of  type $|2H+kF|$
and, by acting if necessary with elementary transformations, we may assume
that the general curve in this system is smooth of genus $g$.  The
adjoint system to $|2H+kF|$ is $|(k+e-2)F|$,
and therefore $g=k+e-1$. Since  $g\geq 2$, we must have $k\geq
3-e$. Moreover, since $k=(2H+kF)\cdot E\geq 0$, we must also have
$k\geq 0$. Thus we are in the hyperelliptic case.
\end{proof}

\subsubsection{The non-hyperelliptic case}

Next we consider case \eqref{case:nonhe}.
In particular, $g\geq 3$.

\begin{lemma}\label{lem:nonhyp} In case \eqref{case:nonhe}
  the general curve in $|C|$ is not hyperelliptic.
\end{lemma}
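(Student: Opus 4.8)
The plan is to argue by contradiction: assume the general curve $C\in|C|$ is hyperelliptic and derive an absurdity. Since $|C+K_{S'}|$ cuts out the complete canonical series on $C$ and $C\cdot F=0$, the movable part $|M|$ restricts on $C$ to the complete system $|K_C|$; hence $\phi_{|M|}$ restricts on $C$ to the canonical map of $C$, which, $C$ being hyperelliptic, is a degree-$2$ map onto a rational normal curve $\Gamma\subset\P^{g-1}$ of degree $g-1$. In particular $|M|$ is base-point-free along $C$, so $M\cdot C=\deg(K_C)=2g-2$; and, since we are in case \eqref{case:nonhe}, the image $\Sigma_0:=\overline{\phi_{|M|}(S')}$ is a non-degenerate surface in $\P^{g-1}$ (were it a curve, $|M|$ would be composed with a pencil, \ie we would be in case \eqref{case:he}), so that $\deg(\Sigma_0)\ge g-2$ and $M^2>0$.

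The next step is to squeeze out numerical constraints. As $C$ is nef and big with $C^2=d\ge 4g-4$, and $M$ is nef, the Hodge index theorem gives $M^2(4g-4)\le M^2C^2\le(M\cdot C)^2=(2g-2)^2$, hence $M^2\le g-1$. Resolving the base points of $|M|$ and running the usual bookkeeping (the moving part $M'$ of the pulled-back system satisfies $M'^2\le M^2$ and $M'^2=\mu\deg(\Sigma_0)$, where $\mu$ is the degree of $\phi_{|M|}$ onto $\Sigma_0$), one gets $\mu(g-2)\le\mu\deg(\Sigma_0)\le M^2\le g-1$; therefore $\mu=1$ if $g\ge 4$, while $\mu\in\{1,2\}$ if $g=3$. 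If $\mu=1$, then after resolving the base points $\phi_{|M|}$ is a birational morphism onto $\Sigma_0$; since $|M|$ is base-point-free along $C$ this resolution is an isomorphism near $C$, so the strict transform of $C$ still maps with degree $2$ onto $\Gamma$. But a birational morphism of smooth projective surfaces is an isomorphism over a dense open subset of $\Sigma_0$, hence one-to-one over a general point of $\Gamma$ — a contradiction.

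The only remaining case, $g=3$ with $\mu=2$, is the one I expect to require the most work. Then $\deg(\Sigma_0)=1$, so $\Sigma_0=\P^2$, $M^2=2$, $|M|$ is base-point-free, and $\phi_{|M|}\colon S'\to\P^2$ is a finite double cover; moreover Hodge index now forces $2d\le(M\cdot C)^2=16$, so $d=8$, and the equality case gives $C\equiv 2M$, hence $C\sim 2M$ on the rational surface $S'$. As $S'$ is smooth the branch curve $B\subset\P^2$ of $\phi_{|M|}$ is smooth, of some even degree $2k$; from $K_{S'}\sim\phi_{|M|}^{*}\O_{\P^2}(k-3)$ and the rationality of $S'$ one reads off $k\le 2$, and $k=1$ is excluded because it would give $C\sim-K_{S'}$, whence $p_a(C)=1\ne 3$. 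Thus $k=2$: the surface $S'$ is a Del Pezzo surface of degree $2$, $M\sim-K_{S'}$, and $C\in|-2K_{S'}|$. It then suffices — and this is the crux — to observe that the general member of $|-2K_{S'}|$ is not invariant under the covering involution of $\phi_{|M|}$, its defining section genuinely involving the anti-invariant section $w$ of $-2K_{S'}$ with $w^{2}$ pulled back from the equation of $B$; hence it maps birationally onto a plane curve of degree $C\cdot M=4$, which, having geometric genus $3$, must be a smooth plane quartic, and is therefore non-hyperelliptic. This contradicts the hyperellipticity of the general $C$, and the lemma follows.
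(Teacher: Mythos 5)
Your argument takes a genuinely different route from the paper's, which fixes a general point $p$, considers the curve $D$ swept out by the points conjugate to $p$ on the members of a general pencil in $|C|$ through $p$, and uses a monodromy argument on the base locus of that pencil to show that $D$ cannot cut out a canonical divisor, hence cannot be the member of $|M|$ through $p$. Your strategy --- Hodge index plus an analysis of the degree of $\phi_{|M|}$ --- is natural and most of the bookkeeping is correct, but the decisive step in the main case $\mu=1$ has a genuine gap. From the birationality of the resolved map onto $\Sigma_0$ you conclude that it is one-to-one over a general point of $\Gamma=\phi_{|M|}(C)$. This does not follow from being an isomorphism over a dense open subset of $\Sigma_0$: the curve $\Gamma$ could lie entirely in the complement of that open set. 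Indeed $\Sigma_0$ is merely the image of a linear system; it is in general singular, and when $\deg(\Sigma_0)=g-1$ it need not even be normal, in which case a birational morphism from a smooth surface can be $2:1$ over a curve (think of the normalization of a surface with a double curve, e.g.\ a suitable projection of a minimal-degree surface). So you must rule out that $\Gamma$ lies in the non-normal locus of $\Sigma_0$. This can be done: if $\Gamma\subset\Sing(\Sigma_0)$, a general hyperplane section of $\Sigma_0$ would be an integral non-degenerate curve of degree at most $g-1$ in $\P^{g-2}$ with at least $\deg(\Gamma)=g-1\geq 2$ singular points, contradicting the bound $p_a\leq 1$ for such curves; once $\Gamma\not\subset\Sing(\Sigma_0)$, Zariski's main theorem at a general (hence normal) point of $\Gamma$ gives finite connected, hence singleton, fibres there, contradicting the $2:1$ behaviour of $C\to\Gamma$. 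Some such argument is needed, and it is the crux of your proof.

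Two smaller points in the residual case $g=3$, $\mu=2$: the morphism $\phi_{|M|}\colon S'\to\P^2$ need not be finite (it may contract curves $E$ with $M\cdot E=0$), so the branch curve of its Stein factorization need not be smooth, and ``rationality forces $k\leq 2$'' is not valid as stated (rational double planes branched along singular sextics exist). The conclusion is nevertheless recoverable without these claims: one has $h^0(S',C)=7$, while $\phi_*\O_{S'}(C)=\O_{\P^2}(2)\oplus\O_{\P^2}(2-k)$, which forces $k=2$; the resulting one-dimensional anti-invariant summand then shows, exactly as you say, that the general member of $|C|$ maps birationally onto a plane quartic of geometric genus $3$, hence is a smooth plane quartic and non-hyperelliptic. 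With the gap above filled and this case cleaned up, your proof would be complete.
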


\begin{proof} Suppose towards a contradiction that the general curve in $|C|$
  is hyperelliptic. 
Let $p$ be a general point of $S'$. Let us consider a general pencil
$\mathcal P$ of curves in $|C|$ having $p$ as base point. If $C$ is a
general curve in $\mathcal P$ and $q$ is the point conjugate to $p$ in
the $g^1_2$ on $C$, then by the generality of $\mathcal P$, we can
assume that $q$ is not a base point of $\mathcal P$.  Consider the
Zariski closure $D$ in $S'$ of the set of points $q\in S'$ such that
$p+q$ is a divisor of the $g^1_2$ on the curves of $\mathcal P$; 
\ie
\[
  D =\overline
  {
    \bigcup _{C \in \mathcal{P}^0}
    \left\{ q: p+q \in g^1_2 (C) \right\},
  }
\]
with $\mathcal P^0 \subset \mathcal P$ the Zariski dense open subset
parametrizing smooth members of $\mathcal{P}$.
Then $D$ is a (rational) curve on $S'$.

Any curve of $|M|$ passing through $p$ (which, by the generality of
$p$, is a general curve of $|M|$) contains $D$ and therefore
coincides with $D$.  On the other hand, we claim that $D$ does not cut
out a canonical divisor on~$C$; hence it cannot be a member of $|M|$,
and we have a contradiction.

It thus only remains to prove the claim.
Let $m$ be the multiplicity of
$p$ in $D$. 
Then
\[
  D \cdot C = mp+q+R,
\]
where $R$ is contained in $\mathrm{Bs}(\mathcal{P})-p$, with
$\mathrm{Bs}(\mathcal{P})$ the base locus of $\mathcal{P}$.  If $R=0$,
the claim holds (recall that $g\geq 3$).  Otherwise, $R$ must contain
all $d-1$ points of $\mathrm{Bs}(\mathcal{P})-p$ since by the
generality of $\mathcal{P}$, there is a monodromy action on
$\mathrm{Bs}(\mathcal{P})-p$, and it acts as the full symmetric group
by \cite[Section~III.1, pp.~111--113]{ACGH}.
Then the claim follows as $d \geq 4g-4$.
\end{proof}

One has $C\cdot M=2g-2$.
Hence for an irreducible curve $M$ to be contained in a curve in $|C|$
is at most $2g-1$ conditions, and equality holds if and only if $M$ is
smooth and rational and the restriction of $|C|$ to $M$ is a complete
linear series. We thus have
\[
\dim (|C-M|)\geq 3g-3+\varepsilon-(2g-1)=g-2+\varepsilon\geq 1,
\]
where $\epsilon$ is the non-negative integer such that
$C^2 = 4g-4+\epsilon$.

\begin{proof}[Proof of Proposition~\ref{thm:rat}  in the case $\dim
  (|C-M|)=g-2+\varepsilon$]
In this case the general curve in $|M|$ is smooth and rational. Since $\dim (|M|)=g-1$, we have $M^2=g-2$, $|M|$ is base-point-free, and $\phi_{|M|}$ is a morphism mapping $S'$ to a surface $Y\subseteq \P^{g-1}$. 
In this map the curves $C$ are mapped to canonical curves of degree
$2g-2$. Since $Y$ has rational hyperplane sections, we have only the
following possibilities: 
\begin{a-enumerate}
\item\label{pft:r-a}  $g=3$, and $Y$ is $\P^2$.
\item\label{pft:r-b}  $g=6$, and $Y$ is the $2$-Veronese image of $\P^2$.
\item\label{pft:r-c}  $Y$ is a rational normal scroll.
\end{a-enumerate}

In cases~\eqref{pft:r-a} and~\eqref{pft:r-b} we are in the planar case
with $\delta=4$ and $\delta=5$, respectively. In case~\eqref{pft:r-c}
we are in the trigonal case.
\end{proof}

Next we assume $s:=\dim (|C-M|) \geq g-1+\varepsilon$.
Recall that 
\[
C-M \sim -K_{S'}+F.
\]

\begin{lemma}\label{lem:fixo}
In the above setting, $F$ is in the fixed part of\, $|C-M|$.
\end{lemma}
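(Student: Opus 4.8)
The plan is to show that the standing hypothesis $s\geq g-1+\varepsilon$ essentially determines the situation, and then to recast the conclusion as a cohomological vanishing on $S'$.

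\emph{Step 1: reduce to $\varepsilon=0$ and $s=g-1$.} Restrict $|C-M|$ to $C$. Since $M$ is a nonzero effective divisor, $h^0(\O_{S'}(-M))=0$, so $0\to\O_{S'}(-M)\to\O_{S'}(C-M)\to\O_C\big((C-M)|_C\big)\to 0$ gives $h^0(C-M)\leq h^0\big(C,(C-M)|_C\big)$, where $(C-M)|_C$ has degree $(C-M)\cdot C=C^2-M\cdot C=2g-2+\varepsilon$. If $\varepsilon\geq 1$ this bundle is nonspecial, whence $h^0=g-1+\varepsilon$ and $s\leq g-2+\varepsilon$, against the hypothesis; hence $\varepsilon=0$. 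Then $h^0\big(C,(C-M)|_C\big)\leq g$ forces $s=g-1$, $(C-M)|_C\cong K_C$, and an isomorphism $H^0(C-M)\xrightarrow{\ \sim\ }H^0(C,K_C)$. (Incidentally $C|_C\cong 2K_C$, so one is heading towards the bicanonical Del Pezzo case of Example~\ref{ex:delp}.)

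\emph{Step 2: reformulate as a vanishing.} Since $C$ is irreducible with $C\cdot F=0$, the divisor $F$ is disjoint from $C$, so $-K_{S'}=C-M-F$ restricts to $(C-M)|_C\cong K_C$; as $F$ is effective, $F$ lies in the fixed part of $|C-M|$ if and only if $h^0(-K_{S'})=h^0(C-M)=g$. From $0\to\O_{S'}\big(-(C+K_{S'})\big)\to\O_{S'}(-K_{S'})\to\O_C(K_C)\to 0$ (using $-K_{S'}-C=-(M+F)$, not effective) this is equivalent to surjectivity of restriction to $C$, for which it suffices that $H^1\big(S',-(C+K_{S'})\big)=0$. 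By Serre duality, and since $C+K_{S'}\sim M+F$, this is $H^1(S',K_{S'}+M+F)=0$; and the sequence $0\to\O_{S'}(K_{S'})\to\O_{S'}(K_{S'}+M+F)\to\omega_{M+F}\to 0$, with $h^1(\O_{S'}(K_{S'}))=q=0$ and $h^2(\O_{S'}(K_{S'}+M+F))=h^0\big(-(M+F)\big)=0$, reduces the vanishing to $h^0(\O_{M+F})=1$, \ie to the $1$-connectedness of $D:=M+F$.

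\emph{Step 3: the crux --- $1$-connectedness of $M+F$.} Take a decomposition $D=A+B$ into nonzero effective divisors, with the irreducible curve $M$ appearing in $A$; then $B$ is supported on the components of $F$. For any component $\Gamma$ of $F$ one has $C\cdot\Gamma=0$, hence $\Gamma^2<0$ by the Hodge index theorem, hence $K_{S'}\cdot\Gamma\geq 0$ by adjunction (as $\Gamma$, being $C$-contracted with $\Gamma^2<0$, is not a $(-1)$-curve, by minimality of the resolution); moreover $M\cdot\Gamma=(M+F)\cdot\Gamma-F\cdot\Gamma=(C+K_{S'})\cdot\Gamma-F\cdot\Gamma=K_{S'}\cdot\Gamma-F\cdot\Gamma$. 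Summing over the components of $B$ and substituting,
\[
A\cdot B=(M+F-B)\cdot B=M\cdot B+F\cdot B-B^2=K_{S'}\cdot B-B^2\geq 1,
\]
since $K_{S'}\cdot B\geq 0$ and $B$, being a nonzero effective divisor orthogonal to the nef and big class $C$, has $B^2\leq-1$. Hence $D$ is $1$-connected, $h^0(\O_D)=1$, and the lemma follows. (If $F=0$ the statement is trivial.)

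I expect Step 3 to be the only real obstacle: one must make sure the intersection-theoretic bookkeeping is valid for every decomposition and every configuration of $F$, including non-reduced $F$, which rests on $|C|$ being base-point-free, $C$ being nef and big (so $C^\perp$ is negative definite), and $S'$ being the minimal resolution. The two preliminary steps are routine manipulations with Riemann--Roch and the standard restriction sequences.
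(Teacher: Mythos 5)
Your proof is correct, but it takes a genuinely different (and heavier) route than the paper's. The paper argues purely with intersection numbers: for each irreducible component $\Gamma$ of $F$ one has $C\cdot\Gamma=0$, hence $\Gamma^2<0$ by the Hodge index theorem and $K_{S'}\cdot\Gamma\geq 0$ by minimality of the resolution --- exactly the two facts at the heart of your Step~3 --- whence $(-K_{S'}+F)\cdot F=-K_{S'}\cdot F+F^2<0$; since $-K_{S'}+F\sim C-M$ is effective, some non-zero $G\leq F$ must lie in the fixed part, and iterating the same inequality on $F-G$, $F-G-G_1,\ldots$ sweeps all of $F$ into the fixed part, with no cohomology at all. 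You instead recast the claim as the vanishing $H^1(S',K_{S'}+M+F)=0$ and prove it via the $1$-connectedness of the adjoint divisor $M+F$ together with the standard fact that $h^0(\O_D)=1$ for a $1$-connected divisor; your connectedness computation $A\cdot B=K_{S'}\cdot B-B^2\geq 1$ is valid (including for non-reduced $B\leq F$), and so is the preliminary reduction to $\varepsilon=0$, $s=g-1$, $\restr{(C-M)}{C}\cong K_C$ --- information the paper only extracts later, in the conclusion of the proof of Proposition~\ref{thm:rat}, by a separate counting argument. What your route buys is precisely these by-products (notably $h^0(-K_{S'})=g$ and the connectedness of the adjoint divisor); what it costs is length, the need for the full strength of the hypothesis $s\geq g-1+\varepsilon$ (the paper's argument uses only effectivity of $C-M$), and the appeal to a Ramanujam-type connectedness statement where a five-line negativity computation suffices.
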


\begin{proof}
Let $D$ be an irreducible component of $F$. One has $C\cdot D=0$,
hence $D^2<0$. Then $K_{S'}\cdot D\geq 0$; otherwise, $D$ would be a
$(-1)$-curve contracted by $|C|$, and we would have a
contradiction. So we have $K_{S'}\cdot F\geq 0$ and therefore
$(-K_{S'}+F)\cdot F<0$.  Indeed, we also have $F^2<0$ by the Hodge
index theorem because $C\cdot F=0$.  Since $-K_{S'}+F\sim C-M$ is
effective, there is a non-zero divisor $G\leq F$ that is in the fixed
part of $|-K_{S'}+F|$. If $G=F$, we are done. Otherwise, set $F_1=F-G$
and consider the linear system $|-K_{S'}+F_1|$. By the same argument
as above, we have $(-K_{S'}+F_1)\cdot F_1<0$, so there is a non-zero
divisor $G_1\leq F_1$ that is in the fixed part of $|-K_{S'}+F_1|$. If
$G_1=F_1$, we are done. Otherwise, we repeat this argument till we
eliminate all of $F$ from the fixed part of $|-K_{S'}+F|$.
\end{proof}

\begin{proof}[Conclusion of the proof of Proposition~\ref{thm:rat}]
By Lemma~\ref{lem:fixo}, we can assume that $s=\dim (|-K_{S'}|)\geq g-1+\varepsilon\geq 2$. Consider the map
\[
\phi_{|-K_{S'}|}\colon S'\longdashrightarrow Y\subseteq \P^s.
\]
One has 
\[
(-K_{S'})\cdot C=(C-M-F)\cdot C=4g-4+\varepsilon -(2g-2)=2g-2+\varepsilon.
\]
Hence the curves in $|C|$ are mapped via $\phi_{|-K_{S'}|}$ to curves
of degree $\delta\leq 2g-2+\varepsilon$. Let $\gamma$ be the number of
conditions that containing the curve $C$ imposes on the members of
$|-K_{S'}|$. One has
\begin{equation*}
    \begin{cases}
    \gamma =g & \text{if }\,\,\,  \varepsilon =0\,\,\,   \text {and}\,\,\,
    C\,\,\,  \text{is mapped via} \,\,\,  \phi_{|-K_{S'}|}\,\,\,
    \text{to a canonical curve},  \\
    \gamma \leq g-1+\varepsilon & \text{otherwise}.
  \end{cases}
\end{equation*}
In any event, unless $\varepsilon =0$, $s=g-1$, and $\gamma=g$, one has
\[
\dim (|-K_{S'}-C|) = s-\gamma\geq  0, 
\]
so that $-K_{S'}-C$ is effective. Hence we have $-K_{S'}\sim C+T$, with $T$ effective. Then 
$0\sim K_{S'}+ C+T\sim F+M+T$, which is not possible. 

So the only possibility is that $\varepsilon =0$, $s=g-1$, $\gamma=g$,
and $|-K_{S'}|$ cuts out the complete canonical series on $C$. Since
$\dim(|-K_{S'}|)\leq 9$, we have $g\leq 10$. 

From $\restr {-K_{S'}} C =K_C$ we deduce that $\O_{C}(C)=\O_{C}(2K_C)=\O_C(-2K_{S'})$.

Let us set $|-K_{S'}|=A+|B|$, where $A$ is the fixed part and $|B|$
the movable part of $|-K_{S'}|$. Since $|-K_{S'}|$ cuts out the
complete canonical series on $C$ and $C$ is not hyperelliptic by
Lemma~\ref{lem:nonhyp}, we have that $C\cdot A=0$ and $|B|$ is not
composed with a pencil; hence the general curve in $|B|$ is
irreducible.

First suppose that $A=0$. Then, for any irreducible curve $B\in |B|$, one has $p_a(B)=1$. Moreover, from the exact sequence
\[
0\longrightarrow \O_{S'}\longrightarrow \O_{S'}(B)\longrightarrow \O_{B}(B)\longrightarrow 0
\] 
and from $h^0(S',\O_{S'}(B))=g$, we deduce that
$h^0(B,\O_{B}(B))=g-1$, which implies that $B^2=g-1$.  From the index
theorem applied to $C$ and $B$, we have that $C\sim 2B$.  Moreover,
there is a birational map $\eta\colon S'\dasharrow \P^2$ that maps $|B|$ to
the linear system of cubics with $10-g$ simple base points. From this,
we see that we are in the bicanonical Del Pezzo case.

Next suppose that $A$ is non-zero.  By Lemma~\ref{l:antican-rat}
below, $A\cdot B=2$ and the general member of $|B|$ is rational.  By
the same argument we made above, we have $B^2=g-2$, and
$\phi_{|B|}=\phi_{|-K_{S'}|}$ is a birational map from $S'$ to its image
$Y$ that is a surface of minimal degree in $\P^{g-1}$.

If $Y=\P^2$, then $\phi_{|B|}$ maps $|B|$ to the linear system of
lines and $A$ to a conic $\Gamma$. The curves in $|C|$ are mapped to
plane quartics. Since $C\cdot A=0$, the linear system $|C|$ is mapped
to a linear system of quartics with eight (proper or infinitely near)
base points on the conic $\Gamma$. This shows that we are in the
planar case with $\delta=4$ and $b=8$ (see the notation in
Example~\ref{ex:plane}).

If $Y$ is the $2$-Veronese image of $\P^2$, then we may identify $Y$
with $\P^2$ and $\phi_{|B|}$ maps $|B|$ to the linear system of conics
and $A$ to a line $R$. The curves in $|C|$ are mapped to plane
quintics. Since $C\cdot A=0$, the linear system $|C|$ is mapped to a
linear system of quintics with five (proper or infinitely near) base
points along the line $R$. Hence we are in the planar case with
$\delta=5$ and $b=5$ (see Example~\ref{ex:plane} again). 

Finally, if $g\geq 4$, $Y$ can be a rational normal scroll in
$\P^{g-1}$ and $\phi_{|B|}$ maps $|B|$ to the linear system $|\cH|$,
where $\cH$ is the hyperplane section class on $Y$. The curves in $|C|$
are mapped to canonical trigonal curves $C'$ on $Y$.
To see that $C'$ is trigonal, consider a divisor $D$ cut out on $C'$
by a line of the ruling of $Y$, and let $d$ be its degree; by the
geometric Riemann--Roch theorem, the linear series $|D|$ has dimension
$d-2$, and then it follows from Clifford's theorem that it is a
$g^1_3$ as $C'$ is non-hyperelliptic.
Let $F$ be a line
of the ruling of $Y$. As in Example~\ref{ex:trig}, one sees that
$C'\in |3\cH-(g-4)F|$, so that $Y$ is smooth, unless maybe $g=4$. In
the latter case, $Y$ can be a quadric cone, so its minimal
desingularization is $\F_2$, and in this case we will work on $\F_2$
rather than on $Y$.

Suppose that $Y=\F_e$, and let, as usual, $E$ be the section such that $E^2=-e$. Then $g$ and $e$ have the same parity and 
\[
\cH\sim E+\left( \frac {e+g}2-1\right)F
\]
and, accordingly, 
\[
C'\sim 3\cH-(g-4)F\sim 3E+\left( \frac {3e+g}2+1\right) F.
\]

The morphism $\phi_{|B|}\colon S'\to Y$ consists in blowing down 
a number of $(-1)$-exceptional divisors. Let $D$ be the total such exceptional divisor. Then
\[
-K_{S'}=-\phi_{|B|}^*(K_{\F_e})-D.
\]
Since $\phi_{|B|}^*(\cH)=B$, we have
\[
A\sim \phi_{|B|}^*(-K_{\F_e}-\cH)-D.
\]
By \eqref {eq:kappa}, we have
\[
-K_{\F_e}-\cH\sim E+\left(\frac {e-g}2+3  \right)F,
\]
thus
\[
A\sim \phi_{|B|}^*\left(E+\left(\frac {e-g}2+3  \right)F\right)-D.
\]

One has
\[
C'\cdot \left(E+\left(\frac {e-g}2+3  \right)F\right)=10-g; 
\]
recall that $C\cdot A=0$. Moreover, 
\[
-K_{\F_e}\cdot \left(E+\left(\frac {e-g}2+3  \right)F\right)=8-g<10-g.
\]
In conclusion:
\begin{itemize}
\item $S'$ is obtained from $\F_e$ by blowing up the (curvilinear) scheme $Z$ of length $10-g$ that is the complete intersection of a smooth curve $C'$ with a curve $N$ of $\big|E+(\frac {e-g}2+3)F\big|$.
\item The linear system $|C|$ on $S'$ is the strict transform of the linear systems of the curves of $|C'|$ on $\F_e$ containing $Z$, and
\[
C^2=C'^2-(10-g)=3g+6-(10-g)=4g-4,
\]
as we wanted.
\item The strict transform $A$ of $N$ on $S'$ splits off the
  anticanonical system on $S'$. 
\end{itemize}
Thus we are here in the trigonal case.
\end{proof}

This concludes the proof of Proposition~\ref{thm:rat}, hence also that
of Theorem~\ref{t:hideg}.
We end this section with an elementary lemma that has been used above.

\begin{lemma}
\label{l:antican-rat}
Let $S$ be a smooth rational surface, and write
\[
  |-K_S| = A + |B|
\]
with $A$ the fixed part and $|B|$ the movable part. We assume that
$A$ is effective and non-zero and $B$ big and nef.
Then $A\cdot B=2$, and the general member of\, $|B|$ is
rational if it is irreducible.
\end{lemma}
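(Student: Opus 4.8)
The plan is to compute intersection numbers using the fact that $S$ is a rational surface, so $K_S^2 = 10 - \rho$ where $\rho$ is the Picard number (equivalently, $S$ is $\P^2$ blown up at some number of points, counted with infinitely near ones), and to exploit that $A + B \sim -K_S$. First I would record the obvious identities: $(A+B)^2 = K_S^2$, $(A+B)\cdot A = -K_S \cdot A$, and $(A+B)\cdot B = -K_S \cdot B$. Since $B$ is big and nef, $B^2 \geq 0$ and $B \cdot A \geq 0$; since $A$ is effective and $-K_S = A + B$, adjunction on each irreducible component of $A$ gives a bound on $K_S \cdot A$. The key numerical input will be $-K_S \cdot B = B^2 + A\cdot B \geq 0$ together with $-K_S \cdot A = A\cdot B + A^2$.

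The main step is to show $A \cdot B \leq 2$. Here I would use that $-K_S$ is effective with $h^0(-K_S) = \dim|B| + 1 \geq 3$ (as $|B|$ is movable, hence at least a pencil, and in our application of genus $\geq 3$ it has dimension $g - 1 \geq 2$), together with Riemann--Roch and the structure of anticanonical rational surfaces. A cleaner route: restrict the exact sequence $0 \to \O_S \to \O_S(B) \to \O_B(B) \to 0$ and use that $H^1(\O_S) = 0$ (rationality) to get $h^0(\O_B(B)) = h^0(\O_S(B)) - 1 = \dim|B|$; since $B$ is nef, $B^2 = B\cdot B \geq 0$, and on a general (possibly reducible) member the arithmetic genus is $p_a(B) = 1 + \tfrac12(B^2 + K_S\cdot B) = 1 - \tfrac12 A\cdot B + \tfrac12 B^2 \cdot (\text{stuff})$... more directly, $2p_a(B) - 2 = B^2 + K_S \cdot B = B^2 - B^2 - A\cdot B = -A\cdot B$, so $p_a(B) = 1 - \tfrac12 A\cdot B$. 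Since $A\cdot B \geq 0$ and $p_a(B) \geq 0$ for an irreducible curve, and $A \cdot B$ is even (being $2 - 2p_a(B)$), we get $A\cdot B \in \{0, 2\}$ when $|B|$ has an irreducible member, and $p_a(B) \in \{0,1\}$ accordingly. The case $A\cdot B = 0$ must then be ruled out: if $A\cdot B = 0$ then, $A$ being effective and nonzero and $B$ big and nef, the Hodge index theorem forces $A^2 < 0$ unless $A \equiv 0$; but then $-K_S \cdot A = A^2 < 0$, contradicting that some component $D$ of $A$ with $K_S \cdot D < 0$ would be a $(-1)$-curve not meeting $B$, whereas $B$ big and nef meeting the fixed part... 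I would argue instead that $A \cdot B = 0$ together with $B$ big forces $A$ to be contracted by $\phi_{|B|}$ yet $-K_S = A+B$ effective and $p_a$ considerations give $h^0(-K_S) = h^0(B)$, so $A$ contributes nothing, and combined with $A\cdot(-K_S) = A^2 \leq 0$ one gets $p_a$ of components of $A$ negative unless each is a $(-1)$-curve, but a $(-1)$-curve $D$ in the fixed part with $D\cdot B = 0$ has $D \cdot (-K_S) = D\cdot A = D^2 + D\cdot(A - D) = -1 + D\cdot(A-D)$, and pushing this through a minimality/connectedness argument yields a contradiction. The cleanest version: since $B$ is big and nef and $A \neq 0$ is effective, $A \cdot B = 0$ would make $A$ lie in the kernel of the intersection pairing restricted to $B^\perp$, forcing $A^2 < 0$ by Hodge index; then $2p_a(A_i) - 2 = A_i^2 + K_S \cdot A_i = A_i^2 - A_i\cdot B - A_i\cdot(A - A_i)$ for a component $A_i$, and summing/using effectivity gives at least one component with $p_a < 0$, impossible. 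Hence $A\cdot B = 2$.

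Finally, with $A \cdot B = 2$ established, the genus computation above gives $2p_a(B) - 2 = B^2 + K_S\cdot B = B^2 - (B^2 + A\cdot B) = -A\cdot B = -2$, so $p_a(B) = 0$, \ie the general member of $|B|$ is rational whenever it is irreducible. I expect the main obstacle to be the careful elimination of the case $A\cdot B = 0$: this requires combining the Hodge index theorem, adjunction on the components of the fixed part $A$, and the hypothesis that $S$ carries no $(-1)$-curve disjoint from the relevant moving system (which in the ambient proof comes from minimality of the desingularization, but here must be derived from $B$ big and nef and $A$ fixed). A robust way to close this gap is: if $A\cdot B = 0$, then since $-K_S \cdot (-K_S) = K_S^2 = (A+B)^2 = A^2 + B^2$ and $A^2 \leq 0$, $B^2 \geq 0$, while $-K_S$ effective forces $\chi(\O_S) = 1$ and Riemann--Roch gives $h^0(-K_S) \geq 1 + \tfrac12((-K_S)^2 - K_S\cdot(-K_S)) = 1 + K_S^2$; matching with $h^0(-K_S) = h^0(B) = 1 + \dim|B|$ and $B^2 = h^0(B)$-related bounds pins down $A^2 = 0$, whence $A \equiv 0$ by Hodge index (as $B$ is big), contradicting $A \neq 0$. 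I would present whichever of these arguments is shortest given the tools already in the paper.
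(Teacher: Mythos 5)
Your reduction to the two cases $A\cdot B\in\{0,2\}$ is essentially sound (note that $B^2+K_S\cdot B$ is even for any divisor, and that $p_a(B)\geq 0$ really needs the general member of $|B|$ to be reduced and connected rather than irreducible --- which does hold here because $|B|$ has no fixed part and $B$ is big and nef, but you should say so since the lemma does not assume irreducibility), and the deduction $p_a(B)=0$ from $A\cdot B=2$ is exactly right. The genuine gap is the exclusion of $A\cdot B=0$: none of the arguments you sketch closes it. Adjunction on the components of $A$ gives no contradiction --- for instance, if $A$ were irreducible with $A\cdot B=0$, then $2p_a(A)-2=A^2+K_S\cdot A=A^2-A^2-A\cdot B=0$, so $p_a(A)=1$, which is perfectly consistent. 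The Hodge index theorem only yields $A^2<0$. And the final Riemann--Roch comparison of $h^0(-K_S)$ with $h^0(B)$ reduces, when $A\cdot B=0$, to the inequality $h^1(B)\geq A^2$, which is automatic since $A^2<0$; the claim that it ``pins down $A^2=0$'' is not substantiated. Purely numerical information cannot separate $A\cdot B=0$ from $A\cdot B=2$ here; some cohomological input is needed.

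The paper supplies that input via a vanishing theorem and gets the answer in one stroke by computing $\chi(\O_S(-A))$: one has $h^0(-A)=0$ since $A$ is effective and non-zero; $h^1(-A)=h^1(K_S+B)=0$ by Kawamata--Viehweg vanishing, which is precisely where the hypothesis that $B$ is big and nef enters; and $h^2(-A)=h^0(K_S+A)=h^0(-B)=0$. Hence $\chi(-A)=0$, while Riemann--Roch gives $\chi(-A)=1+\tfrac{1}{2}\left(A^2+K_S\cdot A\right)=1-\tfrac{1}{2}A\cdot B$, so $A\cdot B=2$ directly, with no case analysis. If you prefer to keep your structure, the same vanishing $h^1(K_S+B)=0$ is what kills the case $A\cdot B=0$ (it would force $\chi(-A)=1$, contradicting $h^0(-A)=h^1(-A)=h^2(-A)=0$); you should incorporate it explicitly.
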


\begin{proof}
First, $h^0(-A)=0$.
Next,  by Kawamata--Viehweg vanishing, $h^1(-A)=h^1(B+K_S)=0$.
Finally,
\[
  h^2(-A) = h^0(K_S+A)= h^0(-B)=0.
\]
Thus $\chi(-A)=0$, hence by Riemann--Roch $A\cdot B = 2$.
Therefore, $(B+K_S)\cdot B = -A\cdot B = -2$,
and the result follows.
\end{proof}

\section{Classification of surfaces with a
  hyperelliptic section}
\label{S:class-hyperell}

This section is dedicated to Theorem~\ref{t:hyperell-notrat}.  We
prove it in Sections~\ref{s:he-gnl} and~\ref{s:he-sporad}  and
formulate it in useful alternative ways in Section
\ref{s:castelnuovo}.

We consider a minimal resolution of singularities $S' \to S$ and work
on $S'$.  Being a hyperplane section of $S$, $C$ must be contained in
the smooth locus of $S$, so that considered in $S'$ it cannot
intersect any curve contracted by $S' \to S$. It follows that,
considered in $S'$, $C$ is a big and nef divisor such that $C^2 = d$.

We want to apply the Reider--Beltrametti--Sommese theorem below to $C$
for $k=1$, which requires $C^2 \geq 9$.  We thus split the proof of
Theorem~\ref{t:hyperell-notrat} in two: the \emph{general case} in
which we assume that $d \geq 9$, equivalently either $g\geq 3$, or
$g=2$ and $d \neq 7, 8$, and the \emph{sporadic cases} in which $g=2$
and $d=7$ or $8$.

\subsection{Proof of Theorem~\ref{t:hyperell-notrat} in the  general case}
\label{s:he-gnl}

\begin{theorem}[Reider, \cf \cite{reider},
Beltrametti--Sommese, \cf \cite{beltrametti-sommese}]
\label{t:R/B-S}
Let $L$ be a nef line bundle on a smooth surface $S$ and $k$ be a
positive integer.  Assume that $L^2 \geq 4k+5$ and there exists a
$0$-dimensional subscheme $Z \subset S$ of length $k+1$ such that the
restriction
\[
  H^0(S,K_S+L) \lra H^0\left(Z, \restr [K_S+L] Z\right)
\]
is not surjective.  Then there exists an effective divisor $D$
containing $Z$ and such that
\[
  L\cdot D - k-1 \leq D^2 < k+1.
\]
\end{theorem}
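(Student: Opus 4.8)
The plan is to reprove Reider's theorem along the classical lines: reinterpret the hypothesis as a cohomology non-vanishing, package it into a rank-two vector bundle by the Serre construction, and read off the divisor $D$ from Bogomolov's instability theorem.

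\textit{Producing a vector bundle.} Since $L$ is nef and $L^2\geq 4k+5>0$, $L$ is big, so Kawamata--Viehweg vanishing gives $H^1(S,K_S+L)=0$; tensoring the structure sequence of $Z$ by $K_S+L$, the hypothesis that $H^0(S,K_S+L)\to H^0(Z,(K_S+L)|_Z)$ is not surjective becomes equivalent to $H^1(S,\mathcal I_Z\otimes(K_S+L))\neq 0$. I would replace $Z$ by a subscheme minimal for this last property, of length some $m\leq k+1$, and check by a dimension count (using $H^1(K_S+L)=0$) that such a $Z$ satisfies the Cayley--Bacharach condition relative to $|K_S+L|$ and has $h^1(\mathcal I_Z\otimes(K_S+L))=1$. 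Arranging in addition that $Z$ is curvilinear, hence a local complete intersection, the Serre construction yields a rank-two vector bundle $\mathcal E$ on $S$ with
\[
 0 \lra \mathcal O_S \lra \mathcal E \lra \mathcal I_Z\otimes L \lra 0,
 \qquad c_1(\mathcal E)=L,\quad c_2(\mathcal E)=m.
\]

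\textit{Instability and the divisor.} One computes $c_1(\mathcal E)^2-4c_2(\mathcal E)=L^2-4m\geq L^2-4(k+1)\geq 1>0$, so $\mathcal E$ is Bogomolov-unstable: there is a sub-line bundle $\mathcal O_S(A)\hookrightarrow\mathcal E$ with quotient $\mathcal I_W\otimes\mathcal O_S(B)$ ($W$ zero-dimensional), where $A+B\equiv L$, $(A-B)\cdot H>0$ for every ample $H$, and $(A-B)^2\geq 1$; computing $c_2$ of the extension gives $A\cdot B\leq c_2(\mathcal E)=m$. Composing $\mathcal O_S(A)\hookrightarrow\mathcal E\to\mathcal I_Z\otimes L$: were this zero, $\mathcal O_S(A)$ would sit inside the subsheaf $\mathcal O_S$, making $-A$ effective and hence $(A-B)\cdot H=(2A-L)\cdot H\leq -L\cdot H<0$ (here $L$ nef with $L^2>0$ forces $L\cdot H>0$), a contradiction. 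So the composite is nonzero, hence injective as a map of rank-one sheaves, and twisting down gives $\mathcal O_S(-B)\hookrightarrow\mathcal I_Z$; the resulting nonzero section of $\mathcal I_Z\otimes\mathcal O_S(B)$ has an effective zero divisor $D\in|B|$ that contains $Z$.

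\textit{The numerical bounds.} From $L\cdot D=L\cdot B=A\cdot B+B^2\leq m+D^2\leq (k+1)+D^2$ one gets $L\cdot D-k-1\leq D^2$. For $D^2<k+1$: if $D^2=B^2\leq 0$ this is immediate; otherwise $B$ is effective and nonzero with $B^2>0$, so it lies in the positive cone, whence $(A-B)\cdot B\geq 0$ by the Hodge index theorem, and this cannot be an equality, since then $A-B$ would lie in $B^\perp$, on which the intersection form is negative definite, contradicting $(A-B)^2\geq 1$; therefore $(A-B)\cdot B>0$, \ie $D^2=B^2<A\cdot B\leq k+1$. The main obstacle is the first step: the whole argument rests on the Serre extension being \emph{locally free}, which is precisely the Cayley--Bacharach condition, and this is what forces the passage to a minimal subscheme and the care with the local complete intersection hypothesis; once $\mathcal E$ is available, Bogomolov's theorem is a black box and the rest is elementary intersection theory.
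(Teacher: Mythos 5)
The paper does not prove this statement: it is imported as a black box from Reider and Beltrametti--Sommese, and is only ever invoked with $k=1$. So the only possible comparison is with the classical argument of those references, which your sketch follows faithfully: Serre construction, Bogomolov instability, Hodge index. Your last paragraph (the numerical bounds) is correct and complete as written; the equality case $(A-B)\cdot B=0$ that you exclude by hand in fact cannot occur for two classes lying strictly inside the same component of the positive cone, by the index theorem directly, but your argument for it is also fine.

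There are, however, two genuine gaps in the first step, both sitting exactly where you say the difficulty lies. First, the divisor you construct contains only the minimal subscheme $Z'\subseteq Z$, of some length $m\leq k+1$, whereas the statement asserts $D\supseteq Z$. These differ already for $k=1$: if one of the two points of $Z$ is a base point of $|K_S+L|$, the minimal subscheme is that single point and your $D$ need not pass through the other one --- yet the theorem, and the paper's application of it (where $D$ must contain \emph{both} points $x_1,x_2$ of a $g^1_2$ divisor), require that it does. You must either rule out this degeneration or run the construction with the full, non--Cayley--Bacharach $Z$, for which the Serre extension is only torsion-free and the section of $B$ extracted from the double dual is no longer guaranteed to vanish on all of $Z$. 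Second, ``arranging in addition that $Z$ is curvilinear'' is asserted, not proved: a subscheme minimal for $h^1(\mathcal I_Z(K_S+L))\neq 0$ need not be curvilinear, and for a non-l.c.i.\ $Z$ no locally free Serre extension exists at all, so Bogomolov instability cannot be applied to it. This is precisely the point at which the literature separates $k$-spannedness (curvilinear subschemes, where the argument is exactly as you describe) from $k$-very ampleness (arbitrary subschemes, which needs extra work). For the single use of the theorem in this paper --- $k=1$, where every length-$2$ subscheme is automatically curvilinear --- the second gap is harmless, but the first one is still live.
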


\begin{proof}[Proof of Theorem~\ref{t:hyperell-notrat} in the
  general case]
  The general case means that we assume $d \geq 9$; see above.
  
For all divisors $x_1+x_2$ in the $g^1_2$ of $C$, the adjoint system
$|K_{S'}+C|$ does not 
separate $x_1$ and $x_2$, so Theorem~\ref{t:R/B-S}
tells us that there exists a divisor $D$ on $S'$ such that
\[
  x_1,x_2 \in D
  \quad \text{and}
  \quad
  D\cdot C -2 \leq D^2 \leq 1.
\]
The inequality $D \cdot C \leq 3$ forbids that $D$ contains $C$:
indeed, if $D=C+D'$ with $D'$ effective, then $D \cdot C = C^2+C\cdot
D' \geq C^2 \geq 9$, which contradicts $D\cdot C \leq 3$.
This implies that $D$ cannot be fixed as $x_1+x_2$ moves in the
$g^1_2$, for otherwise $D$ would necessarily have $C$ as an
irreducible component.

Let $M$ be the part of $D$ that contains $x_1$ and $x_2$.
By the previous observation, the family of these $M$ has no
fixed part. Thus we have
\[
  2 \leq M\cdot C \leq D\cdot C \leq 3.
\]
We first claim that $M$ is irreducible. Indeed, otherwise
at least one component $M'$ of $M$ would verify $M'\cdot C=1$; 
hence it would be mapped to a line by the map $S'\to S$ 
and give a ruling of $S$ by lines, which implies that
$S$ is a cone by Theorem~\ref{l:CSegre}.
So $M$ is irreducible, and we have the following possibilities \eqref{case1} and
\eqref{case2} to consider: 
\begin{enumerate}[label=(\arabic*),ref=\arabic*]
\item\label{case1} $M\cdot C=2$. Then $M$ is mapped to a conic by $S' \to S$; 
in particular, it is a rational curve.
Since it moves in a family
parametrized by the $g^1_2$ on $C$, the surface $S$ is then
unirational,  hence rational, so the conclusion of our theorem holds in
this case.

\item\label{case2} $M\cdot C=3$. Then $M$ is mapped to a cubic by $S' \to S$; 
  hence it spans either \begin{enumerate*}[label=(\alph*),ref=\alph*] \item\label{a} a plane or \item\label{b} a $3$-space in $\P^{d-g+1}$.\end{enumerate*}
  \end{enumerate}

Case~\eqref{case2} may happen only if $d\leq 9$, for the following
reason.  If $D\cdot C=3$, then $D^2=1$, and thus by the Hodge index
theorem,
\[
  D^2\cdot C^2 \leq (D\cdot C)^2
  \Iff
  C^2 \leq 9.
\]
The upshot is that $d=9$ and $g=2$ or $3$; hence $C$ is an intersection of
quadrics by Green's theorem, Theorem~\ref{t:green}.

In case~\eqref{a}, the three points of $M\cap C$ are located on the
line $\vect M \cap \vect C$. To see that the latter is indeed a line,
note that $\vect C$ is a hyperplane in $\vect S$; hence it is
impossible that the plane $\vect M$ be contained in $\vect C$ for
general $C$.  On the other hand, $C$ is an intersection of quadrics,
so this situation is impossible.  In conclusion, case~\eqref{a} cannot happen.

In case~\eqref{b} the curve $M$ is rational.  Hence if $M^2=0$, then
the curves $M$ are parametrized by a curve $\H_M$.  The curve $\H_M$
is rational because there exists a morphism $\P^1 \to \H_M$ mapping
the element $x_1+x_2$ of the $g^1_2$ to the corresponding curve $M$.
The upshot is that in this case $|M|$ is a base-point-free pencil.
The restriction of such a pencil to $C$ would be a base-point-free
$g^1_3$ (remember that $C$ viewed on $S'$ does not intersect any curve
contracted by $S' \to S$) containing all divisors of the $g^1_2$; by
Lemma~\ref{l:g12g13} below, this is impossible.  So we must have
$M^2>0$. But in this case $|M|$ cuts out a $g^r_3$ on $C$ with $r\geq
2$, in contradiction with $g \geq 2$.

The conclusion is that only case~\eqref{case1} may happen, in which
the conclusion of our theorem holds.
\end{proof}

\begin{lemma}
\label{l:g12g13}
  Let $C$ be a curve of genus $g$, and assume it has a $g^1_2$,
  $\mathfrak l$, and a $g^1_3$, $\mathfrak m$.
  We consider the following condition:
  \begin{equation}
    \label{cond-g12g13}
     \forall x_1+x_2 \in \mathfrak l,\
     \exists z \in C:\quad
     x_1+x_2+z \in \mathfrak m.
  \end{equation}
  If \eqref{cond-g12g13} holds,
  then either $z$ is a base point of\, $\mathfrak m$, or $g=0$.
  If \eqref{cond-g12g13} does not hold, then $C$ is birational to a
  curve of type $(2,3)$ in $\P^1\times\P^1$; in particular, 
  $g \leq 2$.
\end{lemma}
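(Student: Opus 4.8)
The plan is to consider the morphism $\psi = (\phi_{\mathfrak l}, \phi_{\mathfrak m})\colon C \to \P^1\times\P^1$ obtained by combining the degree-$2$ map $\phi_{\mathfrak l}$ associated to $\mathfrak l$ and the map $\phi_{\mathfrak m}$ associated to (the moving part of) $\mathfrak m$, and to analyze condition \eqref{cond-g12g13} in terms of the incidence between fibres of the two projections. First I would dispose of condition \eqref{cond-g12g13} holding: if for every $x_1+x_2\in\mathfrak l$ there is $z$ with $x_1+x_2+z\in\mathfrak m$, then the fibre $\phi_{\mathfrak l}^{-1}(\phi_{\mathfrak l}(x_1))=x_1+x_2$ is always contained in a fibre of $\phi_{\mathfrak m}$. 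Varying $x_1+x_2$ over $\mathfrak l\cong\P^1$, this says that the fibres of $\phi_{\mathfrak l}$ refine those of $\phi_{\mathfrak m}$, so there is a factorization $\phi_{\mathfrak m}=\rho\circ\phi_{\mathfrak l}$ for some $\rho\colon\P^1\to\P^1$; comparing degrees, $3=\deg\mathfrak m = \deg\rho\cdot 2$, which is impossible unless $\mathfrak m$ has a base point absorbing the discrepancy. Concretely: the residual $z$ is constant along each $\mathfrak l$-fibre, and as $x_1+x_2$ moves the assignment $\{x_1,x_2\}\mapsto z$ is a rational (hence regular) map $\P^1\to C$; either this map is non-constant, forcing $C$ rational (so $g=0$), or it is constant, and then that single point $z$ lies in $x_1+x_2+z\in\mathfrak m$ for every $x_1+x_2$, i.e. $z$ is a base point of $\mathfrak m$. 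This gives the first dichotomy.

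Now suppose \eqref{cond-g12g13} fails. Then for a general $x_1+x_2\in\mathfrak l$ the three-point divisors of $\mathfrak m$ meeting $\{x_1,x_2\}$ do not contain $x_1+x_2$ as a sub-divisor — more usefully, the general fibre of $\phi_{\mathfrak l}$ is \emph{not} contained in any fibre of $\phi_{\mathfrak m}$, so the two pencils $\mathfrak l$ and $\mathfrak m$ have no common member and $\phi_{\mathfrak m}$ is genuinely degree $3$ onto $\P^1$ (its moving part defines such a map; any base point would be handled as above). I claim then that $\psi=(\phi_{\mathfrak l},\phi_{\mathfrak m})\colon C\to\P^1\times\P^1$ is birational onto its image $C'$, which is a curve of bidegree $(3,2)$: the bidegree is read off as $(C'\cdot \{\pt\}\times\P^1, C'\cdot\P^1\times\{\pt\})=(\deg\mathfrak m,\deg\mathfrak l)=(3,2)$ provided $\psi$ is birational onto $C'$. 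Birationality is the crux: $\psi$ fails to be birational only if the two maps $\phi_{\mathfrak l},\phi_{\mathfrak m}$ factor through a common map $C\to C''$ of degree $\geq 2$, which would force $2$ and $3$ to share a common factor — impossible. Hence $\psi$ is birational onto $C'$, and a smooth curve of bidegree $(2,3)$ in $\P^1\times\P^1$ has arithmetic genus $(2-1)(3-1)=2$, so the normalization $C$ has $g\leq 2$, as asserted.

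The main obstacle I expect is the bookkeeping around base points of $\mathfrak m$: the statement is phrased for the \emph{linear series} $\mathfrak m$, which may or may not have a base point, and one must be careful that "$\phi_{\mathfrak m}$ has degree $3$" really holds when there is no base point, while the base-point case is precisely the alternative appearing in the conclusion. So the clean way to organize the argument is: (i) if $\mathfrak m$ has a base point $z_0$, then taking $x_1+x_2$ general one can check whether $z_0$ serves as the required $z$ — if $\mathfrak m = z_0 + |M'|$ with $|M'|$ a base-point-free $g^1_2$, then $|M'|=\mathfrak l$ forces \eqref{cond-g12g13} with $z=z_0$; if $|M'|\neq\mathfrak l$ then \eqref{cond-g12g13} fails and one runs the bidegree argument with $(\deg|M'|,\deg\mathfrak l)=(2,2)$, landing on a $(2,2)$ curve of genus $1$, still $\leq 2$ — so this sub-case needs a small separate treatment; (ii) if $\mathfrak m$ is base-point-free, proceed exactly as above. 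The other routine point to verify is that the rational map $\P^1\to C$, $\{x_1,x_2\}\mapsto z$, in the "\eqref{cond-g12g13} holds" branch is well-defined and regular; this follows because $z$ is uniquely determined (as the residual of a fixed divisor class minus $x_1+x_2$) and $\P^1$ is smooth, so any rational map from it to a curve extends to a morphism.
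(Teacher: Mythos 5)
Your proof is correct and follows essentially the same route as the paper's: the first dichotomy rests on the uniqueness of the residual point $z$ (the paper phrases it as $x_1+x_2+z \sim x_1'+x_2'+z'$ forcing $z\sim z'$, hence $z=z'$ unless $C$ is rational, which is exactly your constant-map argument), and the second part is precisely the paper's map $(\phi_{\mathfrak l},\phi_{\mathfrak m})\colon C\to \P^1\times\P^1$, for which you supply the birationality detail (coprimality of $2$ and $3$) that the paper dismisses as ``straightforward to verify''. The one point you flag but do not close is the sub-case $\mathfrak m = z_0+|M'|$ with $|M'|$ a base-point-free $g^1_2\neq \mathfrak l$, where $\gcd(2,2)=2$ and your coprimality argument is silent; it is closed by noting that a common degree-$2$ factorization of $\phi_{\mathfrak l}$ and $\phi_{|M'|}$ would make the remaining factors isomorphisms of $\P^1$, hence would force the two pencils to have the same fibres, contradicting $|M'|\neq\mathfrak l$ --- so birationality onto a $(2,2)$ curve of arithmetic genus $1$ still holds there.
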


\begin{proof}
First assume that \eqref{cond-g12g13} holds.
Consider $z,z'\in C$ such that
$x_1+x_2+z \in \mathfrak m$
and $x'_1+x'_2+z' \in \mathfrak m$
for some members
$x_1+x_2$ and $x'_1+x'_2$ of $\mathfrak l$.
Then
\[
  x_1+x_2+z
  \lineq x'_1+x'_2+z',
  \quad \text{hence}
  \quad
  z \lineq z',
\]
and either $C$ is rational, or $z=z'$. In the latter case $z$ is a base
point of $\mathfrak{m}$.
This
proves the first part of the lemma.

For the second part, consider the map
\[
  \phi = (\phi_{\mathfrak l}, \phi_{\mathfrak m})\colon C
  \lra \P^1\times\P^1
\]
defined by the two pencils $\mathfrak l$ and $\mathfrak m$.
It is straightforward to verify that \eqref{cond-g12g13} holds if and only
if $\phi$ is not birational, and this proves the second part of the
statement. 
\end{proof}

\subsection{Proof of Theorem~\ref{t:hyperell-notrat} in the
  sporadic cases}
\label{s:he-sporad}

\begin{proof}[Proof of Theorem~\ref{t:hyperell-notrat} in the
  sporadic cases]
The sporadic cases are those when $g=2$ and $d=7$ or $8$.
Let $S \subset \P^{d-1}$, with $d = 7,8$, be a degree $d$ surface with one
hyperplane section a genus $2$ curve $C$, such that $S$ is not a cone,
hence not a scroll either by Theorem~\ref{l:CSegre}.
Since $d>2g-2$, we have $\kappa(S)=-\infty$ as in
Section~\ref{s:setup}. 

Assume towards a contradiction that $S$ is not rational.
Then $S$ is abstractly a ruled surface,
necessarily elliptic as it is not a scroll. Let $S'$ be a
minimal resolution of the singularities of $S$ and $\Sigma$ a minimal
model of $S'$; then $\Sigma$ is a ruled surface over an elliptic curve $B$.
By \cite[Theorem~1.4]{CC-wdef}, since $d\geq 7$,
if $S \subset \P^{d-1}$ is not $1$-weakly
defective, then for general $p_1,p_2 \in S$, the general section of $S$
by a hyperplane tangent at both $p_1$ and $p_2$ is a curve $C_0$
with two nodes at $p_1$ and $p_2$ and no other singularity. 

If $C_0$ is irreducible, then it is a rational curve
because $S$ has sectional genus $2$. Moreover, $C_0$ dominates the
base $B$ since $C_0^2>0$, so we have 
a contradiction to the non-rationality of $S$. 
Otherwise, there are the  following two possibilities:
\begin{enumerate}[label=(\arabic*),ref=\arabic*]
\item\label{41-1} $C_0 = \bar C + F_1 + F_2$, with $F_i \cap \bar C = p_i$ for
$i=1,2$,   $F_1\cap F_2 = \emptyset$, and all three $\bar C,F_1,F_2$ smooth;
\item\label{41-2} $C_0 = \bar C + F$, and 
$\bar C \cap F = \{p_1,p_2\}$,
$\bar C$ and $F$ smooth.
\end{enumerate}

In case~\eqref{41-1}, by generality, the situation must be symmetric with
respect to $p_1$ and $p_2$; hence $F_1$ and $F_2$ are algebraically
equivalent. Let $F$ be their algebraic equivalence class.
Necessarily, $F^2=0$, and $F$ moves in a $1$-dimensional family.
Moreover,
\[
  C\cdot F = \bar C\cdot F = 1; 
\]
hence $F$ is mapped to a line, and $S$ is a scroll, so we have a contradiction.

In case~\eqref{41-2} at least one among $\bar C$ and $F$ dominates the
base $B$; we assume it is $\bar C$, so $\bar C$ has genus at least
one. Since $C$ has arithmetic genus $2$, the only possibility is that
$\bar C$ has genus $1$ and $F$ has genus $0$. Therefore, $F$ is in the
class of the ruling of $\Sigma$. This leads to a contradiction as the
two general points $p_1$ and $p_2$ may not sit on a common ruling.

It thus only remains to examine the finitely many possibilities in
which $S \subset \P^{d-1}$ is 1-weakly defective, listed in
Proposition~\ref{pr:list-1wdef} below.

\eqref{p:l-1w-1}  If $S$ were in the cone over the Veronese surface, then its
  hyperplane section $C$ would be on the Veronese surface itself. But
  then necessarily $S \subset \P^6$, \ie $d=7$, and since all curves
  on the Veronese surface have even degree, this case cannot be
  realized. 

\eqref{p:l-1w-2} In this case $S$ has sectional genus $3$ (the
general hyperplane section of $S$ is birational to a plane quartic
everywhere tangent to the branch curve of the double cover $S
\xrightarrow{2:1} \P^2$), and this is contrary to our assumptions.

\eqref{p:l-1w-3} In this case $S$ is contained in a cone
$K(\Lambda,B) \subset \P^{d-1}$.  Then $B$ is an irreducible curve in
$\P^{d-3}$; hence it has degree $\delta \geq d-3$.  The cone
$K(\Lambda,B)$ is swept out by a $1$-dimensional family $(\Pi_b)_{b
  \in B}$ of planes containing~$\Lambda$.  Let $m$ be the number of
points cut out on $C$ off $\Lambda$ by a general $\Pi_b$.  If $m=1$,
then $S$ is ruled by lines, which is excluded.  Otherwise, let us
consider the section of $C$ by a general hyperplane $H$ containing the
line~$\Lambda$: this is the sum of the points $\Pi_b \cap C$ for $b
\in H\cap B$, plus possibly some points on the vertex line $\Lambda$.
The upshot is that
\[
  d = \deg (H\cap S) \geq m\delta \geq m(d-3),
\]
which is possible only if $d \leq 6$, in contradiction with our
assumptions. Thus this case cannot happen either.

This completes the proof by contradiction that $S$ is rational. Now, let us
write
\[
  |K_{S'}+C| = F + |M|,
\]
where as usual $F$ is the fixed part and $|M|$ the mobile part. Since
$S$ is rational, it cuts out the complete canonical series on $C$; 
hence $|M|$ has dimension $1$, and $F\cdot C=0$ and $M\cdot C=2$. Thus
$|M|$ is mapped to a pencil of conics on $S$, and the proof is complete.
\end{proof}

\begin{proposition}[Chiantini--Ciliberto, \cf \cite{CC-wdef}]
\label{pr:list-1wdef}
Let $S \subset \P^r$, $r \geq 6$, be a $1$-weakly defective surface
with isolated singularities.
Then $S$ falls into one of the following three cases: 
\begin{a-enumerate}
\item\label{p:l-1w-1}  $S \subset \P^6$ is contained in the cone over the Veronese
surface and vertex a point, \ie
\[
  S \subset K\left(p, v_2(\P^2) \right).
\]
\item\label{p:l-1w-2} 
  $S \subset \P^6$ is a quartic double plane $\pi\colon S \xrightarrow
  {2:1} \P^2$ embedded by the complete linear system
  $|\pi^* 2H|$, where $H$ is the line class in $\P^2$.
\item\label{p:l-1w-3}  $S$ is contained in the cone with vertex a line over a curve $B$: 
\[
  S \subset K\left(\Lambda, B \right)
  \subset \P^r.
\]
\end{a-enumerate}
\end{proposition}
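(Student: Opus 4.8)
The plan is to follow the proof of \cite{CC-wdef}, which I outline here rather than reproduce. Recall that $S$ being $1$-weakly defective means that, for general $p_1,p_2\in S$, the general hyperplane $H$ tangent to $S$ at $p_1$ and $p_2$ is tangent along a subscheme $\Gamma=\Gamma_{p_1,p_2}(H)\subset S$ of \emph{positive} dimension; as $S$ is a surface, $\Gamma$ is a curve, and by generality it passes through $p_1$ and $p_2$. The first step is to describe this contact curve. The relevant structure result (a Terracini-type statement, \cf \cite{CC-wdef}) asserts that along each irreducible component $\Gamma'$ of $\Gamma$ the embedded tangent planes $T_pS$, $p\in\Gamma'$, span a linear space $\vect{\Gamma'}$ of very small dimension, $H$ being a general hyperplane through that span; from this one deduces that $\Gamma'$ is a smooth rational curve which is either a line, a conic in a plane, or a rational cubic (plane or twisted, in a $\P^3$). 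Excluding higher degrees and sorting out the incidence of the components of $\Gamma$ --- either $\Gamma$ is irreducible and passes through both points, or $\Gamma=\Gamma_1+\Gamma_2$ with $p_i\in\Gamma_i$, the two pieces varying independently with $p_1$ and $p_2$ --- is where Zak's theorem on tangencies and the hypothesis $r\geq 6$ come in.

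Granting this, one lets $(p_1,p_2)$ range over a dense open subset of $S\times S$: the contact curves $\Gamma$ form an irreducible family covering $S$, and one runs a case analysis according to the type of the general component, each case leading to one of \eqref{p:l-1w-1}--\eqref{p:l-1w-3}. For instance, if the general component is a \emph{line}, then $S$ is ruled by lines; here one uses that a cone over a curve with vertex a \emph{point} is not $1$-weakly defective --- the bitangent hyperplane at two general points being tangent exactly along the two corresponding rulings --- which forces the rulings of $S$ to meet a common \emph{line} and places $S$ inside a $3$-fold cone $K(\Lambda,B)$ over a curve with vertex that line, \ie case~\eqref{p:l-1w-3}. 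The \emph{conic} and \emph{cubic} cases are handled in the same spirit, the cubic one forcing $S$ to lie, up to projection, on the cone over the Veronese surface $v_2(\P^2)\subset\P^5$ with vertex a point (case~\eqref{p:l-1w-1}), and the remaining possibility, a surface ruled by conics, being the quartic double plane embedded by $|\pi^*2H|$ in $\P^6$ of case~\eqref{p:l-1w-2}.

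The hard part will be the intermediate step: proving that the contact curve is necessarily a line, a conic, or a cubic of the stated kind, and then carrying out the three geometric case analyses with the attendant bookkeeping on linear spans, base loci, and monodromy. For this I would simply invoke \cite{CC-wdef} rather than reconstruct the argument.
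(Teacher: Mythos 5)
Your proposal is correct and takes essentially the same route as the paper: the paper's own ``proof'' is the single sentence that the proposition is a direct application of \cite[Theorem~1.3]{CC-wdef} and is left to the reader. Your sketch of the internal contact-curve argument is additional exposition of that cited classification rather than a different method, and since you too ultimately defer to \cite{CC-wdef} for the substance, the two approaches coincide.
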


\noindent
The proof is a direct application of
\cite[Theorem~1.3]{CC-wdef} and is left to the reader.

\subsection{The Castelnuovo classification}
\label{s:castelnuovo}

As a consequence of Theorem~\ref{t:hyperell-notrat}, we can prove the
following classification result,
from which Corollary~\ref{c:he_class} will follow.

\begin{corollary}
\label{t:castelmoderno}
Let $S$ be as in Theorem~\ref{t:hyperell-notrat}.
If\, $S$ is not a cone, then $S$ is
represented by the complete linear system
\[
  |2E+(g+1+e)F|
\]
on the rational ruled surface $\F_e$, $0\leq e \leq g+1$,
or possibly by a linear subsystem defined by simple base points along
a curvilinear scheme.
\end{corollary}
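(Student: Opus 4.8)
The plan is to combine Theorem~\ref{t:hyperell-notrat} with the analysis of the hyperelliptic case carried out in the proof of Theorem~\ref{t:hideg}, supplemented by a dimension count.

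By Theorem~\ref{t:hyperell-notrat}, if $S$ is not a cone then it is rational and ruled by conics. Let $\pi\colon S'\to S$ be the minimal resolution; as usual we keep the name $C$ for the (strict, equivalently total) transform of a general hyperplane section, which is big and nef with $C^2=d$. Since $S$ is rational, the sequence $0\to\O_S\to\O_S(1)\to\O_C(1)\to 0$, together with $h^1(\O_S)=0$ and $h^1(\O_C(1))=0$ (as $d>2g-2$), gives $h^0(\O_S(1))=d-g+2$; hence $S$ is linearly normal and $\phi_{|C|}$ maps $S'$ onto $S$ by the complete linear system $|C|$, of dimension $d-g+1$.

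Next I would run the adjunction argument. The adjoint system $|K_{S'}+C|$ cuts out the complete $|K_C|$ on $C$; write $|K_{S'}+C|=\Delta+|M|$ with $\Delta$ the fixed part and $|M|$ the movable part, so that $\dim|M|=g-1$ and $C\cdot\Delta=0$. The conic pencil provided by Theorem~\ref{t:hyperell-notrat} restricts on $C$ to its base-point-free $g^1_2$, so its base points on $S'$ do not lie on $C$; after blowing them up (keeping the name $S'$, and with $C^2$ unchanged) we obtain a morphism $S'\to\P^1$ whose general fibre $\Phi$ is a smooth rational curve with $\Phi^2=0$ and $C\cdot\Phi=2$. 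Then $K_{S'}\cdot\Phi=-2$, so $(K_{S'}+C)\cdot\Phi=0$, which forces $M\cdot\Phi=\Delta\cdot\Phi=0$ and hence $|M|$ composed with the pencil $|\Phi|$. This puts us precisely in case~\eqref{case:he} of the proof of Theorem~\ref{t:hideg}: performing elementary transformations if necessary, $S'$ admits a birational model $\F_e$ on which $C$ becomes a smooth curve $\bar C$ with $\bar C\cdot F=2$ and adjoint system $|(k+e-2)F|$ of dimension $g-1$, whence $\bar C\in|2H+kF|=|2E+(k+2e)F|$ with $k=g+1-e\geq 0$. Thus $0\leq e\leq g+1$, $\bar C\in|2E+(g+1+e)F|$, and (up to these modifications) $S'$ is the blow-up of $\F_e$ along a curvilinear scheme $Z\subset\bar C$ of length $b=\bar C^2-C^2=(4g+4)-d$.

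Finally I would match dimensions. By Example~\ref{ex:hyperell}, $h^0(\F_e,\O_{\F_e}(2E+(g+1+e)F))=3g+6$; restricting to $\bar C$, on which $\O_{\F_e}(\bar C)$ has degree $4g+4$ and is therefore non-special, one sees that the curvilinear scheme $Z$ imposes independent conditions, so the members of $|2E+(g+1+e)F|$ passing through $Z$ form a linear system of dimension $3g+5-b=d-g+1$. Since $|C|$ is contained in the system of strict transforms of these members and has the same dimension $d-g+1$, the two coincide; hence $S=\phi_{|C|}(S')$ is represented by $|2E+(g+1+e)F|$ itself when $d=4g+4$, and otherwise by its subsystem of members through $Z$, a curvilinear scheme. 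I expect the delicate point to be the elementary-transformation bookkeeping needed to bring $\bar C$ to a smooth curve on $\F_e$ while ensuring that $Z$ stays curvilinear and supported on $\bar C$ — which is what legitimizes the wording ``simple base points along a curvilinear scheme''; this part runs exactly as in case~\eqref{case:he} of the proof of Theorem~\ref{t:hideg}, the only genuinely new ingredient being the dimension count above, valid throughout the range $2g+3\leq d\leq 4g+4$.
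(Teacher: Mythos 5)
Your argument is correct and follows essentially the same route as the paper: reduce via the conic ruling from Theorem~\ref{t:hyperell-notrat} to a minimal rational ruled surface, use elementary transformations to make the image of $C$ smooth, and then apply adjunction to get $C\lineq 2E+(g+1+e)F$ with $0\leq C\cdot E=g+1-e$. The only additions are the linear-normality check and the final dimension count showing that the curvilinear scheme $Z$ imposes independent conditions; the paper leaves these implicit, but they are harmless (and correct) elaborations rather than a different method.
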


\begin{proof}
By Theorem~\ref{t:hyperell-notrat}, we know that $S'$ is rational
and contains a pencil $|M|$ of rational curves such that $C\cdot
M=2$.
Then there is a birational morphism $S' \to \Sigma$, with
$\Sigma$ a minimal rational ruled surface on which $|M|$ is mapped to
the ruling. Since $C\cdot M=2$ on $S'$, the linear system $|C|$ is
mapped in $\Sigma$ to a linear system with only simple and double base
points. One may get rid of all double base points by performing
elementary transformations. Thus we may assume that $\Sigma \cong \F_e$
and the general member of $|C|$ (in $\Sigma$) is smooth.

In our usual notation,
$C \lineq 2E + kF$, and one finds $k = g+1+e$ with the adjunction
formula. 
Then,
\begin{equation*}\pushQED{\qed}
  0 \leq C\cdot E = g+1-e.
\qedhere \popQED
	\end{equation*}
\renewcommand{\qed}{} 
\end{proof}

\begin{corollary}
\label{he:castelnuovo}
Let $S$ be as in Theorem~\ref{t:hyperell-notrat}.
If\, $S$ is not a cone, then it is rational and
represented by one of the following linear systems
or one obtained from those by adding simple base points along a
curvilinear subscheme:
\begin{a-enumerate}
\item\label{he:c-1} $\L_{g+3}(g+1,2)$, the linear system of
  plane curves of degree $g+3$ with one base point of multiplicity $g+1$
  and one double point; 
\item\label{case:iperell-infnear}
  $\L_{2g+2-\mu}([2g-\mu, 2^{g-\mu}])$, $\mu=0,\ldots,g$,
  the linear system of
  plane curves of degree $2g+2-\mu$, with one base point $p$ of
  multiplicity $2g-\mu$ and $g-\mu$ double base points infinitely
  near to $p$, 
  pairwise distinct on the exceptional divisor of the blow-up of $p$.
\end{a-enumerate}
\end{corollary}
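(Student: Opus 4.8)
The plan is to deduce this from Corollary~\ref{t:castelmoderno} by transporting the ruled-surface model to $\P^2$. By that corollary, $S$ is rational and is the image of the complete linear system $|2E+(g+1+e)F|$ on some Hirzebruch surface $\F_e$ with $0\le e\le g+1$ (in the Notation~\ref{nz:scrolls}), or of a linear subsystem of it cut out by a curvilinear scheme of simple base points. It therefore suffices to produce, for each admissible $e$, a birational map $\rho_e\colon\F_e\dashrightarrow\P^2$ carrying $|2E+(g+1+e)F|$ onto the plane system in case~\eqref{he:c-1} when $e=0$, and onto the plane system in case~\eqref{case:iperell-infnear} with $\mu=g+1-e$ when $e\ge1$. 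A subsystem's simple curvilinear base points are then transported to simple curvilinear base points of the plane system, since $\rho_e$ is an isomorphism near a general point and may be chosen with indeterminacy locus disjoint from the base scheme, which lies on the smooth locus of $S$. As $e$ runs through $\{1,\dots,g+1\}$, the integer $\mu=g+1-e$ runs through $\{0,\dots,g\}$, matching the range in~\eqref{case:iperell-infnear}.

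For $e=0$ one has $\F_0=\P^1\times\P^1$, and $|2E+(g+1)F|$ is the system of curves of bidegree $(2,g+1)$. For $\rho_0$ take the classical quadratic transformation: blow up a general point $x$ and contract the strict transforms of the two rulings through $x$. Pulling the line class of $\P^2$ back through $\rho_0$, a general curve of bidegree $(2,g+1)$ becomes a plane curve of degree $g+3$ with a point of multiplicity $g+1$ and an ordinary double point at the two images of the contracted rulings; that is, $\L_{g+3}(g+1,2)$, which is case~\eqref{he:c-1}. For $e=1$, $\F_1$ is the blow-up of $\P^2$ at a point $p$ with exceptional curve $\mathcal{E}_p$; writing $F=\ell-\mathcal{E}_p$ and $E=\mathcal{E}_p$ gives $2E+(g+2)F=(g+2)\ell-g\,\mathcal{E}_p$, so $\rho_1$ is the blow-down and the system becomes $\L_{g+2}(g)$, which is~\eqref{case:iperell-infnear} with $\mu=g$.

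For $2\le e\le g+1$ realize $\F_e$ as follows. Let $V\to\P^2$ be the blow-up at a point $p$ and at $e-1$ further points $q_1,\dots,q_{e-1}$ lying on $\mathcal{E}_p$ and pairwise distinct; the strict transforms of the lines $\overline{pq_i}$ are disjoint $(-1)$-curves, and contracting all of them produces a smooth rational surface of Picard rank $2$ containing a curve of self-intersection $-e$, hence isomorphic to $\F_e$, this curve being the section $E$ and the images of the lines through $p$ forming the ruling. An elementary computation in $\Pic(V)$ shows that $E$ pulls back to $\mathcal{E}_p-\sum_{i=1}^{e-1}\mathcal{E}_{q_i}$ and the fibre $F$ to $\ell-\mathcal{E}_p$, whence $2E+(g+1+e)F$ pulls back on $V$ to $(g+1+e)\ell-(g+e-1)\mathcal{E}_p-2\sum_{i=1}^{e-1}\mathcal{E}_{q_i}$, that is, to the strict transform of the plane system $\L_{g+1+e}([g+e-1, 2^{e-1}])$. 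Setting $\mu:=g+1-e$, this is exactly $\L_{2g+2-\mu}([2g-\mu, 2^{g-\mu}])$ of case~\eqref{case:iperell-infnear}, the $e-1$ double points being infinitely near $p$ and pairwise distinct on $\mathcal{E}_p$. Because $\rho_e$ identifies the two linear systems, the expected dimension $3g+5$ and the irreducibility of the general member need no separate verification.

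The one point demanding care is the bookkeeping: one must check that the strict transform of the $\F_e$-system under $\rho_e$ has precisely the asserted multiplicities at the (possibly infinitely near) points $p,q_1,\dots,q_{e-1}$ and imposes no further base conditions, and that in the subsystem case the base scheme stays curvilinear and cut out by simple base points under transport. One could instead lower $e$ in steps of $2$ by pairs of elementary transformations---which preserves the complete hyperelliptic system---down to $e\in\{0,1\}$ and invoke only the first two cases; but the direct construction has the merit of producing the full list~\eqref{he:c-1}--\eqref{case:iperell-infnear} in one stroke.
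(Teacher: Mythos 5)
Your proposal is correct and follows essentially the same route as the paper: both start from Corollary~\ref{t:castelmoderno} and transport the system $|2E+(g+1+e)F|$ on $\F_e$ to $\P^2$ by a birational map, reading off the degree and the multiplicities of the resulting plane system. The only difference is in how that map is realized: the paper factors $\F_e \dashrightarrow \P^2$ as $e-1$ elementary transformations at general points followed by the contraction of $E$ on $\F_1$ (the double points created land on the successive negative sections, which is why they end up infinitely near to $p$), whereas you construct the inverse map in one stroke by blowing up $p$ together with $e-1$ points infinitely near to it on $\mathcal{E}_p$ and contracting the corresponding lines --- a computation that checks out.
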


\begin{proof}
Take into account Corollary~\ref{t:castelmoderno}.
If $e=0$, we are in case~\eqref{he:c-1}.
If $e=1$, we are in case~\eqref{case:iperell-infnear} with $\mu=g$.
If $e>1$, we perform $e-1$ elementary transformations at general
points, thus ending up on $\F_1$, after what we arrive at $\P^2$ by
contracting the $(-1)$-curve $E$ on $\F_1$, with a linear system as in
case~\eqref{case:iperell-infnear} with $\mu = g+1-e$.
\end{proof}

\begin{noname}
\label{p:castelnuovo-storia}
The classification of rational surfaces such that the general
hyperplane section is hyperelliptic
had been classically worked out by Castelnuovo; \cf 
\cite{castelnuovo-iperell}.
In more recent times, the classification of
arbitrary smooth surfaces having
at least one
smooth hyperelliptic hyperplane section has been worked out by
Serrano, \cf \cite{serrano}, 
and Sommese--Van de Ven, \cf \cite{sommese-vdVen}.
Our classification takes into account singular surfaces
as well,
with the same condition that they have one hyperelliptic section,
albeit with the restriction that the degree
$d$ be at least $10$, or at least $2g+3$ if $g=2$ or $3$.
Note that without this assumption, there are other possibilities,
including rational surfaces such that the general hyperplane section
is not hyperelliptic (if the pair $(g,d)$ equals
$(3,8)$ or $(4,9)$) or is an elliptic ruled surface (if
$(g,d)=(3,8)$); \cf \cite{serrano}.
\end{noname}

\section{Gaussian maps and their cokernels}
\label{S:coker}

We consider a linearly normal curve $C \subset \P^r$ and let
$L = \restr {\O_{\P^r}(1)} C$.
In this section we define the Gaussian map $\gamma_{C,L}$ and compute
its corank in a number of cases, thus proving
Theorem~\ref{t:cork-iperell+g3}.

\begin{noname}[The Gaussian map ${\gamma_{C,L}}$]
\label{def:gaussian}
Let $R_{C,L}$ be the kernel of the multiplication map
\begin{equation*}
\label{eq:mult-map}
\mu_{C,L}\colon H^ 0(K_C)\otimes H^ 0(L)\lra 
H^ 0(K_C+L).
\end{equation*}
The Gaussian map
\begin{equation*}
\label{eq:gaussian}
\textstyle
\gamma_{C,L}\colon R_{C,L} \lra H^ 0(2K_C+L)
\end{equation*}
is the map locally defined as
$\sum_i s_i \otimes t_i \mapsto \sum_i (s_i \cdot dt_i- t_i\cdot ds_i)$.
\end{noname}

We are interested in the corank of $\gamma_{C,L}$,
\ie the dimension of its cokernel in $H^ 0(2K_C+L)$.
Most of the time, Castelnuovo's theorem, Theorem~\ref{t:castelnuovo}, 
tells us that the multiplication map $\mu_{C,L}$ is surjective,
which readily gives the dimension of $R_{C,L}$; then it
suffices to compute the dimension of
the kernel of $\gamma_{C,L}$ to find its corank.
To do so, we shall use the canonical identification
which we now explain.

\begin{noname}[Restriction maps to the diagonal]
We shall interpret
the maps $\mu_{C,L}$ and $\gamma_{C,L}$ in terms of operations on the
product $C\times C$. We let $\p_1$ and $\p_2$ be the two 
projections,
\[
  \xymatrix@C=5pt@R=15pt{
    & C \times C
    \ar[dl]_{\p_1} \ar [dr]^{\p_2}
    \\
    C && C\rlap{,}
  }
\]
and $\Delta \subset C\times C$ be the diagonal.
The multiplication map $\mu_{C,L}$ can be identified with the
restriction map $r_1$ to the diagonal $\Delta$, as indicated in the
diagram below: 
\[
  \xymatrix{
    H^0\left(\p_1^*K_C + \p_2^* L\right)
    \ar[d]_{r_1}
    \ar@{=}[r] & H^0(K_C) \otimes H^0(L) \ar[d]^{\mu_{C,L}}
    \\
    H^0\left( \restr {\left(\p_1^*K_C + \p_2^* L\right)} {\Delta} \right)
    \ar@{=}[r] & H^0(K_C+L)
  }
\]
From this we deduce the following identification of the kernel of
the multiplication map $\mu_{C,L}$: 
\[
  R_{C,L}
  \cong
  H^0(C\times C, \p_1^*K_C + \p_2^* L-\Delta).
\]
In turn, the Gaussian map $\gamma_{C,L}$ can be identified with yet again a
restriction 
map to the diagonal, namely the following map $r_2$: 
\[
  r_2\colon
  H^0(C\times C, \p_1^*K_C + \p_2^* L-\Delta)
  \lra 
  H^0 \left(\Delta, \restr {(\p_1^*K_C + \p_2^* L-\Delta)} {\Delta} \right)
  \cong H^0(C, 2K_C+L).
\]
For the last identification, note that
$\restr \Delta \Delta \cong N_{\Delta/C\times C} \cong -K_C$.
The upshot is the following identification of the kernel of
$\gamma_{C,L}$: 
\begin{equation}
\label{eq:ker-gaussian}
  \ker(\gamma_{C,L})
  \cong
  H^0(C\times C, \p_1^*K_C + \p_2^* L-2\Delta).
\end{equation}
\end{noname}

\begin{proposition}
\label{p:he-ker}
Let $C$ be a hyperelliptic curve of genus $g$ and $L$ an
effective line bundle.
There is an identification
\begin{equation}\label{eq:he-ker}
  \ker(\gamma_{C,L} ) \cong H^0\left( C,(g-3) \fg  \right)
  \otimes H^0 \left( C, L-2\fg \right),
\end{equation}
where $\fg$ denotes the class of the $g^1_2$ on $C$.
\end{proposition}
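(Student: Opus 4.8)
The plan is to exploit the identification \eqref{eq:ker-gaussian}, so that the task becomes the computation of $H^0(C\times C, \p_1^*K_C + \p_2^*L - 2\Delta)$. The key structural input is that a hyperelliptic curve comes with its degree-$2$ map $\phi_{\fg}\colon C\to \P^1$, and the canonical bundle is pulled back from $\P^1$: one has $K_C \lineq (g-1)\fg$. I would first rewrite $\p_1^*K_C \lineq (g-1)\p_1^*\fg$, and more importantly analyze the diagonal $\Delta$ using the fibre product structure: the involution $\iota$ on $C$ induces, together with the identity, a map $C\times C \to \P^1\times\P^1$, and $\Delta$ together with the ``anti-diagonal'' $\Delta' = \{(p,\iota p)\}$ is the preimage of the diagonal of $\P^1\times\P^1$. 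Concretely, if $\Delta_{\P^1}$ denotes the diagonal of $\P^1\times\P^1$, then $(\phi_\fg\times\phi_\fg)^*\Delta_{\P^1} = \Delta + \Delta'$ as divisors on $C\times C$, and $\Delta_{\P^1} \lineq \pr_1^*\O(1) + \pr_2^*\O(1)$ minus a point's worth — more precisely $\O_{\P^1\times\P^1}(\Delta_{\P^1}) \cong \pr_1^*\O(1)\otimes\pr_2^*\O(1)$. Pulling back gives $\O_{C\times C}(\Delta + \Delta') \cong \p_1^*\fg \otimes \p_2^*\fg$.

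\smallskip
Next I would use this to replace $2\Delta$. Writing $2\Delta = (\Delta+\Delta') + (\Delta - \Delta')$, we get
\[
  \p_1^*K_C + \p_2^*L - 2\Delta
  \lineq \p_1^*(K_C - \fg) + \p_2^*(L-\fg) - (\Delta-\Delta')
  \lineq \p_1^*((g-2)\fg) + \p_2^*(L-\fg) - \Delta + \Delta'.
\]
This still has a $-\Delta+\Delta'$ term, so I would iterate: again $\Delta+\Delta' \lineq \p_1^*\fg+\p_2^*\fg$, hence $-\Delta = \Delta' - \p_1^*\fg - \p_2^*\fg$, giving $-\Delta+\Delta' \lineq 2\Delta' - \p_1^*\fg - \p_2^*\fg$. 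Substituting,
\[
  \p_1^*K_C + \p_2^*L - 2\Delta
  \lineq \p_1^*((g-3)\fg) + \p_2^*(L-2\fg) + 2\Delta'.
\]
Now $2\Delta'$ is the pullback of $2\Delta$ under the automorphism $\id\times\iota$ of $C\times C$; composing the identification \eqref{eq:ker-gaussian} with this automorphism, the computation of $H^0$ of the bundle above is the same as computing $H^0\bigl(C\times C,\ \p_1^*((g-3)\fg)\otimes\p_2^*(L-2\fg)\otimes\O(2\Delta)\bigr)$ — but this is not obviously a Künneth-type bundle. Rather, the cleaner route is to observe that after applying $\id\times\iota$ the line bundle becomes $\p_1^*((g-3)\fg) + \p_2^*(L-2\fg) + 2\Delta$, and to instead push forward via $\p_1$ or to argue directly that $2\Delta'$ can again be absorbed. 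Actually the clean endpoint is: because $(g-3)\fg$ and $L-2\fg$ are the ``right'' bundles, one shows the $2\Delta'$ (equivalently, after the automorphism, a $-2\Delta$ correction has been fully removed) contributes nothing, i.e. $H^0(C\times C, \p_1^*((g-3)\fg)\otimes\p_2^*(L-2\fg))$ is exactly the group in \eqref{eq:he-ker}, by Künneth. So the heart of the argument is to verify that the remaining diagonal correction term is trivial: I would prove that $\O_{C\times C}(\Delta') \cong \p_1^*\fg \otimes \p_2^*\fg \otimes \O(-\Delta)$ forces, after the substitutions, an honest equality of line bundles $\p_1^*K_C + \p_2^*L - 2\Delta \cong (\id\times\iota)^*\bigl(\p_1^*((g-3)\fg)\otimes\p_2^*(L-2\fg)\bigr)$ — note $\iota^*\fg \cong \fg$ and $\iota^*(L-2\fg)$ need not equal $L-2\fg$, but its $H^0$ has the same dimension, and in fact one can arrange $L$ to be $\iota$-invariant up to the computation of dimensions since only $h^0$ matters for the corank.

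\smallskip
The main obstacle, and the step requiring genuine care, is bookkeeping the diagonal corrections: making precise the identity $\O_{C\times C}(\Delta + \Delta') \cong \p_1^*\fg\otimes\p_2^*\fg$ (this uses that $\phi_\fg\times\phi_\fg$ is finite flat of degree $4$ and that $\Delta+\Delta'$ is its scheme-theoretic preimage of $\Delta_{\P^1}$, which requires checking the map is unramified along these divisors away from the Weierstrass locus, or more robustly comparing divisor classes on the smooth surface $C\times C$ via intersection with fibres $\{p\}\times C$ and $C\times\{p\}$), and then iterating the substitution $-\Delta \lineq \Delta' - \p_1^*\fg - \p_2^*\fg$ the correct number of times to land on $\p_1^*((g-3)\fg)\otimes\p_2^*(L-2\fg)$ with an even multiple of $\Delta'$ that can be transported to an even multiple of $\Delta$ by $\id\times\iota$ and then — crucially — shown to be removable. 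Here one uses that $\Delta' \cdot F_i = 1$ for both ruling classes $F_i = \{p\}\times C$, $C\times\{p\}$, exactly as for $\Delta$, so $\O(\Delta')$ and $\O(\Delta)$ differ only by a pullback from each factor; chasing this through shows $2\Delta'$ is linearly equivalent to $\p_1^*(2D_1) + \p_2^*(2D_2) - 2\Delta$ for suitable divisors, and a final application of the same relation collapses everything. Once the line bundle is genuinely a box product $\p_1^*A \otimes \p_2^*B$ with $A \lineq (g-3)\fg$ and $B \lineq L - 2\fg$ (up to $\iota$, harmless for $h^0$), Künneth's formula $H^0(C\times C, \p_1^*A\otimes\p_2^*B) \cong H^0(C,A)\otimes H^0(C,B)$ finishes the proof, yielding precisely \eqref{eq:he-ker}. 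I would present the divisor-class computation on the surface $C\times C$ (using the basis given by the two fibre classes and $\Delta$, whose intersection matrix is explicit) as the backbone, since it sidesteps delicate ramification analysis and makes the ``iterate the substitution'' step into transparent linear algebra.
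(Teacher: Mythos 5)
Your divisor-class bookkeeping is correct as far as it goes: the identity $\Delta+\Delta'\lineq\p_1^*\fg+\p_2^*\fg$ (in the paper's notation $\Delta'$ is the graph $I$ of the hyperelliptic involution, the unique member of $|\p_1^*\fg+\p_2^*\fg-\Delta|$) does yield the linear equivalence
\[
\p_1^*K_C+\p_2^*L-2\Delta \lineq \p_1^*\left((g-3)\fg\right)+\p_2^*\left(L-2\fg\right)+2\Delta',
\]
which is exactly the paper's relation \eqref{eq:he-kerr}. The genuine gap is at the step where you claim the residual term $2\Delta'$ ``contributes nothing'' and ``collapses''. Substituting $2\Delta'\lineq\p_1^*(2\fg)+\p_2^*(2\fg)-2\Delta$ once more is circular --- it returns you verbatim to $\p_1^*K_C+\p_2^*L-2\Delta$ --- and no amount of linear algebra in $\Pic(C\times C)$ can remove the diagonal correction, because the two line bundles $\p_1^*((g-3)\fg)\otimes\p_2^*(L-2\fg)$ and $\p_1^*K_C\otimes\p_2^*L\otimes\O(-2\Delta)$ are simply \emph{not} isomorphic: they differ by the nontrivial effective class $2\Delta'$. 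Likewise transporting by $\id\times\iota$ only trades $\Delta'$ for $\Delta$ (and $L$ for $\iota^*L$); it does not remove it. What is needed, and what you never prove, is that $2\Delta'$ lies in the \emph{fixed part} of the linear system $|\p_1^*K_C+\p_2^*L-2\Delta|$, so that subtracting it leaves $h^0$ unchanged; adding an effective divisor to a line bundle can perfectly well increase $h^0$, so this is not automatic and is not detected by intersection numbers against the rulings.

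That fixed-part statement is precisely the geometric heart of the paper's proof. One restricts an arbitrary effective member $E$ of $|\p_1^*K_C+\p_2^*L-2\Delta|$ to a fibre $\p_2^*p\cong C$, obtaining a divisor $\sum_i p_i$ with $2p+\sum_i p_i\in|K_C|=|(g-1)\fg|$; for general $p$ one has $2p\notin\fg$, so two of the $p_i$ must each be conjugate to $p$, hence both equal $\iota(p)$, forcing $E$ to contain the point $(\iota(p),p)$ with multiplicity $2$ and therefore to contain $2\Delta'$ as a component. Only after this does the linear equivalence you computed, together with K\"unneth, finish the argument. (A minor further point: your suggestion to ``arrange $L$ to be $\iota$-invariant'' is not available, since the statement concerns an arbitrary effective $L$; but once the fixed-part argument is in place no such invariance is needed.)
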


\begin{proof}
We shall prove that the right-hand-sides of 
\eqref{eq:he-ker}
and \eqref{eq:ker-gaussian}
are isomorphic, which suffices to prove the proposition.
First assume that there exists an effective divisor
\[
  E \in \left|\p_1^* K_C + \p_2^*L-2\Delta\right|
\]
in $C \times C$. Then for all $p \in C$, we have the following
equality as divisors on the fibre $\p_2^* p \cong C$: 
\[
  \textstyle
  E\cap \p_2^* p = \sum_{i=1}^{2g-4} p_i,
\]
with the points $p_1,\ldots,p_{2g-4}$ subject to the condition that
\(
2p+\sum_{i=1}^{2g-4} p_i \in |K_C|.
\)
In general, $2p \not\in \fg$, and then there must be two of the points
$p_i$, say $p_1$ and $p_2$, such that $p+p_1,p+p_2 \in \fg$.
Then necessarily $p_1=p_2$, and thus
\[
  \textstyle
  E\cap \p_2^* p = 2p_1+\sum_{i=3}^{2g-4} p_i
  \quad \text{with}
  \quad
  \left\{
    \begin{aligned}
      & p+p_1 \in \fg
      \
      \text{and} \ \\
      & \textstyle
      \sum_{i=3}^{2g-4} p_i \in |K_C-2\fg|=|(g-3)\fg|.
    \end{aligned}
  \right.
\]
This shows that $E$ must contain the graph $I \subset C\times C$ of the
hyperelliptic involution with multiplicity $2$;
in other words, $2I$ is a fixed part of the linear system
$|\p_1^* K_C + \p_2^*L-2\Delta|$, and thus
\[
  H^0\left(C\times C, \p_1^* K_C + \p_2^*L-2\Delta\right)
  = H^0\left(C\times C, \p_1^* K_C + \p_2^*L-2\Delta-2I\right).
\]
The graph $I$ is the
unique divisor in $|\p_1^* \fg+\p_2^* \fg -\Delta|$;  hence
\begin{align}
\label{eq:he-kerr}
  \p_1^* K_C + \p_2^*L-2\Delta
  -2I
  & \lineq
    \p_1^*\left( (g-3)\fg \right)
    + \p_2^* \left( L-2\fg \right),
\end{align}
and the conclusion follows if $\p_1^* K_C + \p_2^*L-2\Delta$
is effective.
On the other hand, the linear equivalence \eqref{eq:he-kerr} implies
that $\p_1^* K_C + \p_2^*L-2\Delta$ is effective if
$\p_1^*( (g-3)\fg )
+ \p_2^* ( L-2\fg )$
is effective;  hence the proposition holds unconditionally. 
\end{proof}

\begin{proposition}
\label{p:g3-ker}
Let $C$ be a non-hyperelliptic curve of genus $3$ and $L$ an
effective line bundle on $C$.
There is an identification
\begin{equation}\label{eq:g3-ker}
  \ker(\gamma_{C,L} ) \cong
  H^0\left( C,L-3K_C\right).
\end{equation}
\end{proposition}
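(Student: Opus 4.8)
The plan is to follow the strategy of the proof of Proposition~\ref{p:he-ker}: use the identification \eqref{eq:ker-gaussian},
\[
  \ker(\gamma_{C,L}) \cong H^0\bigl(C\times C,\ \p_1^*K_C + \p_2^* L-2\Delta\bigr),
\]
exhibit an effective divisor $T\subset C\times C$ that is forced to lie in every member of $|\p_1^*K_C + \p_2^* L-2\Delta|$, and check that subtracting it leaves exactly $\p_2^*|L-3K_C|$.

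Since $C$ is non-hyperelliptic of genus $3$, the canonical map embeds it as a smooth plane quartic, and $K_C=\O_C(1)$. I would take $T$ to be the \emph{tangential correspondence}: the closure in $C\times C$ of the set of pairs $(a,p)$ with $a\neq p$ and $a$ lying on the projective tangent line $\mathbb{T}_pC$. For general $p$ the fibre of $T$ over $p$ (via $\p_2$) is the residual divisor of $2p$ in $\mathbb{T}_pC\cap C$, which is the \emph{unique} effective divisor in $|K_C-2p|$, since $C$ non-hyperelliptic forces $h^0(K_C-2p)=h^0(2p)=1$ for every $p$. The crucial computation is the class of $T$: I claim $T\lineq \p_1^*K_C+3\p_2^*K_C-2\Delta$. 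The quickest justification is to consider the incidence divisor $\mathcal I=\{(a,p)\in\P^2\times\P^2: a\in\mathbb{T}_pC\}$, which has bidegree $(1,3)$, so $\mathcal I|_{C\times C}\in|\p_1^*K_C+3\p_2^*K_C|$; by Euler's relation $\Delta\subset\mathcal I|_{C\times C}$, and since the first polar of $C$ at a general point $a\in C$ is a cubic tangent to $C$ at $a$, the diagonal appears in $\mathcal I|_{C\times C}$ with multiplicity exactly $2$, whence $T=\mathcal I|_{C\times C}-2\Delta$ has the stated class. (Equivalently, and perhaps more cleanly, one can identify $\p_{2*}\O_{C\times C}(\p_1^*K_C-2\Delta)$ with $\O_C(-3K_C)$ by writing down the exact sequence of the sheaf of principal parts $\mathcal P^1(K_C)$ and using that $K_C$ separates $1$-jets because $C$ is non-hyperelliptic.)

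Next I would show that every nonzero $E\in|\p_1^*K_C+\p_2^*L-2\Delta|$ contains $T$. A general fibre $\p_2^*p\cong C$ is not a component of $E$ (an effective divisor has only finitely many $\p_2$-vertical components), so $E$ restricts on it to an effective divisor of degree $\deg K_C-2=2$ lying in $|K_C-2p|$; by the uniqueness above this restriction coincides with that of $T$. Hence $E$ and $T$ agree over a dense set of $\p_2$-fibres, and since every component of $T$ dominates $C$ via $\p_2$, this forces $E\geq T$ with $E-T=\p_2^*D$ for some effective divisor $D$ on $C$; the class computation gives $D\lineq L-3K_C$. Conversely $T+\p_2^*D$ is a member of the system for every effective $D\lineq L-3K_C$.

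Finally, to promote this correspondence of linear systems to the stated isomorphism of vector spaces, fix $0\neq\theta\in H^0\bigl(C\times C,\O(\p_1^*K_C+3\p_2^*K_C-2\Delta)\bigr)$ with $\div\theta=T$. Then $s\mapsto\theta\cdot\p_2^*s$ is a linear injection $H^0(C,L-3K_C)\hookrightarrow H^0\bigl(C\times C,\p_1^*K_C+\p_2^*L-2\Delta\bigr)$, and it is surjective because by the previous step every section of the target vanishes along $T=\div\theta$, hence is divisible by $\theta$ with quotient a section of $\p_2^*(L-3K_C)$, i.e.\ of $L-3K_C$ pulled back from $C$. Together with \eqref{eq:ker-gaussian} this yields \eqref{eq:g3-ker}. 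The one genuinely nontrivial point is the class of $T$ — the multiplicity-$2$ contribution of the diagonal — which is precisely where the plane-quartic geometry of $C$ is used; the remaining steps are formal divisor bookkeeping on $C\times C$, exactly as in Proposition~\ref{p:he-ker}.
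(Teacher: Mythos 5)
Your proposal is correct and follows essentially the same route as the paper: identify $\ker(\gamma_{C,L})$ with $H^0(C\times C,\p_1^*K_C+\p_2^*L-2\Delta)$, show the tangential correspondence $T$ is a fixed component by the uniqueness of the member of $|K_C-2p|$, and compute $T\lineq\p_1^*K_C+\p_2^*(3K_C)-2\Delta$ so that the residual system is $\p_2^*|L-3K_C|$. The only (cosmetic) difference is that you obtain the class of $T$ from the bidegree-$(1,3)$ incidence divisor in $\P^2\times\P^2$ plus Euler's relation, whereas the paper restricts $T$ to the fibres of both projections via first polars; both computations use the same plane-quartic geometry.
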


\begin{proof}
As in proof of Proposition~\ref{p:he-ker}, we will identify the
right-hand-side of \eqref{eq:g3-ker} with that of
\eqref{eq:ker-gaussian}, and this will complete the proof.
Assume there exists an effective divisor
\[
  E \in |\p_1^*K_C + \p_2^* L-2\Delta|
\]
on $C\times C$.
Then for all $p\in C$, we have the following
equality as divisors on the fibre $\p_2^* p \cong C$: 
\[
  E \cap \p_2^*(p) = p_1+p_2,
\]
where $p_1,p_2\in C$ are the only two points on $C$ such that
$2p+p_1+p_2 \in K_C$: seeing $C$ as a plane quartic,
$p_1,p_2$ are the residual intersection points of $C$ and its tangent
line at $p$.
Thus $E$ must contain the tangential correspondence
\[
  T = \overline {\left\{
    (q,p) \in C\times C:
    q \neq p \text{ and }
    q \in \T_pC
  \right\}},
\]
where $\T_pC$ denotes the tangent line to $C$ at $p$ in its model as a
plane quartic.
In other words, the linear system
$|\p_1^*K_C + \p_2^* L-2\Delta|$ has $T$ as a fixed part, and therefore
\[
  H^0 \left(
  C\times C,
  \p_1^*K_C + \p_2^* L-2\Delta
  \right)
  =   H^0 \left(
  C\times C,
  \p_1^*K_C + \p_2^* L-2\Delta-T
  \right).
\]

Let us now compute the class of $T$.
For all $(q,p) \in C\times C$, we have the equalities as divisors on
$C$ 
\newcommand{\polar}[2]{\mathrm{D} ^{#1} #2}
\[
  T\cap \p_1^*(q) = C \cap \polar q C - 2q
  \quad \text{and}
  \quad
  T\cap \p_2^*(p) = C \cap \T_pC - 2p,
\]
where $\polar q C$ is the first
polar of $q$ with respect to $C$ (seen as a plane quartic); 
see, \eg
\cite[Appendix~A]{th-formulae}.
First polars with respect to a plane quartic are plane cubics;  hence
$\polar q C$ cuts out the divisor class $3K_C$ on $C$
since the latter is canonically embedded. We thus find
\[
  T\cdot \p_1^*(q) \lineq 3K_C-2q
  \et
  T\cdot \p_2^*(p) \lineq K_C - 2p,
\]
hence
\[
  T \lineq \p_1^* K_C + p_2^* (3K_C) - 2\Delta.
\]
Finally, we find
\begin{equation}
\label{eq:g3-kerr}
  \p_1^*K_C + \p_2^* L-2\Delta-T
  \lineq
  \p_2^*(L-3K_C),
\end{equation}
and this completes the proof if
$\p_1^*K_C + \p_2^* L-2\Delta$ is effective.
On the other hand, it follows from \eqref{eq:g3-kerr}
that $\p_1^*K_C + \p_2^* L-2\Delta$ is effective if
$L-3K_C$ is effective;  hence the result holds unconditionally.
\end{proof}

\begin{proposition}
\label{pr:cork-general}
Let $C$ be a smooth projective curve of genus $g\geq 2$ and $L$ an
effective line bundle on $C$ of degree $d>0$.
\begin{a-enumerate}
\item\label{prop:cork-g1} If\, $C$ is hyperelliptic, then
\[
  \cork(\gamma_{C,L}) =
  2g+2-g\cdot h^1(L) + (g-2)\cdot h^1(L-2\fg)
  - \cork(\mu_{C,L}),
\]
where $\fg$ is the $g^1_2$ on $C$.
\item\label{prop:cork-g3}
 If\, $C$ is non-hyperelliptic of genus $3$, then
\[
  \cork(\gamma_{C,L}) =
  h^0(4K_C-L) -3\cdot h^1(L) 
  - \cork(\mu_{C,L}).
\]
\end{a-enumerate}
\end{proposition}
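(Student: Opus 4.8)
The plan is to extract $\cork(\gamma_{C,L})$ from the two natural four-term exact sequences attached to $\mu_{C,L}$ and $\gamma_{C,L}$, substitute the value of $\dim\ker(\gamma_{C,L})$ furnished by Propositions~\ref{p:he-ker} and~\ref{p:g3-ker}, and clear everything with Riemann--Roch; no further geometric input is needed, so the argument is essentially bookkeeping. From the definition of $R_{C,L}$ as $\ker(\mu_{C,L})$ one has the exact sequence $0 \to R_{C,L} \to H^0(K_C)\otimes H^0(L) \xrightarrow{\mu_{C,L}} H^0(K_C+L) \to \coker(\mu_{C,L}) \to 0$, whence $\dim R_{C,L} = g\cdot h^0(L) - h^0(K_C+L) + \cork(\mu_{C,L})$; from the definition of $\gamma_{C,L}$ in~\ref{def:gaussian} one has $0 \to \ker(\gamma_{C,L}) \to R_{C,L} \xrightarrow{\gamma_{C,L}} H^0(2K_C+L) \to \coker(\gamma_{C,L}) \to 0$, whence $\cork(\gamma_{C,L}) = h^0(2K_C+L) - \dim R_{C,L} + \dim\ker(\gamma_{C,L})$. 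Combining these, and using that $\deg L = d > 0$ forces $K_C+L$ and $2K_C+L$ to be nonspecial so that $h^0(K_C+L) = d+g-1$ and $h^0(2K_C+L) = d+3g-3$ while $h^0(L) = d-g+1+h^1(L)$, I obtain the master formula
\[
  \cork(\gamma_{C,L}) = (2-g)d + g^2 + 3g - 4 - g\cdot h^1(L) + \dim\ker(\gamma_{C,L}) - \cork(\mu_{C,L}).
\]

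For part~\ref{prop:cork-g1} I substitute Proposition~\ref{p:he-ker}. On a hyperelliptic curve $h^0((g-3)\fg) = g-2$, both for $g\geq 3$ (since $g-3\leq g-1$) and, trivially, for $g=2$; hence $\dim\ker(\gamma_{C,L}) = (g-2)\,h^0(L - 2\fg)$, and as $L - 2\fg$ has degree $d-4$, Riemann--Roch gives $h^0(L-2\fg) = d - g - 3 + h^1(L-2\fg)$. Plugging $\dim\ker(\gamma_{C,L}) = (g-2)(d-g-3) + (g-2)\,h^1(L-2\fg)$ into the master formula, the $d$-dependent terms cancel against $(2-g)d$ and the constant part collapses to $2g+2$, yielding the claimed value.

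For part~\ref{prop:cork-g3} I set $g=3$ and substitute Proposition~\ref{p:g3-ker}, so $\dim\ker(\gamma_{C,L}) = h^0(L - 3K_C)$. Since $L - 3K_C$ has degree $d - 12$, Riemann--Roch together with Serre duality gives $h^0(L-3K_C) = d - 14 + h^1(L-3K_C) = d - 14 + h^0(4K_C - L)$; inserting this into the master formula with $g=3$, everything depending on $d$ alone cancels and one is left with $h^0(4K_C - L) - 3\,h^1(L) - \cork(\mu_{C,L})$. There is no genuine obstacle here: the only points to watch are that effectivity of $L$ — part of the hypothesis — is exactly what makes Propositions~\ref{p:he-ker} and~\ref{p:g3-ker} applicable, and that the borderline identity $h^0((g-3)\fg)=g-2$ reads $0=0$ when $g=2$; the residual risk is purely arithmetical, so I would sanity-check the cancellations against the range $d\geq 2g+1$, where $h^1(L)=h^1(L-2\fg)=0$ and $\cork(\mu_{C,L})=0$ by Castelnuovo's theorem (Theorem~\ref{t:castelnuovo}), recovering the uniform values $\cork(\gamma_{C,L})=2g+2$ and $\cork(\gamma_{C,L})=h^0(4K_C-L)$ of Theorem~\ref{t:cork-iperell+g3}.
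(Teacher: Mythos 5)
Your proof is correct and follows essentially the same route as the paper's: the same master formula $\cork(\gamma_{C,L}) = (g+4)(g-1)+d(2-g)-g\,h^1(L)-\cork(\mu_{C,L})+\dim\ker(\gamma_{C,L})$ obtained from the two four-term exact sequences and Riemann--Roch, followed by substitution of Propositions~\ref{p:he-ker} and~\ref{p:g3-ker}. The arithmetic checks out, including the $g=2$ borderline.
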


Note that if $L$ is not effective of positive degree, the multiplication
map $\mu_{C,L}$ has kernel $R_{C,L} = 0$, so that
\[
  \cork(\gamma_{C,L}) = h^0(2K_C+L).
\]
Moreover, we emphasize that the corank of the multiplication map
$\mu_{C,L}$ may be computed in virtually any situation, using for
instance Castelnuovo's theorem, Theorem~\ref{t:castelnuovo}, or the
base-point-free pencil trick; see also \cite[Section~1]{ciliberto83},
\cite[Theorem~(4.e.1)]{green84}, and \cite{pareschi}.
Part~\eqref{prop:cork-g3} of Proposition~\ref{pr:cork-general} had
already appeared as \cite[Proposition~2.9(a)]{kl}.

\begin{proof}
One has
\begin{align*}
  \cork(\gamma_{C,L})
  &= h^0(2K_C+L) -\dim(R_{C,L}) +\dim(\ker \gamma_{C,L}) \\
  &= h^0(2K_C+L) - h^0(K_C)h^0(L) + h^0(K_C+L)
    - \cork(\mu_{C,L}) +\dim(\ker \gamma_{C,L}) \\
  &= (3g-3+d) -g\left(1-g+d + h^1(L)\right)
    +(g-1+d)- \cork(\mu_{C,L}) +\dim(\ker \gamma_{C,L}) \\
  &= (g+4)(g-1)+d(2-g) -g h^1(L) - \cork(\mu_{C,L})
    + \dim(\ker \gamma_{C,L})
\end{align*}
by Riemann--Roch and the fact that $L$ is effective of positive
degree.
For hyperelliptic $C$, one has
\[
  h^0\left( (g-3)\fg \right) = g-2
  \et
  h^0(L-2\fg) = d-g-3+h^1(L-2\fg),
\]
and thus Proposition~\ref{p:he-ker} gives the result.
For $C$ a genus $3$ curve, 
Proposition~\ref{p:g3-ker} gives the result, noting that
\[
  14-d+h^0(C,L-3K_C)=h^0(C,4K_C-L)
\]
by Riemann--Roch and Serre duality. 
\end{proof}

\begin{proof}[Proof of Theorem~\ref{t:cork-iperell+g3}]
Let us first consider the case when $C$ is hyperelliptic.
If $d\geq 2g+3$, then  $L$ is very ample and non-special, 
hence $\mu_{C,L}$ is surjective by Castelnuovo's
theorem, Theorem~\ref{t:castelnuovo};
moreover, $h^1(L-2\fg)=0$ for degree reasons as well, so that
the result follows from Proposition~\ref{pr:cork-general}.

If $d \geq g+4$ and $L$ is general, then
\[
  L \sim p_1+\cdots+p_g + 2\fg + D_0
\]
for some general points $p_1,\ldots,p_g \in C$ and some effective
divisor $D_0$.  In particular, we may assume that $p_1,\ldots,p_g$
impose independent conditions on the canonical series $|K_C|$; hence
$h^1(L)=h^1(L-2\fg)=0$ by Serre duality, and, moreover,
$h^1(L-q)=h^1(L-q-q')=0$ for all $q,q' \in C$.  It follows that $L$ is
very ample; hence $\mu_{C,L}$ is surjective by
Theorem~\ref{t:castelnuovo}, and the result follows from
Proposition~\ref{pr:cork-general}.

We now consider the case when $C$ is non-hyperelliptic of genus $3$.
If $d\geq 2g+1=7$, then $L$ is very ample and non-special;  
hence $\mu_{C,L}$ is surjective by 
Theorem~\ref{t:castelnuovo},
and the result follows from Proposition~\ref{pr:cork-general}.

If $d=2g=6$, then $L$ is base-point-free and non-special, and the map
induced by $|L|$ may identify at most two points $p$ and $q$, which
happens if and only if $L=K_C+p+q$ (all this can be seen by standard
considerations involving the Riemann--Roch theorem). It follows that
$\mu_{C,L}$ is surjective in this case as well, and then
Proposition~\ref{pr:cork-general} gives the result.

If $g+1=4\leq d \leq 5$ and $L$ is general, then $|L|$ is non-special,
and the following may happen:
if $d=5$, $|L|$ is base-point-free and maps $C$ to a plane quintic;
if $d=4$, $|L|$ is a base-point-free $g^1_4$.
In all cases, it follows from Theorem~\ref{t:castelnuovo} that
$\mu_{C,L}$ is surjective, and the result follows from 
Proposition~\ref{pr:cork-general} as in the previous cases.
\end{proof}

\section{Ribbons and extensions}
\label{S:ribb'n'ext}

In this section we interpret the extensions of a smooth polarized curve
$(C,L)$ in terms of the integration of ribbons over $(C,L)$, under the
assumption that it has property $N_2$. This leads to a
necessary condition for $(C,L)$ to be extendable. We also define
universal extensions and give a criterion for their existence.
Most results in this section are essentially an adaptation of some in
\cite{cds}, and we will thus be brief; we also propose various
enhancements with respect to \cite{cds}.

\begin{definition}
\label{d:ext}
Let $(X,L)$ be a polarized variety such that $L$ is very ample, and
consider the projective embedding $X \subset \P^N$ defined by $|L|$.
For all $k\in \N^*$, the polarized variety $(X,L)$ is
\emph{$k$-extendable} if there exist a $Y \subset \P^{N+k}$, not a
cone, and an $N$-dimensional linear subspace
$\Lambda \subset \P^{N+k}$
such that $X = Y\cap \Lambda$.
\end{definition}

In the above situation, we say that $Y$ is a \emph{non-trivial
$k$-extension}, or simply a \emph{non-trivial extension}, of $(X,L)$.
We say that $(X,L)$ is \emph{extendable} if it is $k$-extendable for
some $k>0$.  The trivial extension of $(X,L)$ is defined as the cone
with vertex a point over $X$, in its embedding defined by $|L|$.

\begin{definition}
\label{d:ribb}
Let $(X,L)$ be a polarized variety.
A \emph{ribbon} over $(X,L)$,
also known as a ribbon over $X$ with normal bundle $L$,
is a scheme $\tilde X$ such that $\tilde X_\red = X$ and the ideal
$\I_{X/\tilde X}$ defining $X$ in $\tilde X$ verifies the two
conditions
$\I_{X/\tilde X}^2 = 0$
and
$\I_{X/\tilde X} = L^{-1}$.
\end{definition}

If $X \subset \P^N$ is smooth and $Y \subset \P^{N+1}$ is a
$1$-extension of $(X,\O_X(1))$,
then the first infinitesimal neighbourhood of $X$ in $Y$ is a ribbon
over $(X,\O_X(1))$ which we denote by $2X_Y$.
A ribbon $\tilde X$ over $(X,L)$ is \emph{integrable} if there exists
an extension $Y$ such that $\tilde X = 2X_Y$; in this situation 
we say that the variety $Y$ is an
\emph{integral} of the ribbon $\tilde X$.

A ribbon over $(X,L)$ is uniquely determined by its extension class
$e_{\tilde X} \in \Ext^1(\Omega^1_X,L^{-1})$,
and two ribbons are isomorphic if and only if their extension classes
are proportional.
We will say that a ribbon is \emph{trivial} if its extension class is zero.

Let $(X,L)$ be a smooth polarized manifold with $L$ very ample,
consider the corresponding embedding $X \subset \P^N$, and identify
this $\P^N$ with a hyperplane $H\subset \P^{N+1}$.
If $\tilde X$ is a ribbon over $(X,L)$ contained in the first
infinitesimal neighbourhood $2H_{\P^{N+1}}$, then its extension class
lies in the kernel of the map
\[
  \eta\colon \Ext^1\left(\Omega^1_X,L^{-1}\right) \lra
  \Ext^1\left(\restr {\Omega^1_{\P^N}} X ,L^{-1}\right)
\]
induced by the restriction map
$\restr {\Omega^1_{\P^N}} X \to \Omega^1_X$,
as has been first observed in \cite{voisin-acta}.
When $X$ is a curve, the map $\eta$ can be identified with
$\trsp \gamma _{C,L}$,
the transpose of the Gaussian map defined in Section~\ref{def:gaussian}.

\begin{theorem}
\label{t:unicity}
Let $(C,L)$ be a smooth polarized curve of genus $g\geq 2$
and degree $d\geq 2g+3$.
Then for all $v \in \ker(\trsp \gamma_{C,L})$, the ribbon
$\tilde C_v$ with extension class $v$
is the first infinitesimal neighbourhood of\, $C$ in at most one
surface,
up to automorphism.
In particular, if\, $(C,L)$ is extendable, then $\gamma_{C,L}$ is not
surjective. 
\end{theorem}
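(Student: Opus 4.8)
The plan is to reduce the statement to a cohomological uniqueness assertion about which ideal of quadrics can cut out a given ribbon, using property $N_2$ (available by Green's theorem, Theorem~\ref{t:green}, since $d\geq 2g+3$). First I would recall the setup: fix the embedding $C\subset\P^N$, $N=r=d-g$, and identify $\P^N$ with a hyperplane $H\subset\P^{N+1}$. Given $v\in\ker(\trsp\gamma_{C,L})$, the ribbon $\tilde C_v$ sits inside the first infinitesimal neighbourhood $2H_{\P^{N+1}}$, and a surface extension $S\subset\P^{N+1}$ with $2C_S=\tilde C_v$ is determined, at the level of the graded ideal $I_S\subset k[x_0,\dots,x_{N+1}]$, by its degree-two piece $(I_S)_2$. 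The point is that $(I_S)_2$ must be a specific subspace of $(I_{2H})_2 = x_{N+1}\cdot\langle x_0,\dots,x_{N+1}\rangle$ determined by $v$: concretely, each quadric $q\in(I_C)_2$ in the $x_0,\dots,x_N$ lifts to $q + x_{N+1}\ell_q$ for a unique linear form $\ell_q$, and the assignment $q\mapsto\ell_q$ is governed by the extension class, hence by $v$. So $(I_S)_2$ is forced.

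Next I would argue that the whole homogeneous ideal $I_S$ is then forced, and that $S$ is recovered as $\Proj$ of the quotient ring; this is exactly where property $N_2$ enters. By $N_2$ the ideal $I_C$ is generated in degree two with linear syzygies; the standard deformation-theoretic argument (as in \cite{cds}, going back to the observation in \cite{voisin-acta}) shows that under $N_2$ a surface extension $S$ of $C$ is completely determined by the ribbon $2C_S$: the higher graded pieces $(I_S)_k$ for $k\geq 3$ are determined by $(I_S)_2$ because the relevant obstruction/comparison groups vanish — the Koszul-type cohomology that would allow a genuine choice in degree $k$ is killed by $N_2$ together with $d\geq 2g+3$ (so that all the line bundles $L^{\otimes k}$ involved are nonspecial and the embedding is $k$-normal). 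Thus two surface extensions inducing the same ribbon have the same homogeneous ideal, hence are equal as subschemes of $\P^{N+1}$; "up to automorphism" accounts for the choice of the hyperplane $H$ and the coordinate $x_{N+1}$, i.e.\ for composing with an element of the projective stabilizer of $C$ that moves $H$.

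Finally, the "in particular" clause is immediate: if $\gamma_{C,L}$ were surjective then $\ker(\trsp\gamma_{C,L})=0$, so the only ribbon over $(C,L)$ lying in $2H_{\P^{N+1}}$ is the trivial one, whose unique integral is the cone; hence every extension of $(C,L)$ is trivial, i.e.\ $(C,L)$ is not extendable. (This is points \eqref{item1}--\eqref{item2} of the introduction, and it is really just bookkeeping once the uniqueness is in hand.)

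I expect the main obstacle to be the second paragraph: making precise, under property $N_2$ alone, that the degree-two part of the ideal determines the whole ideal of the extension — equivalently, that the comparison map between the extension problem "lift $I_C$ across $x_{N+1}$" and the data of the ribbon is injective in every degree. This requires identifying the correct cohomology groups (Gaussian/Koszul cohomology of $C$ with values in powers of $L$) and checking they vanish in the range $d\geq 2g+3$; it is exactly the content one borrows from \cite{cds}, and the honest work is citing and lightly re-deriving that vanishing rather than reproving it from scratch.
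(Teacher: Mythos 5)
Your overall architecture matches the paper's: write the extension in coordinates $(\bx:t)$ as a deformation of the quadrics cutting out $C$, and use property $N_2$ (via Green's theorem) plus a vanishing borrowed from \cite{cds} to kill the residual ambiguity. But there is a genuine gap, and it sits exactly where you declare victory rather than where you expect trouble. In your first paragraph you assert that ``$(I_S)_2$ is forced'' because each quadric $q$ of $C$ lifts to $q+x_{N+1}\ell_q$ with $\ell_q$ ``governed by the extension class, hence by $v$.'' This is not what the ribbon gives you. The ribbon $\tilde C_v$ lives in $2H_{\P^{N+1}}$, i.e.\ it only sees the equations modulo $t^2$; the most general quadric of a surface containing $\tilde C_v$ has the form $q+t\,\ell_q+t^2 h_q$ with $h_q$ a constant, and the ribbon determines $q+t\,\ell_q$ but says nothing a priori about the vector $\bh=(h_q)$ of $t^2$-coefficients. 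The entire content of the unicity statement is that two admissible choices of $\bh$ differ by an element of $H^0(C,N_{C/\P^{d-g}}(-2))$, and that this space vanishes when $(C,L)$ has property $N_2$ (this is \cite[Lemma~3.6]{cds}, which the paper cites at precisely this point). Your proposal never isolates this group; the vanishing you do flag, namely nonspeciality and $k$-normality of $L^{\otimes k}$ to control $(I_S)_k$ for $k\geq 3$, is comparatively a non-issue, since under $N_2$ everything in sight is cut out by its quadrics and the surface is recovered from $(I_S)_2$ directly.

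Two smaller points. First, the lift $\bef_v$ of $v$ to $H^0(C,N_{C/\P^{d-g}}(-1))$ is itself only unique up to $H^0(C,L)^\vee$ (the Euler-type sequence \eqref{eq:euler}); this ambiguity, not the choice of $H$, is what the ``up to automorphism'' clause absorbs, via projectivities of $\P^{N+1}$ acting as the identity on $H$. Second, your ``in particular'' clause is slightly off: surjectivity of $\gamma_{C,L}$ gives $\ker(\trsp\gamma_{C,L})=0$, so the only available ribbon is the trivial one $\tilde C_0$; one then applies the unicity statement to $v=0$, noting that the cone already integrates $\tilde C_0$, so no non-cone surface can. That is the argument the paper gives, and it does require the unicity, not just ``bookkeeping.''
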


The precise meaning of the unicity statement above is the
following. Consider $C \subset \P^{d-g}$ in its embedding defined by
$|L|$, and identify this $\P^{d-g}$ with a hyperplane $H \subset
\P^{d-g+1}$.  Let $S,S' \subset \P^{d-g+1}$ be two surfaces such that
$S\cap H = S'\cap H = C$.  If $2C_S \cong 2C_{S'}$, then there is
exists a projectivity of $\P^{d-g+1}$ acting as the identity on $H$
and mapping $S$ to $S'$.

One gets the necessary condition for the integrability of $(C,L)$ by
applying the unicity statement to the zero vector $0 \in \ker(\trsp
\gamma_{C,L})$. Indeed, the trivial ribbon $\tilde C_0$ is the first
infinitesimal neighbourhood of $C$ in the cone over $C$, so the
unicity statement tells us that if $S$ is a non-trivial extension of
$C$ (thus, $S$ is not a cone), then the two ribbons $2C_S$ and $\tilde
C_0$ are distinct, so $2C_S$ comes from a non-zero vector in
$\ker(\trsp \gamma_{C,L})$.

\begin{noname}
We now outline the proof of Theorem~\ref{t:unicity}, as this will
be needed later on.
It follows a construction given in \cite{wahl97}.
By Theorem~\ref{t:green}, the curve $(C,L)$ has property $N_2$.
Thus the homogeneous ideal of $C$ in its embedding in $\P^{d-g}$
defined by $|L|$ has a minimal resolution as follows: 
\begin{equation}
\label{eq:pres} 
\cdots \longrightarrow
\O_{\P^ {d-g}}(-3)^ {\oplus m_1}
\stackrel {\mathbf r} \longrightarrow   
\O_{\P^ {d-g}}(-2)^ {\oplus m}
\stackrel {\bef}\longrightarrow 
\mathcal I_{C/\P^ {d-g}}\longrightarrow 0.
\end{equation}
We view $\bef$ as a vector of quadratic equations defining $C$
scheme-theoretically, in the homogeneous coordinates $\bx =
(x_0:\ldots: x_{d-g})$ on $\P^{d-g}$.

On the other hand, there is an exact sequence of vector spaces
\begin{equation}
\label{eq:euler}
0 \longrightarrow
H^0(C,L)^\vee  \longrightarrow
H^0(C,N_{C/\P^{d-g}}(-1)) \longrightarrow
\ker \left(\trsp \gamma_{C,L}\right)
\longrightarrow
0; 
\end{equation}
\cf \cite[Lemma~3.2]{cds} and the references given there.
Let $v \in \ker (\trsp \gamma_{C,L})$, and choose a lift of~$v$
in
$H^0(C,N_{C/\P^{d-g}}(-1))$.
By \eqref{eq:pres},  the latter space is a subspace of 
$H^0(C,\O_C(1))^ {\oplus m}$, so we can represent the lift of $v$ as
a length $m$ vector $\bef_v$ of linear forms in the variable $\bx$.

Then the ribbon $\tilde C_v$ with extension class $v$ is the subscheme
of $\P^{d-g+1}$ defined by the equations
\begin{equation}
\label{eq:infdef}
\mathbf f(\mathbf x)+t\mathbf f_v(\mathbf x)=\mathbf 0,\quad  t^
2=0
\end{equation}
in the homogeneous coordinates $(\bx:t)$.
In turn, any surface $S \subset \P^{d-g+1}$ containing $\tilde C_v$ is
defined by the equations
\begin{equation}
\label{eq:integral}
\bef(\bx)+t\bef_v(\bx)+ t^ 2 \bh =\mathbf 0,
\end{equation}
where $\bh$ is a length $m$ vector of constants, subject to conditions
that we will not discuss here (see \cite[Section~4.9]{cds}).
The upshot of these conditions, however, is that two vectors $\bh$ and
$\bh'$ defining two surfaces $S$ and $S'$ containing the ribbon
$\tilde C_v$ differ by an element of
$H^0(C,N_{C/\P^{d-g}}(-2))$. Then the unicity statement follows from
the vanishing of this space when $(C,L)$ has property $N_2$; 
\cf \cite[Lemma~3.6]{cds} and the references given there.
\qed
\end{noname}

One should keep in mind the following conclusions from the above
considerations. The projective space $\P(\ker (\trsp \gamma_{C,L}))$
parametrizes isomorphism classes of non-trivial ribbons over $(C,L)$
likely to be integrated to a non-trivial extension $S$ of $(C,L)$. Each such
ribbon may be integrated to at most one extension, and each $1$-extension
conversely corresponds to a point in $\P(\ker (\trsp \gamma_{C,L}))$.

In analogy with the terminology from deformation theory,\footnote{In
fact, if one looks at the construction in \cite{wahl97}, one sees that
this is more than a mere analogy.}  we will say that the extension
theory of $(C,L)$ is \emph{unobstructed} if every ribbon corresponding
to a point of $\P(\ker (\trsp \gamma_{C,L}))$ is integrable;
otherwise, we say that it is \emph{obstructed}.  When the extension
theory is unobstructed, we will see that we can construct a universal
extension, in the following sense.

\begin{definition}
\label{d:univ}
Let $(C,L)$ be a smooth polarized curve of genus $g\geq 2$
and degree $d\geq 2g+3$.
Let $r = \cork(\gamma_{C,L})$.
An $r$-extension $Y \subset \P^{d-g+r}$ of $(C,L)$ is
\emph{universal} if the following condition holds:
for all $[v] \in \P(\ker(\trsp \gamma_{C,L}))$,
there exists a unique $(d-g+1)$-plane $\Lambda \subset \P^{d-g+r}$
containing $C$ such that the surface $Y\cap \Lambda$ is an integral of
the ribbon over $(C,L)$ defined by the extension class $v$.
\end{definition}

\begin{noname}
\label{p:linear}
Note that, under the above assumptions,
if  $Y \subset \P^{d-g+k}$ is a $k$-extension of $(C,L)$, then it is
defined by equations
\begin{equation*}
  \bef(\bx) + \mathbf{F}(\bx)\cdot \trsp \bt
  + \mathbf{H}( \bt) = 0
\end{equation*}
in homogeneous coordinates $(\bx:\bt)$,
where $\bx=(x_0:\ldots:x_{d-g})$ and
$\bt = (t_1:\ldots:t_k)$,
such that the span $\vect C$ is defined by $\bt=0$;
see for instance \cite[Theorem~20.3]{peeva}.
Here, $\mathbf{F}$ is an $m\times k$ matrix of linear forms in $\bx$,
and 
$\mathbf H$ is a length $m$ vector, constant in $\bx$ and
quadratic in $\bt$.
One thus sees that the map
\begin{equation*}
  \Lambda \in \P^{d-g+k}/ \vect C
  \longmapsto
  [2C_{Y\cap \Lambda}] \in \P(\ker(\trsp \gamma_{C,L}))
\end{equation*}
is linear, given by the matrix $\mathbf{F}$.
(Here $\P^{d-g+k}/ \vect C$ denotes the $(k-1)$-dimensional projective
space of  $(d-g+1)$-planes $\Lambda$ containing $C$,
and the map associates a $(d-g+1)$-plane $\Lambda \subset \P^{d-g+r}$
containing $C$
with the isomorphism class of the ribbon $2C_{Y\cap \Lambda}$).
\end{noname}

\begin{lemma}
\label{l:app_ribb-linear}
Let $(C,L)$ be a smooth polarized curve of genus $g\geq 2$
and degree $d\geq 2g+3$.
Let $Y\subset \P^N$ be an extension of\, $(C,L)$,
of dimension $1+\cork(\gamma_{C,L})$.
Assume that for general
$[e] \in \P(\ker (\trsp \gamma_{C,L}))$, there is a linear subspace
$\Lambda \subset \P^N$ containing $C$ and cutting out a surface on $Y$
such that the ribbon $2C_{Y\cap \Lambda}$ has extension class $e$
$($in other words, $2C_{Y\cap \Lambda} \cong \tilde C_e)$.
Then $Y$ is a universal extension of\, $C$.
\end{lemma}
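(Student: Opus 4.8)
The plan is to reduce the statement to the soft fact that a dominant linear map between projective spaces of the same dimension is an isomorphism, once the linear-algebraic description of extensions recalled in~\ref{p:linear} is in hand. Write $r := \cork(\gamma_{C,L})$. Since $Y$ is an extension of $(C,L)$ of dimension $1+r$, it is an $r$-extension, so $N = d-g+r$ and we are in the situation of Definition~\ref{d:univ}. Inside $\P^N = \P^{d-g+r}$ the $(d-g+1)$-planes containing $\vect C$ form the projective space $\P^N/\vect C$ of dimension $(d-g+r)-(d-g+1) = r-1$; on the other hand $\ker(\trsp\gamma_{C,L})$ has dimension $\cork(\gamma_{C,L}) = r$, so $\P(\ker(\trsp\gamma_{C,L}))$ also has dimension $r-1$. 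Thus the source and target of the map under consideration both have dimension $r-1$.

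Next I would invoke~\ref{p:linear}: writing $Y$ in the normal form $\bef(\bx)+\mathbf F(\bx)\cdot\trsp\bt+\mathbf H(\bt)=0$ with $\bt=(t_1:\ldots:t_r)$, the assignment
\[
  \psi\colon \Lambda\in\P^N/\vect C \longmapsto [\,2C_{Y\cap\Lambda}\,]\in\P\bigl(\ker(\trsp\gamma_{C,L})\bigr)
\]
is the projectivisation of the linear map $\C^r\to\ker(\trsp\gamma_{C,L})$ given by $\mathbf F$, hence is a linear projection whose centre is the projectivised kernel of that linear map. The hypothesis says exactly that a general point of the target lies in the image of $\psi$. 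Since the image of a linear projection is a linear subspace of the target, and a linear subspace of the $(r-1)$-dimensional space $\P(\ker(\trsp\gamma_{C,L}))$ containing a general point must be the whole space, the linear map $\C^r\to\ker(\trsp\gamma_{C,L})$ is surjective, hence bijective (equal dimensions $r$); therefore $\psi$ is an everywhere-defined isomorphism $\P^N/\vect C\xrightarrow{\ \sim\ }\P(\ker(\trsp\gamma_{C,L}))$.

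To conclude, fix $[v]\in\P(\ker(\trsp\gamma_{C,L}))$ and set $\Lambda_v := \psi^{-1}([v])$; this is a $(d-g+1)$-plane containing $C$, and it is the unique one with $2C_{Y\cap\Lambda_v}\cong\tilde C_v$ because $\psi$ is bijective. By the normal form, $Y\cap\Lambda_v$ is cut out by equations of the shape~\eqref{eq:integral} whose linear part $\bef_v$ represents $v$ via~\eqref{eq:euler}; hence $Y\cap\Lambda_v$ is an integral of the ribbon $\tilde C_v$, and as $v\neq 0$ this ribbon is non-trivial, so the integral is a non-trivial extension (a surface, not a cone). Thus for every $[v]$ there is a unique $(d-g+1)$-plane $\Lambda$ containing $C$ with $Y\cap\Lambda$ an integral of the ribbon defined by $v$, which is precisely the content of Definition~\ref{d:univ}.

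The only point requiring care is the bookkeeping hidden in~\ref{p:linear}: one must check that $Y\cap\Lambda$ is a genuine surface integrating the corresponding ribbon for \emph{every} $(d-g+1)$-plane $\Lambda$ through $\vect C$, and not merely for a general one — which is exactly what the normal-form equations~\eqref{eq:integral} guarantee once $\mathbf F$ has maximal rank, i.e. once $\psi$ is an isomorphism. Everything else is the linear-algebra observation used above.
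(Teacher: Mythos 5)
Your proof is correct and follows essentially the same route as the paper's: both identify the map $\Lambda \mapsto [2C_{Y\cap\Lambda}]$ as the linear map from Section~\ref{p:linear}, observe that the hypothesis makes it dominant, and conclude it is an isomorphism between projective spaces of the same dimension $\cork(\gamma_{C,L})-1$. Your write-up merely spells out in more detail why dominance of a linear map forces surjectivity and why the conclusion then matches Definition~\ref{d:univ}, which the paper leaves implicit.
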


\begin{proof}
We consider the map
\begin{equation*}
  \Lambda \in \P^{d-g+\cork(\gamma_{C,L})}/ \vect C
  \longmapsto
  [2C_{Y\cap \Lambda}] \in \P\left(\ker\left(\trsp \gamma_{C,L}\right)\right)
\end{equation*}
as in Section~\ref{p:linear} above.
The assumption made on $Y$ means that this map is dominant. It is
moreover linear, as has been observed in Section~\ref{p:linear}. Since
its target and its source are projective spaces of the same dimension, 
namely $\cork(\gamma_{C,L})-1$,
it is an isomorphism, which means that $Y$ is
a universal extension of $(C,L)$.
\end{proof}

\begin{theorem}
\label{t:gnl-ribb}
Let $(C,L)$ be a smooth polarized curve of genus $g\geq 2$
and degree $d\geq 2g+3$.
If the general ribbon in $\P(\ker (\trsp \gamma_{C,L}))$ is
integrable,
then all such ribbons are integrable,
and there exists a universal extension of $(C,L)$.
\end{theorem}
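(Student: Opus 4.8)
The plan is to realize a universal extension explicitly from the equation-theoretic description of Section~\ref{p:linear}, and then invoke Lemma~\ref{l:app_ribb-linear}. Set $c=\cork(\gamma_{C,L})$, so that $W:=\ker(\trsp\gamma_{C,L})$ has dimension $c$ and $\P(W)\cong\P^{c-1}$ is irreducible. Fix a linear splitting of the surjection $H^0(C,N_{C/\P^{d-g}}(-1))\twoheadrightarrow W$ of \eqref{eq:euler}; composing with the inclusion $H^0(C,N_{C/\P^{d-g}}(-1))\subset H^0(C,\O_C(1))^{\oplus m}$ afforded by \eqref{eq:pres}, this attaches to each $v\in W$, linearly in $v$, a length-$m$ vector $\bef_v$ of linear forms in $\bx$. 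Choosing a basis $v_1,\dots,v_c$ of $W$ with dual coordinates $\bt=(t_1:\cdots:t_c)$, we package these into a single $m\times c$ matrix $\mathbf F$ of linear forms in $\bx$ with $\mathbf F(\bx)\cdot\trsp\bt=\sum_i t_i\,\bef_{v_i}(\bx)$. For a length-$m$ vector $\mathbf H=\mathbf H(\bt)$ of quadratic forms in $\bt$, consider the subscheme
\[
  Y_{\mathbf H}\colon\quad \bef(\bx)+\mathbf F(\bx)\cdot\trsp\bt+\mathbf H(\bt)=0\quad\subset\quad\P^{d-g+c},
\]
and, for $0\neq v\in W$, let $\Lambda_v\subset\P^{d-g+c}$ be the $(d-g+1)$-plane spanned by $\vect C=\{\bt=0\}$ and the point $(0:v)$. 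Then $Y_{\mathbf H}\cap\Lambda_v$ is the surface cut in $\Lambda_v\cong\P^{d-g+1}$ by $\bef(\bx)+t\,\bef_v(\bx)+t^2\bh_v=0$, with $\bh_v:=\mathbf H(v)\in\C^m$ — this is exactly \eqref{eq:integral} — so its first infinitesimal neighbourhood along $C=\{t=0\}$ is the ribbon $\tilde C_v$ of \eqref{eq:infdef}. Since every $(d-g+1)$-plane containing $\vect C$ is some $\Lambda_v$, it therefore suffices to produce one $\mathbf H$ for which $Y_{\mathbf H}$ is a genuine $c$-extension of $(C,L)$: for such an $\mathbf H$, $Y_{\mathbf H}\cap\Lambda_v$ is an integral of $\tilde C_v$ for every $[v]\in\P(W)$, whence every ribbon is integrable and, by Lemma~\ref{l:app_ribb-linear}, $Y_{\mathbf H}$ is universal.

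Next I would show that the locus $\Pi\subseteq\P(W)$ of classes $[v]$ for which $\tilde C_v$ is integrable is Zariski-closed; since $\P(W)$ is irreducible and $\Pi$ is dense by hypothesis, this forces $\Pi=\P(W)$, that is, all ribbons over $(C,L)$ are integrable. Closedness follows from the bookkeeping of \cite[Section~4.9]{cds}: as $(C,L)$ has property $N_2$ by Green's theorem, Theorem~\ref{t:green}, the equations above define a $c$-extension precisely when the syzygy matrix $\mathbf r$ of \eqref{eq:pres} lifts $\bt$-adically; the first-order condition holds automatically because $\mathbf F$ takes values in $\ker(\trsp\gamma_{C,L})$, and the only remaining condition is a second-order one of the form $A\cdot\mathbf H=B(\bt)$, where $A$ is a fixed linear map into a fixed vector space and $B$ is a fixed length-$m$ vector of quadratic forms in $\bt$. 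Restricting this to a line $\C v\subset W$ gives back precisely the conditions on $\bh$, alluded to after \eqref{eq:integral}, under which \eqref{eq:integral} defines a surface — i.e.\ the condition for $\tilde C_v$ to be integrable, namely that $A\,\bh=B(v)$ be solvable for some $\bh\in\C^m$. Hence $\Pi=\{[v]:B(v)\in\im(A)\}$ is the zero locus of the quadratic map $v\mapsto\overline{B(v)}\in\coker(A)$, so it is closed. (Alternatively, and more softly, $\Pi$ is the image of the projective incidence scheme $\{([v],S):2C_S\cong\tilde C_v\}$ under its projection to $\P(W)$.)

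Granting $\Pi=\P(W)$, I would then upgrade the pointwise solvability of $A\,\bh=B(v)$ to a single polynomial solution $\mathbf H(\bt)$. Write $B(\bt)=\sum_{i\leq j}b_{ij}\,t_it_j$ with the $b_{ij}$ in the target vector space of $A$. They lie in the linear span of $\{B(v):v\in W\}$ (evaluate $B$ at $v=v_i$ and at $v=v_i+v_j$), and $\Pi=\P(W)$ says $B(v)\in\im(A)$ for every $v\in W$; hence each $b_{ij}\in\im(A)$. Choosing $h_{ij}\in\C^m$ with $A\,h_{ij}=b_{ij}$ and setting $\mathbf H(\bt):=\sum_{i\leq j}h_{ij}\,t_it_j$, we obtain $A\cdot\mathbf H=B$, so that all the conditions of \cite[Section~4.9]{cds} are met for this $\mathbf H$. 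Consequently $Y:=Y_{\mathbf H}$ carries a free resolution obtained by lifting \eqref{eq:pres}, hence is a reduced irreducible variety of dimension $c+1$ with $Y\cap\vect C=C$; and it is not a cone, e.g.\ because $Y\cap\Lambda_v$ has the non-trivial first infinitesimal neighbourhood $\tilde C_v$ along $C$ for every $v\neq0$, whereas a section of a cone through its vertex would have the trivial ribbon $\tilde C_0$ (see Theorem~\ref{t:unicity}). Thus $Y$ is a $c$-extension of $(C,L)$, and by the first paragraph it is a universal extension.

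The main obstacle is the middle step: one must verify carefully that the obstruction to integrating $\tilde C_v$ depends polynomially — indeed quadratically and homogeneously — on $v$, and that it is the restriction along $\bt\mapsto v$ of the single second-order obstruction $B(\bt)$ governing the family $Y_{\mathbf H}$, so that its identical vanishing on $W$ really forces $A\cdot\mathbf H=B$ to be solvable over polynomials and not merely pointwise. This is exactly the content of the analysis of \cite[Section~4.9]{cds}, which in turn relies on property $N_2$ (Theorem~\ref{t:green}) to kill all higher-order obstructions; granting it, the rest of the argument is formal.
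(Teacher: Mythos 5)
Your proposal is correct in substance but follows a genuinely different route from the paper's. The paper proves the implication ``general integrable $\Rightarrow$ all integrable'' by a soft specialization argument: it takes an arc $\D$ through an arbitrary $[v_0]$, forms the flat closure of the family of surface integrals over $\D^\circ$, and observes that the central fibre still contains the limit ribbon; it then builds the universal extension abstractly, by gluing affine families $\cS_i$ over the standard charts of $\P(\ker(\trsp\gamma_{C,L}))$ inside a projective bundle and taking the image under the relative $\O(1)$ --- crucially, the homogeneity relations $\bef_{\lambda v}=\lambda\bef_v$ and $\bh_{\lambda v}=\lambda^2\bh_v$ needed for the gluing are extracted from the uniqueness theorem (Theorem~\ref{t:unicity}) alone, without opening up the obstruction equations. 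You instead exploit the algebraic structure of the obstruction: the integrable locus is the zero locus of a quadratic map $v\mapsto\overline{B(v)}\in\coker(A)$, hence closed, and the polarization identity $b_{ij}=B(v_i+v_j)-B(v_i)-B(v_j)$ upgrades pointwise solvability of $A\,\bh=B(v)$ to a single polynomial solution $\mathbf{H}(\bt)$, yielding the universal extension directly by the equations $\bef(\bx)+\mathbf{F}(\bx)\cdot\trsp\bt+\mathbf{H}(\bt)=0$ predicted in Section~\ref{p:linear}. What your route buys is explicitness (the equations of the universal extension, and the identification of $\Lambda_v\mapsto[\tilde C_v]$ essentially with the identity, which makes the appeal to Lemma~\ref{l:app_ribb-linear} immediate); what it costs is a heavier reliance on the precise shape of the conditions in \cite[Section~4.9]{cds}, namely that they amount to a single linear equation in $\bh$ whose right-hand side is quadratic in $v$ once $\bef_v$ and the first-order syzygy lift are chosen linearly in $v$, with property $N_2$ killing all higher-order conditions and making the solvability class independent of these choices. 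You flag this correctly as the load-bearing step; it does hold (by $N_2$ the second syzygies are linear, so changing the first-order lift changes $B(v)$ by an element of $\im(A)$), but it must be checked against the cited reference, whereas the paper's flat-limit argument needs only the existence and uniqueness statements already recorded in the text.
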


\begin{proof}
Let us first prove, to fix ideas, that if all ribbons in $\P(
\ker (\trsp \gamma_{C,L}))$ are integrable, 
then there exists a universal extension of $(C,L)$.
The proof is identical to that of \cite[Section~5]{cds}, so we will be
brief. The idea is that we can package together all ribbons in 
$\P(\ker (\trsp \gamma_{C,L}))$ and their integrals in a projective
bundle
$\P(\O_{\P(\ker (\trsp \gamma_{C,L}))}^ {\oplus d-g}
\oplus \O_{\P(\ker (\trsp \gamma_{C,L}))}(1))$,
and then the universal extension is the image
of the family of all surface integrals by the map defined by the
relative $\O(1)$ of this projective bundle.
This works as follows.

One first chooses a section 
\[
  v \in \ker \left(\trsp \gamma_{C,L}\right)
  \longmapsto \bef_v \in H^0(C,N_{C/\P^{d-g}}(-1))
\]
of \eqref{eq:euler}.
For all $v \in \ker (\trsp \gamma_{C,L})$, we let $\bh_v$ be the
unique vector of constants such that the integral of the ribbon
$\tilde C_v$ is defined by the equations \eqref{eq:integral} with
$\bh = \bh_v$.
For all $\lambda \in \C$, one has
\begin{equation}
  \label{eq:hom1}
  \bef_{\lambda v}=\lambda \bef_v,
\end{equation}
so the ribbon $\tilde C_{\lambda v}$
(isomorphic to $\tilde C_v$)
is defined by the equations
\begin{equation*}
  \mathbf f(\mathbf x)+t\mathbf f_{\lambda v}(\mathbf x)
  = \bef(\bx)+\lambda t\bef_v(\bx) 
  = \mathbf 0,\quad  t^2
  =0.
\end{equation*}
One may thus deduce the equations of the integral of
$\tilde C_{\lambda v}$
from those of  $\tilde C_v$; namely, they are 
\begin{equation*}
\bef(\bx)+\lambda t\bef_v(\bx)+ \lambda^2 t^ 2 \bh_v =\mathbf 0.
\end{equation*}
By the unicity of the vector of constants $\bh$ attached to $\lambda
v$, we conclude that
\begin{equation}
  \label{eq:hom2}
  \bh_{\lambda v} = \lambda^2 \bh_v.
\end{equation}

Next, let us construct the family
$\cS$ of all surface extensions of $(C,L)$
in $\P(\O_{\P(\ker (\trsp \gamma_{C,L}))}^ {\oplus d-g}
\oplus \O_{\P(\ker (\trsp \gamma_{C,L}))}(1))$
by gluing affine pieces.
We choose a basis $v_1,\ldots,v_r$ of $\ker (\trsp \gamma_{C,L})$, 
$r=\cork(\gamma_{C,L})$,
and for $i=1,\ldots,r$ we consider the subscheme $\cS_i$ of
$\P^{d-g+1} \times \A^{r-1}$ defined by the equations
\[
  \bef (\bx) +
  t\bef(\bx) _{a_1v_1+\cdots+v_i+\cdots+a_rv_r}
  + t^ 2 \bh _{a_1v_1+\cdots+v_i+\cdots+a_rv_r}
  =\mathbf 0  
\]
in the homogeneous coordinates 
$(\bx:t)$ on $\P^{d-g+1}$
and affine coordinates $(a_1,\ldots,\widehat{a_i},\ldots,a_n)$ on
$\A^{r-1}$ (with the convention that the term under the hat should be
omitted);
it is flat over $\A^{r-1}$.
The homogeneity properties
\eqref{eq:hom1} and \eqref{eq:hom2} ensure that any two pieces
$\cS_i$ and $\cS_j$ glue along their open subsets defined by
$(a_j\neq 0)$ and
$(a_i\neq 0)$, via the isomorphism
\[
  ([\bx:t],a_1,\ldots,\widehat{a_i},\ldots,a_n)
  \longmapsto
  \left([\bx:a_i t],\frac {a_1}{a_j},\ldots,\widehat{a_j},\ldots,
  \frac {a_n}{a_j}\right).
\]
The gluing of all $\cS_1,\ldots,\cS_r$ gives the family
$\cS \subset \P(\O_{\P(\ker (\trsp \gamma_{C,L}))}^ {\oplus d-g}
\oplus \O_{\P(\ker (\trsp \gamma_{C,L}))}(1))$
as we wanted.
Finally, the construction of the universal extension as the image
of $\cS$  by the relative $\O(1)$ of the projective bundle is exactly
the same as in \cite[Corollary~5.5]{cds}.

It remains to prove that the integrability of the general ribbon
implies that of all ribbons in $\P(\ker (\trsp \gamma_{C,L}))$.
Let $[v_0] \in \P(\ker (\trsp \gamma_{C,L}))$. If the general ribbon
is integrable, we can find an arc
$\D \subset \P(\ker (\trsp \gamma_{C,L}))$ centred at $[v_0]$ such
that for all $[v] \in \D^\circ = \D -[v_0]$, the corresponding ribbon
$\tilde C_v$ is integrable.
Then we have the following two families, defined by equations as
above:
\begin{itemize}
\item the flat family $\tilde\cC \subset \P^{d-g+1}\times \D$ of all
ribbons $\tilde C_v$, $[v] \in \D$,
and 
\item the flat family $\cS_\D^\circ\subset \P^{d-g+1}\times \D^\circ$
  of the surface integrals of the ribbons
  $\tilde C_v$, $[v]\neq [v_0]$.
\end{itemize}
Taking the closure $\cS_\D$ of $\cS_\D^\circ$ in $\P^{d-g+1}\times \D$, we
obtain a flat family of surfaces over $\D$.
Since $\cS_\D^\circ$ contains $\restr {\tilde\cC} {\D^\circ}$,
$\cS_\D$ will contain ${\tilde\cC}$; 
hence the central fibre of $\cS_\D$ is an integral of the ribbon
$\tilde C_{v_0}$.
Therefore,  all ribbons in $\P(\ker (\trsp \gamma_{C,L}))$ are
integrable, and the theorem is proved.
\end{proof}

\begin{noname}
\label{p:strategy}
In the following sections we shall apply the above
Theorem~\ref{t:gnl-ribb} to various specific situations in which we
know the dimension of $\P(\ker(\trsp \gamma _{C,L}))$.
Our strategy to verify that the general ribbon over $(C,L)$ is
integrable is to produce a family of extensions of $(C,L)$ of the same
dimension as $\P(\ker(\trsp \gamma _{C,L}))$.
Then, by the unicity theorem, Theorem~\ref{t:unicity},
there is an injective map from the parameter space of this family of
extensions to $\P(\ker(\trsp \gamma _{C,L}))$, which 
is dominant for dimension reasons.
\end{noname}

Finally, let us note that all the above considerations may be
adapted to polarized manifolds $(X,L)$ of arbitrary dimension.
The only difference is that the exact sequence
\eqref{eq:euler} should be slightly modified;  see
\cite[Lemma~3.5]{cds}.

\section{Extensions of polarized genus 3 curves}
\label{S:ext-g3}

In this section we study closely the extensions of polarized curves
of genus $3$ and degree $d\geq 2g+3$, in order to determine whether
their ribbons are obstructed or not. Our main output in this direction
is Theorem~\ref{t:ext_g3}.

\begin{noname}[Classification of surfaces with sectional genus 3]
The classification of rational surfaces with hyperplane sections that
are non-hyperelliptic curves of genus $3$ had been classically worked
out by Castelnuovo; \cf \cite{castelnuovo-g3}.  He proved that all
such surfaces are represented by a linear system of plane quartics.
More recently Lanteri and Livorni, \cf \cite{lanteri-livorni}, have
classified all pairs $(S,C)$ where $S$ is a smooth surface, $C \subset
S$ is a smooth genus $3$ curve, and the linear system $|C|$ is
globally generated and ample.  For $d\geq 8$, Theorem~\ref{t:hideg}
provides more generally the classification of surfaces, possibly
singular, with one hyperplane section that is a linearly normal
non-hyperelliptic curve of genus $3$.
\end{noname}

\begin{corollary}
\label{t:ext_g3:rat}
Let $(C,L)$ be a non-hyperelliptic polarized curve of genus $g=3$ and
degree $d\geq 4g-4 =8$. Then 
all surface extensions of\, $(C,L)$ are rational.
If $d\geq 9$, they are all realized by a linear system of plane
quartics; if $d=8$, they are realized either by a linear system of
plane quartics, or by a complete linear system of plane sextics with
seven base points of multiplicity $2$ as in Example~\ref{ex:delp}.
\end{corollary}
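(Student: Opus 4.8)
The plan is to deduce the corollary directly from the classification Theorem~\ref{t:hideg}, simply reading off what each case contributes when $g=3$. Since $d\geq 4g-4$ and $C$ is smooth of genus $3$ and linearly normal, any surface extension $S$ of $(C,L)$ that is not a cone falls into one of the cases \eqref{ghd:biell}--\eqref{ghd:trigonal}; the whole argument consists in checking which of these are compatible with $g=3$ and with $C$ being non-hyperelliptic.

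First I would discard the cases that cannot occur. Case \eqref{ghd:biell} is governed by Example~\ref{ex:biell}, which presupposes a bielliptic curve of genus $g\geq 4$. Case \eqref{ghd:hyperell} has hyperelliptic hyperplane sections, contrary to hypothesis. Case \eqref{ghd:trigonal} is governed by Example~\ref{ex:trig}, whose construction requires $g\geq 4$ (the rational normal scroll of degree $g-2$ in $\P^{g-1}$ degenerates when $g=3$); for $g=3$ a trigonal canonical curve is just a plane quartic, already accounted for in case \eqref{ghd:plane}. Hence only cases \eqref{ghd:plane} and \eqref{ghd:DP} remain, and the surfaces arising there are all rational (blow-ups of $\P^2$, respectively $v_2$-images of rational Del Pezzo-type surfaces), which proves the first assertion.

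Next I would make the two surviving cases explicit. In case \eqref{ghd:plane} the plane $\delta$-ic has genus $(\delta-1)(\delta-2)/2$, and among $4\leq \delta\leq 6$ this equals $3$ only for $\delta=4$; so $S$ is represented by a linear system of plane quartics, with $16-d$ simple base points (possibly infinitely near, along a curvilinear scheme) as in Example~\ref{ex:plane}. In case \eqref{ghd:DP}, Example~\ref{ex:delp} shows that the relevant surface is the $v_2$-image of $\P^2$ blown up at $h\leq 7$ points on an irreducible cubic (then $g=10-h$), or of a quadric in $\P^3$ (then $g=9$); only $h=7$ yields $g=3$, and the resulting surface has degree exactly $4g-4=8$ and is represented by the complete linear system of plane sextics with seven double base points. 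The dichotomy in the statement now follows: case \eqref{ghd:DP} occurs only in degree $8$, so for $d\geq 9$ one is left solely with plane quartics, while for $d=8$ both families appear.

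I do not anticipate a genuine difficulty: the argument is a bookkeeping exercise on Theorem~\ref{t:hideg}. The one point deserving a second look is that case \eqref{ghd:trigonal} truly contributes nothing new for $g=3$ (rather than furnishing a surface not of plane-quartic type), together with the observation that the degree computation in Example~\ref{ex:delp} leaves no room for simple internal projections, pinning case \eqref{ghd:DP} to $d=8$.
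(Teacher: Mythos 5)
Your proposal is correct and matches the paper's own argument, which simply states that the corollary is a direct application of Theorem~\ref{t:hideg}; you have just carried out explicitly the case-by-case bookkeeping that the paper leaves implicit. The elimination of cases \eqref{ghd:biell}, \eqref{ghd:hyperell}, and \eqref{ghd:trigonal}, and the identification of $\delta=4$ in case \eqref{ghd:plane} and $h=7$ (hence $d=8$) in case \eqref{ghd:DP}, are exactly the intended checks.
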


\begin{proof}
This is a direct application of Theorem~\ref{t:hideg}.
\end{proof}

\begin{lemma}
\label{l:cork_g3}
Let $C$ be a non-hyperelliptic curve of genus $3$ and $L$ be a
line bundle of degree $d\geq 0$ on it. Then $h^0(C,4K_C-L)$ takes the
following values:
\begin{itemize}
\item If\, $d>16$, then $h^0(4K_C-L)=0$.
\item If\, $d=16$, then $h^0(4K_C-L)=1$ if\, $L=4K_C$ and $0$ otherwise.
\item If\, $d=15$, then $h^0(4K_C-L)=1$ if\, $L=4K_C-p$ for some $p\in C$, and
  $0$ otherwise.
\item If\, $d=14$, then $h^0(4K_C-L)=1$ if\, $L=4K_C-p-q$ for some $p,q\in C$,
  and $0$ otherwise. 
\item If\, $d=13$, then $L$ may always be written as $4K_C-p-q-r$ for some
  $p,q,r\in C$, and $h^0(4K_C-L)=2$ if these three points are aligned on
  the canonical model of $C$, and $1$ otherwise.
\item If\, $d=12$, then $L$ may always be written as $4K_C-p-q-r-s$ for some
  $p,q,r,s\in C$, and $h^0(4K_C-L)=3$ if these four points are aligned on
  the canonical model of $C$, and $2$ otherwise.
\item If\, $d <12$, then $h^0(4K_C-L)=14-d$.
\end{itemize}
\end{lemma}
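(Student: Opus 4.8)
The plan is to reduce the statement to the Riemann--Roch theorem, used in conjunction with the elementary fact that the canonical model of a non-hyperelliptic curve $C$ of genus $3$ is a smooth plane quartic, on which $K_C = \O_C(1)$ and $|K_C|$ is exactly the linear system cut out by lines. Throughout, set $M := 4K_C - L$, so $\deg M = 16 - d$ because $\deg K_C = 2g-2 = 4$; the assertion is then a case analysis on $\deg M$.

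First I would clear the easy ranges. If $d > 16$ then $\deg M < 0$ and $h^0(M) = 0$; if $d = 16$ then $\deg M = 0$ and $h^0(M) = 1$ exactly when $M$ is trivial, i.e. $L = 4K_C$. If $d = 15$ or $d = 14$ then $\deg M = 1$ or $2$, and since $C$ is non-hyperelliptic of genus $3$ any line bundle of degree at most $2$ has $h^0 \leq 1$, with equality precisely when $M$ is effective, i.e. $M \lineq p$, resp. $p+q$ --- which is the stated condition. If $d < 12$ then $\deg M = 16 - d \geq 5 > 2g-2$, so $M$ is non-special and Riemann--Roch gives $h^0(M) = \deg M - g + 1 = 14 - d$.

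The two substantive cases are $d = 13$ and $d = 12$, where $\deg M = 3$, resp. $4$. In both, $h^0(M) \geq \deg M - g + 1 > 0$, so $M$ is effective; write $M \lineq p+q+r$, resp. $p+q+r+s$, equivalently $L = 4K_C - p - q - r$, resp. $4K_C - p - q - r - s$. Riemann--Roch gives $h^0(M) = (\deg M - 2) + h^0(K_C - M)$, where $K_C - M$ has degree $1$, resp. $0$. For $d = 12$: $h^0(K_C - M) = 1$ iff $M \lineq K_C$, i.e. iff $p+q+r+s$ is a plane section of the quartic, i.e. iff the four points are aligned, and $0$ otherwise; hence $h^0(M) = 3$, resp. $2$. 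For $d = 13$: the degree-one class $K_C - M$ is effective iff $p+q+r+t \lineq K_C$ for some $t \in C$, i.e. iff $p,q,r$ lie on a line (whose residual intersection with the quartic is then $t$), in which case $h^0(K_C - M) = 1$; three non-collinear points cannot be completed to a plane section, so $h^0(K_C - M) = 0$ otherwise. This gives $h^0(M) = 2$ when $p,q,r$ are aligned and $1$ otherwise.

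I do not expect any real obstacle: the proof is a routine Riemann--Roch computation. The only points needing a little care are the interpretation of ``aligned'' with multiplicities (tangent lines count, and a plane section of the quartic may be non-reduced) and, for $d = 13$, checking that the collinearity of $p,q,r$ is precisely what makes the degree-one bundle $K_C - M$ effective; both follow at once from the identification of $|K_C|$ with the system of lines in the plane containing the quartic.
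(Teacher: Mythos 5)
Your proof is correct and follows essentially the same route as the paper's: dispose of the degrees $d\geq 14$ and $d<12$ by elementary considerations, and for $d=12,13$ write $L=4K_C-\sum p_i$ (the paper invokes Jacobi inversion where you use Riemann--Roch to get effectiveness of $4K_C-L$, which amounts to the same thing) and then conclude by Riemann--Roch and Serre duality, interpreting $h^0(K_C-M)$ via collinearity on the plane quartic model. No gaps.
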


\begin{proof}
If $d\geq 16$, then $\deg(4K_C-L)\leq 0$ and the result is clear. 
If $d=15$, we can always write $L=4K_C-p_0+N$ for some arbitrarily
chosen point $p_0 \in C$ and some degree $0$ line bundle $N$. If
$h^0(4K_C-L)>0$, then $p_0-N \lineq p$ for some $p\in C$; hence
$L=4K_C-p$. In this case, $h^0(4K_C-L)=h^0(p)=1$.
If $d=14$, it follows as in the previous case that $L=4K_C-p-q$ if
$h^0(4K_C-L)>0$. In this case,
$h^0(4K_C-L)=h^0(p+q)=1$ since $C$ is non-hyperelliptic.
If $d\leq 13$, 
by Jacobi's inversion theorem (see \cite[p.~19]{ACGH}),
we can always write $L = 4K_C-\sum_{i=1}^{16-d} p_i$ for some
points $p_1,\ldots,p_{16-d}$, and then the result follows by
Riemann--Roch and Serre duality.
\end{proof}

\begin{proof}[Proof of Theorem~\ref{t:ext_g3}]
The ``only if'' part of~\eqref{t:ext_g3:cond} is a direct consequence of
Theorem~\ref{t:unicity}, taking into account Theorem
\ref{t:cork-iperell+g3} and Lemma~\ref{l:cork_g3}.
On the other hand, Example~\ref{ex:plane} provides an extension for
all $(C,L)$ with $L = 4K_C - \sum_{i=1}^{16-d} p_i$, thus proving the ``if''
part of~\eqref{t:ext_g3:cond}: explicitly, this goes as follows.
Consider $C$ in its
canonical embedding as a smooth plane quartic,
and let $\epsilon\colon S \to \P^2$ be the blow-up of the plane at the
points $p_1,\ldots,p_{16-d} \in C \subset \P^2$,
with exceptional divisors $E_1,\ldots,E_{16-d}$
(when some $p_i$ coincide, this means that we blow up infinitely
near points).
Let $H$ denote the pull-back to $S$ of the line class on $\P^2$.
The linear system
$|4H - \sum_{i=1} ^{16-d}E_i|$ on $S$
cuts out the complete linear series $|4K_C-\sum_{i=1} ^{16-d}p_i| = |L|$
on the proper transform of~$C$, which is very ample; 
it is thus base-point-free and defines a birational morphism from $S$
to an extension of $(C,L)$.

Let us now prove~\eqref{t:ext_g3:all}.
If $d \geq 14$, there is nothing to add to~\eqref{t:ext_g3:cond} since
in these cases $\cork (\gamma_{C,L})$ is either $0$ or $1$ by
Lemma~\ref{l:cork_g3}, so we will suppose $d\leq 13$.
Given $[Z] \in (\P^2)^{[16-d]}$ for a length $16-d$,
$0$-dimensional subscheme $Z \subset \P^2$,
we let $S_Z$
be the blow-up of $\P^2$ along $Z$, with total
exceptional divisor $E_Z$, and call $H$ the pull-back of the line
class on $\P^2$.
The locus $\cS \subset (\P^2)^{[16-d]}$ parametrizing
those $Z$ such that the
linear system $|4H -E_Z|$ on $S_Z$ contains a smooth
curve is dense.
Moreover, for all $[Z] \in \cS$, since $d\geq 9$,
this linear system has dimension $d-2$, is base-point-free, and
defines a birational morphism.
We consider the universal family
\[
  \mathcal{L} \lra \cS
\]
of these linear systems and the dense open subset
$\L^\circ \subset \L$ consisting of those pairs $(Z,C)$ such that
$C$ is a smooth member of $|4H -E_Z|$ on $S_Z$. After dividing out by
the automorphism group of $\P^2$, we get the moduli space $\cSC$ of
such pairs, which has dimension
\[
  \dim \left( (\P^2)^{[16-d]} \right)
  + (d-2) - 8
  = 22-d.
\]
Next we consider the universal Jacobian $\cJ_3^d$ parametrizing degree
$d$ line bundle on genus $3$ curves,
which has dimension $4g-3 = 9$,
and its dense subset $\cJ^\circ$
corresponding to non-hyperelliptic curves.
We shall examine the map
\[
  c\colon (Z,C) \in \cSC
  \longmapsto
  \left[ C, \restr {\O_{S_Z}(4H-E_Z)} C \right] \in \cJ^\circ,
\]
the fibre of which over a point $(C,L)$ consists of distinct
isomorphism classes of extensions of $(C,L)$.
By our proof of the ``if'' part of~\eqref{t:ext_g3:cond}, the image of $c$
is the locus of those $(C,L)$ such that $L$ may be written as
$4K_C-\sum_{i=1}^{16-d}p_i$,
which is the whole $\cJ^\circ$
since we are assuming $d\leq 13$.
Therefore, all fibres of $c$ have dimension at least
\[
  \dim (\cSC) - \dim (\cJ^\circ)
  = 13-d.
\]
If $d <12$, this proves that for all $(C,L)$, the general ribbon in
$\P(\ker (\trsp \gamma_{C,L}))$ is integrable, by Lemma~\ref{l:cork_g3}
and the argument given in Section~\ref{p:strategy}.
We conclude by Theorem~\ref{t:gnl-ribb} that all ribbons are
integrable and there exists a universal extension. 

If $d=12$ or $13$, the same argument proves~\eqref{t:ext_g3:all}
for all $(C,L)$ such that $L = 4K_C - \sum_{i=1}^{16-d}p_i$ with
$p_1,\ldots,p_{16-d}$ not all on a line in $\P^2$.

We now treat the two remaining cases separately, in the same spirit as
the others.
First assume $d=13$, and let $S_1$ be the blow-up of $\P^2$ along
three pairwise distinct points lying on a line, with total exceptional
divisor $E_1$. 
Consider the locus of smooth quartics in 
the linear system $|4H-E_1|$, which has dimension $11$,
and take its quotient by the automorphism group of $S_1$,
which is the subgroup of the projectivities
of $\P^2$ acting as the
identity on a line, hence has dimension $3$
(it is the group of homotheties and translations of the affine
plane).
We thus get the moduli space $\cSC_1$ of pairs $(S_1,C)$,
of dimension $8$.
The image of $\cSC_1$ by the map $c\colon \L^\circ \to \cJ^\circ$
is the locus $\cJ^1$ of pairs $(C,L)$ such that $L = 4K_C-D$ with $|D|$
a $g^1_3$, which has dimension~$7$
(note that if $p_1,p_2,p_3 \in C \subset \P^2$ are aligned, then they
move in a base-point-free $g^1_3$, and thus up to linear equivalence
we may always assume that they are pairwise distinct).
The upshot is that the fibres of the map
\[
  \restr c {\cSC_1}\colon \cSC_1 \lra \cJ^1
\]
all have dimension at least $1$, and we conclude as in the previous
cases.

In the case $d=12$, we fix three pairwise distinct points
$q_1,q_2,q_3$ on a line in $\P^2$ and let $\cS_2$ be the complement
of $\{ q_1,q_2,q_3 \}$ in this line. For all $q \in \cS_2$, we let
$S_q$ be the blow-up of $\P^2$ at $ q_1,q_2,q_3$, and $q$,
with total exceptional divisor $E_q$. The $S_q$
form a $1$-dimensional family of pairwise non-isomorphic surfaces,
all with automorphism group the
subgroup of the projectivities of $\P^2$ acting as the
identity on a line, which has dimension $3$.
Quotienting the universal family of the linear systems
$|4H-E_q|$, each of dimension $10$, by this automorphism group, we get 
the moduli space $\cSC_2$ of pairs $(S_q,C)$, of dimension
$10+1-3=8$.
The image of $\cSC_2$ by the map $c\colon \L^\circ \to \cJ^\circ$
is the locus $\cJ^2$ of pairs $(C,3K_C)$, which has dimension $6$.
Therefore, the fibres of the map
\[
  \restr c {\cSC_2}\colon \cSC_2 \lra \cJ^2
\]
all have dimension at least $2$, and we conclude as in the previous
cases.
\end{proof}

\begin{remark}
In the case when $L=4K_C-D$ with $|D|$ a $g^1_3$, there exist extensions
of $(C,L)$ supported on a surface different from $S_1$. Indeed,
choosing a member of $|D|$ of the form $2p_1+p_2$, we see that the
blow-up $S_1'$ of $\P^2$ along three aligned points, two of which are
infinitely near, provides an extension of $(C,L)$. This surface is
rigid as $S_1$ is, but its automorphism group is larger: it is the
subgroup of those projectivities fixing
the two points $p_1$ and $p_2$
(hence also leaving the line $\vect{p_1,p_2}$ stable),
which has dimension $4$. We thus
get a moduli space $\cSC_1'$ of dimension $11-4=7$ which surjects onto
$\cJ^1$. Thus for all $(C,L) \in \cJ^1$, there is at least one
extension supported on the surface $S_1'$.

For all $C$, there are finitely many $g^1_3$ having a member of the
form $3p_1$, and the corresponding pairs $(C,L)$ form a
$6$-dimensional locus in $\cJ^\circ$. These pairs have an extension
supported on $S_1''$, the blow-up of $\P^2$ at three infinitely near
points lying on a line. The surface $S_1''$ is rigid and has an
automorphism group of dimension $5$.

Similar considerations may be made about the extensions of the
polarized curves
$(C,3K_C)$. 
\end{remark}

\begin{remark}
One may want to prove Corollary~\ref{t:ext_g3:rat}
for $d\geq 9$
directly with the
above considerations, without resorting to Theorem~\ref{t:hideg}.
Our proof of~\eqref{t:ext_g3:all} shows that for all $(C,L)$, the
general extension of $(C,L)$ is rational and realized by a linear
system of plane quartics. However, it is not clear to us 
why these two properties should be preserved
when one specializes to an
arbitrary extension of $(C,L)$.
\end{remark}

\begin{noname}[Remarks on the extensions in degree ${d<2g+3}$]
\label{p:g3-notN2}
In this case Green's theorem, Theorem~\ref{t:green}, no longer applies to
guarantee that $(C,L)$ has property $N_2$, so that, in particular, 
one ribbon may a priori have several different integrals.

We will mostly concentrate on the case $d=2g+2=8$.
If $L \neq 2K_C$, then all extensions of $(C,L)$ are realized by a
linear system of plane quartics by Corollary~\ref{t:ext_g3:rat},
and the analysis carried out in the proof of Theorem~\ref{t:ext_g3}
applies \textit{mutatis mutandis}. We find that if $(C,L)$ is general, then the
extensions of $(C,L)$ form a family of the expected dimension
\[
  5 = \cork(\gamma_{C,L})-1
  = h^0(4K_C-2L)-1.
\]
We cannot say much more, however, because of the possible failure of
property $N_2$.

The case $L=2K_C$ is more interesting. In this case we still have,
for general $C$, a $5$-dimensional family of extensions of $(C,2K_C)$
given by linear systems of plane quartics: these extensions are
obtained by projecting $v_4(\P^2)$ from eight points that, in $\P^2$,
lie on a conic, so that they sum to a bicanonical
divisor of any quartic containing them.
There is, however, another family of extensions, given by complete linear
systems of plane sextics with seven base points of multiplicity~$2$.
One finds, using similar arguments to those in the proof of
Theorem~\ref{t:ext_g3} above, that for general $C$, these form a
$6$-dimensional family. We thus find two independent families of
extensions of $(C,2K_C)$, one of the expected dimension and one
superabundant.\footnote
{We note in addition that those surfaces obtained by a linear system
  of quartics with eight base points lying on a conic have a
  singularity of type $\frac 1 4(1,1)$; \ie such a surface is locally the cone
  over a rational normal quartic curve.
We also emphasize  that sextics with seven double points are
Cremona-minimal, hence not Cremona-equivalent to smooth quartics; 
see \cite{CalCil-nagoya} and \cite{mella-polastri}.
}

In degree $d<8$ the situation is similar. In fact, linear
systems of sextics with seven double base points of degree $d<8$
necessarily have further base points, so that the funny situation for
$2K_C$ analyzed above now happens for more line bundles (\eg
in degree $7$ it happens for those line bundles that may be written as
$2K_C-p$ for some $p\in C$, which form a $1$-dimensional family in the
Jacobian of $C$).
Note, however, that for $d<8$, we are out of the range of application of
Theorem~\ref{t:hideg} and Corollary~\ref{t:ext_g3:rat} so that there
may be even more families of extensions.
\end{noname}

\begin{noname}[Invariants of the universal extensions]
\label{p:univ-ext-g3}
We now list the degrees and dimensions of the universal extensions of
genus $3$ curves gotten above, limiting ourselves to the case when the
dimension is at least $3$:
\begin{itemize}
\item If $d=13$ and $L= 4K_C-p-q-r$ for some
  $p,q,r\in C$ on a line in the canonical model of $C$,
  we find a threefold $X$ of degree $d=13$ in $\P^{12}$. 
\item If $d=12$ and $L=4K_C-p-q-r-s$ for some
  $p,q,r,s\in C$ not on a line in the canonical model of $C$,
  we find a threefold $X$ of degree $d=12$ in $\P^{11}$.
\item If $d=12$ and $L=3K_C$,
  we find a fourfold $X$ of degree $d=12$ in $\P^{12}$. 
\item If $9 \leq d \leq 11$, we find a $(15-d)$-dimensional variety
  of degree $d$ in $\P^{11}$
  (note that $4\leq 15-d\leq 6$).
\end{itemize}

Smooth projective varieties of degree $d=9,10,11$ have been classified
in 
\cite{fania-livorni9},
\cite{fania-livorni10},
and \cite{besana-biancofiore}, 
respectively.
Running through the corresponding lists, we notice that there is no smooth
$6$-fold of degree $9$ in $\P^{11}$; hence the universal extension in
degree $9$ is certainly singular.
By contrast, in degree $10$ and $11$, there exist a smooth
$5$-fold and a smooth $4$-fold of sectional genus $3$ in $\P^{11}$, namely
$\P^1\times\Q^4$ in the Segre embedding and a scroll over $\P^2$, 
respectively.
It is possible that these coincide with the universal extensions
above, but we will not investigate this now.
\end{noname}

\section{Extensions of polarized hyperelliptic curves}
\label{S:ext-hyperell}

In this section we study closely the extensions of polarized
hyperelliptic curves
of genus $g$ and degree $d\geq 2g+3$ in order to determine whether
their extension theory is obstructed or not. Our main output in this
direction is Theorem~\ref{t:ext_he}.
Our general strategy is similar to that employed for genus $3$ curves
in Section~\ref{S:ext-g3}, but the situation for hyperelliptic curves
is slightly more complicated and requires more care.

\begin{noname}
For all $\mu=g+1,g,\ldots,0$, we let $\H_\mu$ be the complete linear
system
\[
  |2H+\mu F|
  = |2E + (2g+2-\mu)F|
  \quad\text{on }
  \F_{g+1-\mu}.
\]
It defines a projective surface
of degree $4g+4$ in $\P^{3g+5}$, 
which is the maximum possible degree for a
non-trivial extension of a linearly normal curve of genus $g\geq 2$; 
see Corollary~\ref{c:hartshorne}.
Since we focus on polarized curves of degree $d\geq 2g+3$, in order
for property $N_2$ to hold, 
the maximal number of points from which we may project these
surfaces is
\begin{equation}
\label{eq:bmax-he}
  b_{\max} = 2g+1.
\end{equation}
\end{noname}

\begin{proposition}
\label{pr:charact_he}
Let $C$ be a hyperelliptic curve of genus $g$ and
$\fe$ an effective divisor of degree $\mu$ on $C$, $0\leq \mu \leq g+1$.
There exists an embedding of\, $C$ as a member of the linear system
$|2H+\mu F|$ on $\F_{g+1-\mu}$ such that $\restr E C = \fe$
if and only if no two points of $\fe$ are conjugate with respect to
the hyperelliptic involution on $C$.
\end{proposition}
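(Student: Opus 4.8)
The plan is to realize $C$ inside $\F_{g+1-\mu}$ explicitly via the hyperelliptic map, and to compute the restriction $\restr E C$ directly. Write $e = g+1-\mu$, so that $\F_e = \P(\O_{\P^1}\oplus\O_{\P^1}(-e))$ with the section $E$ of self-intersection $-e$; the linear system in question is $|2E+(2g+2-\mu)F| = |2H+\mu F|$. The adjunction computation in Example~\ref{ex:hyperell} shows that a smooth member $C$ of this system is hyperelliptic of genus $g$, cut out on $C$ by $|F|$. First I would set up coordinates: let $\pi\colon\F_e\to\P^1$ be the ruling, so that $\restr\pi C\colon C\to\P^1$ is the $g^1_2$, i.e. the hyperelliptic pencil $\fg$. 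Since $E\cdot F = 1$, the divisor $\fe := \restr E C$ has degree $E\cdot C = E\cdot(2E+(2g+2-\mu)F) = -2e + 2g+2-\mu = \mu$, consistent with the statement.

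For the \textbf{necessity} direction, suppose $C\in|2E+(2g+2-\mu)F|$ is smooth and set $\fe=\restr E C$. Suppose two points $p,q\in\fe$ were conjugate under the hyperelliptic involution, i.e. $p+q\in\fg = |\restr F C|$. Then $p$ and $q$ lie on a common fibre $F_0$ of $\pi$. But $F_0$ meets $E$ in a single point (as $E\cdot F = 1$), so $p$ and $q$ both equal $E\cap F_0$, contradicting that they are distinct points of the divisor $\fe$ — unless $\fe$ is nonreduced at that point, i.e. $\fe\geq 2p$ with $p = E\cap F_0$. I would need to rule out this case as well: if $C$ is smooth and tangent to $E$ at $p$, one checks via intersection multiplicity that the fibre $F_0$ through $p$ meets $C$ at $p$ with multiplicity $\geq 2$ coming from the tangency to $E$, forcing $p+p = 2p\in\fg$, i.e. $p$ is a ramification point of the hyperelliptic map — but $2p$ being a member of $\fg$ means $p$ and $p$ are "conjugate," which the hypothesis forbids (interpreting "no two points of $\fe$ conjugate" as including the tangency/ramification case). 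So in all cases we get a contradiction. (The cleanest phrasing: $\fe = \restr E C$ and $\restr F C\in\fg$ meet any fibre of $\pi$ in a scheme of length $\leq 1$ on $E$, so $\fe$ maps injectively to $\P^1$ under $\restr\pi C$, which is exactly the condition that no two of its points — counted with multiplicity — are conjugate.)

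For \textbf{sufficiency}, I would construct the embedding. Given $C$ hyperelliptic with hyperelliptic pencil $\fg$, consider the map $\fg\colon C\to\P^1$, and suppose $\fe$ is an effective divisor of degree $\mu$ no two of whose points are conjugate, so that $\restr{(\fg)}{\fe}\colon\fe\hookrightarrow\P^1$ is a closed embedding of a length-$\mu$ subscheme. Let $\sigma\colon\tilde C\to\P^1\times C$... actually the more efficient route is: form the rank-$2$ bundle on $C$ giving the desired scroll. The section $E$ of self-intersection $-e$ we want corresponds to realizing $C$ as a divisor in $\F_e$ meeting $E$ along $\fe$. Equivalently, take $\F_e$ with its $\P^1$-bundle structure, pull back a general point of $\Pic$ so that $\restr{\O_{\F_e}(C)}E = \O_E(\mu) \cong \O_{\P^1}(\mu)$ corresponds under $E\cong\P^1$ to the divisor $\fe$ via the identification with the base. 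Concretely: the linear system $|2E + (2g+2-\mu)F|$ restricted to $E$ is $|\O_{\P^1}(\mu)|$ (degree $-2e + 2g+2-\mu = \mu$), which is base-point-free; choosing the member through the specified $\fe\subset E\cong\P^1$ fixes $\restr C E$. Then one must check that a general curve in $|2E+(2g+2-\mu)F|$ restricting to $\fe$ on $E$ is smooth (Bertini away from $E$, plus a local analysis along $E$ using that $\fe$ is reduced-or-tangent-to-$E$ in the allowed way) and has hyperelliptic pencil $\restr{|F|}C$ with $\restr E C = \fe$ by construction.

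The \textbf{main obstacle} I anticipate is the sufficiency direction, specifically the smoothness of the chosen member and making precise the correspondence between "the divisor $\fe$ on $E\cong\P^1$" and "the conjugacy condition on $C$." The subtlety is that $|2E+(2g+2-\mu)F|$ might have base points or the generic member through a prescribed $\fe\subset E$ might be forced to be singular along $E$ when $\fe$ is nonreduced; handling the nonreduced (tangency/ramification) case carefully — and deciding the precise reading of "conjugate" in that degenerate situation — is where the real work lies. I expect this is resolved by a local computation in suitable coordinates on $\F_e$ near a point of $E$, showing that a general such curve acquires at worst simple tangency with $E$ exactly when $\fe$ is nonreduced there, which is smooth, and is transverse to $E$ when $\fe$ is reduced. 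The degree count $(2E+(2g+2-\mu)F)^2 = 4g+4$ and $\dim|2E+(2g+2-\mu)F| = 3g+5$ from Example~\ref{ex:hyperell} then identify the resulting $S$ as being of the expected type.
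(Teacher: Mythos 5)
Your necessity argument is essentially the paper's, but as written it contains a wrong step. If $C$ is smooth and tangent to $E$ at $p$, the tangent direction of $C$ at $p$ is that of $E$, which is \emph{transverse} to the fibre $F_0$ through $p$; hence $F_0$ meets $C$ at $p$ with multiplicity exactly $1$ --- the opposite of what you claim. The correct conclusion is that tangency to $E$ \emph{precludes} $p$ from being a ramification point of the $g^1_2$, hence precludes $p$ from being conjugate to itself. This is exactly the paper's argument (a smooth curve cannot be simultaneously tangent to $E$ and to the fibre at the same point), and it is also what your final parenthetical amounts to once unpacked; but note that in this direction the conjugacy-free condition is the \emph{conclusion}, so you cannot invoke it as ``the hypothesis'' to close the case, as your main text does.

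The genuine gap is in sufficiency. The statement asks for an embedding of the \emph{given} abstract curve $C$ realizing the \emph{given} divisor $\fe$ as $\restr E C$. Your construction --- take a general member of $|2E+(2g+2-\mu)F|$ whose restriction to $E\cong\P^1$ is the prescribed length-$\mu$ subscheme --- produces \emph{some} smooth hyperelliptic curve of genus $g$ with the right numerics, but gives no control over its isomorphism class, let alone over the isomorphism class of the pair (curve, divisor cut by $E$); at best a dimension count shows such pairs dominate the relevant moduli, which does not yield every pair $(C,\fe)$. (The smoothness issue you flag is real but secondary.) The paper instead builds the embedding of $C$ itself: with $\ff$ the class of the $g^1_2$ and $\fh=\fe+(g+1-\mu)\ff$, the hypothesis that no two points of $\fe$ are conjugate forces $\fh$ to be non-special, and an explicit basis $us,\,ut,\,vs^{g-\mu+2},\ldots,vt^{g-\mu+2}$ of $H^0(C,\fh+\ff)$ (with $v$ spanning $H^0(\fe)$ and $s,t$ a basis of $H^0(\ff)$) exhibits $\phi_{|\fh+\ff|}(C)$ as lying on a rational normal scroll isomorphic to $\F_{g+1-\mu}$ embedded by $|H+F|$, with $\restr F C=\ff$ and $\restr H C=\fh$, whence $\restr E C=\fh-(g+1-\mu)\ff=\fe$. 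This is where the conjugacy-free hypothesis actually does its work; your proposal uses it in the sufficiency direction only to embed $\fe$ into $\P^1$, which is not enough.
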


\begin{proof}[Proof of the ``only if'' part of
  Proposition~\ref{pr:charact_he}]
Suppose $C$ is a smooth member of the linear system
$|2H+\mu F|$ on $\F_{g+1-\mu}$, and let
\[
  \restr E C = e_1+\cdots +e_\mu.
\]
Consider two points $e_i$ and $e_j$ of $\restr E C$, distinct in the
sense that $i\neq j$.
Since $E\cdot F=1$ and the $g^1_2$ on $C$ is cut out by $F$, the only
possibility for $e_i$ and $e_j$ to be conjugate is that $e_i=e_j$ and
it is a ramification point of the $g^1_2$. However, the condition that
$e_i=e_j$ means that $C$ is tangent to $E$ at this point, and the
condition that it is a ramification point of the $g^1_2$ means that
$C$ is tangent to the fibre at this point, so these two conditions may
not be realized simultaneously as $C$ is smooth. The conclusion is
thus that for all $i \neq j$, $e_i$ and $e_j$ are not conjugate.
\end{proof}

In case $\mu \leq g$, this can also be proved by cohomological
considerations, as the condition we want to prove is then equivalent
to $h^0(C, \restr E C) = 1$, which in turn is equivalent to
$h^1(\F_{g+1-\mu},E-C)=0$,
as can be seen by considering the restriction exact sequence.
The vanishing holds because $E-C$ has vanishing $h^0$ and $h^2$
(the latter by Serre duality), and $\chi(E-F)=0$ by Riemann--Roch.

\begin{proof}[Proof of the ``if'' part of Proposition~\ref{pr:charact_he}]
Let $C$ be a hyperelliptic curve of genus $g$ and $\fe$ an effective
divisor of degree $\mu$ on $C$, no two points of which are conjugate.
We let $\ff$ be the class of the $g^1_2$ and $\fh = \fe+(g+1-\mu)\ff$.
One has
\[
  K_C-\fh =
  (g-1)\ff-\fe-(g+1-\mu)\ff
  = (\mu-2)\ff -\fe,
\]
so $\fh$ is non-special since our assumption on $\fe$ implies that it
imposes independent conditions to any multiple of $\ff$.
Therefore, $h^0(C,\fh)=g-\mu+3$ by Riemann--Roch.

We first consider the case $\mu \leq g$.
Let $v$ be a basis of $H^0(C,\fe)$
and $s,t$ be a basis of $H^0(C,\ff)$.
Then $vs^{g+1-\mu},\ldots, vt^{g+1-\mu}$ are linearly independent
and span a hyperplane in $H^0(C,\fh)$.
Finally, we choose $u$ such that 
$u,vs^{g+1-\mu},\ldots, vt^{g+1-\mu}$ form a basis of $H^0(C,\fh)$.
Arguing in the same way as above, we find that
$H^0(C,\fh+\ff)$ has dimension $g-\mu+5$
and $us,ut,vs^{g-\mu+2},\ldots, vt^{g-\mu+2}$ is a basis of this
space. 
Thus the linear system $|\fh+\ff|$ maps $C$ in $\P^{g-\mu+4}$ in such
a way that it sits in a rational normal scroll
built on a line and a rational normal curve of degree
$g-\mu+2$ 
spanning complementary subspaces,
that is, $\F_{g+1-\mu}$ in its embedding given by $|H+F|$,
and $\restr F C = \ff$ and $\restr H C = \fh$.
This completes the proof in this case, as $H = E + (g+1-\mu)F$.
The argument is similar when $\mu=g+1$; we leave it to the
reader. 
\end{proof}

\begin{noname}\textit{Proof of Theorem~\ref{t:ext_he}\, \eqref{t:ext_he:ext}.}
\label{p:ext-hell}
Let $(C,L)$ be a polarized hyperelliptic curve of genus $g$ and degree
$d$. 
We first prove the result in the case $d=4g+4$.
We shall see that there are two ways of proceeding in order to write
$L$ as $2\fh+\mu\ff$,
so as to be able to apply Proposition~\ref{pr:charact_he}. We call
these the even and odd ways, respectively 
(note that one way is sufficient to prove~\eqref{t:ext_he:ext}).
We keep the notation $\ff$ for the class of the $g^1_2$ on $C$ and
write $g=2\gamma+\epsilon$ with $\epsilon \in \{0,1\}$ and
$\gamma\in \N$.

In the even way, one first chooses a line bundle
$M^+$ of (even) degree $2g+2$ such that $L=2M^+$.
Then, there is a unique integer $k\in \{0,\ldots,\gamma+\epsilon\}$
such that $M^+$ may be written as
\[
  M^+
  = \fe + (\gamma+1+k)\ff,
\]
with the condition that $\fe$ is the sum of $g+\epsilon-2k$ points
pairwise not conjugate with respect to the $g^1_2$
($k$ is thus the largest integer such that
$M^+ - (\gamma+1+k)\ff$
is effective).
We set
\[
  \mu = g+\epsilon-2k
  \ \text{(even)}
  \quad\text{and}
  \quad
  \fh = \fe + (g+1-\mu)\ff.
\]
By Proposition~\ref{pr:charact_he}, 
there exists an embedding of $C$ as a member of the linear system
$|2H+\mu F|$ on $\F_{g+1-\mu}$ such that $\restr E C = \fe$
and $\restr F C = \ff$.
The normal bundle of $C$ in this embedding is
\begin{align*}
  N_{C/\F_{g+1-\mu}}
  &= 2\fh + \mu \ff \\
  &= 2\fe + (2g+2-\mu) \ff\\
  &= 2\fe+2(\gamma+1+k)\ff = 2M^+ = L; 
\end{align*}
hence the embedding of $\F_{g+1-\mu}$ defined by the complete linear
system $|2H+\mu F|$ is an extension of $(C,L)$, 
as we wanted.

In the odd way, one chooses instead a line bundle $M^-$ of (odd)
degree $2g+1$ such that $L = 2M^-+\ff$.
Then, there is a unique integer  $k\in \{0,\ldots,\gamma\}$
such that $M^-$ may be written as
\[
  M^-
  = \fe + (\gamma+\epsilon+k)\ff,
\]
with $\fe$  the sum of $g+1-\epsilon-2k$ points pairwise not
conjugate with respect to the $g^1_2$.
We set
\[
  \mu = g+1-\epsilon-2k
  \ \text{(odd)}
  \quad\text{and}
  \quad
  \fh = \fe + (g+1-\mu)\ff.
\]
By Proposition~\ref{pr:charact_he}, 
there exists an embedding of $C$ as a member of the linear system
$|2H+\mu F|$ on $\F_{g+1-\mu}$ such that $\restr E C = \fe$
and $\restr F C = \ff$.
The normal bundle of $C$ in this embedding is
\begin{align*}
  N_{C/\F_{g+1-\mu}}
  &= 2\fh + \mu \ff \\
  &= 2\fe + (2g+2-\mu) \ff\\
  &= 2\fe+[2(\gamma+\epsilon+k)+1]\ff = 2M^-+\ff = L,
\end{align*}
which proves that $(C,L)$ is extendable in the same fashion as in the
even way.

It remains to prove~\eqref{t:ext_he:ext} when $d<4g+4$. In this case we
choose $b=4g+4-d$ distinct points $p_1,\ldots,p_b$ on~$C$, to the
effect that
\[
  L^\sharp = L(p_1+\cdots+p_b)
\]
is a line bundle of degree $4g+4$ on $C$. Thus there exists a
non-trivial extension $S^\sharp$ of $(C,L^\sharp)$ by the case
$d=4g+4$, and we get an extension of $(C,L)$ by simple internal
projection of $S^\sharp$ from the points $p_1,\ldots,p_b$, as in
Example~\ref{ex:hyperell}.\qed
\end{noname}

To prove the remaining parts of Theorem~\ref{t:ext_he}, we will apply
Proposition~\ref{pr:charact_he} in the following way.

\begin{corollary}
\label{c:dimlocus_he}
Let $C$ be a hyperelliptic curve of genus $g$,
$\mu \in \{g+1,g,\ldots,0\}$, and $b$ a non-negative integer.
The locus in the Jacobian $J^{4g+4-b}(C)$ of degree $4g+4-b$ line
bundles $L$ on $C$ such that $L$ is the normal bundle of\, $C$ 
in a simple internal projection from $b$ points of the surface
$(\F_{g+1-\mu},2H+\mu F)$ has dimension
$\min(g,\mu+b)$.
\end{corollary}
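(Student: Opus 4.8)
The plan is to reinterpret the locus in question as the image of an explicit morphism from a product of symmetric products of $C$ to a Jacobian of $C$, and then to compute its dimension by inspecting the differential. First I would use Proposition~\ref{pr:charact_he} to encode an embedding of $C$ as a smooth member of $|2H+\mu F|$ on $\F_{g+1-\mu}$ by the datum of the effective divisor $\fe = \restr E C$, which has degree $\mu$ and no two points conjugate under the hyperelliptic involution; such divisors form a dense open subset $U_\mu \subset \Sym^\mu C$, and for such an embedding $\restr F C = \ff$ is the $g^1_2$ while the normal bundle of $C$ is $2\fe + (2g+2-\mu)\ff$, as computed in the proof of Theorem~\ref{t:ext_he}\,\eqref{t:ext_he:ext}. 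Next I would note that the hyperplane sections of a simple internal projection $S'$ of $S = \phi_{|2H+\mu F|}(\F_{g+1-\mu})$ from a length-$b$ subscheme $Z$ are the proper transforms of the hyperplane sections of $S$ passing through $Z$; hence for $C$ to occur as one of them, $Z$ must be a degree-$b$ divisor $D$ on a copy of $C$ embedded as above, and then $L = 2\fe + (2g+2-\mu)\ff - D$. Conversely every such pair $(\fe,D)$ does give a simple internal projection, the projection being birational by the criterion recalled after the definition of simple internal projection (here $\deg S - b = 4g+4-b \geq 2g+1$ in the range of $b$ that matters). The upshot is that the locus in the statement is the image of the morphism
\[
  \Psi\colon U_\mu \times \Sym^b C \lra J^{4g+4-b}(C), \qquad (\fe, D) \longmapsto 2\fe - D + (2g+2-\mu)\ff,
\]
which is irreducible and has dimension $\mu+b$ minus that of its general fibre; in particular its dimension is trivially at most $\min(\mu+b,g)$, and the whole point is the reverse inequality.

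For the lower bound I would compute the rank of $d\Psi$ at a general point $(\fe,D)$, taken so that the $\mu+b$ points involved are pairwise distinct (so that $U_\mu\times\Sym^b C$ is smooth there, with tangent space $\bigoplus T_{e_i}C \oplus \bigoplus T_{q_j}C$). By the standard description of the differential of an Abel--Jacobi map (see e.g.\ \cite[Ch.~IV]{ACGH}), and since scaling by $2$ is invertible on the tangent space $T_0 J(C) = H^0(K_C)^\vee$, the image of $d\Psi|_{(\fe,D)}$ is the annihilator of $H^0(K_C-\fe)\cap H^0(K_C-D)$, which because $\fe$ and $D$ have disjoint reduced supports equals $H^0(K_C - \fe - D)$. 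Hence $\operatorname{rank} d\Psi|_{(\fe,D)} = g - h^0(K_C-\fe-D)$, and it remains to evaluate $h^0(K_C-\fe-D)$ for $(\fe,D)$ general. I would split into two cases. If $\mu+b \leq g$, then $\fe+D$ is a general effective divisor of degree $\mu+b\leq g$ on the hyperelliptic curve $C$, hence $h^0(\fe+D)=1$ — the locus $W^1_{\mu+b}$ equals $\ff + W_{\mu+b-2}$, a proper subvariety of $W_{\mu+b}$ — so $h^0(K_C-\fe-D) = g-\mu-b$ by Riemann--Roch and $d\Psi$ has rank $\mu+b$, whence $\Psi$ is generically finite and its image has dimension $\mu+b$. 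If $\mu+b\geq g$, I would argue $h^0(K_C-\fe-D)=0$ for general $(\fe,D)$: this is clear by degree if $\mu+b\geq 2g-1$, and otherwise $h^0(K_C-\fe-D)>0$ would force $[\fe]+[D]$ into the translate $K_C - W_{2g-2-\mu-b}$, of dimension $2g-2-\mu-b<g$, whereas $\{[\fe]+[D]\}$ is dense in $W_{\mu+b}=J(C)$; so $d\Psi$ has rank $g$ at a general point and $\Psi$ is dominant. In either case the image of $\Psi$ has dimension $\min(\mu+b,g)$, as wanted.

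The main obstacles, I expect, are bookkeeping rather than conceptual. The first is to pin down precisely which line bundle $L$ results from the projection — namely that the centre $Z$ is forced to lie on the relevant copy of $C$ and enters by subtracting the corresponding divisor of degree $b$ — and to check that every admissible pair $(\fe,D)$ really does arise (this is where the birationality criterion and the constraint $d-b\geq 2g+1$ are used). The second is the interplay with hyperelliptic geometry in the rank computation: the identity $H^0(K_C-\fe)\cap H^0(K_C-D) = H^0(K_C-\fe-D)$ for general disjoint $\fe,D$, together with the input that a general effective divisor of degree $\leq g$ on $C$ has $h^0=1$ (equivalently $\dim W^1_n = n-2$ for $n\leq g$), is exactly what makes the two cases of the rank count collapse to $\min(\mu+b,g)$. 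Everything else — irreducibility of the image, the trivial upper bound, and the form of the Abel--Jacobi differential — is routine.
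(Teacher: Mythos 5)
Your proposal is correct and follows essentially the same route as the paper: both identify the locus as the image of (an open subset of) $\Sym^\mu C\times \Sym^b C$ — equivalently of $\mathcal Z\times C^{[b]}$ with $\mathcal Z$ the locus of admissible $\fh = \restr{H}{C}$ — under the map $(\fe,D)\mapsto 2\fe+(2g+2-\mu)\ff-D$ furnished by Proposition~\ref{pr:charact_he}. The only difference is that the paper asserts the dimension of this image directly (implicitly using that adding $W_b$ to a subvariety of the Jacobian raises the dimension by $\min(b,\,g-\dim)$), whereas you justify it by computing the rank of the Abel--Jacobi differential; this fills in a step the paper leaves tacit but is not a different argument.
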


More precisely, the locus in the corollary parametrizes those $L$ such
that the following holds: 
there exists an embedding of $C$ as a smooth member of the linear
system $|2H+\mu F|$ on $\F_{g+1-\mu}$ passing through $b$ points
$p_1,\ldots,p_b \in \F_{g+1-\mu}$, such that $L$ is the normal bundle
of the proper transform of $C$ in the blow-up of $\F_{g+1-\mu}$
at $p_1,\ldots,p_b$.

\begin{proof}
It follows from Proposition~\ref{pr:charact_he} that the locus
$\mathcal Z \subset J^{2g+2-\mu}(C)$
of degree
$2g+2-\mu$ line bundles $\fh$ such that there exists an embedding of
$C$ as a member of $|2H+\mu F|$ on $\F_{g+1-\mu}$ such that
$\restr H C = \fh$ has dimension $\min(g,\mu)$.
In turn, the locus we are interested in is the image
of $\mathcal Z \times C^{[b]}$, where $C^{[b]}$ denotes the $\supth{b}$ 
symmetric power of $C$, by the map
\begin{equation}
\label{eq:find-mu}
  j_{\mu,b}:
  (\fh,D) \in J^{2g+2-\mu}(C) \times C^{[b]}
  \longmapsto
  2\fh+\mu \ff -D \in J^{4g+4-b}(C),
\end{equation}
with $\ff$ the class of the $g^1_2$; 
hence its dimension is $\min(g,\mu+b)$.
\end{proof}

\begin{proof}[Proof of Theorem~\ref{t:ext_he}\,
\eqref{t:ext_he:univ} and
\eqref{t:ext_he:obstr}]
For all $\mu=g+1,g,\ldots,0$,
 as in the proof of
Theorem~\ref{t:ext_g3}, we consider the universal family
$\cL_{\mu,b} \to \cS_{\mu,b}$ of the linear systems
$|2H+\mu F-E_D|$ on the blow-up of $\F_{g+1-\mu}$ along
$D \in \cS_{\mu,b} = (\F_{g+1-\mu})^{[b]}$, with total exceptional
divisor $E_D$.
The total space $\cL_{\mu,b}$ has dimension
$3g+5+b$.
Then we divide out by the automorphism group of
$\F_{g+1-\mu}$, which has dimension
$5+\max(1,g+1-\mu)$, and end up with a moduli space $\cSC_{\mu,b}$ of
pairs $(S,C)$,  which has dimension
\[
  2g+b-1+\min(g,\mu).
\]
Next we consider the map
\[
  c_{\mu,b}\colon
  (S,C) \in \cSC_{\mu,b}
  \longmapsto
  (C,N_{C/S}) \in \cJ^{4g+4-b}_g
\]
to the universal Jacobian, which by Corollary~\ref{c:dimlocus_he}
surjects onto an irreducible locus of dimension
\[
  2g-1+\min(g,\mu+b)
\]
(recall that the hyperelliptic locus in $\M_g$ has dimension $2g-1$).
Thus, the general fibres of $c_{\mu,b}$ have dimension 
\[
  \delta_{\mu,b}=
  b+\min(g,\mu)-\min(g,\mu+b)
  =
  \begin{cases}
    0 & \text{if } \mu+b\leq g, \\
    b + \mu - g & \text{if } g-b \leq \mu \leq g,  \\
    b  & \text{if } \mu = g+1, \\
  \end{cases}
\]
and all fibres of $c_{\mu,b}$ have dimension at least this.

Now, by Corollary~\ref{t:castelmoderno}, 
the family of all possible extensions of $(C,L)$ is parametrized by
the unions of the fibres of the maps $c_{\mu,b}$ for $\mu=g+1,g,\ldots,0$
and $b=4g+4-d$.
To prove Theorem~\ref{t:ext_he},  it is thus sufficient,
by the argument given in Section~\ref{p:strategy},
to compare the
dimension $\delta_{\mu,b}$ above with the dimension of
$\P(\ker(\trsp \gamma_{C,L}))$, which by Theorem~\ref{t:cork-iperell+g3}
happens to equal $b_{\max}$;  \cf \eqref{eq:bmax-he}.
After that, the conclusion will follow as in the proof of
Theorem~\ref{t:ext_g3}.

One finds that $\delta_{\mu,b} \leq b$ always holds, and thus
\[
  \delta_{\mu,b} = \cork(\gamma_{C,L})-1
  \quad\Iff
  \quad
  b=b_{\max}
  \ \  \text{and}
  \ \ 
  \mu = g+1,g,
\]
which completes the proof of the theorem.
\end{proof}

\begin{noname}[Remarks on the extensions in degree $\boldsymbol{d<2g+3}$]
We can extend the above analysis to degrees lower than $2g+3$,
even though in this range Green's theorem, Theorem~\ref{t:green}, no longer
applies to guarantee that $(C,L)$ has property $N_2$,
and as a consequence many of our results about ribbons and extensions
are no longer usable.

We find that for general $(C,L)$, there is a family of extensions of
dimension
\[
  4g+4-\deg(L)>2g+1,
\]
whereas the corank of $\gamma_{C,L}$ still takes the same value $2g+2$
by Theorem~\ref{t:cork-iperell+g3}, provided we do not consider degrees
lower than $g+4$.
Thus in this case the extensions of $(C,L)$ form a superabundant
family. 
\end{noname}

\begin{noname}[Discussion of universal extensions]
Let us compare the universal extensions 
of degree $2g+3$ and dimension $2g+3$ in $\P^{3g+5}$ gotten in
Theorem~\ref{t:ext_he} with the classification results
in degree $d\leq 11$ available in the literature; \cf
\cite{ionescu,
  fania-livorni9,
  fania-livorni10,
  besana-biancofiore}.
The only relevant cases occur in genus $g=2,3,4$, and no smooth variety
with the appropriate invariants exists in these cases. Therefore, the
corresponding universal extensions are certainly singular.
\end{noname}

\begin{remark}
When $L$ has degree $4g+4$, there are in general only finitely many
extensions of $(C,L)$, even though $\P(\ker(\trsp \gamma_{C,L}))$ has
dimension $2g+1$ (indeed, $\delta_{\mu,0}=0$ in the above notation).
From the arguments given in Section~\ref{p:ext-hell}, it follows that
one can always find at least two different extensions of $(C,L)$, with
underlying surfaces respectively the rational ruled surfaces
$\F_{2k+1-\epsilon}$ (from the even way) and $\F_{2k+\epsilon}$ (from
the odd way); in addition, there are $2^{2g}$ possible choices for
$M^+$ and $M^-$, respectively.  In this case it is clear that there
cannot exist a universal extension of $(C,L)$.

Moreover, it is in general possible to find two line bundles $M$ with the same
parity but leading to different values of $\mu$.
For instance, in the even case, if we have
\[
  M_1 = \fe_1+(\gamma_1+1+k_1)\ff
  \quad\text{and}
  \quad
  M_2 = \fe_2+(\gamma_2+1+k_2)\ff,
\]
the condition that $2M_1=2M_2$ amounts to
\[
  2\fe_1 = 2\fe_2 +(k_2-k_1)\ff,
\]
which is fulfilled if the difference $\fe_1-\fe_2$ consists of
ramification points of the $g^1_2$.

When the degree of $L$ gets smaller than $4g+4$, \ie when the number
$b$ of points from which one projects becomes positive, a given
surface may be obtained by projecting two scrolls with different
values of $\mu$.
Indeed, the sublinear system of $|2H+\mu F|$ on $\F_{g+1-\mu}$ of curves
passing through a point $p$ off $E \subset \F_{g+1-\mu}$
corresponds to the sublinear system of $|2H+(\mu+1) F|$ on $\F_{g-\mu}$
of curves passing through a point $p'$ on $E \subset \F_{g-\mu}$
via the elementary transformation
$\F_{g+1-\mu} \dashrightarrow \F_{g-\mu}$ based at $p$.

Thus one sees that the various loci
$j_{\mu,b}(\mathcal{Z} \times C^{[b]})$
(in the notation of the proof of Corollary~\ref{c:dimlocus_he}) 
are in general not disjoint
as $\mu$ ranges from $0$ to $g+1$.
\end{remark}

\section{Extensions of pluricanonical curves}
\label{S:bican}

In this section we study the extensions of pluricanonical curves,
\ie polarized curves $(C,mK_C)$ with $m>1$.
In particular, we will prove Theorems~\ref{t:cork-plurican}
and~\ref{t:plurican}.
The case of canonical
curves is of a different nature and will not be discussed here; we
refer for instance to \cite{abs1, cds}.

\subsection{Corank of the Gaussian map}
\label{s:cork-plurican}

In this subsection we give the corank of the Gaussian map
$\gamma_{C,mK_C}$ for all non-hyperelliptic curves and $m>1$,
which will prove Theorem~\ref{t:cork-plurican}.
We rely on the following identification.

\begin{noname}
Let $(C,L)$ be a polarized curve, with $L$ very ample, and $N_C$ the
normal bundle in the embedding given by $|L|$.
By \cite[Proposition~1.2]{cm90}, 
we have 
\begin{equation}
\label{eq:cm90}
  \forall m \geq 2:
  \quad
  H^0\left(N_C(-m)\right) \cong
  \ker \left(\trsp \phi_{K_C+(m-1)L, L}\right),
\end{equation}
where for any two line bundles $M,N$ on $C$,
$\phi_{M,N}$ is a Gaussian map
$R(M,N) \to H^0(K_C+M+N)$ defined in a way similar to that in 
Section~\ref{def:gaussian}, with $R(M,N)$ the kernel of the multiplication map
$H^0(M)\otimes H^0(N) \to H^0(M+N)$.

In particular, for $L=K_C$, 
the map $\phi_{K_C+(m-1)K_C, K_C} = \phi_{mK_C, K_C}$ is exactly
the map $\gamma_{C,mK_C}$ defined in Section~\ref{def:gaussian} and
considered in the present paper.
Thus,
as a particular case of \eqref{eq:cm90}, we obtain that 
if $C$ is non-hyperelliptic, then
\begin{equation}
\label{eq:cork-plurican}
  \forall m \geq 2:
  \quad
  H^0\left(N_{C / \P^{g-1}} (-m)\right) \cong
  \ker (\trsp \gamma_{C,mK_C}),
\end{equation}
where the normal bundle
$N_{C / \P^{g-1}}$
is that of the canonical embedding of $C$.
\end{noname}

\begin{proof}[{Proof of Theorem~\ref{t:cork-plurican}}]
The identification \eqref{eq:cork-plurican} will enable us to compute
$\gamma_{C,mK_C}$ in all cases, using the values of
$h^0(N_{C / \P^{g-1}} (-m))$ given in \cite{knutsen18}.
First of all, one has
\begin{equation*}
  h^0(N_{C / \P^{g-1}} (-m))=0
  \quad \text{if}\ 
  m>2
  \ \text{and}\ g\geq 5; 
\end{equation*}
see \cite[Introduction, pp.~58--59]{knutsen18}
and \cite{kl}.
Moreover, one has
\begin{equation*}
  h^0(N_{C / \P^{g-1}} (-m))=0
  \quad \text{if}\ 
  m\geq 2
  \ \text{and}\ \Cliff(C)>2 
\end{equation*}
since canonical curves with Clifford index larger than $2$ have
property $N_2$ by \cite{voisin88,schreyer91}, which is well known to
imply the asserted vanishing;  see
\cite[Lemma~3.6]{cds} and the references given there.

Thus, to prove the theorem,  it only remains to prove that
$\cork(\gamma_{C,2K_C})$ takes the asserted values for curves of
Clifford index $1$ or $2$.
This is readily given in \cite{knutsen18} in the following cases:
\begin{itemize}
\item plane quintics, \cf \cite[Proposition~3.2]{knutsen18}, 
  and sextics, \cf \cite[Proposition~4.3]{knutsen18};
\item tetragonal curves of genus $g\geq 6$ that are not a plane quintic  (this comprises the cases~\eqref{cliff2-b} and~\eqref{cliff2-c} for curves of Clifford index  $2$ in Theorem~\ref{t:cork-plurican}); \cf
  \cite[Proposition~4.1]{knutsen18}.
\end{itemize}
For trigonal curves, \cite[Proposition~3.1]{knutsen18} gives the following
values:
\begin{equation}
\label{eq:3gon-Kn}
  \begin{tabular}{r|l}
    $g$ & $h^0(N_{C/\P^{g-1}} (-2))$ \\
    \hline
    $5$ & 3 \\
    $6$ & 2 \\
    $7$ & $1+h^0(K_C-4\fg) \leq 2$ \\
    $8$ & $h^0(K_C-4\fg) \leq 1$ \\
    $9$ & $h^0(K_C-5\fg) \leq 1$ \\
    $10$ & $h^0(K_C-6\fg) \leq 1$ \\
    $\geq 11$ & 0
  \end{tabular}
\end{equation}
where $\fg$ stands for the class of the $g^1_3$.
We prove in Section~\ref{p:3gon-class}
below that these values always equal
$h^0(K_C-(g-4)\fg)$.
There remains the case of non-trigonal genus $5$ curves, which is
treated in Section~\ref{p:ci-cork} below.
\end{proof}

\begin{noname}[Classification of trigonal curves]
\label{p:3gon-class}
Here we classify trigonal curves of genus $g$ with $5 \leq g \leq 10$
along the lines of \cite[Section~9]{miranda-triple},\footnote{There
is a typo in \cite{miranda-triple}, as the fourth formula at the top of
p.~1153 should read $t=2n-m$ instead of $t=2m-n$.}  in order to prove
that for such curves one has
\[
  \cork(\gamma_{C,2K_C})
  = h^0(N_{C/\P^{g-1}} (-2))
  = h^0(K_C-(g-4)\fg).
\]
This may also be seen with
\cite[Proposition~2.9(e)]{kl},
but the following self-contained proof is more in the spirit of our
text. 
Let $f\colon C \to \P^1$ be a genus $g$ triple cover of $\P^1$.
It holds that
\[
  f_* \O_C = \O_{\P^1} \oplus V, 
\]
with $ V = \O_{\P^1}(-a)\oplus\O_{\P^1}(-b)$ such that
\[
  a+b=g+2
  \quad \text{and}
  \quad
    0<a\leq b
    \leq 2a.
\]
Then $C$ may be realized as a divisor in the rational ruled surface
$\F_{b-a}$, with class
\[
  3E+(2b-a)F
  \lineq 3H+(2a-b)F.
\]
Moreover, one has
\begin{align*}
  h^0\left( (g-4)\fg\right)
  &= h^0 \left(C,f^* \O_{\P^1}(g-4) \right) \\
  &= h^0 \left(\P^1,
    \O_{\P^1}(g-4)
    \oplus \O_{\P^1}(g-4-a)
    \oplus \O_{\P^1}(g-4-b)
    \right) \\
  &= h^0 \left(\P^1,
    \O_{\P^1}(g-4)
    \oplus \O_{\P^1}(b-6)
    \oplus \O_{\P^1}(a-6)
    \right),
\end{align*}
from which one deduces
\begin{align*}
  h^0\left( K_C - (g-4)\fg \right)
  &= h^1\left( (g-4)\fg \right) \\
  &= h^0\left( (g-4)\fg \right) - 3(g-4)+g-1 \\
  &= h^0\left( (g-4)\fg \right) - 2g +11.
\end{align*}
This gives the following complete classification:

\begin{center}
\begin{tabular}{l|cc|c|c|c}
  & $a$ & $b$ & $h^0\left( (g-4)\mathfrak g^1_3 \right)$
  & $h^0\left( K-(g-4)\mathfrak g^1_3 \right)$ & Class \\
  \hline
  $g=5$ & $3$ & $4$ & 2 & 3 & $3E+5F$ on $\F_1$ \\
  \hline
  $g=6$ & 4 & 4 & 3 & 2 & $3E+4F$ on $\F_0$ \\
        & 3 & 5 & 3 & 2 & $3E+7F$ on $\F_2$ \\
  \hline
  $g=7$ & 4 & 5 & 4 & 1 & $3E+6F$ on $\F_1$ \\
        & 3 & 6 & 5 & 2 & $3E+9F$ on $\F_3$ \\
  \hline
  $g=8$ & 5 & 5 & 5 & 0 & $3E+5F$ on $\F_0$ \\
        & 4 & 6 & 6 & 1 & $3E+8F$ on $\F_2$ \\
  \hline
  $g=9$ & 5 & 6 & 7 & 0 & $3E+7F$ on $\F_1$ \\
        & 4 & 7 & 8 & 1 & $3E+10F$ on $\F_3$ \\
  \hline
  $g=10$ & 6 & 6 & 9 & 0 & $3E+6F$ on $\F_0$ \\
  & 5 & 7 & 9 & 0 & $3E+9F$ on $\F_2$ \\
  & 4 & 8 & 10 & 1 & $3E+12F$ on $\F_4$ 
\end{tabular}
\end{center}

From this classification, one readily deduces
that the values given in
\eqref{eq:3gon-Kn}
indeed equal $h^0(K_C-(g-4)\fg)$.
The only non-trivial case is when $g=7$; then one finds as above that
\[
  h^0(4\fg)
  = h^0(\P^1, \O_{P^1}(4) \oplus \O_{P^1}(4-a) \oplus \O_{P^1}(4-b))
  = \begin{cases}
    6 & \text{if}\ (a,b)=(4,5),\\
    7 & \text{if}\ (a,b)=(3,6),
  \end{cases}
\]
hence by Riemann--Roch, 
\[
  h^0(K_C-4\fg)
  = \begin{cases}
    0 & \text{if}\ (a,b)=(4,5),\\
    1 & \text{if}\ (a,b)=(3,6), 
  \end{cases}
\]
as required.
\end{noname}

\begin{noname}[Complete intersection canonical curves]
\label{p:ci-cork}
We conclude the subsection by giving the values of
$\cork(\gamma_{C,mK_C})$ for non-hyperelliptic curves of genus
$g\leq 4$ and non-trigonal curves of genus $5$.

These curves are complete intersections in
their canonical embedding, and thus one finds, using
\eqref{eq:cork-plurican} once again, 
\begin{equation}
\label{eq:cork-g3,4}
\cork(\gamma_{C,mK_C})
=h^0(N_{C/\P^{g-1}}(-m))
= \begin{cases}
    h^0(\O_C(4-m)) & \text{if $g=3$}, \\
    h^0(\O_C(3-m)\oplus\O_C(2-m)) & \text{if $g=4$}, \\
    h^0(\O_C(2-m)^{\oplus 3}) & \text{if $g=5$}. \\
  \end{cases}
\end{equation}
\end{noname}

\subsection{Surface extensions}
\label{s:plurican-ext}

In this subsection we study the surface extensions of pluricanonical
curves. Comparing their number of moduli with the values for
$\cork(\gamma_{C,mK_C})$ found in Section~\ref{s:cork-plurican}, we
will prove Theorem~\ref{t:plurican}, to the effect that the extension
theory of pluricanonical curves having property $N_2$ is
unobstructed. 

We shall prove Theorem~\ref{t:plurican} by considering separately the
various cases for which we have found in
Section~\ref{s:cork-plurican}
above that $\cork(\gamma_{C,mK_C})$ is non-zero
(if $\cork(\gamma_{C,mK_C})$ is zero, then there is only the trivial
ribbon over $(C,mK_C)$, and the statement is empty).
The strategy is the same as for Theorems
\ref{t:ext_g3}
and~\ref{t:ext_he}; 
namely, we prove that the general ribbon in
$\P(\ker(\trsp \gamma_{C,mK_C}))$ is integrable by a dimension count,
after which the conclusion follows by Theorem~\ref{t:gnl-ribb}.
We will be brief and only outline the dimension count.
Note that all extensions of pluricanonical curves appear in the
classification of Theorem~\ref{t:hideg}.

\begin{noname}
Note that
\[
  2g+3 \leq 4g-4
  \quad\Iff
  \quad
  g \geq 4,
\]
so by Green's theorem, Theorem~\ref{t:green}, all pluricanonical curves of
genus $g\geq 4$ have property $N_2$.
Similarly, in genus~$3$, $m$-canonical curves have property $N_2$
if $m \geq 3$.

For bicanonical curves of genus $3$, the sufficient condition for
property $N_2$ provided by Theorem~\ref{t:green} does not hold, and
indeed a direct computation using Macaulay2, \cf \cite{M2}, shows that if
$C \subset \P^5$ is a non-hyperelliptic curve of genus $3$ embedded by
the complete linear series $|2K_C|$, its ideal $\I_{C/\P^5}$ has the
minimal resolution
\[
  0 \lra
  \O_{\P^2}(-6)^{\oplus 3} \lra 
  \O_{\P^2}(-5)^{\oplus 8} \oplus \O_{\P^2}(-4)^{\oplus 3} \lra 
  \O_{\P^2}(-4)^{\oplus 6} \oplus \O_{\P^2}(-3)^{\oplus 8} \lra
                                \O_{\P^2}(-2)^{\oplus 7}
  \lra \I_{C/\P^5} \lra 0,
  \]
so that $(C,2K_C)$ does not have property $N_2$.
\end{noname}

\subsubsection{Complete intersection curves}
 
\begin{noname}[Genus 3]
In this case the result is contained in Theorem~\ref{t:ext_g3}. Let us
briefly recall how it goes in this particular case and add a few
comments.

Let $C$ be a smooth plane quartic.
The $4$-canonical model of $C$ is $v_4(C)$, which is a hyperplane
section of the $4$-Veronese surface $v_4(\P^2)$. Since
$\cork(\gamma_{C,4K_C}) =1$,
we conclude that the unique ribbon over $(C,4K_C)$
is integrable. 

We obtain extensions of the $3$-canonical model of $C$ by projecting
the surface $v_4(\P^2)$ from four points on $v_4(C)$ that, in $\P^2$,
are cut out by a line on $C$, so that they sum to a canonical divisor
of $C$.
There is a $2$-dimensional family of such divisors, and
correspondingly  a $2$-dimensional family of non-isomorphic extensions
of $(C,3K_C)$, in agreement with $\cork(\gamma_{C,3K_C})=3$.
In order to avoid possible misunderstandings, note that the curve
$v_3(C)$ is contained in the $3$-Veronese surface $v_3(\P^2)$ but 
is not a hyperplane section, so $v_3(\P^2)$ is not an extension of
$(C,3K_C)$. 

Bicanonical curves of genus $3$ are out of the range of
Theorem~\ref{t:plurican} because they do not have property
$N_2$. Their extensions have been analyzed in
Section~\ref{p:g3-notN2}, where we have found for general $C$ two
families of extensions, one of the expected dimension $5$ and one
superabundant of dimension $6$.  In this case we cannot apply
Theorem~\ref{t:unicity} to guarantee that each surface is the integral
of a unique ribbon, and thus we have no proof of the fact that the
general ribbon in $\P(\ker(\trsp \gamma_{C,2K_C}))$ is integrable.
\end{noname}

\begin{noname}[Genus 4]
Let $C \subset \P^3$ be a smooth complete intersection of a quadric
$X_2$ and a cubic $X_3$. 
The surface $v_3(X_2) \subset \P^{15}$ is an extension of
$v_3(C)$, in fact the only one since 
$\cork(\gamma_{C,3K_C}) =1$.

Similarly, for each cubic $X'_3$ containing $C \subset \P^3$,
the surface $v_2(X'_3)\subset \P^9$ has $v_2(C)$ as a hyperplane
section, and it is thus an extension of $(C,2K_C)$.
This provides a $4$-dimensional family of extensions of $(C,2K_C)$,
in agreement with $\cork(\gamma_{C,2K_C}) =5$.
\end{noname}

\begin{noname}[Genus 5]
Let $C \subset \P^4$ be a smooth complete intersection of three
quadrics.  For each surface $X_2\cap X_2'$ that is the complete
intersection of two quadrics containing $C$, the surface $v_2(X_2\cap
X_2')$ has $v_2(C)$ as a hyperplane section.  This provides a
$2$-dimensional family of extensions of $(C,2K_C)$, in agreement with
$\cork(\gamma_{C,2K_C}) =3$.
\end{noname}

\subsubsection{Clifford index 1}

\begin{noname}[Trigonal curves]
Let $C$ be a smooth trigonal curve of genus $g\geq 5$,
non-hyperelliptic.
In its canonical model, it sits in a rational normal scroll
$Y \subset \P^{g-1}$ of degree $g-2$,
with class
$C \lineq 3\cH-(g-4)F$, with $\cH$ the hyperplane section class of
$Y \subset \P^{g-1}$ and $F$ the class of a ruling; 
\cf Example~\ref{ex:trig}.
Extensions of $(C,2K_C)$ are to be found as simple projections of the
image of $Y$ by the map $\phi_{|C|}$.
The centre of the projection must be an effective, degree $10-g$,
divisor $D_{10-g}$ on $C$  such that 
\begin{equation*}
  \restr {\left( 3\cH-(g-4)F \right)} C - D_{10-g}
  \lineq 2K_C
  \quad
  \Iff\quad
  D_{10-g} \lineq K_C - (g-4) \fg,
\end{equation*}
where $\fg$ is the class of the $g^1_3$ on $C$
(recall that $\restr \cH C \lineq K_C$).
Thus each $D \in |K_C - (g-4) \fg|$ gives an extension
of $(C,2K_C)$, in agreement with
\(
  \cork(\gamma_{C,2K_C})
  = h^0(K_C-(g-4)\fg).
\)
\end{noname}

\begin{noname}[Plane quintics]
Let $C \subset \P^2$ be a smooth plane quintic.
Then $K_C=\restr {2L} C$, where
$L$ denotes the line class on $\P^2$,
and the extensions of $(C,2K_C)$ are to be found as simple projections
of the Veronese surface $v_5(\P^2)$.
The centre of the projection must be an effective, degree $5$,
divisor $D$ on $C$ such that 
\begin{equation*}
  \restr {5L} C -D \lineq 2K_C
  \quad\Iff
  \quad
  D \lineq \restr L C.
\end{equation*}
Thus each $D \in |\restr L C|$ gives an extension
of $(C,2K_C)$, in agreement with
\(
  \cork(\gamma_{C,2K_C})
  = h^0(\O_{\P^2}(1)).
\)
\end{noname}

\subsubsection{Clifford index 2}

\begin{noname}[Quadric sections of Del Pezzo surfaces]
Let $C$ be a genus $g$ curve such that in its canonical model, $C$ is a
quadric section of a Del Pezzo surface $S \subset \P^{g-1}$. Then
$v_2(S)$ is an extension of $(C,2K_C)$, which proves the theorem in
this case since $\cork(\gamma_{C,2K_C})=1$.
\end{noname}

\begin{noname}[Bielliptic curves]
Let $f\colon C \to E$ be a genus $g$ double cover of the elliptic curve
$E$.
Then the $2$-Veronese re-embedding of the cone in $\P^{g-1}$ over
the elliptic normal curve $E \subset \P^{g-2}$
is an extension of $(C,2K_C)$ as in Example~\ref{ex:biell}, 
which proves the theorem in
this case since $\cork(\gamma_{C,2K_C})=1$.
\end{noname}

\begin{noname}[Plane sextics]
Let $C \subset \P^2$ be a smooth plane sextic. Then $K_C=\restr {3L}
C$, where $L$ denotes the line class on $\P^2$, and the Veronese
surface $v_6(\P^2)$ is an extension of $(C,2K_C)$, 
which proves the theorem in
this case since $\cork(\gamma_{C,2K_C})=1$.
\end{noname}

\subsection{Universal extensions}
\label{s:univ-plurican}

Finally, we consider those pluricanonical curves $(C,mK_C)$ for
which $\cork(\gamma_{C,2K_C}) > 1$ 
and provide a construction of the universal extension, except in the
trigonal case.
The constructions are similar to those in
\cite{cd-higher} and \cite[Appendix]{angelo},
see also \cite{bruno}, and are inspired by examples of Burt Totaro
(private communication, see \cite{cd-double}).
When $\cork(\gamma_{C,2K_C}) = 1$, the universal extension is a
surface, and there is nothing to add to the analysis carried out in  Section~\ref{s:plurican-ext}.

\begin{noname}[Genus 3]
We shall construct a
$4$-dimensional variety $X \subset \P^{12}$ of degree $12$ having
tricanonical curves of genus $3$ as curve sections 
and projections of $v_4(\P^2)$ from four points lying on a line in the
plane as surface sections.

We start from the following basic fact, the proof of which we leave to
the reader.
Let $f,\l \in \C[\bx]$, $\bx=(x_0,x_1,x_2)$, be two homogeneous
polynomials of degrees $4$ and $1$, respectively.
Then the hypersurface $S$ of the weighted projective space $\P(1^3,3)$
defined by the homogeneous equation
\[
  f(\bx)+y\ell(\bx) = 0,
\]
of weighted degree $4$ in the homogeneous coordinates $(\bx,y)$,
is isomorphic to the surface obtained by first blowing up $\P^2$ at
the four points defined by the equations $f(\bx) = \ell(\bx) = 0$ 
and then contracting the proper transform of the line defined by
$\ell(\bx) = 0$.

Now, we claim that the weighted quartic hypersurface
\[
  X:
  \quad
  x_0y_0 + x_1y_1 + x_2y_2 = 0
  \quad\text{in}
  \
  \P(1^3,3^3)_{(\bx:\by)},
\]
in its embedding $X \subset \P^{12}$ defined by weighted cubics,
is the universal extension we are looking for.
To prove our claim, we consider a canonical curve $C$ of genus $3$
defined by a quartic equation $f(\bx)=0$ in $\P^2$. Up to a change of
coordinates, we may assume that $f$ has no term in $x_0^4$, so that it
is possible to write it as
\[
  f = x_1 f_1 + x_2 f_2.
\]
First, we note that $C$ is defined by the three cubic equations
\[
  y_0 = y_1-f_1(\bx) = y_2-f_2(\bx) = 0
\]
in $X$, so that in its tricanonical embedding,  it is the section of $X
\subset \P^{12}$ by three hyperplanes.
Next, we consider a general line $L \subset \P^2$ defined by an
equation 
\[
  x_0 + a_1x_1 +a_2x_2 = 0
\]
in $\P^2$. Then, the extension of $(C,3K_C)$ given by the projection
of $v_4(\P^2)$ from the four points in $C\cap L$ is the section of
$X \subset \P^{12}$ by the two hyperplanes corresponding to the cubic
equations 
\[
  y_1-f_1(\bx) -a_1y_0 = y_2-f_2(\bx) -a_2y_0 = 0 
\]
since the latter is isomorphic to the hypersurface
\[
  (x_1f_1+x_2f_2)+ y_0(x_0+a_1x_1+a_2x_2) = 0
\]
in $\P(1^3,3)$.
It follows that the map
\begin{equation*}
  \Lambda \in \P^{12}/ \vect C
  \longmapsto
  [2C_{X\cap \Lambda}] \in \P(\ker(\trsp \gamma_{C,L})),
\end{equation*}
in the notation of Section~\ref{p:linear}, is dominant.
It thus follows from Lemma~\ref{l:app_ribb-linear} that
$X \subset \P^{12}$ is a universal extension
of $(C,3K_C)$.

One can perform the same construction for bicanonical curves of
genus~$3$, even though they do not have property $N_2$.  Thus, the
weighted quartic hypersurface
\[
  X':
  \quad
  x_0^2y_0 + x_0x_1y_1 + x_0x_2y_2
  + x_1^2y_3 + x_1x_2y_3
  + x_2^2y_5
  =0
  \quad\text{in}
  \
  \P(1^3,2^6)_{(\bx:\by)},
\]
in its embedding in $\P^{11}$ defined by weighted quadrics,
has as surface sections all projections of $v_4(\P^2)$ from eight
points that are the complete intersection of a quartic and a conic.
\end{noname}

\begin{noname}[Genus 4]
We shall construct a $6$-dimensional variety $X \subset \P^{13}$
of degree $12$
having bicanonical curves of genus $4$ and their extensions as
sections by linear spaces.
This is the weighted cubic
\[
  X:
  \quad
  x_0y_0 + x_1y_1 + x_2y_2 + x_3y_3 = 0
  \quad\text{in}
  \
  \P(1^4,2^4)_{(\bx:\by)},
\]
in its embedding $X \subset \P^{13}$ defined by weighted quadrics.

Indeed, let $C$ be the canonical genus $4$ curve defined by the
equations $f,g \in \C[\bx]$ in $\P^3$, of degrees $2$ and~$3$, 
respectively. We may write the degree $3$ equation as
\[
  g = x_0g_0 + x_1 g_1 + x_2 g_2 + x_3g_3,
\]
which enables us to see $C$ as being cut out in $X$ by the five degree
$2$ equations 
\[
  f(\bx)=
  y_0-g_0(\bx)=
  y_1-g_1(\bx)=
  y_2-g_2(\bx)=
  y_3-g_3(\bx)=
  0.
\]
In turn, each cubic surface $X_3' \subset \P^3$ containing $C$ has an
equation of the form $g+(a_0x_0+a_1x_1+a_2x_2+a_3x_3)f$;  hence it is cut
 out in $X$ by the four degree
$2$ equations 
\[
  y_0-g_0(\bx)-a_0f(\bx)=
  y_1-g_1(\bx)-a_1f(\bx)=
  y_2-g_2(\bx)-a_2f(\bx)=
  y_3-g_3(\bx)-a_3f(\bx)=
  0,
\]
so that $v_2(X_3') \subset \P^9$ is the section of
$X \subset \P^{13}$ by four hyperplanes containing $C$, as required.
\end{noname}

\begin{noname}[Genus 5]
The universal extension of non-trigonal bicanonical curves of genus
$5$ is the Veronese $4$-fold $v_2(\P^4) \subset \P^{14}$.
\end{noname}

\begin{noname}[Plane quintics]
The universal extension for bicanonical models of plane quintics may
be constructed analogously to what we have done in the genus~$3$ case
for plane quartics.
Thus the universal extension is the weighted quintic hypersurface
\[
  X:
  \quad
  x_0y_0 + x_1y_1 + x_2y_2 = 0
  \quad\text{in}
  \
  \P(1^3,4^3)_{(\bx:\by)},
\]
in its embedding $X \subset \P^{17}$ defined by weighted cubics, a
projective variety of dimension $4$ and degree $20$.
\end{noname}

\begin{noname}[Bicanonical trigonal curves]
We find a universal extension of dimension greater than $2$ in the
following cases:
\begin{itemize}
\item $g=5$ and $C$ is a member of $|3E+5F|$ on $\F_1$, and
  the universal extension of $(C,2K_C)$ has dimension $4$ and degree
  $16$ in $\P^{14}$.
\item  $g=6$ and $C$ is a member of $|3E+4F|$ on $\F_0$ or of
  $|3E+7F|$ on $\F_2$, and the universal extension of $(C,2K_C)$ has
  dimension $3$ and degree $20$ in $\P^{16}$.
\item  $g=7$ and $C$ is a member of $|3E+9F|$ on $\F_3$, and
  the universal extension of $(C,2K_C)$ has dimension $3$ and degree
  $24$ in $\P^{19}$.
\end{itemize}
We believe it should be possible to give an explicit construction of
these universal extensions along the same lines as in the other cases,
but we do not dwell on this and leave it as an open project.
It is plausible that the universal extension in genus $6$ will be the
same for both kinds of curves.
\end{noname}


\end{document}